\theoremstyle{plain}
\newtheorem{thm}{Theorem}[section]
\newtheorem{prop}[thm]{Proposition}
\newtheorem{lem}[thm]{Lemma}
\newtheorem{cor}[thm]{Corollary}
\newtheorem{conj}{Conjecture}
\newtheorem{claim}{Claim}
\theoremstyle{definition}
\newtheorem{defn}{Definition}
\newtheorem{notation}{Notation}
\theoremstyle{remark}
\newtheorem{remark}{Remark}
\newtheorem{example}{Example}
  \def\C{{\mathbb{C}}}  \def\E{{\mathbb{E}}}         \def\N{{\mathbb{N}}}    \def\R{{\mathbb{R}}}        \def\Z{{\mathbb{Z}}}
\def\ba{{\bar{a}}}                       \def\bx{{\bar{x}}} \def\by{{\bar{y}}} 
  \def\bfc{{\bf{c}}}                       
\def\cA{{\mathcal{A}}}   \def\cD{{\mathcal{D}}}    \def\cH{{\mathcal{H}}}   \def\cK{{\mathcal{K}}}  \def\cM{{\mathcal{M}}} \def\cN{{\mathcal{N}}} \def\cO{{\mathcal{O}}} \def\cP{{\mathcal{P}}}  \def\cR{{\mathcal{R}}}    \def\cV{{\mathcal{V}}}    
       \def\tcH{{\tilde{\mathcal{H}}}}        \def\tcP{{\tilde{\mathcal{P}}}}
       \def\tcH{{\tilde{\cH}}}        \def\tcP{{\tilde{\cP}}}
  \def\tlambda{{\widetilde{\lambda}}} \def\tmu{{\widetilde{\mu}}}            
\newcommand{\La}{\Lambda}
\newcommand{\Si}{\Sigma}
\newcommand{\eps}{\epsilon}
\renewcommand\d{\delta}
\newcommand\g{\gamma}
\renewcommand\l{\lambda}
\newcommand\s{\sigma}
\newcommand\dom{\operatorname{dom}}
\newcommand\GL{{\operatorname{GL}}}
\newcommand\Gr{{\operatorname{Gr}}}
\newcommand\id{\operatorname{I}}
\newcommand\im{\operatorname{Im}}
\newcommand\Isom{\operatorname{Isom}}
\newcommand\Prob{\operatorname{Prob}}
\newcommand\Proj{\operatorname{Proj}}
\newcommand\Span{\operatorname{span}}
\def\cc{{\curvearrowright}}
\newcommand{\botimes}{\bar{\otimes}}
\newcommand{\dee}{\textrm{d}}
\newcommand{\resto}{\upharpoonright}
\newcommand{\ip}[1]{\langle #1 \rangle}
  \newcommand{\rL}{\textrm{L}}
\begin{document}
\title{A multiplicative ergodic theorem for von Neumann algebra valued cocycles}
\author{Lewis Bowen\footnote{This author acknowledges support from NSF grant DMS-1500389 and a Simons Fellowship.}, Ben Hayes\thanks{This author acknowledges support from NSF grants DMS-1827376 and DMS-2000105.} and Yuqing (Frank) Lin\footnote{This author acknowledges support from NSF grant DMS-1500389.}}
\maketitle

\begin{abstract}
The classical Multiplicative Ergodic Theorem (MET) of Oseledets is generalized here to cocycles taking values in a semi-finite von Neumann algebra. This allows for a continuous Lyapunov distribution.
\end{abstract}

\noindent
{\bf Keywords}: Multiplicative Ergodic Theorem, Oseledets' Theorem, Lyapunov exponents\\
{\bf MSC}: 37H15\\

\noindent
\tableofcontents

\section{Introduction}

\subsection{The finite dimensional MET}
Here is a version of the classical Multiplicative Ergodic Theorem (MET). Let $(X,\mu)$ be a standard probability space, $f:X \to X$ a measure-preserving transformation, and $c:\N \times X \to \GL(n,\R)$ a measurable cocycle:
$$c(n+m,x) = c(n, f^mx)c(m,x)\quad \forall n,m \in \N, ~\mu-a.e. ~x\in X.$$
Assume the first moment condition:
$$\int \log^+ \|c(1,x)\|~d\mu(x)<\infty,$$
Then there is a {\bf limit operator}
$$\La(x):= \lim_{n\to\infty} [c(n,x)^*c(n,x)]^{1/2n}$$
for a.e. $x$. Let $e^{\l_1(x)} >  \cdots > e^{\l_k(x)}$ be the distinct eigenvalues of $\La(x)$. Then $\l_1,\dots, \l_k$ are the {\bf Lyapunov exponents}.  They are invariant in the sense that $\l_i(f(x)) = \l_i(x)$ for a.e. $x$. If $m_i \in \N$ is the multiplicity of $\l_i$ then the {\bf Lyapunov distribution} is the discrete measure $\sum_{i=1}^k m_i \d_{\l_i}$.

Let $W_i$ be the $e^{\l_i(x)}$-eigenspace of $\La(x)$ and define
$$V_i = \sum_{j \ge i } W_j$$
so that  $V_k(x) \subset \cdots \subset V_1(x) = \R^n$ is a flag. The $V_i(x)$ are the {\bf Oseledets subspaces}. They are cocycle-invariant in the sense that $V_i(f(x)) = c(1,x)V_i(x)$ (for a.e. $x$).

Finally, for a.e. $x \in X$ and every vector $v \in V_i(x) \setminus V_{i+1}(x)$,
$$\lim_{n\to\infty} \frac{1}{n} \log \|c(n,x)v\| = \l_i(x).$$
This last condition can be expressed without reference to Lyapunov exponents by:
\begin{equation}\label{eqn:Lyapunov growth rate}
    \lim_{n\to\infty}  \| c(n,x)v \|^{1/n} = \lim_{n\to\infty} \| \La(x)^{n}v \|^{1/n}.
\end{equation}


 \subsection{Previous literature}

 Infinite-dimensional generalizations of the MET have appeared in \cite{MR647807, MR730286, MR3485402, MR2674952, MR877991, MR3400385, MR1178957}. Each of these assumes the operators $c(n,x)$ satisfy a quasi- compactness condition and consequently they trivialize away from the discrete part of the spectrum of the limit operators $\La(x)$. There are also geometric generalizations that are not directly concerned with operators on Hilbert spaces \cite{MR1729880}.

On the other hand, one does not expect there to be an unconditional generalization to infinite dimensions. For example, Voiculescu's example in \cite[Example 8.4]{MR2511586} shows there is a bounded operator $T:\ell^2(\N) \to \ell^2(\N)$ such that $|T^n|^{1/n}$ does not converge in the Strong Operator Topology. We could define the cocycle $c$ above by $c(n,x)=T^n$  to see that convergence cannot be guaranteed in the general setting of bounded operators on Hilbert spaces. Another counterexample is given in \cite{MR1178957}.


\subsection{von Neumann algebras}\label{sec: vNa intro}

The purpose of this paper is to establish a new MET in which the cocycle takes values in the group of invertible elements of a semi-finite tracial von Neumann algebra. To explain in more detail, let $\cH$ be a separable Hilbert space, $B(\cH)$ the algebra of bounded operators on $\cH$. A {\bf von Neumann algebra} is a sub-algebra $M \subset B(\cH)$ containing the identity ($\id\in M$) that is closed under taking adjoints and closed in the weak operator topology. Let $M_{+} \subset M$ be the positive operators on $M$ (these are the operators $x\in M$ satisfying $\langle x\xi,\xi\rangle \ge 0$ for all $\xi \in \cH$), and $M_{sa}$ the self-adjoint operators in $M$. We define a partial order on $M_{sa}$ by saying that $x\leq y$ if $y-x\in M_{+}$. A {\bf trace} on $M$ is a map $\tau:M_+ \to [0,\infty]$ satisfying
\begin{enumerate}
\item $\tau(x+y) =\tau(x)+\tau(y)$ for all $x,y \in M_+$;
\item $\tau(\l x) = \l \tau(x)$ for all $\l \in [0,\infty)$, $x \in M_+$ (agreeing that $0(+\infty) = 0$);
\item $\tau(x^*x)=\tau(xx^*)$ for all $x\in M$.
\end{enumerate}
We will always assume $\tau$ is
\begin{itemize}
\item faithful, which means $\tau(x^*x)=0 \Rightarrow x=0$;
\item normal, which means $\tau(\sup_i x_i) = \sup_i \tau(x_i)$ for every increasing net $(x_i)_i$ in  $M_+$;
\item semi-finite, which means for every $x \in M_+$ there exists $y \in M_+$ such that $0<y<x$ and $0<\tau(y)<\infty$.
\end{itemize}
 The pair $(M,\tau)$ is a {\bf finite tracial von Neumann algebra} if $\tau(\id)<\infty$.


The trace $\tau$ on $M$ is unique (up to scale) if and only if $M$ has trivial center. Many constructions considered here depend on the choice of trace but we will suppress this dependence from the notation and terminology.

\subsection{Example: the abelian case}\label{intro-abelian}
Fix a standard (semi-finite) measure space $(Y,\nu)$ and let $M=\rL^\infty(Y,\nu)$. For every $\phi \in M$, define the multiplication operator
$$m_\phi: \rL^2(Y,\nu) \to \rL^2(Y,\nu), \quad (m_\phi f)(y) = \phi(y)f(y).$$
The map $\phi \mapsto m_\phi$ embeds $M$ into the algebra of bounded operators on $\rL^2(Y,\nu)$. We will identify $\phi$ with $m_\phi$. Define the trace $\tau:M_+ \to [0,\infty)$ by
$$\tau(\phi) = \int \phi~\dee\nu.$$
With this trace, $(M,\tau)$ is a semi-finite von Neumann algebra. It is finite if $\nu(Y)$ is finite.


\subsection{Example: the full algebra}\label{intro-full}
Let $M=B(\cH)$ be the algebra of all bounded operators on a separable Hilbert space $\cH$. Also let $\{\xi_i\}_{i \in I} \subset \cH$ be an orthonormal basis. Define the {\bf canonical trace} $\tau_\cH:M^+ \to [0,\infty]$ by
$$\tau_\cH(a)=\sum_{i\in I} \langle a\xi_i,\xi_i\rangle.$$
It is well-known that the canonical trace does not depend on the choice of orthonormal basis. Moreover, $(B(\cH),\tau_\cH)$ is semi-finite. The multiplicative ergodic theorem for integrable cocycles $c:\N \times X \to \{\exp(x):~ x \in B(\cH), \tau_\cH(x^*x)<\infty\}$ was obtained in Karlsson-Margulis \cite{MR1729880}.

\subsection{Example: group von Neumann algebras}\label{intro-group vNa}
Let $G$ be a countable, discrete group and let $\lambda\colon G\to \mathcal{U}(\ell^{2}(G))$ be the homomorphism given by
\[(\lambda(g)\xi)(h)=\xi(g^{-1}h)\]
where $\mathcal{U}(\ell^{2}(G))\subset B(\ell^{2}(G))$ is the group of unitary operators acting on $\ell^{2}(G)$. Let
\[L(G)=\overline{\Span\{\lambda(g):g\in G\}}^{SOT}.\]
We call $L(G)$ the \emph{group von Neumann algebra of $G$}. We leave it as an exercise to verify that if $x\in L(G)$ and $\xi=x(\delta_{1})$ (where $\d_1 \in \ell^2(G)$ is the unit vector corresponding to the identity), then for any $\eta \in \ell^2(G)$, $x(\eta)=\xi*\eta$, where
\[(\xi*\eta)(g)=\sum_{h\in G}\xi(h)\eta(h^{-1}g).\]
So every operator in $L(G)$ is a convolution operator (it can in fact be shown that $L(G)$ consists precisely of the bounded convolution operators).  Set $\tau(x)=\ip{x(\delta_{1}),\delta_{1}}$. It is another exercise to verify that $(L(G),\tau)$ is a finite tracial von Neumann algebra. Moreover, if $G$ is non-abelian, then $L(G)$ is non-commutative.  

A von Neumann algebra $M$ is \textbf{diffuse} if for every nonzero projection $p\in M$, there is a nonzero projection $q\leq p$ with $q\ne p$.  See  Proposition \ref{prop:diffuse vNa TFAE} in the appendix for other equivalent ways to say that a finite von Neumann algebra is diffuse. For example, it follows from that Proposition that if $(M,\tau)$ is a finite tracial von Neumann algebra, then $M$ contains an isomorphic copy of $(\rL^{\infty}(X,\mu),\int \cdot\,d\mu)$ where $(X,\mu)$ is an atomless probability space, if and only if $M$ is diffuse. If $G$ is infinite, then $L(G)$ is diffuse. To see this, let $(g_{n})_{n=1}^{\infty}$ be a sequence of distinct elements of $G$. We claim that $\lambda(g_{n})\to_{n\to\infty}0$ in the weak operator topology. Indeed, for all $\xi,\eta\in c_{c}(G)$ we have that $\ip{\lambda(g_{n})\xi,\eta}=0$ for all large $n$. The fact that
\[\ip{\lambda(g_{n})\xi,\eta}\to_{n\to\infty}0\textnormal{ for all $\xi,\eta\in \ell^{2}(G)$}\]
then follows from the case of $\xi,\eta\in c_{c}(G)$, the fact that $\|\lambda(g_{n})\|=1$, and the density of $c_{c}(G)$ in $\ell^{2}(G)$.  So $\lambda(g_{n})\to 0$ in the weak operator topology, and  thus Proposition \ref{prop:diffuse vNa TFAE} implies that $M$ is diffuse. 

By Propositions \ref{prop:diffuse vNa TFAE} and \ref{prop: no nonzero compacts appendix}, if $G$ is infinite then $L(G)$ does not contain compact operators but it does contain a copy of $\rL^\infty(Y,\nu)$ for some non-atomic probability space $(Y,\nu)$. If $G$ is amenable and every nonidentity conjugacy class of $G$ is infinite, then by \cite{MR0454659} we know that $L(G)$ is isomorphic to the hyperfinite $\textrm{II}_{1}$-factor (see \S \ref{intro-hyperfinite} and \cite[Theorem 11.2.2]{anantharaman-popa}). For more details on group von Neumann algebras, see \cite{anantharaman-popa}.

\subsection{Main results}



\subsubsection{The limit operator}

Our first main result, Theorem \ref{thm:main} shows the existence of a limit operator.  The remaining results, Theorems \ref{thm:invariance}-\ref{thm:main2} are derived from Theorem \ref{thm:main} in \S \ref{sec:proofs of main results}. We state the result here and afterwards explain the notions of convergence and the notation used, such as $\GL^2(M,\tau)$ and $\cP$.

\begin{thm}\label{thm:main}
Let $(X,\mu)$ be a standard probability space, $f:X \to X$ an ergodic measure-preserving transformation, $(M,\tau)$ a von Neumann algebra with semi-finite faithful normal trace $\tau$. Let $M^\times \subset M$ be the group consisting of operators $x\in M$ that are bounded in the operator topology on $\cH$ with bounded inverse $x^{-1} \in M$. Let $c:\Z \times X \to M^\times \cap \GL^2(M,\tau)$ be a cocycle in the sense that
$$c(n+m,x) = c(n, f^mx)c(m,x)$$
for all $n,m \in \Z$ and a.e. $x \in X$. We assume $c$ is measurable with respect to the Strong Operator Topology on $M^\times$ (with respect to either the inclusion of $M^\times$ into $B(\cH)$ or into $B(\rL^2(M,\tau))$).

Assume the first moment condition:
$$\int_X \| \log (|c(1,x)|)\|_2 ~d\mu(x) < \infty.$$
Then for almost every $x\in X$, the following limit exists:
$$\lim_{n\to\infty} \frac{\|\log(c(n,x)^*c(n,x))\|_2}{n} = D.$$
$D$ is called the {\bf drift}. Moreover, if $D>0$ then for a.e. $x$, there exists a  limit operator $\La(x) \in \GL^2(M,\tau)$ satisfying
\begin{itemize}
\item $\lim_{n\to\infty} \frac{1}{n} d_\cP(|c(n,x)|, \La(x)^n) = 0$;
\item $\lim_{n\to\infty} |c(n,x)|^{1/n} \to \La(x)$ in $(\cP,d_\cP)$ and in measure;
\item $\lim_{n\to\infty} n^{-1} \log |c(n,x)| \to \log \La(x)$ in $\rL^2(M,\tau)$.
\end{itemize}
\end{thm}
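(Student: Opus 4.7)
The overall plan is to realize the cocycle as a random walk by isometries on the space $\cP$ of positive invertible operators in $\GL^2(M,\tau)$ equipped with the Finsler-type metric $d_\cP$, and then apply a Karlsson--Margulis ergodic theorem on this non-positively curved metric space. The group $M^\times$ acts on $\cP$ by $g\cdot P = gPg^*$, and under $d_\cP$ this action is by isometries. The orbit of the identity under the cocycle is $c(n,x)\cdot \id = c(n,x)c(n,x)^*$, whose distance to $\id$ is $\|\log(c(n,x)c(n,x)^*)\|_2$, matching (up to a factor of $2$) the quantity featured in the statement. Geodesic rays from $\id$ in $\cP$ are precisely the curves of the form $t\mapsto \exp(tH)$ for self-adjoint $H\in \rL^2(M,\tau)$, so a geodesic ray from the identity is exactly the datum of a putative limit operator $\Lambda(x)$ via $\Lambda(x)^n = \exp(nH/2)^2$.

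The drift $D$ is produced by Kingman's subadditive ergodic theorem. Set $a_n(x) = d_\cP(\id,\,c(n,x)\cdot\id)$. The cocycle identity together with isometric invariance of $d_\cP$ under the $M^\times$-action gives the subadditive relation $a_{n+m}(x)\le a_m(x)+a_n(f^m x)$. The first moment hypothesis ensures $a_1\in \rL^1(\mu)$. Ergodicity of $f$ then produces a constant almost sure limit, which (after the factor of $2$) is the asserted drift $D = \lim_n \|\log(c(n,x)^*c(n,x))\|_2/n$.

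When $D>0$, the Karlsson--Margulis theorem, applied in the complete non-positively curved metric space $(\cP,d_\cP)$ built in the earlier sections, yields for a.e.\ $x$ a unique unit-speed geodesic ray $\gamma_x:[0,\infty)\to\cP$ with $\gamma_x(0)=\id$ such that
\[
\frac{1}{n}\,d_\cP\bigl(c(n,x)\cdot \id,\ \gamma_x(Dn)\bigr)\longrightarrow 0.
\]
Writing $\gamma_x(t)=\exp(tH(x))$ with $H(x)\in \rL^2(M,\tau)_{sa}$ of unit norm, one sets $\Lambda(x):=\exp(DH(x)/2)\in\cP\cap\GL^{2}(M,\tau)$ so that $\gamma_x(Dn)=\Lambda(x)^{2n}$. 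Translating the displayed estimate via $c(n,x)\cdot\id=|c(n,x)^*|^2$ (or by rerunning the argument with the $*$-cocycle) gives the first bullet $\tfrac{1}{n}d_\cP(|c(n,x)|,\Lambda(x)^n)\to 0$. The third bullet then follows at once: on the geodesic ray $t\mapsto \Lambda(x)^{t}$ one has $\log\Lambda(x)^{n}=n\log\Lambda(x)$, and the distance $d_\cP$ dominates the $\rL^2$-distance of the logarithms (via the left-invariance and the $\rL^2$ functional calculus inequality $\|\log A - \log B\|_2 \le d_\cP(A,B)$ when $A,B$ commute, handled in general by the infinitesimal form of non-positive curvature), giving $n^{-1}\log|c(n,x)|\to \log\Lambda(x)$ in $\rL^2$. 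The second bullet is a reformulation of the third via the continuity of the exponential functional calculus on bounded subsets of $\rL^2(M,\tau)$, combined with the elementary fact that $\rL^2$-convergence of self-adjoint elements implies convergence in measure of their spectral functional calculus.

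The principal obstacle is Step 3: one must verify that $(\cP,d_\cP)$ has all the geometric properties needed for Karlsson--Margulis to apply in this infinite-dimensional, non-commutative setting, in particular completeness, unique geodesics given by $t\mapsto A^{t}$, and a Busemann/semi-parallelogram convexity inequality strong enough to support the horofunction boundary argument. A secondary obstacle is the passage from fellow-traveling in $d_\cP$ to genuine $\rL^{2}$-convergence of $n^{-1}\log|c(n,x)|$, because $|c(n,x)|$ and $\Lambda(x)^{n}$ need not commute, so one cannot naively take logs; this requires the non-commutative comparison between $d_\cP$ and the pulled-back $\rL^{2}$-distance on self-adjoint elements, which is exactly what the non-positive curvature of $\cP$ is designed to provide.
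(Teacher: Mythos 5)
Your proposal is correct and follows essentially the same route as the paper: establish that $(\cP,d_\cP)$ is a complete CAT(0) space on which $\GL^2(M,\tau)$ acts transitively by isometries, apply the Karlsson--Margulis theorem to the adjoint (reverse) cocycle $c(n,x)^*$ acting at $\id$, identify the resulting geodesic ray with $t\mapsto\exp(t\xi(x))$ to define $\La(x)$, and transfer the fellow-traveling estimate to the three bullets via the power-contraction inequality $d_\cP(a^\s,b^\s)\ge\s\,d_\cP(a,b)$ and the comparison $\|\log a-\log b\|_2\le d_\cP(a,b)$. The only ingredient you do not mention is the verification that SOT-measurability of $c$ implies measurability of the induced isometry-valued cocycle in the compact-open topology, which the paper handles separately but which is a routine technical point.
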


\begin{remark}
The special case of Theorem \ref{thm:main} in which $(M,\tau)=(B(\cH),\tau_\cH)$ was proven in \cite{MR1729880}. If $M$ is infinite dimensional, then $\rL^{2}(M,\tau)$ is as well and so $M^{\times}$ does not contain any compact operators.
\end{remark}

\subsubsection{The regular representation}\label{sec: intro regular rep}

In order to explain the notation, we briefly recall the regular representation. So let $(M,\tau)$ be as in Theorem \ref{thm:main}. Let $\cN = \{x \in M:~ \tau(x^*x)<\infty\}$. Since $\tau$ is a trace, we have that $\cN=\{x\in M:~\tau(xx^{*})<\infty\}$.   By \cite[Lemma VII.1.2]{TakesakiII} $\cN$ is a $*$-closed two-sided ideal in $M$, the ideal $\cM=\Span\{x^{*}y:x,y\in \cN\}$ is linearly spanned by its positive elements and the trace extends linearly to $\cM$. So we can define an inner-product on $\cN$ by
$$\langle x, y \rangle : = \tau(x^*y).$$
The $\rL^2$-norm for $x\in \cN$ is defined by $\|x\|_2 = \tau(x^*x)^{1/2}$.

Let $\rL^2(M,\tau)$ denote the Hilbert space completion of $\cN$ with respect to this inner product.  Note that $\rL^2(M,\tau)$ does not usually coincide with $B(\cH)$: if $(M, \tau)$ is not finite then $\rL^2(M,\tau)$ does not contain the identity operator; on the other hand in many cases $\rL^2(M,\tau)$ also contains unbounded operators (see Example \ref{abelian1} and Remark 5).  For $x \in M$, the left-multiplication operator $L_x:M \to M$ defined by $L_x(y)=xy$ extends to a bounded linear operator on $\rL^2(M,\tau)$. Therefore, we may view $M$ as a sub-algebra of the algebra $B(\rL^2(M,\tau))$ of bounded linear operators on $\rL^2(M,\tau)$. This is the {\bf regular representation of $M$} (this is explained in more detail in \S \ref{sec:prelim}).

An operator (not necessarily bounded) $x$ on $\rL^2(M,\tau)$ is {\bf affiliated} with $M$ if it is closed, densely defined and commutes with every element in the commutant $M'=\{x \in B(\rL^2(M,\tau)):~xy=yx \, \forall y \in M\}$. A subspace is $V \subset \rL^2(M,\tau)$ is {\bf essentially dense} if for every $\eps>0$ there exists a projection $p \in M$ such that $\tau(\id-p) < \eps$ and $p\rL^2(M,\tau) \subset V$. Here, and throughout the paper, by a \emph{projection} we mean an \emph{orthogonal} projection, i.e. a self-adjoint idempotent.  Essentially dense subspaces are reviewed in \S \ref{sec:essentially}. An operator affiliated with $(M,\tau)$ is {\bf $\tau$-measurable} if its domain of definition is essentially dense.  Note that when $(M,\tau)$ is finite, then all affiliated operators are $\tau$-measurable.  Let $\rL^0(M,\tau)$ denote the algebra of $\tau$-measurable operators affiliated with $(M,\tau)$. This is a $\ast$-algebra, and in fact a complete topological $\ast$-algebra with respect to the measure topology. Moreover, the trace $\tau$ extends to $\tau: \rL^0(M,\tau)_+ \to [0,\infty]$ where $\rL^0(M,\tau)_+ \subset \rL^0(M,\tau)$ is the cone of positive $\tau$-measurable affiliated operators. Also if $x \in \rL^0(M,\tau)_+$ then $x^{-1/2}$ and $\log x$ are well-defined via the spectral calculus.  See \S \ref{sec:affiliated} and \S \ref{sec:measure} for details, including on the measure topology.

Let $\GL^2(M,\tau)$ consist of those elements $x \in \rL^0(M,\tau)^\times$ such that $\log |x| \in \rL^2(M,\tau)$ (where $\rL^0(M,\tau)^\times$ is the set of operators $x \in \rL^0(M,\tau)$ with $x^{-1} \in \rL^0(M,\tau)$). We prove in \S \ref{sec:GL2} that $\GL^2(M,\tau)$ is a group. Let $\cP = \GL^2(M,\tau) \cap \rL^0(M,\tau)_+$. For $x,y \in \cP$, define $d_\cP(x,y) = \| \log (x^{-1/2} y x^{-1/2})\|_2$. We prove in \S \ref{sec:P1}-\ref{sec:P2}:

\begin{thm}\label{thm:positivecone}
Let $(M,\tau)$ be as in Theorem \ref{thm:main}. Then $(\cP,d_\cP)$ is a complete CAT(0) metric space on which $\GL^2(M,\tau)$ acts transitively by isometries.
\end{thm}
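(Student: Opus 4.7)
The strategy is to realize $(\cP,d_\cP)$ as a nonpositively curved symmetric space for $\GL^2(M,\tau)$ modulo its unitary subgroup, imitating the finite-dimensional case of $\GL(n,\R)/O(n)$ acting on the cone of positive definite matrices, and then upgrade the classical CAT(0) argument to the $\tau$-measurable, $\rL^2$-norm setting.

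\textbf{Step 1 (Metric structure.)} First, verify that $d_\cP$ is well defined. Given $x,y\in\cP$, the operator $x^{-1/2}yx^{-1/2}$ lies in $\rL^0(M,\tau)_+$ and is invertible (its inverse is $x^{1/2}y^{-1}x^{1/2}$), so its logarithm exists via the measurable functional calculus; the group structure of $\GL^2(M,\tau)$ established in \S\ref{sec:GL2} gives $\log(x^{-1/2}yx^{-1/2})\in\rL^2(M,\tau)$. Symmetry $d_\cP(x,y)=d_\cP(y,x)$ follows by polar-decomposing $x^{-1/2}y^{1/2}=u|x^{-1/2}y^{1/2}|$; a direct manipulation then exhibits $x^{-1/2}yx^{-1/2}$ and $y^{-1/2}xy^{-1/2}$ as unitarily conjugate (up to inversion), and the trace is invariant under unitary conjugation. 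Positivity and nondegeneracy are immediate from faithfulness of $\tau$; the triangle inequality I would defer until the CAT(0) property is established.

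\textbf{Step 2 (Transitive isometric action.)} Define the action by $g\cdot p := g p g^*$. Closure of $\cP$ under this action uses that $\GL^2(M,\tau)$ is a group and that $gpg^*$ is positive and invertible. For isometry, set $a = gpg^*$ and observe $(a^{-1/2}gp^{1/2})^*(a^{-1/2}gp^{1/2}) = \id$, so this element is a partial isometry of full range and its polar part is a unitary $u\in M$. A short calculation yields
\[
a^{-1/2}(gqg^*)a^{-1/2} = u\bigl(p^{-1/2}qp^{-1/2}\bigr)u^*,
\]
so that $d_\cP(g\cdot p,g\cdot q) = \|\log(u(p^{-1/2}qp^{-1/2})u^*)\|_2 = d_\cP(p,q)$ by unitary invariance of the trace. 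Transitivity is immediate: $p = p^{1/2}\cdot\id$ and $p^{1/2}\in\GL^2(M,\tau)$ since $\log(p^{1/2})=\tfrac12\log p\in \rL^2(M,\tau)$.

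\textbf{Step 3 (Completeness.)} The exponential map $\exp:\rL^2_{sa}(M,\tau)\to\cP$ is a bijection with inverse $\log$. I would show $d_\cP(\id,\exp A)=\|A\|_2$ (immediate from the definition at the basepoint) and, more generally, that $\log$ is a local homeomorphism near $\id$ into $\rL^2_{sa}$. By the transitive isometric action of Step 2, local completeness at $\id$ transfers to every basepoint: a Cauchy sequence $(p_n)$ in $\cP$ can be translated so that eventually $p_n=\exp A_n$ with $(A_n)$ Cauchy in $\rL^2_{sa}$, and then $\exp(\lim A_n)$ is the limit.

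\textbf{Step 4 (CAT(0) inequality.)} This is the main obstacle. The candidate geodesic from $p$ to $q$ is
\[
\gamma_{p,q}(t) = p^{1/2}\bigl(p^{-1/2}qp^{-1/2}\bigr)^{t}p^{1/2},\qquad t\in[0,1],
\]
and by the isometric action it suffices to verify the CAT(0) comparison at the basepoint. The heart of the matter is the \emph{exponential metric increasing} (EMI) inequality
\[
\|A-B\|_2 \;\le\; d_\cP(\exp A,\exp B)\qquad \text{for all } A,B\in \rL^2_{sa}(M,\tau),
\]
an $\rL^2$-trace version of a classical inequality of Segal and Corach--Porta--Recht. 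The commutative case is an equality, so the content is that non-commutativity only increases distances; in the bounded case this follows from the Araki--Lieb--Thirring inequality applied to the trace, and I would extend this to $\tau$-measurable operators by an approximation using spectral projections $\mathbf 1_{[-N,N]}(A),\mathbf 1_{[-N,N]}(B)$, cutting off to the finite-trace setting where bounded-operator techniques apply, and then passing to the limit in $\rL^2$. Combined with the convexity of $t\mapsto d_\cP(\gamma(t),r)^2$ along geodesics (a consequence of EMI applied infinitesimally along the geodesic, plus isometric translation), this gives the semi-parallelogram law, hence CAT(0). The EMI step is the main technical hurdle because the standard operator-norm proofs rely on spectral norm estimates that have to be re-proved in the $\tau$-measurable, trace-norm framework; everything else then falls out of the general theory of CAT(0) spaces.
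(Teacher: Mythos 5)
Your overall architecture is sound and close in spirit to the paper's: both proofs rest on the exponential metric increasing (EMI) inequality $\|A-B\|_2\le d_\cP(e^A,e^B)$, both reduce the hard estimates to the bounded (and, in the semi-finite case, finite-trace-corner) setting by spectral cutoffs, and both exploit the transitive isometric action. Where you differ: your Step 2 proves isometry by a direct polar-decomposition identity ($u=a^{-1/2}gp^{1/2}$ is unitary and conjugates $p^{-1/2}qp^{-1/2}$ to $a^{-1/2}(gqg^*)a^{-1/2}$), which is cleaner than the paper's route of approximating $g,p,q$ by bounded elements and invoking the Andruchow--Larotonda isometry on $\cP^\infty=\cP\cap M^\times$; your Step 4 proposes to prove EMI from Araki--Lieb--Thirring and then verify the Bruhat--Tits semi-parallelogram law directly, whereas the paper cites Andruchow--Larotonda for the CAT(0) property of $\cP^\infty$ and deduces CAT(0) for $\cP$ from density of $\cP^\infty$ plus the fact that completions of CAT(0) spaces are CAT(0) (with a four-point subembedding argument in the semi-finite case). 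Both of your variants are workable; note that the semi-parallelogram law follows from EMI, transitivity, and the parallelogram law in $\rL^2(M,\tau)_{sa}$ directly, without the detour through convexity of $t\mapsto d_\cP(\gamma(t),r)^2$ that you mention.

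The genuine gap is in Step 3. Completeness requires not only that $\log$ is $1$-Lipschitz (that is EMI, which converts a $d_\cP$-Cauchy sequence $p_n=\exp A_n$ into an $\rL^2$-Cauchy sequence $A_n\to A$), but also the converse continuity statement: if $A_n\to A$ in $\rL^2(M,\tau)_{sa}$ then $\exp A_n\to\exp A$ in $d_\cP$. You assert this as ``$\log$ is a local homeomorphism near $\id$,'' but it does not follow from EMI -- EMI bounds $d_\cP$ from \emph{below} by $\|A-B\|_2$, and there is no reverse Lipschitz bound $d_\cP(e^A,e^B)\le C\|A-B\|_2$ for noncommuting unbounded $A,B$. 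Without continuity of $\exp$ you cannot conclude that $\exp(\lim A_n)$ is the $d_\cP$-limit of your Cauchy sequence, so the completeness argument does not close. In the paper this is Theorem \ref{T:continuity of exp}, and it is the most delicate analytic step: it is proved by truncating $A$ and $A_n$ with cutoff functions $f_\lambda$, controlling the tails $\|A-f_\lambda(A)\|_2$ using commutativity (where EMI is an equality), and handling the truncated middle term via convergence in measure and the equivalence of the measure, SOT and $\rL^2$ topologies on operator-norm-bounded sets (Propositions \ref{P:conv in meas implies wk* conv} and \ref{P:3topologies}). You need some version of this argument; once it is supplied, the rest of your outline goes through.
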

These aforementioned properties allow us to apply the Karlsson-Margulis theorem, a special case of which is reproduced in \S \ref{sec:proof overview}, to obtain our result.  This extends work of Andruchow-Larotonda who previously studied the geometry of $\cP \cap M$ \cite{MR2254561}.

\begin{remark}
In general, neither of the conditions $\log |x| \in \rL^2(M,\tau)$ and $1-x \in \rL^2(M,\tau)$ implies the other. For example, suppose $(Y,\nu)$ is the unit interval with Lebesgue measure and $M=\rL^\infty(Y,\nu)$. If $z \in \rL^2(Y,\nu)$ is such that $z\ge 0$ and $z^2 \notin \rL^2(Y,\nu)$ then $x=\exp(z)$ satisfies $\log |x| \in \rL^2(M,\tau)$ but $1-x \notin \rL^2(M,\tau)$. On the other hand, the function $x(t)=e^{-t^{-1/2}}$ satisfies $1-x \in \rL^2(M,\tau)$ but $\log |x| \notin \rL^2(M,\tau)$. 
\end{remark}


\begin{example}[The abelian case]\label{abelian1}
Continuing with our running example, if $M=\rL^\infty(Y,\nu)$ then the above-mentioned inner product on $M$ is the restriction of the inner product on $\rL^2(Y,\nu)$ to $M$. Therefore, $\rL^2(M,\tau)$ is naturally isomorphic to $\rL^2(Y,\nu)$. An operator is affiliated with $M$ if and only if it is a multiplication operator of the form $m_\phi$ for some measurable $\phi:Y \to \C$. Such an operator $m_\phi$ is  $\tau$-measurable if and only if there is some $\eps>0$ such that 
$$\nu(\{y \in Y:~ |\phi(y)| > \eps\})<\infty.$$ 
The exponential map $\exp:\rL^2(Y,\nu) \to \GL^2(M,\tau)$ is a surjective homomorphism of groups (where we consider $\rL^2(Y,\nu)$ as an abelian group under addition). The kernel consists of all maps $\phi \in \rL^2(Y,\nu)$ with essential range in $2 \pi i \Z$. The restriction of $\exp$ to the real Hilbert space $\rL^2(Y,\nu; \R)$  is an isometry onto $(\cP,d_\cP)$.
\end{example}

\begin{example}[The full algebra case]
Suppose $(M,\tau)=(B(\cH),\tau_\cH)$ is as in \S \ref{intro-full}. Then $\cN \subset M$ is the algebra of Hilbert-Schmidt operators and $\rL^2(M,\tau)=\cN$. So $\GL^2(M,\tau)=\exp(\cN)$ consists of all operators of the form $\id + x$ where $x$ is Hilbert-Schmidt.

A subspace of $\rL^2(M,\tau)$ is essentially dense if and only if it equals $\rL^2(M,\tau)$. This is because every non-zero projection operator has trace at least 1 so if $\tau(\id-p)<1$ then $\id=p$. So $\rL^0(M,\tau)=B(\cH)$.
\end{example}

\begin{example}[The group case]
Suppose $(M,\tau)=(L(G),\tau)$ is as in \S \ref{intro-group vNa}. Then $\cN=M$, and the map $M\to \ell^{2}(G)$ given by $x\mapsto x(\delta_{1})$ extends to a unitary isomorphism $L^{2}(M,\tau)\cong \ell^{2}(G)$.  In this case, every $\xi\in \ell^{2}(G)$ defines a closed, densely-defined operator $\lambda(\xi)$ whose domain is $\dom(\lambda(\xi))=\{\eta\in \ell^{2}(G):\xi*\eta\in \ell^{2}(G)\}$ and so that $\lambda(\xi)(\eta)=\xi*\eta$ for $\eta\in \dom(\lambda(\xi))$. By \cite[Proposition 43.10]{ConwayOT}, the commutant of $M$ acting on $\ell^{2}(G)$ is the von Neumann algebra generated by the right regular representation $\rho\colon G\to \mathcal{U}(\ell^{2}(G))$ defined by $(\rho(g)\xi)(h)=\xi(hg)$. Since $\lambda(\xi)$ commutes with $\rho(g)$ for every $g\in G$, we know that $\lambda(\xi)$ is affiliated to $M$. Since the trace on $M$ is finite, we know that $\lambda(\ell^{2}(G))\subseteq L^{0}(M,\tau)$. In fact, we leave it as an exercise to the reader to verify that if we embed $L^{2}(M,\tau)\subseteq L^{0}(M,\tau)$ as above, then $\lambda(\ell^{2}(G))=L^{2}(M,\tau)$.
Recall that if $G$ is infinite, then $M$ is diffuse. By Proposition \ref{prop:facts about GL2/L2 diffuse case}, $L^{2}(M,\tau)$ contains unbounded operators. The statement that $L^{2}(M,\tau)$ contains unbounded operators is equivalent, by the closed graph theorem, to the statement that there are two $\ell^{2}(G)$ functions whose convolution is not in $\ell^{2}(G)$. For some infinite groups (e.g. those contain an element of infinite order), this is easy to see directly.

\end{example}




\subsubsection{Remarks on the limit operator}

\begin{remark}
Let $\|\cdot\|_\infty$ denote the operator norm. If the cocycle is uniformly bounded in operator norm (this means there is a constant $K$ such that $\|c(1,x)\|_\infty \le K$ for a.e. $x$) then $\|\La(x)\|_\infty \le K$ as well. Therefore, $\La(x) \in M$ for a.e. $x$.
\end{remark}

\begin{remark}
This theorem is a special case of a more general result (Theorem \ref{thm:main-general}) which removes the restriction of the cocycle to taking values in bounded operators.
\end{remark}

\begin{remark}
The reader might wonder whether a stronger form of convergence holds in the theorem above. Namely, whether convergence $\log \La(x)=\lim_{n\to\infty} \log \left([c(n,x)^*c(n,x)]^{1/2n}\right)$ occurs in operator norm. The answer is `no'. We provide an explicit example of this in \S \ref{sec:abelian} below with $M=\rL^\infty(Y,\nu)$. 
\end{remark}

\begin{conj}\label{conj:1}
Assume the hypotheses of Theorem \ref{thm:main}. If $(M,\tau)$ is finite then for a.e. $x$, $\log \left([c(n,x)^*c(n,x)]^{1/2n}\right)$ converges to $\log \La(x)$ almost uniformly in the sense of \cite{MR212581} (the equivalent notion of nearly everywhere convergence was first introduced in \cite[Defn 2.3]{MR54864}). This means that for every $\eps>0$ and for a.e. $x$, there exists a closed subspace $S(x) \subset \rL^2(M,\tau)$ such that the projection operator $p_{S(x)}$ satisfies $p_{S(x)} \in M$, $\tau(\id-p_{S(x)})<\eps$ and
$$\lim_{n\to\infty} \log \left([c(n,x)^*c(n,x)]^{1/2n}\right)p_{S(x)} = \log (\La(x))p_{S(x)}$$
where convergence is in operator norm.
\end{conj}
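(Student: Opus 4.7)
The plan is to combine the $\rL^{2}$-convergence provided by Theorem \ref{thm:main} with a noncommutative Egorov/Lusin principle, and then to upgrade subsequential almost uniform convergence to full-sequence convergence via a noncommutative subadditive ergodic argument.

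First, by Theorem \ref{thm:main}, for $\mu$-a.e.\ $x\in X$ the sequence $B_{n}(x):=n^{-1}\log|c(n,x)|$ converges to $A(x):=\log\La(x)$ in $\rL^{2}(M,\tau)$. Since $(M,\tau)$ is finite, $\rL^{2}$-convergence implies convergence in the measure topology on $\rL^{0}(M,\tau)$, and the noncommutative Egorov-type theorem of \cite{MR212581} (see also \cite{MR54864}) then provides, for each $\eps>0$, a projection $q_{\eps}\in M$ with $\tau(\id-q_{\eps})<\eps/2$ and a subsequence $(n_{k})$ along which $\|(B_{n_{k}}(x)-A(x))q_{\eps}\|_{\infty}\to 0$. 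This already yields a subsequential version of the conjecture; the difficulty is to pass from the subsequence to the full sequence.

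The model case is the abelian setting $M=\rL^{\infty}(Y,\nu)$, where the cocycle disintegrates as a measurable family $\{c(n,\cdot,y)\}_{y\in Y}$ of scalar cocycles over $f$. Kingman's pointwise subadditive ergodic theorem applied fiberwise to $\pm\log c(n,x,y)$ gives, for $\nu$-a.e.\ $y$, $\mu$-a.e.\ convergence of $n^{-1}\log c(n,x,y)$ to $\log\La(x)(y)$, and Fubini followed by the classical Egorov theorem in $(Y,\nu)$ then recovers the conjecture in this case. In the general finite case one would seek an operator-valued analog. The natural candidate subadditive family $F_{n}(x):=\log(c(n,x)^{*}c(n,x))$ fails exact subadditivity in the operator order because $\log$ does not interact multiplicatively with the cocycle. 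One promising route is to reduce to scalar-valued subadditivity by pairing against spectral data of $A(x)$: for each interval $I\subset\R$ with rational endpoints, the sequence $n\mapsto \tau\bigl(E_{I}(A(x))\,F_{n}(x)\bigr)$ should be almost-subadditive via Golden--Thompson or Peierls--Bogoliubov inequalities, and the scalar Kingman theorem then yields full-sequence trace convergence on each such spectral window. Combined with the subsequential almost-uniform convergence from Step 1 and a standard cutting-down argument over a countable family of such spectral projections, this would promote the convergence from a subsequence to the full sequence. The main obstacle is precisely this upgrade: no existing noncommutative Kingman theorem that I know of applies directly to $F_{n}(x)$ without modification, and genuinely new input --- either a fine-tuned noncommutative subadditive ergodic theorem, or a refinement of the Karlsson--Margulis convergence in $(\cP,d_{\cP})$ upgrading distance convergence to almost-uniform tangent-space convergence --- appears to be needed.

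Granting full-sequence almost uniform convergence, the required projection is constructed by taking $N$ large enough that $\tau(\id-\chi_{[-N,N]}(A(x)))<\eps/2$ and intersecting the Egorov projection $q_{\eps}$ (of defect $<\eps/2$) with $\chi_{[-N,N]}(A(x))$: the resulting $p_{S(x)}\in M$ satisfies $\tau(\id-p_{S(x)})<\eps$, $A(x)p_{S(x)}\in M$, and operator-norm convergence of $B_{n}(x)p_{S(x)}$ to $A(x)p_{S(x)}$ on the range of $p_{S(x)}$, as required.
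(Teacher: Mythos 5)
You should first note that the statement you were asked to prove is stated in the paper as Conjecture \ref{conj:1}, i.e.\ as an \emph{open problem}: the paper does not prove it for general finite $(M,\tau)$, but only in the abelian case $M=\rL^\infty(Y,\nu)$ (Theorem \ref{thm:abelian-strong-growth} in \S \ref{sec:abelian}), where the argument is exactly your ``model case'': disintegrate the cocycle fiberwise over $Y$, apply the pointwise ergodic theorem and Fubini to get a.e.\ pointwise convergence of $n^{-1}\log|c(n,x)(y)|$, and then apply the classical Egorov theorem in $(Y,\nu)$ to produce the projection $p_{S(x)}=1_{Z(x)}$. So your abelian paragraph faithfully reproduces the only proof the paper has.

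For general finite $(M,\tau)$ your proposal has a genuine gap, which you yourself identify: Step 1 only yields almost uniform convergence of $B_{n_k}(x)=n_k^{-1}\log|c(n_k,x)|$ to $\log\La(x)$ along a subsequence $(n_k)$ depending on $x$ and $\eps$ (the standard fact that convergence in measure gives an almost uniformly convergent subsequence), whereas the conjecture requires the full sequence. Your proposed upgrades do not close this. The family $F_n(x)=\log(c(n,x)^*c(n,x))$ is not subadditive in the operator order (the paper's Proposition \ref{P:submutliplicative estimate} only controls distribution functions, not operator inequalities), and pairing against spectral projections $E_I(\log\La(x))$ via Golden--Thompson would at best give convergence of the scalars $\tau(E_I(\log\La(x))F_n(x))/n$, i.e.\ weak/trace convergence against a countable family of projections; that is strictly weaker than operator-norm convergence of $(B_n(x)-\log\La(x))p$ for a single projection $p$ of small codimension, and no ``standard cutting-down argument'' bridges that difference. (Compare the paper's counterexample in \S \ref{sec:abelian}: even in the abelian case the convergence is genuinely non-uniform in $y$, so any argument must exploit an Egorov-type exceptional set rather than uniform estimates.) In short: your write-up is an honest and accurate assessment of the difficulty, and its last paragraph correctly reduces the conjecture to full-sequence almost uniform convergence, but it is not a proof, and the missing step is precisely what the authors leave open. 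One small remark on your final paragraph: the intersection with $\chi_{[-N,N]}(\log\La(x))$ is unnecessary for the statement as formulated, since the conjecture only asks that $B_n(x)p_{S(x)}\to\log(\La(x))p_{S(x)}$ in operator norm, which already follows from the Egorov projection $q_\eps$ alone.
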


\begin{remark}
 If $M$ is diffuse, then $M$ has no nonzero compact operators, see Proposition \ref{prop: no nonzero compacts appendix} in the appendix. For example, this occurs when $M=L(G)$ for an infinite group $G$. 
This shows us that frequently (indeed for most cases of interest) $M$ does not have any nonzero compact operators, and so  the limiting operator $\Lambda$ is typically not compact. In fact, it follows from the proof of  Proposition \ref{prop:diffuse vNa TFAE} that if $M$ is diffuse and $\Lambda\ne 0,$ then it does not have a finite dimensional, nonzero eigenspace. 

\end{remark}

\subsubsection{Oseledets subspaces and Lyapunov distribution}

One of the main advantages of working with a tracial von Neumann algebra $(M,\tau)$ is that if $x \in M$ is normal (this means $xx^*=x^*x$) then $x$ has a spectral measure. If $M=\textrm{M}_n(\C)$ is the algebra of $n\times n$ complex matrices, then the spectral measure is the uniform probability measure on the eigenvalues of $x$ (with multiplicity). To define it more generally, recall that there is a projection-valued measure $E_x$ on the complex plane such that $x = \int \l ~\dee E_{x}(\l)$ \cite[Chapter IX, Theorem 2.2]{Conway}. The {\bf spectral measure of $x$} is the composition $\mu_x = \tau \circ E_x$. It is a positive measure with total mass equal to $\tau(\id)$ (where $\id$ is the identity operator). Moreover, if $p$ is any polynomial  then $\tau(p(x))=\int p~d\mu_x$.

\begin{example}[The abelian case]
If $M=\rL^\infty(Y,\nu)$ then every operator $\phi \in M$ is normal. The spectral measure of $\phi$ is its distribution $\mu_\phi$ defined by
$$\mu_\phi(R)=\nu(\{y \in Y:~ \phi(y) \in R\})$$
for all measurable regions $R \subset \C$.
\end{example}

\begin{example}[The full algebra case]
Suppose $(M,\tau)=(B(\cH),\tau_\cH)$. Then every normal Hilbert-Schmidt operator is unitarily diagonalizable. In particular, there is an orthonormal basis of eigenvectors. Therefore, the spectral measure of a normal Hilbert-Schmidt operator is discrete.
\end{example}

This definition of spectral measure extends to $x \in \rL^0(M,\tau)$. In the context of Theorem \ref{thm:main}, we define the {\bf Lyapunov distribution} to be the spectral measure $\mu_{\log\La(x)}$ of the log limit operator $\log\La(x)$. If $M=\textrm{M}_n(\C)$ is the algebra of $n\times n$ complex matrices and $\tau$ is the usual trace then this definition agrees with the previous definition.

To further justify this definition, we recall the notion of von Neumann dimension. If $S \subset \rL^2(M,\tau)$ is a closed subspace and the orthogonal projection operator $p_S$ lies in $M$ then the {\bf von Neumann dimension of $S$} is $\dim_M(S) = \tau(p_S)$. For example, the vN-dimension of $\rL^2(M,\tau)$ itself is $\tau(\id)$. This notion of dimension satisfies many desirable properties such as being additive under direct sums and continuous under increasing and decreasing limits \cite{Luck}.

\begin{example}[The abelian case]
If $M=\rL^\infty(Y,\nu)$ and if $p \in M$ is a projection operator then there is a measurable subset $Z \subset Y$ such that $p$ is the characteristic function $p=1_Z$ and the range of $p$ is the space of all $\rL^2$-functions with support in $Z$. The vN-dimension of this space is the measure $\nu(Z)$.
\end{example}

\begin{example}[The full algebra case]
Suppose $(M,\tau)=(B(\cH),\tau_\cH)$. Then a projection $p \in M$ is Hilbert-Schmidt if and only if its range is finite-dimensional. Moreover, the vN-dimension of a finite-dimensional subspace is its dimension.
\end{example}

Let
$$\mathcal{H}_{t}(x) =1_{(-\infty,t]}(\log\Lambda(x))(\rL^2(M,\tau)) \subset \rL^2(M,\tau) $$
where $1_{(-\infty,t]}(\log\Lambda(x))$ is defined via functional calculus. Alternatively, $\mathcal{H}_{t}(x)$ is the range of the projection $E_{\log \Lambda(x)}(-\infty, t]$. This is analogous to the Oseledets subspaces defined previously. 
The following theorem is proven is \S \ref{sec:invariance}:
\begin{thm}\label{thm:invariance}[Invariance principle]
With notation as above, for a.e. $x \in X$ and every $t \in [0,\infty)$,
$$c(1,x)\cH_t(x)=\cH_t(f(x)), \quad \mu_{\log \La(x)} = \mu_{\log \La(f(x))}.$$
\end{thm}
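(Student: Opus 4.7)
\emph{(1) Invariance of the Lyapunov distribution.}
The starting point is the cocycle identity, which gives $|c(n+1,x)|^{2} = c(1,x)^{*}\,|c(n,f(x))|^{2}\,c(1,x)$. Setting $A=c(1,x)^{*}|c(n,f(x))|$ and $B=|c(n,f(x))|c(1,x)$, the tracial identity $\mu_{AB}|_{\C\setminus\{0\}}=\mu_{BA}|_{\C\setminus\{0\}}$ (both operators are positive and $\tau$-invertible here) yields the exact equality
\[
\mu_{|c(n+1,x)|^{2}}=\mu_{|c(n,f(x))|\,|c(1,x)^{*}|^{2}\,|c(n,f(x))|}.
\]
Since the $\GL^{2}(M,\tau)$-action $g\cdot p=gpg^{*}$ on $(\cP,d_{\cP})$ is isometric (Theorem \ref{thm:positivecone}), one computes $d_{\cP}(|c(n,f(x))|^{2},\,|c(n,f(x))|\,|c(1,x)^{*}|^{2}\,|c(n,f(x))|)=d_{\cP}(\id,|c(1,x)^{*}|^{2})=2\|\log|c(1,x)^{*}|\|_{2}$, which is $O(1)$ in $n$ by the first-moment hypothesis. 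The CAT(0) bound $\|\log p-\log q\|_{2}\le d_{\cP}(p,q)$ (the exponential map at $\id$ is distance non-decreasing) then gives $\|\log(|c(n,f(x))||c(1,x)^{*}|^{2}|c(n,f(x))|)-2\log|c(n,f(x))|\|_{2}=O(1)$. Dividing by $n+1$ and invoking the $L^{2}$-convergence $n^{-1}\log|c(n,y)|\to\log\La(y)$ of Theorem \ref{thm:main}, both sides converge in $L^{2}$, hence in measure, to $2\log\La(x)$ and $2\log\La(f(x))$ respectively; since convergence in measure of self-adjoint $\tau$-measurable operators implies weak convergence of their spectral distributions, the equality of spectral measures for each $n$ passes to the limit, giving $\mu_{\log\La(x)}=\mu_{\log\La(f(x))}$.

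\emph{(2) Cocycle invariance of the filtration.}
The plan is to upgrade the operator-level limit from Theorem \ref{thm:main} to a vector-level Lyapunov growth characterization
\[
\cH_{t}(x)=\bigl\{v\in \rL^{2}(M,\tau):\limsup_{n\to\infty}\tfrac{1}{n}\log\|c(n,x)v\|_{2}\le t\bigr\},
\]
valid for $t$ outside the (countable) set of atoms of $\mu_{\log\La(x)}$ and extended by right-continuity otherwise. Granting this, the cocycle identity $c(n,f(x))c(1,x)=c(n+1,x)$ gives $\tfrac{1}{n}\log\|c(n,f(x))(c(1,x)v)\|_{2}=\tfrac{n+1}{n}\cdot\tfrac{1}{n+1}\log\|c(n+1,x)v\|_{2}$, so the limsup at $f(x)$ on $c(1,x)v$ equals the limsup at $x$ on $v$; hence $v\in\cH_{t}(x)\Leftrightarrow c(1,x)v\in\cH_{t}(f(x))$, whence $c(1,x)\cH_{t}(x)=\cH_{t}(f(x))$. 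The reverse inclusion, if needed separately, follows from the same reasoning applied to the inverse cocycle.

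\emph{Main obstacle.} Step (1) is essentially a direct computation once one has the tracial $\mu_{AB}=\mu_{BA}$ identity and the $\GL^{2}$-isometry of $\cP$. The main difficulty lies in step (2): translating the $L^{2}$-operator convergence $n^{-1}\log|c(n,x)|\to\log\La(x)$ into a per-vector growth statement. In finite dimensions this is automatic via the min-max characterization of singular values, but in the continuous-spectrum setting one must combine functional calculus on the $|c(n,x)|$ with the $d_{\cP}$-convergence of Theorem \ref{thm:main} to propagate the approximation $|c(n,x)|^{1/n}\approx \La(x)$ to per-vector norm growth, which is especially delicate for vectors in the ``critical'' spectral shell of $\log\La(x)$ at the level $t$.
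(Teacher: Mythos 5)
Your step (1) is correct and is a genuinely different route from the paper's. The paper deduces $\mu_{\log \La(x)}=\mu_{\log \La(f(x))}$ \emph{after} the filtration invariance, by identifying $\mu_{\log\La(x)}(s,t]$ with $\dim_M(\cH_t(x)\cap\cH_s(x)^\perp)$ and transporting this von Neumann dimension with the injective operator $(p_{\cH_t(f(x))}-p_{\cH_s(f(x))})c(1,x)$. Your argument --- $\mu_{|A|}=\mu_{|A^*|}$ (Proposition \ref{P:transformation of measures}) applied to $A=c(1,x)^*|c(n,f(x))|$, the isometric action of $|c(n,f(x))|$ on $\cP$, the bound $\|\log p-\log q\|_2\le d_\cP(p,q)$, and passage to the limit of spectral measures --- is independent of the filtration statement and arguably cleaner. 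Two caveats: $A$ and $B=A^*$ are not positive (harmless; only $\mu_{|A|}=\mu_{|A^*|}$ is used), and Proposition \ref{P:conv in meas implies wk* conv}(\ref{I:wk*conv meas sa}) is proved only for finite $\tau$, so in the semi-finite setting of Theorem \ref{thm:main} the spectral measures have total mass $\tau(\id)$ and you must test against functions supported away from $0$ (where the relevant masses are finite, since the operators lie in $\rL^2(M,\tau)$) rather than invoke weak convergence of probability measures.

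Step (2) contains a genuine gap: the proposed identity $\cH_t(x)=\{v:\limsup_n \tfrac1n\log\|c(n,x)v\|_2\le t\}$ is false, and not only at atoms of $\mu_{\log\La(x)}$. The explicit cocycle of \S\ref{sec:abelian} has $\La(x)=\sqrt2\,\id$, so $\cH_t(x)=\rL^2(M,\tau)$ for every $t\ge\tfrac12\log2$, yet there is a vector $\xi$ with $\liminf_n\|c(n,x)\xi\|_2^{1/n}=2$; this $\xi$ fails your limsup test for every $t<\log2$, including all non-atoms of $\mu_{\log\La(x)}$ in $(\tfrac12\log 2,\log 2)$. The failure comes from sparse subsequences of times at which the cocycle overshoots, not from the critical spectral shell. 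This is exactly why the paper works with $\liminf$: it defines $\cV_t(x)$ with $\liminf$, obtains $c(1,x)\cV_t(x)=\cV_t(f(x))$ by your index-shift argument, and then proves (Lemma \ref{L:osoledets spaces lemma}) that $\cV_t(x)$ and $\cH_t(x)$ agree on an essentially dense subspace, whence $\overline{\cV_t(x)}=\cH_t(x)$; applying the bounded invertible operator $c(1,x)$ to closures finishes the proof. One could alternatively run your shift argument with the smooth growth rates $\overline{\Gr}$ of Theorem \ref{thm:main2}, which do characterize $\cH_t(x)$ and are shift-equivariant; but the raw limsup is not salvageable, and even the density of your limsup-set in $\cH_t(x)$ is essentially the open Conjecture \ref{conj:2}.
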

Furthermore, in \S \ref{sec:invariance} we also express $\cH_t$ as $\left\{\xi \in \rL^2(M,\tau):~ \liminf_{n\to\infty} \frac{1}{n} \log \|\La(x)^n\xi\|_2 \le t\right\}$ and prove a version of (\ref{eqn:Lyapunov growth rate}), roughly saying that growth rates along a cocycle matches those of the limit operator.



\subsubsection{Fuglede-Kadison determinants}

The {\bf Fuglede-Kadison determinant} of an arbitrary $x\in M$ is defined by
$$ \Delta(x) = \exp \left(\int_0^\infty \log(\l)~d\mu_{|x|}(\l) \right)$$
where $|x|= (x^*x)^{1/2}$ is a positive operator defined via the spectral calculus. The FK-determinant is multiplicative in the sense that $\Delta(ab)=\Delta(a)\Delta(b)$  \cite{MR0052696}. From \cite{MR2339369} it follows the definition of FK-determinant extends to operators in $\GL^2(M,\tau)$ and therefore can be applied to the limit operator $\La(x)$.

\begin{example}[The abelian case]
If $M=\rL^\infty(Y,\nu)$ then the FK-determinant of a function $\phi \in M$ is $\exp \int \log|\phi(y)|~\dee\nu(y)$.
\end{example}
\begin{example}[The full algebra case]
Suppose $(M,\tau)=(B(\cH),\tau_\cH)$.  If $\cH$ is finite-dimensional, then the FK-determinant is the absolute value of the usual determinant. If $\cH$ is infinite-dimensional, then the FK-determinant coincides with the absolute value of the Fredholm determinant on operators of the form $\id+a$ where $a\in M$ is trace-class. 
\end{example}

The following theorem is proven in \S \ref{sec:determinants}:
\begin{thm}\label{thm:determinants}
With notation as above, for a.e. $x \in X$, if $\tau$ is finite, then
$$\lim_{n\to\infty} \left(\Delta |c(n,x)|\right)^{1/n} = \Delta \La(x).$$
\end{thm}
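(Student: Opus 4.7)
The plan is to reduce this statement to the $\rL^2$-convergence $n^{-1}\log|c(n,x)| \to \log\Lambda(x)$ already supplied by Theorem \ref{thm:main}, using finiteness of $\tau$ in a crucial way.

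First I would rewrite the Fuglede--Kadison determinant in a more convenient form. For any $y \in \GL^2(M,\tau)$ we have $\log|y| \in \rL^2(M,\tau)$ by definition, and when $\tau$ is finite, Cauchy--Schwarz gives
\[
|\tau(z)| = |\langle z, \id\rangle| \le \tau(\id)^{1/2}\|z\|_2 \qquad (z \in \rL^2(M,\tau)),
\]
so in particular $\log|y| \in \rL^1(M,\tau)$ and the spectral theorem yields
\[
\Delta(y) = \exp\!\left(\int_0^\infty \log(\lambda)\,d\mu_{|y|}(\lambda)\right) = \exp(\tau(\log|y|)).
\]
Applying this to $y = c(n,x)$ (which lies in $\GL^2(M,\tau)$ for a.e.\ $x$ and every $n$, by the cocycle property and the first moment hypothesis used in Theorem \ref{thm:main}) gives
\[
\Delta(|c(n,x)|)^{1/n} = \exp\!\bigl(\tau\bigl(n^{-1}\log|c(n,x)|\bigr)\bigr).
\]

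Next I would pass to the limit. Theorem \ref{thm:main} guarantees that $n^{-1}\log|c(n,x)| \to \log\Lambda(x)$ in $\rL^2(M,\tau)$ for a.e.\ $x$. The Cauchy--Schwarz estimate above shows that $\tau\colon \rL^2(M,\tau) \to \C$ is a bounded linear functional (this is where finiteness of $\tau$ is essential — otherwise $\id \notin \rL^2$ and there is no such bound). Consequently
\[
\tau\bigl(n^{-1}\log|c(n,x)|\bigr) \;\longrightarrow\; \tau(\log\Lambda(x)),
\]
and applying the continuous function $\exp$ to both sides yields $\Delta(|c(n,x)|)^{1/n} \to \exp(\tau(\log\Lambda(x))) = \Delta(\Lambda(x))$.

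There is essentially no obstacle in this argument — all the analytic work has been done in establishing $\rL^2$-convergence in Theorem \ref{thm:main}, and the role of this theorem is simply to exploit the observation that, in a \emph{finite} tracial von Neumann algebra, the trace is $\rL^2$-continuous. It is worth emphasizing that this is precisely where the finiteness assumption on $\tau$ enters: in the semi-finite but non-finite case, $\id \notin \rL^2(M,\tau)$, one cannot identify $\tau$ with pairing against $\id$, and the log FK-determinant $\tau(\log|c(n,x)|)$ need not be defined at all for elements of $\GL^2(M,\tau)$, let alone converge. So the proposed argument is both short and sharp, and the theorem as stated should follow in a few lines once the machinery of Theorem \ref{thm:main} is in place.
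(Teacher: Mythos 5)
Your proposal is correct and takes essentially the same route as the paper: the paper's Proposition \ref{P:determinant} establishes exactly the two facts you use — that $\GL^2(M,\tau)\subset M^\Delta$ with $\Delta=\exp\circ\tau\circ\log$ because finiteness of $\tau$ gives $\rL^2(M,\tau)\subset\rL^1(M,\tau)$ and norm-continuity of $\tau$ there — packaged as continuity of $\Delta$ on $(\cP,d_\cP)$, and then cites Theorem \ref{thm:main}. The only cosmetic difference is that you apply the $\rL^2$-convergence of $n^{-1}\log|c(n,x)|$ directly, whereas the paper applies the continuous map $\Delta$ to the $d_\cP$-convergence of $|c(n,x)|^{1/n}$; these are interchangeable.
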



\subsubsection{Growth rates}

Assume the notation of Theorem \ref{thm:main}.

\begin{defn}\label{D:growth1}
Given $\xi\in \rL^2(M,\tau)$, let $\Si(\xi)$ be the set of all sequences $(\xi_n)_n \subset \rL^2(M,\tau)$ such $\lim_{n\to\infty} \|\xi - \xi_n\|_2 = 0$. Define the {\bf upper and lower smooth growth rates of the system $(X,\mu,f,c)$ with respect to $\xi$ at $x\in X$} by
\begin{eqnarray*}
\underline{\Gr}( x | \xi) &=& \inf\left\{ \liminf_{n\to\infty} \| c(n,x) \xi_n\|_2^{1/n} :~ (\xi_n)_n \in \Si(\xi)  \right\}  \\
\overline{\Gr}( x | \xi) &=& \inf\left\{ \limsup_{n\to\infty} \| c(n,x) \xi_n\|_2^{1/n} :~ (\xi_n)_n \in \Si(\xi)\right\}.
\end{eqnarray*}



\end{defn}
The following theorem is proven in \S \ref{sec: growth rates}.
\begin{thm}\label{thm:main2}
Assume the hypotheses of Theorem \ref{thm:main}. Then for a.e. $x\in X$ and every $\xi \in \rL^2(M,\tau)$,
$$\underline{\Gr}(  x | \xi)  = \lim_{n\to\infty} \|\La(x)^n\xi\|_2^{1/n}= \overline{\Gr}( x | \xi).$$
\end{thm}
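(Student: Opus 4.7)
The plan is to sandwich both $\underline{\Gr}(x|\xi)$ and $\overline{\Gr}(x|\xi)$ between the common value $\alpha := \lim_{n\to\infty}\|\Lambda(x)^n\xi\|_2^{1/n}$. Since $\underline{\Gr}(x|\xi)\leq \overline{\Gr}(x|\xi)$ holds trivially (for each admissible sequence $\liminf\le \limsup$, and the infimum preserves this inequality), it suffices to prove $\overline{\Gr}(x|\xi) \leq \alpha$ and $\underline{\Gr}(x|\xi) \geq \alpha$. First I would establish existence of $\alpha\in[0,+\infty]$ by Borel functional calculus applied to the positive affiliated operator $\Lambda:=\Lambda(x)$: the finite positive spectral measure $\mu_\xi$ on $[0,\infty)$ attached to $\xi$ satisfies $\|\Lambda^n\xi\|_2^2=\int \lambda^{2n}\,d\mu_\xi(\lambda)$, so the standard $L^{p}$-type asymptotic gives $\|\Lambda^n\xi\|_2^{1/n}\to \sup\operatorname{supp}(\mu_\xi)=\alpha$.

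For the upper bound I would fix $\beta>\alpha$ outside the (at most countable) atom set of the scalar spectral measure $\tau\circ E_\Lambda$ and set $\xi_n:=p_n\xi$, where $p_n:=\mathbf{1}_{[0,\beta]}\bigl(|c(n,x)|^{1/n}\bigr)\in M$. Since $|c(n,x)|^{1/n}\to\Lambda$ in the measure topology (Theorem \ref{thm:main}) and $\mathbf{1}_{[0,\beta]}$ is $\mu_\Lambda$-a.e.\ continuous at the non-atom $\beta$, Borel functional calculus for convergence in measure gives $p_n\to q_\beta:=\mathbf{1}_{[0,\beta]}(\Lambda)$ in measure; the uniform bound $\|p_n\|_\infty\leq 1$ then upgrades this to convergence $p_n\eta\to q_\beta\eta$ in $\rL^2$ for every $\eta\in\rL^2(M,\tau)$. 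Applied to $\eta=\xi$, together with $\beta>\alpha=\sup\operatorname{supp}(\mu_\xi)$ which forces $q_\beta\xi=\xi$, this yields $\xi_n\to\xi$ in $\rL^2$, so $(\xi_n)\in\Sigma(\xi)$. Commutativity of $|c(n,x)|$ with $p_n$ combined with spectral calculus gives $\||c(n,x)|p_n\|_\infty\leq\beta^n$, hence $\|c(n,x)\xi_n\|_2\leq\beta^n\|\xi\|_2$ and $\limsup_n\|c(n,x)\xi_n\|_2^{1/n}\leq\beta$. Taking $\beta\downarrow\alpha$ through non-atoms gives $\overline{\Gr}(x|\xi)\leq\alpha$.

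For the lower bound, let $(\xi_n)\in\Sigma(\xi)$ be arbitrary, fix $\beta<\alpha$ also avoiding atoms of $\tau\circ E_\Lambda$, and set $p_n':=\mathbf{1}_{(\beta,\infty)}(|c(n,x)|^{1/n})$. The same functional-calculus argument gives $p_n'\to I-q_\beta$ in SOT on $\rL^2(M,\tau)$, and combined with $\xi_n\to\xi$ this yields $p_n'\xi_n\to (I-q_\beta)\xi$ in $\rL^2$; since $\beta<\alpha=\sup\operatorname{supp}(\mu_\xi)$, the limit $(I-q_\beta)\xi$ is nonzero, so $\|p_n'\xi_n\|_2\to\|(I-q_\beta)\xi\|_2>0$. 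Orthogonality of the ranges of $p_n$ and $p_n'$ together with $|c(n,x)|p_n'\geq\beta^n p_n'$ (by spectral calculus on the commuting pair) yields
\[\|c(n,x)\xi_n\|_2^2=\||c(n,x)|p_n\xi_n\|_2^2+\||c(n,x)|p_n'\xi_n\|_2^2\geq \beta^{2n}\|p_n'\xi_n\|_2^2,\]
so $\liminf_n\|c(n,x)\xi_n\|_2^{1/n}\geq\beta$. Taking the infimum over $(\xi_n)$ and then $\beta\uparrow\alpha$ through non-atoms gives $\underline{\Gr}(x|\xi)\geq\alpha$.

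The main technical obstacle is the continuity step $p_n\to q_\beta$ (and the analogous $p_n'\to I-q_\beta$) in SOT on $\rL^2(M,\tau)$. Borel functional calculus for sequences in $\rL^0(M,\tau)$ converging in measure is only continuous at $\mu$-a.e.\ continuous Borel functions, which forces the choice of $\beta$ outside the atoms of $\tau\circ E_\Lambda$; one handles $\mathbf{1}_{[0,\beta]}$ by approximation from above and below by continuous functions agreeing at $\beta$. Passing from measure-convergence of the projections to SOT convergence on $\rL^2$ then requires the standard noncommutative-integration fact that a measure-convergent sequence with uniformly bounded operator norms converges in SOT on $\rL^2$, proved by approximating $\rL^2$-vectors by elements of $M\cap\rL^2$ and using the characterization of the measure topology via projections of small cotrace.
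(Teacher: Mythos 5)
Your argument is correct and reaches the conclusion, but by a genuinely different mechanism from the paper's. The paper (Lemmas \ref{L:existence of growth rates limiting object}, \ref{L:weak convergence of spectral measures} and \ref{L:inclusions of candidate Osoledets spaces}) works throughout with the vector-state spectral measures $\nu_n$ of $b_n=|c(n,x)|^{1/n}$ with respect to $\xi_n$: it shows $\nu_n\to\nu$ weakly, gets the lower bound from Fatou's lemma applied to the moments $\int s^{2m}\,\dee\nu_n$, and for the upper bound takes $\xi_n=f(b_n)\xi$ with $f$ a \emph{continuous} cutoff equal to $1$ on $[0,\rho(\nu)]$ and vanishing above $\rho(\nu)+\varepsilon$. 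You instead use the sharp spectral projections $1_{[0,\beta]}(b_n)$ and $1_{(\beta,\infty)}(b_n)$ at a non-atom level $\beta$. Both routes work, but the paper's continuous cutoff is designed precisely to avoid the step you rightly flag as the main obstacle: the functional-calculus continuity results available (Lemma \ref{tikhonov}, Proposition \ref{P:conv in meas implies wk* conv}) require $f$ continuous on the spectrum of the limit, which $1_{[0,\beta]}$ fails to be whenever $\beta\in\sigma(\Lambda(x))$, even if $\mu_{\Lambda(x)}(\{\beta\})=0$. Your sandwich $f^-\le 1_{[0,\beta]}\le f^+$ does close this, but to make it rigorous you need the monotonicity fact $\tau(1_{(\varepsilon,\infty)}(x))\le\tau(1_{(\varepsilon,\infty)}(y))$ for $0\le x\le y$ (Proposition \ref{P:basic facts positive ops affil}) plus an estimate of the form $\limsup_n\mu_{b_n}([\beta-\delta,\beta+\delta])\le\mu_{\Lambda(x)}((\beta-2\delta,\beta+2\delta))$ from Lemma \ref{cdfcont}; and in the semi-finite case you must keep the interval away from $1$, near which $\mu_{\Lambda(x)}$ can have infinite mass (only $\log\Lambda(x)\in\rL^2$ is guaranteed, so $\mu_{\Lambda(x)}$ is finite only off neighborhoods of $1$, and the atoms form a countable set only for that reason). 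Two smaller points: the upgrade from measure-convergence of uniformly bounded operators to SOT-convergence on $\rL^2(M,\tau)$ is Lemma \ref{semi-finite 2top} in spirit, but that lemma is stated for operators in $M\cap\rL^2(M,\tau)$, which your projections need not be when $\tau(\id)=\infty$; the proof adapts (split off the small-cotrace projection $1_{(\varepsilon,\infty)}(|p_n-q_\beta|)$ and approximate $\xi$ by elements of $M\cap\rL^2(M,\tau)$), but this should be said explicitly. Also, for $\xi_n\notin\dom(c(n,x))$ your lower-bound inequality holds vacuously by the convention $\|c(n,x)\xi_n\|_2=+\infty$, which deserves a sentence. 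What your approach buys is a more transparent geometric picture (mass above level $\beta$ persists); what the paper's buys is never having to touch sharp spectral projections or atoms at all.
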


\begin{remark}
In \S \ref{sec:abelian} we give an explicit example in which a strict inequality
$$\liminf_{n\to\infty} \|c(n,x) \xi\|_2^{1/n} > \lim_{n\to\infty} \|\La(x)^n \xi\|_2^{1/n}$$
occurs. This explains why we take the infimum over all sequences in Definition \ref{D:growth1}. This infimum over sequences also appears in \cite[Definition 3.1]{MR2511586} which handles the special case in which $c(1,x)$ is a constant not depending on $x$ (see also \S \ref{S:single}).
\end{remark}

\begin{conj}\label{conj:2}
If $(M,\tau)$ is finite then Theorem \ref{thm:main2} can be strengthened to: for a.e. $x \in X$ there exists an essentially dense subspace $\cH_x \subset \rL^2(M,\tau)$ such that for every $\xi \in \cH_x$,
$$\lim_{n\to\infty} \|c(n,x) \xi\|_2^{1/n} = \lim_{n\to\infty} \|\La(x)^n \xi\|_2^{1/n}.$$
Essentially dense subspaces are reviewed in \S \ref{sec:essentially}.
\end{conj}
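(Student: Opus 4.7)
The plan is to attack Conjecture \ref{conj:2} by first establishing Conjecture \ref{conj:1} (almost uniform convergence of $n^{-1}\log|c(n,x)|$ to $\log\La(x)$) and then combining this with spectral cutoffs of $\La(x)$ to build $\cH_x$. The crucial finite-case ingredient is a non-commutative Egorov principle: in a tracial finite von Neumann algebra, convergence in measure together with a suitable maximal inequality yields almost uniform convergence, a tool unavailable in the semi-finite setting.

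For Conjecture \ref{conj:1}, I would apply a non-commutative maximal ergodic inequality for the sub-additive sequence $\{\log|c(n,x)|\}$ (in the spirit of the tracial ergodic theorems of Yeadon and Junge--Xu), combined with the $\rL^2(M,\tau)$ convergence from Theorem \ref{thm:main} (which implies convergence in measure). The output would be: for each $\epsilon > 0$ there exists a projection $q_\epsilon \in M$ with $\tau(\id - q_\epsilon) < \epsilon$ and $\|(n^{-1}\log|c(n,x)| - \log\La(x))\, q_\epsilon\|_\infty \to 0$. Given this, set $e_k = 1_{[-k,k]}(\log\La(x))$ (so $\tau(\id - e_k) \to 0$, since $\log\La(x) \in \rL^2(M,\tau)$), pick $q_k$ with $\tau(\id - q_k) < 2^{-k}$, and let $p_k = q_k \wedge e_k$ be the meet of the two projections. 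Then $\tau(\id - p_k) \to 0$, and
$$\cH_x \;:=\; \bigcup_{k \ge 1} p_k \rL^2(M,\tau)$$
is essentially dense. For $\xi \in p_k \rL^2(M,\tau)$ we have simultaneously $\|\La(x)^n \xi\|_2 \le e^{kn}\|\xi\|_2$ (from the spectral cutoff) and $\|(T_n - T)\, q_k\|_\infty = o(1)$ (from Conjecture \ref{conj:1}), where $T_n := n^{-1}\log|c(n,x)|$ and $T := \log\La(x)$.

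The main obstacle is bounding the discrepancy $\|e^{nT_n}\xi - e^{nT}\xi\|_2$ for $\xi \in p_k \rL^2(M,\tau)$ by a quantity of the form $e^{o(n)}\, \|e^{nT}\xi\|_2$; taking $n$-th roots would then finish the proof. The natural tool is the Duhamel identity
$$e^{nT_n} - e^{nT} \;=\; n \int_0^1 e^{(1-s)nT_n}\,(T_n - T)\, e^{snT}\,ds,$$
but when applied to $\xi \in p_k \rL^2(M,\tau)$ the intermediate vector $e^{snT}\xi$ stays inside $e_k \rL^2(M,\tau)$ (since $e_k$ commutes with $e^{snT}$) but in general leaves $q_k \rL^2(M,\tau)$---precisely the subspace on which we have operator-norm control of $T_n - T$. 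Because exponentiation then amplifies residual errors by a factor of $n$, this spectral/subspace mismatch is fatal to the naive estimate. Overcoming it will likely require either a strengthening of Conjecture \ref{conj:1} in which $q_\epsilon$ can be chosen to commute with the spectral projections of $\La(x)$---perhaps via a conditional expectation onto a maximal abelian subalgebra containing $\La(x)$ combined with the (easier) abelian case---or a genuinely different argument exploiting Golden--Thompson-type inequalities in $(M,\tau)$ or the CAT(0) geometry of $(\cP, d_\cP)$ to bypass pointwise exponentiation altogether.
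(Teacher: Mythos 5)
The statement you are attempting is labelled a \emph{conjecture} in the paper: the authors do not prove it in the stated generality, and your proposal does not close that gap either. Your plan has two unresolved holes, the second of which you candidly acknowledge. First, it is conditional on Conjecture \ref{conj:1}, which is itself open (the paper establishes it only for $M=\rL^\infty(Y,\nu)$, via Birkhoff plus Egorov); no noncommutative maximal or subadditive ergodic inequality applicable to $\log|c(n,x)|$ is supplied or cited in a usable form. Second, even granting Conjecture \ref{conj:1}, your Duhamel estimate fails for exactly the reason you identify: the intermediate vector $e^{snT}\xi$ need not remain in $q_k\rL^2(M,\tau)$, and the factor $n$ in front of the integral, after exponentiation, amplifies the uncontrolled part beyond the $e^{o(n)}$ tolerance. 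Since you do not overcome this, the proposal is a research plan rather than a proof.

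For comparison, what the paper actually proves is (a) the conjecture in the abelian case, by essentially the Egorov route you describe --- there the mismatch you worry about disappears because the Egorov projection automatically commutes with $\La(x)$, which is exactly why your closing suggestion of conditioning onto a maximal abelian subalgebra containing $\La(x)$ is the right instinct; and (b) in general, only the weaker statement with $\liminf$ in place of $\lim$ (Lemma \ref{L:osoledets spaces lemma}). That argument is structurally different from yours and never exponentiates an error term: it sets $T_n=\La(x)^{-n}|c(n,x)|^2\La(x)^{-n}$ and $S_n=T_n^{1/2n}$, observes that $\frac1n d_\cP(|c(n,x)|^2,\La(x)^{2n})\to 0$ forces $S_n\to\id$ in measure, extracts a subsequence $n_k$ along which $\mu_{S_{n_k}}(1+\eps_k,\infty)$ is summable, and builds the essentially dense subspace from the ranges of $\La(x)^{-n_k}1_{[0,(1+\eps_k)^{2n_k}]}(T_{n_k})$; the quadratic form $\langle T_{n_k}\La(x)^{n_k}\xi,\La(x)^{n_k}\xi\rangle$ then yields the comparison directly. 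The price of the subsequence extraction is that only a $\liminf$ is obtained, which is precisely why the full statement remains open. If you want to make progress, the honest targets are to prove Conjecture \ref{conj:1} in the finite case or to remove the subsequence from the argument of Lemma \ref{L:osoledets spaces lemma}; the Duhamel route as written does not converge to a proof.
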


\begin{remark}
In \S \ref{sec:invariance}, we prove the conjecture with $\liminf$ in place of $\lim$. To be precise: for a.e. $x \in X$ there exists an essentially dense subspace $\cH_x \subset \rL^2(M,\tau)$ such that for every $\xi \in \cH_x$,
$$\liminf_{n\to\infty} \|c(n,x) \xi\|_2^{1/n} = \lim_{n\to\infty} \|\La(x)^n \xi\|_2^{1/n}.$$
\end{remark}




\subsection{The abelian case}

As in previous examples, suppose $M=\rL^\infty(Y,\nu)$. In \S \ref{sec:abelian}, we show that with this choice of $(M,\tau)$, Theorem \ref{thm:main}, along with Conjectures \ref{conj:1} and \ref{conj:2}, follows readily from Birkhoff's Pointwise Ergodic Theorem. We also provide explicit examples where the limit operator $\La(x)$ has continuous spectrum, where convergence to the limit operator does not occur in operator norm, and where there exist vectors $\xi$ satisfying the strict inequality
$$\liminf_{n\to\infty} \|c(n,x) \xi\|_2^{1/n} > \lim_{n\to\infty} \|\La(x)^n \xi\|_2^{1/n}.$$
The section \S \ref{sec:abelian} can be read independently of the rest of the paper.

\subsection{Powers of a single operator}\label{S:single}

As above, let $(M,\tau)$ be a finite von Neumann algebra and let $T \in M$. It is a famous open problem to determine whether $T$ admits a proper invariant subspace. The main results of \cite{MR2511586} show that the limit $\lim_{n\to\infty} |T^n|^{1/n} = \La$ exists in the Strong Operator Topology (SOT) and moreover, if $\cH_t=1_{[0,t]}(\Lambda)(\rL^2(M,\tau))$ then $\cH_t$ is an invariant subspace. The spectral measure of $\La$ is the same as the Brown measure of $T$ radially projected to the positive real axis. Moreover, if the Brown measure of $T$ is not a Dirac mass then there exists a proper invariant subspace.

Now suppose that $T$ has a bounded inverse $T^{-1} \in M$. Regardless of the dynamics, we may choose to define the cocycle $c$ by $c(n,x)=T^n$. Theorems \ref{thm:main} and \ref{thm:main2} then recover the main results of \cite{MR2511586} with the exception that our results say nothing of the Brown measure and they only apply to the invertible case. Our methods are completely different. In particular, we do not use \cite{MR2511586}.

\subsection{Proof overview}\label{sec:proof overview}
We will make use of a general Multiplicative Ergodic Theorem due to Karlsson-Margulis based on non-positive curvature (see also \cite{MR947327} which seems to be the first paper that develops this geometric approach). To accommodate their cocycle convention (which is different from ours), let us say that a measurable map  $\check{c}:\N \times X \to G$ is a {\bf reverse cocycle} if
$$\check{c}(n+m,x) = \check{c}(n,x)\check{c}(m, f^nx)$$
for any $n,m \in \N$ (where $G$ is a group).

The following is a special case of the Karlsson-Margulis Theorem.
\begin{thm}[\cite{MR1729880}]\label{thm:km}
Let $(X,\mu)$ be a standard probability space, $f:X \to X$ an ergodic measure-preserving invertible transformation, $(Y,d)$ a complete CAT(0) space, $y_0 \in Y$ and $\check{c}:\N \times X \to \Isom(Y,d)$ a measurable reverse cocycle taking values in the isometry group of $(Y,d)$, where measurable means with respect to the compact-open topology on $\Isom(Y,d)$. Assume that
$$\int_X d(y_0, \check{c}(1,x)y_0) ~d\mu(x) < \infty.$$
Then for almost every $x\in X$, the following limit exists:
$$\lim_{n\to\infty} \frac{d(y_0, \check{c}(n,x)y_0)}{n} = D.$$
Moreover, if $D>0$ then for almost every $x$ there exists a unique unit-speed geodesic ray $\g(\cdot, x)$ in $Y$ starting at $y_0$ such that
$$\lim_{n\to\infty} \frac{1}{n} d(\gamma(Dn,x), \check{c}(n,x)y_0) =0.$$
\end{thm}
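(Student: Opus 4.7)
The plan is to follow the geometric approach of Karlsson--Margulis. First I would establish the existence of the drift $D$ via a subadditive ergodic theorem. Set $a(n,x) := d(y_0, \check{c}(n,x) y_0)$. The reverse cocycle relation combined with the fact that each $\check{c}(n,x)$ is an isometry yields
$$a(n+m,x) = d(y_0, \check{c}(n,x)\check{c}(m, f^n x) y_0) \le a(n,x) + d(y_0, \check{c}(m,f^n x) y_0) = a(n,x) + a(m, f^n x),$$
so $(a(n,\cdot))_n$ is a subadditive cocycle over $f$. The first moment hypothesis $\int a(1,x)\,d\mu(x) < \infty$ together with ergodicity of $f$ allow Kingman's subadditive ergodic theorem to deliver a constant $D \ge 0$ with $a(n,x)/n \to D$ almost surely.

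Assume $D > 0$. The geometric heart of the proof is a notion of \emph{$\epsilon$-regular} time: call $n$ an $\epsilon$-regular time at $x$ if
$$a(n,x) \ge a(k,x) + a(n-k, f^k x) - \epsilon n \quad \text{for every } 0 \le k \le n.$$
At such a time the triangle with vertices $y_0$, $\check{c}(k,x)y_0$, $\check{c}(n,x)y_0$ is nearly degenerate, and by the CAT(0) inequality (comparison with a Euclidean triangle) the middle vertex $\check{c}(k,x)y_0$ lies within $o(n)$ of the geodesic segment $[y_0, \check{c}(n,x)y_0]$, uniformly in $k$. A maximal-function-type argument built on $a(n,x)/n \to D$ shows that for a.e.\ $x$ the lower density of the set of $\epsilon$-regular times tends to $1$ as $\epsilon \to 0$.

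For each $n$, let $\gamma_n : [0, a(n,x)] \to Y$ be the unit-speed geodesic from $y_0$ to $\check{c}(n,x)y_0$. For each fixed $t \ge 0$, combining the $\epsilon$-regularity estimates with the CAT(0) inequality shows that, after restricting to $\epsilon$-regular times and letting $\epsilon \to 0$, the sequence $(\gamma_n(t))_n$ is Cauchy in $Y$. Completeness of $Y$ produces a pointwise limit $\gamma(\cdot, x)$ which inherits the unit-speed geodesic property from the $\gamma_n$, and uniqueness follows since two such rays would be forced to coincide at sublinear distance, hence agree. Passing to the limit in the regularity inequality then yields $d(\gamma(Dn, x), \check{c}(n,x) y_0)/n \to 0$. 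The main obstacle is the density statement for $\epsilon$-regular times: this is where the subadditive machinery must be compared delicately to its asymptotic slope $D$, and it is the subtle interaction between Kingman's theorem and the CAT(0) comparison inequality that does the real work.
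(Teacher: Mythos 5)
The paper does not prove Theorem \ref{thm:km} at all: it is stated as a quoted special case of the Karlsson--Margulis theorem from \cite{MR1729880} and is used as a black box in \S\ref{sec:proof overview} (the paper's actual work goes into verifying that $(\cP,d_\cP)$ is a complete CAT(0) space so that this theorem applies). So there is no internal proof to compare against; the relevant comparison is with the original source, and your outline does follow the Karlsson--Margulis strategy: Kingman's subadditive ergodic theorem for the drift, an ergodic lemma producing ``good'' times, and CAT(0) comparison to construct the limiting ray.

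Measured against the actual argument in \cite{MR1729880}, two points in your sketch deserve caution. First, your notion of $\epsilon$-regular time carries an error term $\epsilon n$ uniform over $k$, whereas the Karlsson--Margulis lemma requires $a(n,x)-a(n-k,f^{k}x)\ge (D-\epsilon)k$ for \emph{all} $1\le k\le n$, i.e.\ an error linear in $k$. This distinction matters: with an $O(\epsilon n)$ defect the CAT(0) comparison gives no control over $\check{c}(k,x)y_0$ in the regime $k=o(n)$, which is precisely the regime you need to show that $\gamma_n(t)$ is Cauchy for \emph{fixed} $t$. Second, the existence of infinitely many such regular times (density statements are not needed, and ``lower density tending to $1$'' is not what is proved there) is the genuinely delicate ergodic-theoretic step of the whole proof --- it does not follow from a routine maximal-function argument, and your sketch defers exactly this point. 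Finally, the Cauchy property of $(\gamma_n(t))_n$ is obtained only along the subsequence of good times; an additional interpolation step is needed to upgrade to the sublinear tracking $d(\gamma(Dn,x),\check{c}(n,x)y_0)=o(n)$ along the full sequence. None of this is fatal --- it is the correct skeleton of the KM proof --- but as written the two hard steps are asserted rather than proved, and for the purposes of this paper the honest course is simply to cite \cite{MR1729880}, as the authors do.
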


As remarked in \cite{MR1729880}, this result implies the classical MET as follows. Let $P(n,\R)$ be the space of positive definite $n\times n$ matrices. Then $\GL(n,\R)$ acts on $P(n,\R)$ by $g.p := g p g^*$. The tangent space to $p \in P(n,\R)$, denoted $T_p(P(n,\R))$, is naturally identified with $S(n,\R)$, the space of $n\times n$ real symmetric matrices. Define an inner product on $T_p(P(n,\R))$ by
$$\langle x,y \rangle_p := \textrm{trace}(p^{-1}xp^{-1}y).$$
This gives a complete Riemannian metric on $P(n,\R)$. All sectional curvatures are non-positive and so $P(n,\R)$ is CAT(0). Moreover the $\GL(n,\R)$ action is isometric and transitive. Every geodesic ray from $\id$ (the identity matrix) has the form $t \mapsto \exp(tx)$ for $x \in S(n,\R)$.

Substitute $Y=P(n,\R)$ and $y_0=\id$ (the identity matrix) in the Karlsson-Margulis Theorem to obtain the classical multiplicative ergodic theorem.

Our proof of Theorem \ref{thm:main} follows in a similar way from the Karlsson-Margulis Theorem. In \cite{MR2254561}, Andruchow and Larotonda construct a Riemannian metric on the positive cone $\cP^\infty(M)$ of a finite von Neumann algebra. They prove that it is non-positively curved. We go over the needed facts from their construction in \S \ref{sec:symmetric}.

However, $\cP^\infty(M)$ is not metrically complete. We prove that its metric completion can naturally be identified with $\cP$, as mentioned earlier in \S \ref{sec: intro regular rep}, and $\GL^2(M,\tau)$ acts transitively and is a subgroup of the isometry group of $\cP$. This partially answers a question raised in \cite[Remark 3.21]{MR2643829} which asks to identify the metric completion of the space of positive definite operators $\cP^\infty(M)$ with respect to the metric $d_p(x,y) = \|\log(x^{1/2}y^{-1}x^{1/2})\|_p$ $(1\le p <\infty)$. We obtain a characterization in the special case $p=2$. It is possible that our proof can be modified to handle the general case; we did not attempt it.


\subsection{Organization}

\begin{itemize}
\item \S \ref{sec:abelian} proves a number of results about the special case in which $M$ is abelian. This is not needed for the rest of the paper. It is included for illustration only. 

\item \S \ref{sec:prelim} provides necessary background on spectral measures, the regular representation, the algebra of affiliated operators, etc.

\item \S \ref{sec:GL2} proves that $\GL^2(M,\tau)$ is a group.

\item \S \ref{sec:positive} proves Theorem \ref{thm:positivecone} on the geometry of $\cP$. 

\item \S \ref{sec:proofs of main results} proves the main Theorem \ref{thm:main} and the theorems on growth rates, determinants and Oseledets subspaces.

\item Appendix section \S \ref{sec:diffuse} has general facts about diffuse von Neumann algebras, to illustrate the main theorem.

\item Appendix section \S \ref{sec:examples-more} provides more examples of tracial von Neumann algebras.

\item Appendix section \S \ref{sec:glossary} is a glossary of terms used in the paper.

\end{itemize}

\noindent {\bf Acknowledgements}. L. Bowen would like thank IPAM and UCLA for their hospitality. The initial ideas for this projects were obtained while L. Bowen was attending the Quantitative Linear Algebra semester at IPAM.

\section{The abelian case}\label{sec:abelian}

As in \S \ref{intro-abelian}, let $M=\rL^\infty(Y,\nu)$ and define the trace $\tau$ on $M$ by $\tau(\phi)= \int \phi~\dee\nu$. This section studies the MET under the hypothesis that the cocycle $c$ takes values in $M$. It serves as motivation and can be read independently of the rest of the paper.

This special case might seem trivial and indeed, we will see that the conclusions of Theorem \ref{thm:main} are implied by the Pointwise Ergodic Theorem. However, there are curious features not present in previous versions of the MET. Below we will give examples in which $\La(x)$ has continuous spectrum and examples where $|c(n,x)|^{1/n}$ converges in $\rL^2$-norm to $\La(x)$ but not in operator norm. We will also show that growth rates do not necessarily exist for every vector, but do exist for an essentially dense subspace of vectors.

\subsection{Theorem \ref{thm:main} from the Pointwise Ergodic Theorem}\label{S:mean}

\begin{thm}
Assume the hypotheses of Theorem \ref{thm:main}. In addition, let $(Y,\nu)$ be a standard probability measure space, $M=\rL^\infty(Y,\nu)$ and let the trace $\tau$ be given by $\tau(\phi)=\int \phi~\dee\nu$ ($\phi \in M_+$). Also assume that the cocycle is uniformly bounded: $\exists R>0$ such that
$$R^{-1} \le |c(1,x)(y)| \le R$$
for a.e. $(x,y)$. Then the conclusion of Theorem \ref{thm:main} follows from the Pointwise Ergodic Theorem. 
\end{thm}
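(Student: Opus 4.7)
The plan is to use commutativity of $M = \rL^\infty(Y,\nu)$ to reduce Theorem \ref{thm:main} to Birkhoff's Pointwise Ergodic Theorem applied to a uniformly bounded scalar function. Since operators in $M$ commute, the cocycle identity yields, pointwise in $y$,
$$\log|c(n,x)|(y) = \sum_{i=0}^{n-1} \phi(f^i x, y), \qquad \phi(x,y) := \log|c(1,x)(y)|,$$
and the hypothesis $R^{-1}\le|c(1,x)(y)|\le R$ gives the uniform bound $|\phi|\le \log R$. For each fixed $y\in Y$, ergodicity of $f$ and the Pointwise Ergodic Theorem applied to the bounded function $x\mapsto \phi(x,y)$ give convergence of $\frac{1}{n}\sum_{i=0}^{n-1} \phi(f^ix,y)$ to $\Psi(y):=\int_X \phi(x,y)\,d\mu(x)$ on a $\mu$-conull set depending on $y$. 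The set $S\subset X\times Y$ on which this convergence holds is measurable, every $y$-slice has full $\mu$-measure, and $\nu(Y)=1$, so Fubini produces a single $\mu$-conull set $E\subset X$ such that for every $x\in E$ the convergence holds for $\nu$-a.e.\ $y$.

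Set $\La(x):=\exp(\Psi)\in \rL^\infty(Y,\nu)$; this is $x$-independent (an artifact of ergodic averaging), bounded by $R$, and $\log\La=\Psi$ lies in $\rL^2(Y,\nu)=\rL^2(M,\tau)$, so $\La(x)\in \GL^2(M,\tau)\cap \cP$. The Birkhoff averages $\frac{1}{n}\log|c(n,x)|$ are uniformly bounded by $\log R$, so their $\nu$-a.e.\ pointwise convergence to $\log\La(x)$ for $x\in E$ upgrades via Dominated Convergence to convergence in $\rL^2(M,\tau)$:
$$\frac{1}{n}\log|c(n,x)| \longrightarrow \log\La(x) \quad \text{in } \rL^2(M,\tau).$$
This is the third bullet of Theorem \ref{thm:main}, and $D=\lim_n\frac{1}{n}\|\log(c(n,x)^*c(n,x))\|_2 = 2\|\Psi\|_2$ follows by continuity of the norm. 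For commuting positive operators one has $d_\cP(A,B)=\|\log A-\log B\|_2$, so $\frac{1}{n}d_\cP(|c(n,x)|,\La(x)^n)=\|\frac{1}{n}\log|c(n,x)|-\log\La(x)\|_2\to 0$ yields the first bullet, and the second bullet (convergence of $|c(n,x)|^{1/n}$ to $\La(x)$ in $d_\cP$ and in measure) is equivalent to the same $\rL^2$-convergence together with the standard implication from $\rL^2$-convergence to convergence in measure.

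The only point requiring any care is the Fubini step applied to the convergence set produced by the Pointwise Ergodic Theorem, and uniform boundedness of $\phi$ renders this routine. No substantive obstacle arises in the abelian case, and the proof entirely sidesteps the noncommutative machinery developed in the rest of the paper; the commutativity collapses the multiplicative cocycle problem into an additive Birkhoff sum problem for a bounded function.
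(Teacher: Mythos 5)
Your proposal is correct and follows essentially the same route as the paper: reduce $\log|c(n,x)|$ to a Birkhoff sum of $\phi(x,y)=\log|c(1,x)(y)|$, apply the Pointwise Ergodic Theorem together with Fubini to get $\nu$-a.e.\ convergence for $\mu$-a.e.\ $x$, upgrade to $\rL^2(Y,\nu)$-convergence using the uniform bound $|\phi|\le\log R$ (you via dominated convergence, the paper via Scheff\'e's Lemma), and then translate into the $d_\cP$ statements via the commutative identity $d_\cP(a,b)=\|\log a-\log b\|_2$. Your explicit identification of $\La$ as the $x$-independent operator $\exp\bigl(y\mapsto\int_X\phi(x,y)\,d\mu(x)\bigr)$ is a correct (and slightly more informative) byproduct of applying the ergodic theorem slice-wise in $y$ rather than to the product system $F(x,y)=(f(x),y)$ as the paper does.
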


\begin{proof}
Define
\begin{eqnarray*}
F:X \times Y \to X\times Y&& \quad F(x,y)=(f(x),y), \\
\phi \in \rL^1(X\times Y, \mu \times \nu)&& \quad \phi(x,y)= \log |c(1,x)(y)|, \\
 A_n(x,y) \in \rL^1(X\times Y, \mu \times \nu) &&\quad A_n(x,y) = \frac{1}{n} \sum_{k=0}^{n-1} \phi(F^k(x,y)) = \frac{1}{n} \log |c(n,x)(y)|\\
 A_n(x) \in \rL^1(Y,\nu) && \quad A_n(x)(y) = A_n(x,y).
\end{eqnarray*}


 Because we  assume $\int \|\log(|c(1,x)|)\|_2~d\mu(x)<\infty$, it follows that $\phi \in \rL^1(X\times Y, \mu \times \nu)$ as claimed above.

The first conclusion of Theorem \ref{thm:main} is: for a.e. $x \in X$, $\|A_n(x)\|_2$ converges as $n\to\infty$. It is easier to work with the $\rL^1$-norm in place of the $\rL^2$-norm. Let $\phi^+(x,y)=\max(\phi(x,y),0)$ and $\phi^-(x,y)=\max(-\phi(x,y),0)$. Then both $\phi^+, \phi^-$ are in $\rL^1$ and
$$\|A_n(x)\|_1 = \int |A_n(x,y)| ~\dee\nu(y) =  \left|\frac{1}{n} \sum_{k=0}^{n-1} \int \phi^+(F^k(x,y))~\dee\nu(y) - \frac{1}{n} \sum_{k=0}^{n-1} \int \phi^-(F^k(x,y))~\dee\nu(y)\right|.$$
The Pointwise Ergodic Theorem applied to $x\mapsto \int \phi^{+}(x,y)~\dee\nu(y)$ implies that the averages $\frac{1}{n} \sum_{k=0}^{n-1} \int \phi^+(F^k(x,y))~\dee\nu(y)$ converge for a.e. $x$ as $n\to\infty$. Since the same is true with $\phi^+$ replaced with $\phi^-$, it follows that $\|A_n(x)\|_1$ converges for a.e. $x$ as $n\to\infty$.  Because $(Y,\nu)$ is a finite measure space and $\phi(x,y)$ is essentially bounded, $\rL^1$-convergence implies $\rL^2$-convergence. So for a.e. $x \in X$, $\|A_n(x)\|_2$ converges as $n\to\infty$.

The Pointwise Ergodic Theorem implies $A_n(x,y)$ converges for a.e. $(x,y)$ as $n\to\infty$. So Fubini's Theorem implies that: for a.e. $x$, $A_n(x)$ converges pointwise a.e. as $n\to\infty$. Scheffe's Lemma now implies that for a.e. $x$, $A_n(x)$ converges in $\rL^2(Y,\nu)$. This proves the last conclusion of Theorem \ref{thm:main}.

Let $\log \La(x)$ denote the limit of $A_n(x)$. As explained in Example \ref{abelian1},
$$ \frac{1}{n} d_\cP(|c(n,x)|, \La(x)^n) = \frac{1}{n}\| \log (|c(n,x)|) - \log(\La(x)^n)\|_2 = \left\| \frac{1}{n}\log (|c(n,x)|) - \log(\La(x))\right\|_2.$$
Thus $\frac{1}{n} d_\cP(|c(n,x)|, \La(x)^n) \to 0$ as $n\to\infty$.

Similarly,
$$d_\cP(|c(n,x)|^{1/n}, \La(x))= \left\|\frac{1}{n}\log (|c(n,x)|) - \log(\La(x))\right\|_2.$$
So $|c(n,x)|^{1/n}$ converges to $\La(x)$ in $(\cP,d_\cP)$ as $n\to\infty$ (for a.e. $x$).

To prove $|c(n,x)|^{1/n}$ converges to $\La(x)$ in measure, it suffices to show: for every $\eps>0$
$$\nu\left(\left\{y \in Y:~ \big| |c(n,x)(y)|^{1/n} - \La(x)(y)\big| > \eps\right\}\right)$$
tends to zero as $n\to\infty$ (for a.e. $x$). This is implied by the fact that $n^{-1}\log|c(n,x)|$ converges to $\log \La(x)$ in $\rL^2(Y,\nu)$ for a.e. $x$.


\end{proof}

Similarly, if $M = \textrm{M}_n(\C) \otimes \rL^\infty(Y,\nu)$ where $\textrm{M}_n(\C)$ denotes the algebra of $n\times n$ complex matrices, then the non-ergodic version of the classical Multiplicative Ergodic Theorem implies the conclusions of Theorem \ref{thm:main}.

\subsection{Examples with continuous spectrum}
This example is almost trivial. Let $\psi \in \rL^\infty(Y,\nu)$ be such that $\log |\psi| \in \rL^2(Y,\nu)$. Define $c(n,x)= \psi^n$. Then the limit operator satisfies $\La(x)=|\psi|$ for a.e. $x$ and the spectral measure of $\La$ is the distribution of $|\psi|$. In particular, if $|\psi|$ has continuous distribution then $\La(x)$ has continuous spectrum. 

\subsection{Almost uniform convergence and growth rates}

In this subsection, we prove Conjectures  \ref{conj:1} and \ref{conj:2} in the special case $M=\rL^\infty(Y,\nu)$ and $(Y,\nu)$ is a probability space.
\begin{thm}\label{thm:abelian-strong-growth}
Assume hypotheses as in Theorem \ref{thm:main}.  In addition, let $(Y,\nu)$ be a standard probability space, $M=\rL^\infty(Y,\nu)$ and let the trace $\tau$ be given by $\tau(\phi)=\int \phi~\dee\nu$. In this setting, Conjecture \ref{conj:1} is true.


\end{thm}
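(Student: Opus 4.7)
The plan is to deduce Theorem \ref{thm:abelian-strong-growth} from Egorov's theorem applied to the pointwise convergence already established in \S \ref{S:mean}. In the abelian case, recall from \S \ref{S:mean} that if we set $\phi(x,y) = \log|c(1,x)(y)| \in \rL^1(X\times Y, \mu\times\nu)$ and $F(x,y)=(f(x),y)$, then the Birkhoff average $A_n(x,y) = \frac{1}{n}\sum_{k=0}^{n-1}\phi\circ F^k(x,y)$ equals $\frac{1}{n}\log|c(n,x)(y)|$. The pointwise ergodic theorem applied to $F$ on the probability space $(X\times Y,\mu\times\nu)$ yields that $A_n(x,y)$ converges $\mu\times\nu$-almost everywhere to some measurable function; by the proof in \S \ref{S:mean} this limit agrees with $\log\La(x)(y)$. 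Fubini's theorem then furnishes a $\mu$-conull set $X_0 \subset X$ so that for each $x \in X_0$, the sequence of functions $y \mapsto A_n(x)(y)$ converges to $y\mapsto \log\La(x)(y)$ for $\nu$-almost every $y\in Y$.

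Next, fix $x\in X_0$ and $\eps>0$. Since $(Y,\nu)$ is a finite (probability) measure space, Egorov's theorem applied to the $\nu$-a.e. convergence $A_n(x)\to\log\La(x)$ produces a measurable set $Z=Z(x,\eps)\subset Y$ with $\nu(Y\setminus Z)<\eps$ on which the convergence is uniform. Setting $p_{S(x)} := 1_Z$, this is a projection in $M = \rL^\infty(Y,\nu)$ satisfying $\tau(\id - p_{S(x)}) = \nu(Y\setminus Z) < \eps$, and $S(x) \subset \rL^2(Y,\nu)$ is the closed subspace of $\rL^2$-functions supported on $Z$.

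The final step is to observe that, in the abelian case, $\log\bigl([c(n,x)^*c(n,x)]^{1/2n}\bigr)$ coincides with the multiplication operator $A_n(x)$, and likewise $\log\La(x)$ is a multiplication operator. The operator norm of a multiplication operator $m_g$ on $\rL^2(Y,\nu)$ equals the essential supremum $\|g\|_\infty$. Hence
\begin{equation*}
\bigl\|\bigl(A_n(x) - \log\La(x)\bigr)\, p_{S(x)}\bigr\|_{\mathrm{op}} \;=\; \bigl\|1_Z\bigl(A_n(x) - \log\La(x)\bigr)\bigr\|_\infty \;=\; \mathrm{ess\,sup}_{y\in Z}\,\bigl|A_n(x)(y) - \log\La(x)(y)\bigr|,
\end{equation*}
which tends to zero by the uniform convergence of $A_n(x)$ to $\log\La(x)$ on $Z$ supplied by Egorov. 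This is exactly the almost-uniform convergence required by Conjecture \ref{conj:1}.

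There is no substantive obstacle here: the entire content of the argument is that pointwise a.e. convergence on a finite measure space upgrades to almost-uniform convergence via Egorov, which in the commutative setting is the same as operator-norm convergence after cutting down by a projection of large trace. The only mildly delicate point is the identification of $\log\bigl([c(n,x)^*c(n,x)]^{1/2n}\bigr) p_{S(x)}$ with the multiplication operator by $1_Z A_n(x)$, and the fact that its operator norm is the $\rL^\infty$-norm over $Z$ — both of which are routine features of the abelian case developed in Example \ref{abelian1}.
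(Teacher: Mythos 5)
Your proposal is correct and follows exactly the paper's own argument: pointwise ergodic theorem plus Fubini to get a.e. convergence of $A_n(x)$ to $\log\La(x)$ on $Y$ for a.e. $x$, then Egorov to upgrade to uniform convergence off a set of measure less than $\eps$, with the projection $p_{S(x)}=1_{Z(x)}$ and the identification of operator norm with essential supremum closing the argument. No differences worth noting.
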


\begin{proof}
Define $F,\phi$ and $A_n$ as in \S \ref{S:mean}. By the Pointwise Ergodic Theorem, $A_n(x,y)$ converges to $\log \La(x)(y)$ for a.e. $(x,y)$.  By Fubini's Theorem, there exists a subset $X' \subset X$ with full measure such that for a.e. $x \in X'$, $A_n(x)$ converges pointwise a.e. (as $n\to\infty$) to $\log \La(x)$. Let $\eps>0$. By Egorov's Theorem, for every $x \in X'$ there exists a measurable subset $Z(x) \subset Y$ with $\nu(Z(x)) > 1-\eps$ such that $A_n(x)$ converges uniformly to $\log \La(x)$ on $Z(x)$.

Let $S(x) \subset \rL^2(Y,\nu)$ be the closed subspace of functions that equal zero off of $Z(x)$. The projection operator $p_{S(x)}$ is identified with the characteristic function $1_{Z(x)} \in \rL^\infty(Y,\nu)$. Moreover, $\tau(\id-p_{S(x)}) = \nu(Y \setminus Z(x)) < \eps$. Because $A_n(x)=n^{-1} \log |c(n,x)|$ converges uniformly to $\log \La(x)$ on $Z(x)$, it follows that
 $$\lim_{n\to\infty} n^{-1} \log |c(n,x)| p_{S(x)} = \log(\La(x))p_{S(x)}$$
 in operator norm.

\end{proof}

\begin{prop}
We assume the same hypotheses as Theorem \ref{thm:abelian-strong-growth}. In this setting, Conjecture \ref{conj:2} is true.
\end{prop}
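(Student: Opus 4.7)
The plan is to construct $\cH_x$ as the union of an increasing chain of $\rL^2$-subspaces coming from Egorov's theorem, mirroring the step used in the proof of Theorem~\ref{thm:abelian-strong-growth}. The key observation is that once $n^{-1}\log|c(n,x)|$ is uniformly close to $\log\Lambda(x)$ on some support set, taking $n$-th roots of $\rL^2$-norms converts this into a multiplicative squeeze between $\|c(n,x)\xi\|_2^{1/n}$ and $\|\Lambda(x)^n\xi\|_2^{1/n}$.

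First I would produce the Egorov sets. For a.e.\ $x\in X$, Fubini combined with the Pointwise Ergodic Theorem gives $n^{-1}\log|c(n,x)(y)| \to \log\Lambda(x)(y)$ for $\nu$-a.e.\ $y$. Iterating Egorov's theorem and taking successive unions yields an increasing sequence $Z_1(x)\subseteq Z_2(x)\subseteq\cdots\subseteq Y$ of measurable sets with $\nu(Y\setminus Z_k(x))<1/k$ on each of which $n^{-1}\log|c(n,x)|$ converges uniformly to $\log\Lambda(x)$; in particular $\log\Lambda(x)$ is essentially bounded on every $Z_k(x)$. Set
$$\cH_x=\bigcup_{k\ge 1}\{\xi\in \rL^2(Y,\nu):\xi=0\text{ a.e.\ on }Y\setminus Z_k(x)\},$$
which is a linear subspace because the $Z_k(x)$ are increasing. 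For each $\eps>0$, choosing $k>1/\eps$ makes $p=1_{Z_k(x)}\in M$ a projection with $\tau(\id-p)<\eps$ and $p\rL^2(Y,\nu)\subseteq \cH_x$, so $\cH_x$ is essentially dense.

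Next I would verify the growth-rate equality on $\cH_x$. Given $\xi\in \cH_x$ supported in $Z_k(x)$, set $\delta_n=\sup_{y\in Z_k(x)}\bigl|n^{-1}\log|c(n,x)(y)| - \log\Lambda(x)(y)\bigr|$, so $\delta_n\to0$. On $Z_k(x)$ this gives $e^{-2n\delta_n}\Lambda(x)^{2n}\le|c(n,x)|^2\le e^{2n\delta_n}\Lambda(x)^{2n}$; integrating against $|\xi|^2\,d\nu$ and taking $1/(2n)$-th powers yields
$$e^{-\delta_n}\|\Lambda(x)^n\xi\|_2^{1/n}\le \|c(n,x)\xi\|_2^{1/n}\le e^{\delta_n}\|\Lambda(x)^n\xi\|_2^{1/n}.$$
Writing $d\mu=|\xi|^2\,d\nu$, which is a finite measure supported where $\Lambda(x)$ is essentially bounded, one has $\|\Lambda(x)^n\xi\|_2^{1/n}=\|\Lambda(x)\|_{\rL^{2n}(\mu)}\to\|\Lambda(x)\|_{\rL^\infty(\mu)}$ as $n\to\infty$ by the standard $\rL^p$-$\rL^\infty$ limit on a finite measure space. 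The squeeze then forces $\|c(n,x)\xi\|_2^{1/n}$ to converge to the same finite limit, which is exactly Conjecture~\ref{conj:2} in this setting.

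There is no serious obstacle; the proof is a repackaging of the Egorov step already used for Conjecture~\ref{conj:1} in the abelian case, combined with a routine $\rL^p$-$\rL^\infty$ argument. The only mildly delicate point is arranging the Egorov sets to be increasing so that $\cH_x$ is a genuine linear subspace, handled by taking successive unions during the inductive Egorov construction.
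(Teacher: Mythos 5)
Your proof is correct and follows essentially the same route as the paper's: Egorov sets from the proof of Theorem~\ref{thm:abelian-strong-growth}, the essentially dense union of subspaces of functions supported on those sets, the multiplicative squeeze from uniform convergence of $n^{-1}\log|c(n,x)|$, and the standard $\rL^{2n}(\mu)\to\rL^\infty(\mu)$ limit with $d\mu=|\xi|^2\,d\nu$. Your extra care in making the $Z_k(x)$ increasing so that $\cH_x$ is genuinely a linear subspace is a small tidiness improvement over the paper's presentation, but not a different argument.
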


\begin{proof}

Let $\xi \in \rL^2(Y,\nu)$. We first prove $\lim_{n\to\infty} \| \La(x)^n \xi \|^{1/n}_2 = \| \La(x) 1_{support(\xi)} \|_\infty$.

Without loss of generality, we may assume $\|\xi\|_2 = 1$. It is a standard exercise that $\lim_{n\to\infty} \|\phi\|_n = \|\phi\|_\infty$ for $\phi \in \rL^\infty$. So
$\|\La(x)^n\xi\|^{1/n}_2$ tends to $\|\La(x)\|_{\rL^{\infty}(Y, |\xi|^{2} \dee \nu)}$ as $n\to\infty$. The latter is  the same as $\| \La(x) 1_{support(\xi)} \|_{\rL^\infty(Y,\nu)}$.

By the proof of Theorem \ref{thm:abelian-strong-growth}, for every $r \in \N$ and a.e. $x$, there exists a measurable subset $Z_r(x) \subset Y$ such that $\nu(Z_r(x))>1-1/r$ and $n^{-1} \log |c(n,x)\resto Z_r(x)|$ converges uniformly to $\log \La(x) \resto Z_r(x)$ as $n\to\infty$ (where $\resto$ means ``restricted to'').

Let $S_r(x) \subset \rL^2(Y,\nu)$ be the subspace of vectors $\xi$ such that $\xi(y)=0$ for a.e. $y \in Y \setminus Z_r(x)$. Let $S(x) = \cup_{r\in \N} S_r(x) \subset \rL^2(Y,\nu)$. Because $\nu(Z_r(x))>1-1/r$ for all $r$, $S(x)$ is essentially dense.

Let $\xi \in S_r(x)$. Then
\begin{eqnarray*}
\| c(n,x) \xi \|_2 &\le& \| \La(x)^n \xi\|_2  \left\| \left(y \in Z_r(x) \mapsto \frac{|c(n,x)(y)|}{\La(x)^n(y)} \right) \right\|_{\rL^\infty(Z_r(x),\nu)}.
\end{eqnarray*}
Since $n^{-1} \log |c(n,x)\resto Z_r(x)|$ converges to $\log \La(x)$ uniformly on $Z_r(x)$, this implies
$$\limsup_{n\to\infty} \|c(n,x ) \xi \|^{1/n}_2 \le \lim_{n\to\infty} \|\La(x )^n \xi \|^{1/n}_2.$$
Similarly,
\begin{eqnarray*}
 \| \La(x)^n \xi\|_2&\le& \| c(n,x) \xi \|_2  \left\| \left(y \in Z_r(x) \mapsto \frac{\La(x)^n(y)}{|c(n,x)(y)|} \right) \right\|_{\rL^\infty(Z_r(x),\nu)}.
\end{eqnarray*}
So $\liminf_{n\to\infty} \|c(n,x ) \xi \|^{1/n}_2 \ge \lim_{n\to\infty} \|\La(x )^n \xi \|^{1/n}_2.$ This proves 
$$\lim_{n\to\infty} \| c(n,x) \xi \|^{1/n}_2 = \lim_{n\to\infty} \| \La(x)^n \xi \|^{1/n}_2 = \| \La(x) 1_{support(\xi)} \|_\infty$$
for all vectors $\xi \in S_r(x)$. Because $r\in \N$ is arbitrary, the above limits hold for all $\xi \in S(x)$. Because $S(x)$ is essentially dense, this implies Conjecture \ref{conj:2}.

\end{proof}

\begin{remark}
The same result holds if $M = \textrm{M}_n(\C)  \otimes \rL^\infty(Y,\nu)$ with essentially the same proof. One needs only use the non-ergodic version of Oseledet's Multiplicative Ergodic Theorem instead of Birkhoff's Pointwise Ergodic Theorem.
\end{remark}

\subsection{A counterexample}


\begin{thm}
There exist standard probability spaces $(X,\mu), (Y,\nu)$, an ergodic pmp invertible transformation $f:X \to X$, a measurable cocycle $c:\Z \times X \to M=\rL^\infty(Y,\nu)$  satisfying the hypotheses of Theorem \ref{thm:main} and a vector $\xi \in \rL^2(Y,\nu)$ such that
\begin{eqnarray*}
\lim_{n\to\infty} \|c(n,x ) \xi \|^{1/n}_{\rL^2(Y,\nu)} = \lim_{n\to\infty} \|c(n,x ) \|^{1/n}_{\rL^2(Y,\nu)} > \lim_{n\to\infty} \|\La(x )^n \xi \|^{1/n}_{\rL^2(Y,\nu)} = \lim_{n\to\infty} \|\La(x )^n \|^{1/n}_{\rL^2(Y,\nu)}.
\end{eqnarray*}
Moreover, we can choose the cocycle so that $\|c(1,x)\|_{\infty} \le C$ for some constant $C$ and a.e. $x$. Moreover, $n^{-1}\log |c(n,x)|$ does not converge to $\log \La(x)$ in operator norm (for a.e. $x$).
\end{thm}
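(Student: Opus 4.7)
The plan is to construct a single explicit example realizing all three conclusions simultaneously. Take $(X,\mu,f)$ to be the Bernoulli shift on $\{0,1\}^{\Z}$ with the uniform product measure and $(Y,\nu)$ the discrete probability space $Y=\N$, $\nu(\{n\})=n^{-a}/\zeta(a)$ for some fixed $a>1$. Fix $R>1$ and define $c(1,x)(n)=R^{x_n-1/2}$. Since this function in $\rL^\infty(Y,\nu)$ takes values in $\{R^{\pm 1/2}\}$, both $\|c(1,x)\|_\infty\le R^{1/2}$ and $\|\log c(1,x)\|_2^2\le (\log R)^2/4$ hold deterministically, so the hypotheses of Theorem~\ref{thm:main} are met. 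Unwinding gives the explicit cocycle $c(k,x)(n)=R^{T_k(n,x)-k/2}$ with $T_k(n,x):=\sum_{j=0}^{k-1}x_{n+j}$.

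First I would identify $\La$: the strong law applied to the i.i.d.\ sequence $(x_{n+j})_{j\ge 0}$ gives $T_k(n,x)/k\to 1/2$ a.s.\ for each $n$, whence $\La(x)(n)=1$ for all $n$ a.s., i.e.\ $\La(x)=\mathbf{1}$. I then take $\xi=\mathbf{1}\in\rL^2(Y,\nu)$; since $|\xi|\equiv 1$, the equalities $\|c(k,x)\xi\|_2=\|c(k,x)\|_2$ and $\|\La(x)^k\xi\|_2=\|\La(x)^k\|_2=1$ are automatic, reducing the whole claim to producing the operator-norm bound and showing $\lim_k\|c(k,x)\|_2^{1/k}>1$.

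The operator-norm divergence is quick: the second Borel-Cantelli lemma applied to the independent events $\{x_{Nk}=\cdots=x_{Nk+k-1}=1\}$ for $N\in\N$ shows that for each $k$, a.s.\ some $n$ satisfies $T_k(n,x)=k$ and, by symmetry, some $n$ satisfies $T_k(n,x)=0$. Therefore
\[
\| k^{-1}\log c(k,x)\|_\infty=(\log R)\sup_n|T_k(n,x)/k-1/2|=(\log R)/2
\]
for every $k$ and a.e.\ $x$, which does not converge to $\|\log\La(x)\|_\infty=0$.

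The substantive step is the strict growth inequality. Chernoff bounds give $\mathbb{P}(T_k(n,x)/k\ge 1/2+\delta)\asymp e^{-kI(\delta)}$ with $I(\delta)=(1/2+\delta)\log(1+2\delta)+(1/2-\delta)\log(1-2\delta)$, and applying Borel-Cantelli to disjoint $k$-blocks furnishes, for any $\delta\in(0,1/2)$ and a.e.\ $x$, an index $n_k\le C_\delta ke^{kI(\delta)}$ with $T_k(n_k,x)/k\ge 1/2+\delta$. The single term $\nu(\{n_k\})R^{2T_k(n_k,x)-k}$ in the sum $\|c(k,x)\|_2^2=\sum_n\nu(\{n\})R^{2T_k(n,x)-k}$ is thus at least $C'k^{-a}\exp\bigl(k(2\delta\log R-aI(\delta))\bigr)$; optimizing over $\delta$ (for small $\log R$ taking $\delta^*=(\log R)/(2a)$ yields value $(\log R)^2/(2a)$) produces
\[
\liminf_{k\to\infty}\|c(k,x)\|_2^{1/k}\ge \exp\bigl(\tfrac{1}{2}\sup_{\delta>0}(2\delta\log R-aI(\delta))\bigr)>1.
\]
This already furnishes the strict separation from $\lim\|\La(x)^k\xi\|_2^{1/k}=1$. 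The hardest remaining step is upgrading $\liminf$ to an honest $\lim$ to match the wording of the theorem; I would obtain this via a G\"artner-Ellis or Varadhan-integral-lemma argument applied to the empirical distribution of $\{T_k(n,x)/k\}_{n\in\N}$, which for typical $x$ satisfies a large-deviation principle delivering the matching upper bound. The strict growth inequality itself is already immediate from the $\liminf$ lower bound above.
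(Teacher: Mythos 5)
Your construction is genuinely different from the paper's. The paper uses the dyadic odometer on $\Z_2$ together with a partition $Y=\sqcup_m Y_m$, engineered so that (by independence and Borel--Cantelli) almost every $x$ admits, for all $l$ in longer and longer ranges, a block $Y_m$ on which $c(l,x)$ takes the \emph{maximal} possible value $2^l$; the claimed limit $2$ then coincides with the trivial upper bound $\|c(l,x)\|_\infty^{1/l}\le 2$, so only a lower bound ever has to be proved. Your Bernoulli-shift/moving-block example with weights $n^{-a}$ is sound in outline: the identification $\La\equiv 1$, the choice $\xi=\mathbf{1}$, the operator-norm non-convergence via the second Borel--Cantelli lemma, and the large-deviations lower bound $\liminf_k\|c(k,x)\|_2^{1/k}\ge\exp\bigl(\tfrac12\sup_\delta(2\delta\log R-aI(\delta))\bigr)>1$ are all correct.

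The gap is exactly where you flag it, and it is not cosmetic. The statement asserts that $\lim_k\|c(k,x)\xi\|_2^{1/k}$ \emph{exists}, and in your example its value lies strictly between $1$ and the trivial bound $R^{1/2}$, so the matching upper bound costs real work. G\"artner--Ellis/Varadhan do not apply off the shelf: the relevant object is the \emph{random} weighted measure $\sum_n n^{-a}\delta_{T_k(n,x)/k}$, and what you actually need is an almost-sure bound of the form $\#\{n\le N: T_k(n,x)\ge k(1/2+\delta)\}\lesssim Ne^{-kI(\delta)}$ holding simultaneously for all dyadic $N\ge e^{k(I(\delta)+\eps)}$ and a grid of $\delta$'s, for all large $k$; first-moment bounds give this only in expectation, and upgrading to an a.s.\ statement (second moments or concentration plus Borel--Cantelli, then a summation by levels) is the bulk of the proof and is entirely absent. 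A second, smaller defect: with $c(1,x)(n)=R^{x_n-1/2}$ one has $n^{-1}\log|c(n,x)|\to 0$ in $\rL^2$, so the drift $D$ of Theorem \ref{thm:main} is $0$ and that theorem does not actually furnish the operator $\La(x)$ your inequalities refer to; multiplying the cocycle by a fixed constant greater than $1$ repairs this. If one only wants the strict inequality $\liminf_k\|c(k,x)\xi\|_2^{1/k}>\lim_k\|\La(x)^k\xi\|_2^{1/k}$ (which is all the paper needs to motivate Definition \ref{D:growth1}), your argument is complete after that one-line fix; to prove the statement as written you must either carry out the a.s.\ upper bound sketched above or, as the paper does, redesign the example so that the lower bound meets a trivial upper bound.
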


\begin{proof}
Let $X=\Z_2$ be the compact group of 2-adic integers. An element of $\Z_2$ is written as a formal sum $x = \sum_{i=0}^\infty x_i 2^i$ with $x_i \in \{0,1\}$ and the usual multiplication and addition rules. Let $\mu$ be the Haar probability measure on $X$. There is a bijection between $X$ and $\{0,1\}^{\N \cup \{0\}}$ given by $x \mapsto (x_0,x_1,\ldots)$. This bijection maps the measure $\mu$ to the $(\N \cup \{0\})$-th power of the uniform measure on $\{0,1\}$.

Define $f:X \to X$ by $f(x)=x+1$. It is well-known that a translation on a compact abelian group is ergodic if and only if every orbit is dense. Thus $f$ is an ergodic measure-preserving transformation. Alternatively, $f$ is the standard odometer which is well-known to be ergodic.

Let $(Y,\nu)$ be a probability space that is isomorphic to the unit interval with Lebesgue measure. Let $Y= \sqcup_{n=1}^\infty Y_n$ be a partition of $Y$ into positive measure subsets. We will choose the partition more carefully later. Define the cocycle $c:\Z \times X \to \rL^\infty(Y,\nu)$ by
\begin{displaymath}
c(1,x)(y) = \left\{ \begin{array}{cl}
1 & \textrm{ if } y \in Y_m \textrm{ for some $m$ and } x_m = 0\\
2 & \textrm{ otherwise } \end{array}\right.\end{displaymath}
This extends to a cocycle via $c(n,x)=c(1,f^{n-1}x)\cdots c(1,f(x))c(1,x)$.

For every $y \in Y$,
$$\int \log c(1,x)(y)~d\mu(x)  = (1/2)\log(2).$$
Since $f$ is ergodic, it follows that the limit operator $\La(x)$ defined by $\log \La(x) = \lim_{n\to\infty} n^{-1} \log c(n,x)$ (where convergence is in $\rL^2$ and pointwise a.e.) is the constant function $\La(x)=\sqrt{2}$ for a.e. $x$.



For $n,m \in \N$, let
$$S_{n,m} = \{x \in X:~x_{m} = 1, x_n = 0\}.$$
We claim that if $x \in S_{n,m}$ and $n<m$, $0 \le l \le 2^n +1 $, then $c(l,x)(y) = 2^l~\forall y \in Y_m$. Indeed, note that if $x \in S_{n,m}$ the smallest $k$ such that $c(1,f^kx)(y) = 1$ must be when $k + \sum_{i=0}^m x_i 2^i = 2^{m+1}$.  But
$$2^{m}\le \sum_{i=0}^m x_i 2^i \le 2^{m+1} - 2^n-1.$$
Therefore, $c(1,f^kx)(y) = c(1,x +k)(y)= 2$ for all $0\le k \le 2^n$ and
$$c(l,x)  = c(1,f^{l-1}x)\cdots c(1,x)=2^l.$$

Note that $\mu(S_{n,m})=1/4$. Moreover, if $n_1 \ne n_2$ and $m_1 \ne m_2$ then $S_{n_1,m_1}$ and $S_{n_2,m_2}$ are independent events.  It follows that if $T_n = S_{n,n+10}$ then the events $\{T_n\}_{n=1}^\infty$ are jointly independent and, by Borel-Cantelli, a.e. $x$ is contained in infinitely many of the sets $T_n$.

If $x\in T_n$ then for $0 \le l \le 2^n + 1$
$$\|c(l,x )1_Y\|^2_{\rL^2(Y,\nu)} \ge \|c(l,x)1_{Y_{n+10}}\|^2_{\rL^2(Y,\nu)} \ge \nu(Y_{n+10}) 2^{2l}.$$

We could choose the subsets $\{Y_m\}$ so that $\nu(Y_m) \ge C m^{-2}$ for some constant $C$. With this choice and $x \in T_n$,
$$\|c(l,x )1_Y\|^{2}_{\rL^2(Y,\nu)}\ge C 2^{2l}/(n+10)^{-2}.$$
Since a.e. $x$ is contained in infinitely many $T_n$'s it follows that
$$\lim_{n\to\infty} \|c(n,x )1_Y\|^{1/n}_{\rL^2(Y,\nu)} = 2.$$
On the other hand,
$$\lim_{n\to\infty} \|\La(x )^n 1_Y\|^{1/n}_{\rL^2(Y,\nu)} = \sqrt{2}.$$
This proves the theorem with $\xi=1_Y$. By Theorem \ref{thm:abelian-strong-growth}, $|c(n,x)|^{1/n}$ does not converge to $\La(x)$ in operator norm (for a.e. $x$).
\end{proof}

\begin{remark}
The essential phenomena behind this counterexample is that there is no uniform rate of convergence in the Pointwise Ergodic Theorem. Precisely, while $\frac{1}{n}\sum_{k=0}^{n-1} \log c(1,f^kx)(y)$ converges to $\log(2)/2$ for every $y$ and a.e. $x$, the convergence is not uniform in $y$.
\end{remark}

\section{Preliminaries}\label{sec:prelim}
Throughout these notes, by a \emph{tracial von Neumann algebra} we mean a pair $(M,\tau)$ where $M$ is a von Neumann algebra, and $\tau$ is a faithful, normal, tracial, state.  By a \emph{semi-finite von Neumann algebra} we mean a pair $(M,\tau)$ where $M$ is a von Neumann algebra and $\tau$ is a faithful, normal, and semi-finite trace. Throughout, we assume $M$ is a sub-algebra of the algebra $B(\cH)$ of all bounded operators on a separable Hilbert space $\cH$. This implies $(M,\tau)$ has separable pre-dual. We will consider many constructions that depend on the choice of trace $\tau$ but we suppress this dependence from the notation. 

\subsection{Spectral measures}

Suppose $x$ is a  (bounded or unbounded) self-adjoint operator on $\cH$. By the Spectral Theorem (\cite[Theorem VIII.6]{MR751959}), there exists a projection valued measure $E_x$ on the real line such that
$$x = \int \l ~\dee E_x(\l).$$
The support of $E_x$ is contained in the spectrum of $x$. The projections of the form $E_x(R)$ (for Borel sets $R \subset \R$) are the {\bf spectral projections of $x$}.
If $f: \R \to \R$ is Borel then $f(x)$ is a self-adjoint operator on $\cH$ defined by
$$f(x) := \int f(\l) ~\dee E_x(\l).$$
In the case of unbounded $x$, $f(x)$ has the same domain as $x$. The {\bf absolute value} of $x$ is defined by $|x| = (x^*x)^{1/2} = \int \sqrt{\lambda} ~\dee E_{x^*x}(\l)$ and is equal to $\int |\l| ~\dee E_x(\l)$.

If $x$ is such that all of its spectral projections lie in the von Neumann algebra $M$, then the composition $\tau \circ E_x$ is a Borel probability measure on $\C$ called the {\bf spectral measure of $x$} and denoted by $\mu_x$.  In particular, if $x \in M$ then $\mu_x$ is well-defined.

\begin{example}[The abelian case]
If $M=\rL^\infty(Y,\nu)$ (as in \S \ref{intro-abelian}), then an operator $\phi \in M$ is self-adjoint if and only if it is real-valued. The projection-valued measure $E_\phi$ satisfies: $E_\phi(R)$ is the projection onto the subspace of $\rL^2$-functions with support in $\phi^{-1}(R)$ (for Borel $R \subset \R$). Moreover, $\mu_\phi=\phi_*\nu$ is the distribution of $\phi$.
\end{example}

\subsection{Polar decomposition}

We will frequently have to use the polar decomposition, see \cite[Theorem VIII.32]{MR751959}. We restate it here.

\begin{prop}\label{P:polar decomposition}
Let $x$ be a closed densely defined operator on $\cH$. Then there is a positive self-adjoint operator $|x|$ with $\dom(|x|)=\dom(x)$ and a partial isometry $u$ with initial space $\ker(x)^\perp$ and final space $\overline{\im(x)}$ so that $x=u|x|$ (where $\im(x)$ denotes the image of $x$). Moreover $|x|$ and $u$ are uniquely determined by these properties together with the additional condition $\ker(|x|)=\ker(x)$.
\end{prop}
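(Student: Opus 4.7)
The plan is to construct $|x|$ via the functional calculus applied to $x^{*}x$, and then to define $u$ by ``inverting'' the action of $|x|$. By a classical theorem of von Neumann, for any closed densely defined operator $x$, the operator $x^{*}x$ is positive self-adjoint on the natural domain $\{\xi\in\dom(x):x\xi\in\dom(x^{*})\}$, and this domain is a core for $x$. Applying the spectral theorem to $x^{*}x$, set $|x|:=(x^{*}x)^{1/2}$; this is a positive self-adjoint operator. For $\xi$ in the natural domain of $x^{*}x$ one has
$$\bigl\| \, |x|\xi \, \bigr\|^{2} = \langle |x|^{2}\xi,\xi\rangle = \langle x^{*}x\xi,\xi\rangle = \|x\xi\|^{2}.$$
Since the natural domain of $x^{*}x$ is a core for both $x$ and $|x|$ (for $|x|$ this follows from the fact that $(\id+x^{*}x)^{-1}$ is bounded with range in $\dom(x^{*}x)$), the identity $\bigl\| \, |x|\xi \, \bigr\| = \|x\xi\|$ extends to all $\xi\in\dom(x)=\dom(|x|)$. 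In particular $\ker(|x|)=\ker(x)$.

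Next I would construct $u$. Define $u_{0}:\im(|x|)\to\im(x)$ by $u_{0}(|x|\xi):=x\xi$ for $\xi\in\dom(x)$. The isometric identity above guarantees that $u_{0}$ is well defined (since $|x|\xi=|x|\xi'$ implies $x(\xi-\xi')=0$) and is isometric. Extend $u_{0}$ by continuity to the closed subspace $\overline{\im(|x|)}$, and declare $u=0$ on its orthogonal complement. Because $|x|$ is self-adjoint, $\overline{\im(|x|)}=\ker(|x|)^{\perp}=\ker(x)^{\perp}$, so $u$ is a partial isometry with initial space $\ker(x)^{\perp}$ and with final space $\overline{\im(x)}$. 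The factorization $x=u|x|$ holds on $\dom(x)$ by construction.

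For uniqueness, suppose $x=vp$ with $p$ positive self-adjoint, $\dom(p)=\dom(x)$, $\ker(p)=\ker(x)$, and $v$ a partial isometry with initial space $\ker(x)^{\perp}$ and final space $\overline{\im(x)}$. Then $v^{*}v$ is the orthogonal projection onto $\ker(x)^{\perp}=\overline{\im(p)}$, and since $p$ is self-adjoint with $\im(p)\subset\ker(p)^{\perp}$, we have $v^{*}vp=p$. Consequently $x^{*}x=pv^{*}vp=p^{2}$, and by uniqueness of the positive square root of a positive self-adjoint operator, $p=|x|$. Then $v(|x|\xi)=x\xi=u(|x|\xi)$ on $\im(|x|)$, so $v=u$ on $\overline{\im(|x|)}$ by continuity, while $v=0=u$ on the orthogonal complement, giving $v=u$. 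The main obstacle is the careful domain bookkeeping required to establish $\dom(|x|)=\dom(x)$ and to justify that the natural domain of $x^{*}x$ is simultaneously a core for $x$ and $|x|$; once this is in hand, the construction of $u$ and the uniqueness argument are essentially algebraic.
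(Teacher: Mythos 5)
Your argument is correct, and it is essentially the standard proof of the polar decomposition for closed densely defined operators that the paper itself does not reproduce but simply cites (Reed--Simon, Theorem VIII.32): construct $|x|=(x^{*}x)^{1/2}$ via von Neumann's theorem and the spectral theorem, transfer the isometric identity $\||x|\xi\|=\|x\xi\|$ from the core $\dom(x^{*}x)$ to conclude $\dom(|x|)=\dom(x)$, define $u$ on $\overline{\im(|x|)}=\ker(x)^{\perp}$, and obtain uniqueness from $p^{2}=x^{*}x$ together with uniqueness of positive square roots. The domain bookkeeping you flag is handled correctly (closedness of $x$ and of $|x|$ is what lets the norm identity pass from the core to the full domains), so there is nothing to add.
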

The expression $x=u|x|$ is called the {\bf polar decomposition} of $x$.


\subsection{The regular representation}
For the remainder of this section, fix a semi-finite von Neumann algebra $(M,\tau).$
Recall from the introduction that $\rL^2(M,\tau)$ is the Hilbert space completion of $\cN = \{x \in M:~ \tau(x^*x)<\infty\}$ with respect to the inner product defined on $\cN$ by
$$\langle x, y \rangle = \tau(x^*y).$$
Let $\|x\|_2 = \langle x,x \rangle^{1/2}$ and $\|x\|_\infty$ be the operator norm of $x$ (as an operator on $\cH$).

For any $x,y \in M$,
$$\|xy\|_2 \le \|x\|_\infty \|y\|_2 \textrm{ and } \|xy\|_2 \le \|x\|_2 \|y\|_\infty.$$
(e.g., \cite[V.2, equation (8)]{MR1873025}). Therefore, the operator $L_x:\cN \to \cN$ defined by $L_x(y)=xy$ admits a unique continuous extension from $\rL^2(M,\tau)$ to itself. Moreover, the operator norm of $L_x$ is bounded by $\|x\|_\infty$. In fact, they are equal.
This follows, for example, from \cite[Corollary I.5.4 and Theorem V.2.22]{MR1873025}.
Similarly, the map $R_x:\cN \to \cN$ defined by $R_x(y)=yx$ admits a unique continuous extension to $\rL^2(M,\tau)$ and the operator norm of $R_x$ is $\|x\|_\infty$.

We will identify $M$ with its image $\{L_x:~x \in M\}$ (viewed as a sub-algebra of the algebra of bounded operators on $\rL^2(M,\tau)$).

\begin{remark}
In much of this paper, we think of $M$ as acting on $\rL^2(M,\tau)$ rather than on $\cH$. There is a difference. For example, the strong operator topologies on $M$ induced from the inclusions $M \subset B(\cH)$ and $M \subset B(\rL^2(M,\tau))$ are different in general \cite[Exercise 1.3]{anantharaman-popa}.  However, in our case, $\cH$ is a left-$M$-module. By \cite[Lemma II.2.5 and Corollary III.3.10]{MR1873025}, this implies that the strong topologies induced from the inclusions $M\subseteq B(\cH)$,$M\subseteq B(\rL^{2}(M,\tau))$ agree on any norm bounded subset of $M$. Assuming $\cH$ is separable, there is a nice structure theory: left-$M$-modules are isomorphic to submodules of multiplies of $\rL^2(M,\tau)$ \cite[Chapter 8]{anantharaman-popa}.


\end{remark}

\subsection{The algebra of affiliated operators}\label{sec:affiliated}

\begin{defn}
The {\bf commutant} of $M$, denoted $M'$, is the algebra of bounded operators $y$ on $\rL^2(M,\tau)$ such that $xy=yx$ for all $x\in M$. Von Neumann's Double Commutant Theorem implies $M=(M')'$.
\end{defn}

\begin{defn}
An unbounded operator $x$ on $\rL^2(M,\tau)$ is {\bf affiliated} with $M$ if for every unitary $u \in M'$, $xu=ux$.  By \cite[Chapter I.1 Exercise 10]{MR641217} or \cite[Proposition 7.2.3]{anantharaman-popa}, if $x$ is a closed densely defined operator and $x=u|x|$ is its polar decomposition, then $x$ is affiliated with $M$ if and only if $u$ and the spectral projections of $|x|$ are in $M$.
\end{defn}

\begin{defn}
A subspace is $V \subset \rL^2(M,\tau)$ is {\bf essentially dense} if for every $\eps>0$ there exists a projection $p \in M$ such that $\tau(\id-p) < \eps$ and $p\rL^2(M,\tau) \subset V$.  Essentially dense subspaces are reviewed in \S \ref{sec:essentially}.
\end{defn}

\begin{defn}[$\textrm{L}^0(M,\tau)$]
 A closed, densely-defined operator affiliated with $(M,\tau)$ is {\bf $\tau$-measurable} if its domain of definition is essentially dense.  Note that when $(M,\tau)$ is finite, then all affiliated operators are $\tau$-measurable. Let $\rL^0(M,\tau)$ denote the set of $\tau$-measurable operators.  By \cite[Theorems IX.2.2, IX.2.5]{TakesakiII}, $\rL^0(M,\tau)$ is closed under adjoint, addition and multiplication and is a $\ast$-algebra under these operations.
\end{defn}

\subsubsection{Domains}
For $x\in \rL^0(M,\tau)$ we write $\dom(x) \subset \rL^2(M,\tau)$ for its domain. We remark now that for $a,b\in \rL^0(M,\tau)$ the sum $a+b$ is defined as  the \emph{closure} of the operator $T$ with $\dom(T)=\dom(a)\cap \dom(b)$ and with $T\xi=a\xi+b\xi$ for $\xi\in\dom(T).$  Similarly $ab$ is defined as the \emph{closure} of the operator $T$ with domain $b^{-1}(\dom(a))\cap \dom(b)$ and $T\xi=a(b\xi)$ for $\xi\in\dom(T).$ Thus, for example, the domain of $ab$ is often larger than $b^{-1}(\dom(a))\cap \dom(b).$ This will occasionally cause us some headaches, and we will try to remark  when it actually presents an issue. Regardless, this paragraph should be taken as a blanket warning that $ab$ is not literally defined to be the composition, and $a+b$ is not the literal sum.

\subsubsection{$\textrm{L}^2(M,\tau) \subset \textrm{L}^0(M,\tau)$}

We can include $\rL^2(M,\tau)$ in $\rL^0(M,\tau)$ as follows. For $x \in \rL^2(M,\tau)$ and $y \in M$, define $L^0_x(y) = R_y(x)=xy$. Then $L^0_x$ is closable but not bounded in general. Let $L_x$ denote the closure of $L^0_x$. The map $x \mapsto L_x$ defines a linear injection from $\rL^2(M,\tau)$ into $\rL^0(M,\tau)$. By abuse of notation, we will identify $\rL^2(M,\tau)$ with its image in $\rL^0(M,\tau)$.

For $x\in \rL^0(M,\tau)$ we set $|x|=(x^{*}x)^{1/2}$ and
\[\|x\|_{2}=\left(\int t^{2}\,d\mu_{|x|}(t)\right)^{1/2}\in [0,\infty].\]
Then $\rL^2(M,\tau)$ is identified with the set of all $x\in \rL^0(M,\tau)$ which have $\|x\|_{2}<\infty.$

While $\rL^2(M,\tau)$ is a subspace of $\rL^0(M,\tau)$, it is not a sub-algebra in general.  For instance, it is not closed under products if $M$ is diffuse (see 
Proposition \ref{prop:facts about GL2/L2 diffuse case}).

\subsubsection{Extending the adjoint}

The anti-linear map $x \mapsto x^*$ on $M$ uniquely extends to an anti-linear isometry $J:\rL^2(M,\tau) \to \rL^2(M,\tau)$. By \cite[Proposition 7.3.3]{anantharaman-popa}, if $x \in \rL^2(M,\tau)$ then the following are equivalent: (1) $x$ is self-adjoint, (2) $Jx=x$, (3) $x$ is in the $\rL^2$-closure of $M_{sa}=\{y \in M:~y^*=y\}$. Let $\rL^2(M,\tau)_{sa}=\{x\in \rL^2(M,\tau):~Jx=x\}$.

\subsubsection{Invertible affiliated operators}
We say an operator $x \in \rL^0(M,\tau)$ is {\bf invertible} if there exists an operator $y \in \rL^0(M,\tau)$ such that $xy=yx=\id$ where, following our abuse of notation, $xy$ and $yx$ denote the closures of the compositions of the operators $x$ and $y$. In this case we write $y=x^{-1}$. Let $\rL^0(M,\tau)^{\times} \subset \rL^0(M,\tau)$ be the set of invertible affiliated operators $x$.

\begin{lem}\label{L:invertible}
If $(M,\tau)$ is semi-finite and $x \in \rL^0(M,\tau)^{\times}$ has polar decomposition $x=u|x|$ then $u$ is unitary, $|x| \in \rL^0(M,\tau)^{\times}$ and $x^* \in \rL^0(M,\tau)^\times$ with $(x^*)^{-1}=(x^{-1})^*$. If $(M,\tau)$ is finite then $x \in \rL^0(M,\tau)$ is invertible if and only if it is injective.
\end{lem}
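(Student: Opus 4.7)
The plan is to exploit the polar decomposition $x = u|x|$ together with the algebraic identities $xx^{-1} = I = x^{-1}x$, interpreted as closures of compositions in the $*$-algebra $\rL^{0}(M,\tau)$, to extract information about $u$, $|x|$, and $x^{*}$. Throughout I would work algebraically in $\rL^{0}(M,\tau)$, using associativity of multiplication and the fact that each literal composition of operators is a restriction of the corresponding $\rL^{0}$-product. First I would show $u$ is unitary by showing $x$ is injective with dense range. For injectivity: if $\xi \in \dom(x)$ and $x\xi = 0$, then $\xi$ lies in the domain of the literal composition $x^{-1}\circ x$ (since $0\in \dom(x^{-1})$); this literal composition is contained in its closure $x^{-1}x = I$, forcing $\xi = I\xi = x^{-1}(0) = 0$. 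For dense range: the identity $xx^{-1} = I$, written as the closure of the literal composition $T\colon \eta \mapsto x(x^{-1}\eta)$, forces $\dom(T)$ to be dense in $\rL^{2}(M,\tau)$; on $\dom(T)$ one has $T\eta = \eta \in \im(x)$, so $\overline{\im(x)} = \rL^{2}(M,\tau)$. Since $u$ is a partial isometry with initial space $\ker(x)^{\perp}$ and final space $\overline{\im(x)}$, both now equal to $\rL^{2}(M,\tau)$, $u$ is unitary.

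With $u$ unitary, the remaining claims of the first statement follow by direct computation in $\rL^{0}(M,\tau)$: from $x = u|x|$ I obtain $|x| = u^{*}x$, and associativity together with $u^{*}u = uu^{*} = I$ yield
\begin{align*}
(u^{*}x)(x^{-1}u) &= u^{*}(xx^{-1})u = u^{*}u = I, \\
(x^{-1}u)(u^{*}x) &= x^{-1}(uu^{*})x = x^{-1}x = I,
\end{align*}
showing $|x|\in \rL^{0}(M,\tau)^{\times}$ with inverse $x^{-1}u$. Likewise $x^{*}(x^{-1})^{*} = (x^{-1}x)^{*} = I$ and $(x^{-1})^{*}x^{*} = (xx^{-1})^{*} = I$, giving $(x^{*})^{-1} = (x^{-1})^{*}$.

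For the second statement, the forward direction was already established in the injectivity argument above and uses no finiteness. For the converse, assume $(M,\tau)$ is finite and $x$ is injective, so the initial projection of $u$ equals $I$. The initial and final projections of any partial isometry in $M$ are Murray–von Neumann equivalent, and in a finite von Neumann algebra the only projection equivalent to $I$ is $I$ itself, so the final projection of $u$ equals $I$ and $u$ is unitary. It remains to construct $|x|^{-1}$ in $\rL^{0}(M,\tau)$. Defining $|x|^{-1} := \int_{(0,\infty)} t^{-1}\,dE_{|x|}(t)$ by Borel functional calculus (well-defined because $\ker(|x|)=\ker(x)=\{0\}$ forces $E_{|x|}(\{0\})=0$), the spectral projections $p_{n} := E_{|x|}([1/n,\infty))\in M$ satisfy $p_{n}\rL^{2}(M,\tau)\subseteq \dom(|x|^{-1})$ and $\tau(I-p_{n}) = \tau(E_{|x|}((0,1/n)))\to 0$ by normality and finiteness of $\tau$. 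Hence $\dom(|x|^{-1})$ is essentially dense, $|x|\in \rL^{0}(M,\tau)^{\times}$, and $x^{-1} := |x|^{-1}u^{*}$ inverts $x$ in $\rL^{0}(M,\tau)$. The main obstacle throughout is the careful bookkeeping between literal compositions of unbounded operators and the $\rL^{0}$-products defined as their closures; every algebraic identity must be interpreted correctly in this sense, and the extraction of pointwise information about $u$ and $x$ from $xx^{-1}=I=x^{-1}x$ hinges on a literal composition being a restriction of its closure.
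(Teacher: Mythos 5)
Your proof is correct and follows essentially the same route as the paper's: use the polar decomposition, show invertibility forces $x$ to be injective with dense range so that $u^*u=uu^*=\id$, invert $|x|$ via the spectral calculus, and in the finite case upgrade $u^*u=\id$ to $uu^*=\id$ using finiteness (the paper does this with the trace identity $\tau(u^*u)=\tau(uu^*)$, which is the same fact as your Murray--von Neumann equivalence argument). Your extra bookkeeping distinguishing literal compositions from their closures in $\rL^0(M,\tau)$ fills in details the paper leaves implicit, but it is the same proof.
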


\begin{proof}
Because $u$ is a partial isometry, $u^*u$ is the orthogonal projection onto $\ker(u)^\perp$. If $x$ is invertible, then $u$ is injective, so $u^*u =\id$. Similarly, $uu^*$ is the projection onto the closure of the image of $x$. So if $x$ is invertible then $uu^*=\id$. This proves $u$ is unitary.

Since $x$ is injective, the equality $\|x\xi\|=\||x|\xi\|$ for $\xi\in \dom(x)$ implies that $|x|$ is injective. Thus $1_{\{0\}}(|x|)=p_{\ker(|x|)}=0,$ and so $|x|^{-1}$ may be defined as a closeable operator in $\rL^0(M,\tau).$ The computation $(x^*)^{-1}=(x^{-1})^*$ is straightforward.


Now suppose $(M,\tau)$ is finite. Without loss of generality, $\tau(\id)=1$. Suppose $x$ is injective. Then $u$ is also injective and $u^*u=\id$. Thus $1=\tau(u^*u) = \tau(u^*u)$. Because the only projection with trace 1 is $\id$ (by faithfulness of $\tau$), $u^*u=\id$ and $u$ is unitary. As above, $|x|$ is invertible. So $|x|^{-1}u^*$ is an inverse to $x$.


\end{proof}


\section{The log-square integrable general linear group}\label{sec:GL2}


For this section, we fix a semi-finite tracial von Neumann algebra $(M,\tau)$. Let
$$\GL^2(M,\tau)=\{a\in \rL^0(M,\tau)^{\times}: \log(|a|)\in \rL^2(M,\tau)\}$$
be the {\bf log-square integrable general linear group} of $(M,\tau)$. For brevity we will write $G=\GL^2(M,\tau)$. Although we call this set a group, it is not at all obvious that $G$ is closed under multiplication. The main result of this section is:

\begin{thm}\label{T:log subgroup}
$G$ is a subgroup of $\rL^0(M,\tau)^{\times}.$  Moreover, for every $a\in G$ we have that $a^{*}\in G$ and additionally:
\[\|\log( |a|)\|_{2}=\|\log(|a^{*}|)\|_{2}=\|\log(|a^{-1}|)\|_{2}.\]
\end{thm}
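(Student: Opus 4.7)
I will split the theorem into two parts: (i)~for $a\in G$, both $a^{*}$ and $a^{-1}$ lie in $G$ with all three $\|\log|\cdot|\|_{2}$-quantities equal; and (ii)~$G$ is closed under multiplication. Part~(i) is immediate from polar decomposition. By Lemma~\ref{L:invertible}, $a=u|a|$ with $u\in M$ unitary (since $a\in \rL^{0}(M,\tau)^{\times}$); then $|a^{*}|^{2}=aa^{*}=u|a|^{2}u^{*}$ gives $|a^{*}|=u|a|u^{*}$ by uniqueness of positive square roots, so $\log|a^{*}|=u(\log|a|)u^{*}$ via functional calculus. Similarly $|a^{-1}|^{2}=u|a|^{-2}u^{*}$ yields $|a^{-1}|=u|a|^{-1}u^{*}$ and $\log|a^{-1}|=-u(\log|a|)u^{*}$. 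Since conjugation by a unitary in $M$ is an $\rL^{2}$-isometry (by the trace property), the three norms coincide; in particular $a^{*},a^{-1}\in G$.

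The substance lies in~(ii). The main tool is the Fack--Kosaki inequality $\mu_{s+t}(xy)\le \mu_{s}(x)\mu_{t}(y)$ for $x,y\in \rL^{0}(M,\tau)$ and $s,t\ge 0$, together with the rearrangement identity $\int_{0}^{\tau(\id)}\phi(\mu_{t}(|x|))\,dt=\tau(\phi(|x|))$ for Borel $\phi\ge 0$. Setting $s=t$ and applying this with $(x,y)=(a,b)$, then taking logarithms, positive parts, and squaring via $(u+v)_{+}^{2}\le 2(u_{+}^{2}+v_{+}^{2})$, followed by the change of variable $s=2t$ and the above identity with $\phi(\sigma)=(\log\sigma)_{+}^{2}$, will give
\[
\|(\log|ab|)_{+}\|_{2}^{2}\ \le\ 4\bigl(\|(\log|a|)_{+}\|_{2}^{2}+\|(\log|b|)_{+}\|_{2}^{2}\bigr).
\]
Applying the same argument to $(ab)^{-1}=b^{-1}a^{-1}$ and invoking part~(i)---which identifies $\|(\log|(ab)^{-1}|)_{+}\|_{2}$ with $\|(\log|ab|)_{-}\|_{2}$ and $\|(\log|a^{-1}|)_{+}\|_{2}$ with $\|(\log|a|)_{-}\|_{2}$ (similarly for $b$)---yields the mirror estimate
\[
\|(\log|ab|)_{-}\|_{2}^{2}\ \le\ 4\bigl(\|(\log|a|)_{-}\|_{2}^{2}+\|(\log|b|)_{-}\|_{2}^{2}\bigr).
\]
Summing the two bounds proves $\|\log|ab|\|_{2}^{2}\le 4(\|\log|a|\|_{2}^{2}+\|\log|b|\|_{2}^{2})<\infty$, i.e.\ $ab\in G$; the invertibility of $ab$ in $\rL^{0}(M,\tau)$ with inverse $b^{-1}a^{-1}$ is routine.

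\textbf{Where the work lies.} The Fack--Kosaki inequality controls only the \emph{large} singular values of $ab$, which after logarithms corresponds to the positive part of $\log|ab|$; the small singular values (equivalently, the negative part of $\log|ab|$) lie outside its direct reach. The maneuver of passing to $(ab)^{-1}$, where small singular values of $ab$ become large singular values and can then be bounded via Fack--Kosaki again, is the crucial step and the one I expect to demand the most care in turning into a rigorous argument. A secondary technicality is justifying the rearrangement identity relating $\int(\log\mu_{t}(|x|))_{\pm}^{2}\,dt$ to $\|(\log|x|)_{\pm}\|_{2}^{2}$ in the semifinite setting (where $\tau(\id)$ may be infinite and $\log|x|$ unbounded); this follows from the standard change-of-variables formula between the spectral measure of $|x|$ and the distribution of its generalized $s$-numbers, but requires some attention to domains.
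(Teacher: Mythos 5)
Your proposal is correct and follows essentially the same route as the paper: part (i) is the paper's Proposition \ref{P:transformation of measures} verbatim, and for closure under products both arguments rest on the Fack--Kosaki submultiplicativity of generalized $s$-numbers together with the same key maneuver of passing to $(ab)^{-1}=b^{-1}a^{-1}$ to control the small singular values (equivalently, the negative part of $\log|ab|$). The only difference is bookkeeping --- you integrate the $s$-number inequality directly in the $t$-variable via the rearrangement identity, whereas the paper first converts it into a distribution-function inequality (its Proposition \ref{P:submutliplicative estimate}) and integrates against $\lambda\,\dee\lambda$ using (\ref{E:formula for norm}); these are equivalent, and your constant $4$ is in fact the correct outcome of either computation.
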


We start with some basic facts about spectral measures.

\begin{prop}\label{P:transformation of measures}
Let $(M,\tau)$ be a semi-finite von Neumann algebra and $a\in \rL^0(M,\tau)^{\times}.$ Then:
\begin{enumerate}
\item $\mu_{|a|}=\mu_{|a^{*}|},$ \label{I:same measure under *}
\item  $\mu_{|a^{-1}|}=\mu_{|a|^{-1}}=r_{*}(\mu_{|a|}),$ where $r\colon (0,\infty)\to (0,\infty)$ is the map $r(t)=t^{-1}.$  \label{I:change of measure under reflection}
\end{enumerate}

\end{prop}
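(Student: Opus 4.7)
The plan is to reduce both assertions to basic manipulations of the polar decomposition combined with the unitary invariance of $\tau$ and the change-of-variables formula for the Borel functional calculus.

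First I would invoke the polar decomposition $a = u|a|$ from Proposition \ref{P:polar decomposition}. Because $a \in \rL^0(M,\tau)^{\times}$, Lemma \ref{L:invertible} guarantees that $u$ is a \emph{unitary} in $M$ and that $|a|$ is itself invertible as an element of $\rL^0(M,\tau)$ with $\mu_{|a|}(\{0\}) = \tau(p_{\ker|a|}) = 0$. These two facts are the technical backbone of everything that follows; once they are in hand, all the subsequent manipulations stay inside $\rL^0(M,\tau)$ and no domain subtlety arises, since $u$ is bounded and its action interacts well with closures of products.

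For part \eqref{I:same measure under *}, I would compute $aa^{*} = u|a|^{2}u^{*}$, whence by uniqueness of the positive square root in $\rL^0(M,\tau)_+$ one gets $|a^{*}| = u|a|u^{*}$. Unitary conjugation commutes with Borel functional calculus, so $E_{|a^{*}|}(R) = u\,E_{|a|}(R)\,u^{*}$ for every Borel $R \subset \R$. Applying $\tau$ and using the trace property $\tau(uxu^{*}) = \tau(x)$ gives $\mu_{|a^{*}|}(R) = \mu_{|a|}(R)$, which is the first claim.

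For part \eqref{I:change of measure under reflection}, I would first handle $\mu_{|a|^{-1}} = r_{*}\mu_{|a|}$ purely via functional calculus: since $|a|$ is positive, invertible in $\rL^0(M,\tau)$, and $\mu_{|a|}(\{0\}) = 0$, the map $r(t) = t^{-1}$ is defined $\mu_{|a|}$-a.e.\ on $(0,\infty)$, and by Borel functional calculus $|a|^{-1} = r(|a|)$ with spectral projections $E_{|a|^{-1}}(R) = E_{|a|}(r^{-1}(R))$; composing with $\tau$ yields $\mu_{|a|^{-1}} = r_{*}\mu_{|a|}$. Then I would show $\mu_{|a^{-1}|} = \mu_{|a|^{-1}}$ by the same polar-decomposition trick used above: from $a = u|a|$ and unitarity of $u$ one has $a^{-1} = |a|^{-1}u^{*}$, hence $(a^{-1})^{*}a^{-1} = u|a|^{-2}u^{*}$, so $|a^{-1}| = u|a|^{-1}u^{*}$; trace invariance under unitary conjugation finishes the proof.

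There is no serious obstacle here; the only thing to be careful about is that products in $\rL^0(M,\tau)$ are defined as closures of compositions, so strictly speaking identities like $|a^{-1}|^{2} = u|a|^{-2}u^{*}$ should be checked with $u$ bounded and $|a|^{-1}$ affiliated, where it is standard that pre- and post-composition with a unitary in $M$ preserves the closure operation and commutes with taking $*$. Once this is noted, both parts are short calculations.
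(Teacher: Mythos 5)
Your argument is correct and follows essentially the same route as the paper's proof: polar decomposition $a = u|a|$ with $u$ unitary (via Lemma \ref{L:invertible}), the identities $|a^{*}| = u|a|u^{*}$ and $|a^{-1}| = u|a|^{-1}u^{*}$, unitary invariance of $\tau$, and functional calculus for $r_{*}\mu_{|a|} = \mu_{|a|^{-1}}$. Your closing remark about closures of products with a bounded unitary is a reasonable extra precaution that the paper leaves implicit.
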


\begin{proof}

(\ref{I:same measure under *}):
Let $a=u|a|$ be the polar decomposition. Since $a\in \rL^0(M,\tau)^{\times}$ by Lemma \ref{L:invertible} we have that $u\in \mathcal{U}(M)$ (which is the unitary group of $M$). Then $a^{*}=|a|u^{*},$ and $|a^{*}|^{2}=aa^{*}=u|a|^{2}u^{*}.$ From this, it is easy to see that $|a^{*}|=u|a|u^{*},$ because $(u|a|u^*)^2 = u|a|^2u^*$  Thus, for every Borel $E\subseteq [0,\infty)$
\[\mu_{|a^{*}|}(E)=\tau(1_{E}(|a^{*}|))=\tau(1_{E}(u|a|u^{*}))=\tau(u1_{E}(|a|)u^{*})=\tau(1_{E}(|a|))=\mu_{|a|}(E).\]


(\ref{I:change of measure under reflection}):
Again, let $a=u|a|$ be the polar decomposition. So $a^{-1}=|a|^{-1}u^{*}.$ As in (\ref{I:same measure under *}), it is direct to show that $|a^{-1}|=u|a|^{-1}u^{*}.$ The proof then proceeds exactly as in (\ref{I:same measure under *}), using that $r_{*}(\mu_{|a|})=\mu_{|a|^{-1}}$ (which follows from functional calculus).

\end{proof}

Because expressions like $1_{(\lambda,\infty)}(|a|)(\rL^2(M,\tau))$ will show up frequently, it will be helpful to introduce the following notation. Given $a\in \rL^0(M,\tau)$ and $E\subseteq [0,\infty)$ Borel, we let $\mathcal{H}^{a}_{E}=1_{E}(|a|)(\rL^2(M,\tau)). $
It will be helpful for us to derive an alternate expression for $\|\log(|a|)\|_{2}.$ Note
\begin{eqnarray}
\|\log(|a|)\|_{2}^{2}&=&\int_0^\infty t^{2}\,\dee\mu_{|\log(|a|)|}(t)\nonumber \\
&=&\int_0^\infty \left(\int_0^t 2\l\, \dee\l\right) \, \dee\mu_{|\log(|a|)|}(t)\nonumber \\
&=&2 \int_0^\infty \lambda \int_{0}^{\infty} 1_{(\lambda,\infty)}(t)\,\dee\mu_{|\log(|a|)|}(t)\,\dee\lambda=2\int_{0}^{\infty}\lambda\mu_{|\log(|a|)|}(\lambda,\infty)\,\dee\lambda. \label{E:simpler formula for norm}
\end{eqnarray}

Note that we have used Fubini's Theorem. This is valid if $\mu_{|\log(|a|)|}(\l,\infty)<\infty$ for all $\l>0$ since then $\mu_{|\log(|a|)|}$ is sigma-finite. On the other hand, if  $\mu_{|\log(|a|)|}(\l,\infty)=\infty$ for some $\l>0$ then both sides are infinite, so the formula is correct in this case, too.

By functional calculus,  $\mu_{|\log(|a|)|}$ is the pushforward of $\mu_{|a|}$ under the map $t\mapsto |\log(t)|.$ So
$$\|\log(|a|)\|_{2}^{2}=2\int_{0}^{\infty}\lambda\left[\mu_{|a|}(e^{\lambda},\infty)+\mu_{|a|}(0,e^{-\lambda})\right]\,\dee\lambda.$$
By Proposition \ref{P:transformation of measures},
\begin{eqnarray}\label{E:formula for norm}
\|\log(|a|)\|_{2}^{2}=2\int_{0}^{\infty}\lambda [\mu_{|a|}(e^{\lambda},\infty)+\mu_{|a^{-1}|}(e^{\lambda},\infty)]\,\dee\lambda.
\end{eqnarray}

\begin{prop}\label{P:submutliplicative estimate}
Let $(M,\tau)$ be a semi-finite von Neumann algebra and $a, b\in \rL^0(M,\tau).$
Given $\lambda_{1},\lambda_{2},\lambda\in (0,\infty)$ with $\lambda_{1}\lambda_{2}=\lambda,$ we have that
\[\tau(1_{(\lambda,\infty)}(|ab|))\leq \tau(1_{(\lambda_{1},\infty)}(|a|))+\tau(1_{(\lambda_{2},\infty)}(|b|)).\]
\end{prop}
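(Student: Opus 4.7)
The plan is to produce a projection $g\in M$ with the two properties $\|ab\cdot g\|_{\infty}\le \lambda_{1}\lambda_{2}$ and $\tau(1-g)\le \tau(1_{(\lambda_{1},\infty)}(|a|))+\tau(1_{(\lambda_{2},\infty)}(|b|))$. Once this is achieved, set $P=1_{(\lambda,\infty)}(|ab|)$ and observe that no nonzero $\xi\in P\rL^{2}(M,\tau)\cap g\rL^{2}(M,\tau)$ can exist, since such a $\xi$ would satisfy both $\|ab\xi\|>\lambda\|\xi\|$ (from the spectral inequality $|ab|^{2}P\ge \lambda^{2}P$) and $\|ab\xi\|\le \lambda\|\xi\|$. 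Hence $P\wedge g=0$, and the Kaplansky formula then gives $P\sim P\vee g-g\le 1-g$ as a Murray--von Neumann subequivalence, so that $\tau(P)\le \tau(1-g)$, which is the claimed inequality.

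Set $p_{1}=1_{(\lambda_{1},\infty)}(|a|)$, $p_{2}=1_{(\lambda_{2},\infty)}(|b|)$, and $e_{i}=1-p_{i}$. Since $e_{i}$ commutes with $|a|$ and $|b|$ respectively, functional calculus gives the truncation bounds $\|ae_{1}\|_{\infty}\le \lambda_{1}$ and $\|be_{2}\|_{\infty}\le \lambda_{2}$. Next, consider $p_{1}b \in \rL^{0}(M,\tau)$ with polar decomposition $p_{1}b=v|p_{1}b|$, and let $h=1-v^{*}v\in M$ be the projection onto its kernel. Since $p_{1}b$ has image in $p_{1}\rL^{2}(M,\tau)$ we have $vv^{*}\le p_{1}$, and traciality gives $\tau(1-h)=\tau(v^{*}v)=\tau(vv^{*})\le \tau(p_{1})$. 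Define $g=h\wedge e_{2}\in M$; Kaplansky's formula then gives
\[ \tau(1-g)=\tau((1-h)\vee(1-e_{2}))\le \tau(1-h)+\tau(1-e_{2})\le \tau(p_{1})+\tau(p_{2}). \]

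To bound $\|abg\|_{\infty}$, first note that $g\le e_{2}$ implies $e_{2}g=g$, so $bg=(be_{2})g$ in $\rL^{0}(M,\tau)$ and is thus operator-norm bounded by $\lambda_{2}$. Second, $g\le h$ means $p_{1}bg=0$, equivalently $e_{1}\cdot (bg)=bg$; by associativity of multiplication in $\rL^{0}(M,\tau)$, $a\cdot(bg)=(ae_{1})\cdot (bg)$, which is operator-norm bounded by $\lambda_{1}\lambda_{2}$. This yields $\|abg\|_{\infty}\le \lambda_{1}\lambda_{2}=\lambda$, completing the argument sketched in the first paragraph.

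The main obstacle is bookkeeping inside $\rL^{0}(M,\tau)$: interpreting $\ker(p_{1}b)$ as the range of an honest projection $h\in M$ via the polar decomposition of an affiliated operator, verifying $vv^{*}\le p_{1}$ from the fact that $p_{1}b$ has range in $p_{1}\rL^{2}(M,\tau)$, and using the $\ast$-algebra operations on $\rL^{0}(M,\tau)$ (rather than literal composition of possibly unbounded operators) to pass from $a\cdot bg$ to the bounded product $(ae_{1})\cdot(bg)$.
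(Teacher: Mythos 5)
Your argument is correct in substance, but it is a more self-contained route than the paper's. The paper simply quotes two results of Fack--Kosaki: the duality between the generalized $s$-numbers $\tmu_t(a)=\inf\{\|ap\|_\infty:\tau(\id-p)\le t\}$ and the distribution function $\tlambda_s(a)=\tau(1_{(s,\infty)}(|a|))$ (their Proposition 2.2), together with the submultiplicativity $\tmu_{t_1+t_2}(ab)\le\tmu_{t_1}(a)\tmu_{t_2}(b)$ (their Lemma 2.5(vii)). You in effect reprove both ingredients in the special case needed: your projection $g=h\wedge e_2$ with $\|abg\|_\infty\le\lambda_1\lambda_2$ and $\tau(\id-g)\le\tau(p_1)+\tau(p_2)$ is exactly the witness showing $\tmu_{\tau(p_1)+\tau(p_2)}(ab)\le\lambda_1\lambda_2$, and your comparison $P\sim P\vee g-g\le \id-g$ is the standard proof of the duality. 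The construction is sound: $vv^*\le p_1$ because the closure of $p_1b$ still has range in the closed subspace $p_1\rL^2(M,\tau)$, the de~Morgan/Kaplansky estimate $\tau(\id-g)\le\tau(\id-h)+\tau(p_2)$ is correct, and the identities $bg=(be_2)g$ and $a(bg)=(ae_1)(bg)$ are legitimate uses of associativity in $\rL^0(M,\tau)$. What your approach buys is independence from the Fack--Kosaki machinery; what it costs is having to manage the unbounded-operator bookkeeping by hand.

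One step needs a patch: in the final comparison you take $0\ne\xi\in P\rL^2(M,\tau)\cap g\rL^2(M,\tau)$ and evaluate $ab$ at $\xi$, but such a $\xi$ need not lie in $\dom(ab)$ (the range of $P=1_{(\lambda,\infty)}(|ab|)$ contains vectors on which $|ab|$ is not square-summable). The fix is routine: if $q:=P\wedge g\ne 0$, then since $\dom(ab)$ is essentially dense one can choose a projection $p\in M$ with $p\rL^2(M,\tau)\subseteq\dom(ab)$ and $\tau(\id-p)<\tau(q)$; from $q-q\wedge p\sim q\vee p-p\le \id-p$ one gets $\tau(q\wedge p)>0$, so there is $0\ne\xi\in q\rL^2(M,\tau)\cap\dom(ab)$. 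For such a $\xi$ the spectral measure of $|ab|$ with respect to $\xi$ is supported in $(\lambda,\infty)$, giving the strict inequality $\|(ab)\xi\|=\||ab|\xi\|>\lambda\|\xi\|$, while $g\xi=\xi$ and $\xi\in\dom(ab)$ give $\|(ab)\xi\|=\|(abg)\xi\|\le\lambda\|\xi\|$, a contradiction. With this addition the proof is complete.
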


\begin{proof}
The proof is almost immediate from \cite[Lemma 2.5 (vii) and Proposition 2.2]{MR840845}. To explain, for $a \in \rL^0(M,\tau)$, let $\tmu_t(a)$ be the infimum of $\|ap\|_\infty$ over all projections $p \in M$ with $\tau(1-p) \le t$. This is the {\bf $t$-th generalized $s$-number of $a$}. Also let $\tlambda_t(a)=\tau(1_{(t,\infty)}(|a|))$. So $t\mapsto \tlambda_t(a)$ is the {\bf distribution function of $a$}. These invariants are related by \cite[Proposition 2.2]{MR840845} which states
$$\tmu_t(a) = \inf\{s\ge 0:~ \tlambda_s(a)\le t\}.$$
It follows that $\tmu_t(a) \le s$ if and only if $\tlambda_s(a)\le t$. In particular, $\tmu_{\tlambda_t(a)}(a) \le t$ always holds.

 \cite[Lemma 2.5 (vii)]{MR840845} states
$$\tmu_{t+s}(ab) \le \tmu_t(a)\tmu_s(b)$$
for any $t,s >0$ and any $a,b \in \rL^0(M,\tau)$.

Now that the tools above are ready, we return to the proposition we want to prove. The inequality  $\tau(1_{(\lambda,\infty)}(|ab|))\leq \tau(1_{(\lambda_{1},\infty)}(|a|))+\tau(1_{(\lambda_{2},\infty)}(|b|))$ is equivalent to the statement
$$\tlambda_{ts}(ab) \le \tlambda_t(a) + \tlambda_s(b).$$
By \cite[Proposition 2.2]{MR840845}, this is true if and only if
$$\tmu_{\tlambda_t(a) + \tlambda_s(b)}(ab) \le ts.$$
By \cite[Lemma 2.5 (vii)]{MR840845},
$$\tmu_{\tlambda_t(a) + \tlambda_s(b)}(ab) \le \tmu_{\tlambda_t(a)}(a)\tmu_{\tlambda_t(b)}(b).$$
By \cite[Proposition 2.2]{MR840845} again, $\tmu_{\tlambda_t(a)}(a) \le t$ and $\tmu_{\tlambda_s(b)}(b) \le s$. Combining these inequalities proves the proposition.

\end{proof}

\begin{proof}[Proof of Theorem \ref{T:log subgroup}]
The fact that $G$ is closed under inverses and the $*$ operation is obvious from Proposition \ref{P:transformation of measures}. Let $a,b\in G.$ By Proposition \ref{P:submutliplicative estimate}:
\begin{align*}
&\|\log(|ab|)\|_{2}^{2}=2\int_{0}^{\infty}\lambda\left[\mu_{|ab|}(e^{\lambda},\infty)+\mu_{|b^{-1}a^{-1}|}(e^{\lambda},\infty)\right]\,\dee\lambda \\
&\leq 2\int_{0}^{\infty}\lambda\left[\mu_{|a|}(e^{\lambda/2},\infty)+\mu_{|a^{-1}|}(e^{\lambda/2},\infty)\right]\,\dee\lambda+2\int_{0}^{\infty}\lambda\left[\mu_{|b|}(e^{\lambda/2},\infty)+\mu_{|b^{-1}|}(e^{\lambda/2},\infty)\right]\,\dee\lambda\\
&=4\int_{0}^{\infty}\lambda\left[\mu_{|a|}(e^{t},\infty)+\mu_{|a^{-1}|}(e^{t},\infty)\right]\,\dee t+4\int_{0}^{\infty}\lambda\left[\mu_{|b|}(e^{t},\infty)+\mu_{|b^{-1}|}(e^{t},\infty)\right]\,\dee\mu\\
&=2(\|\log(|a|)\|_{2}^{2}+\|\log(|b|)\|_{2}^{2}).
\end{align*}
\end{proof}

We also need the following fact analogous to Proposition \ref{P:submutliplicative estimate}, but whose proof is easier.

\begin{prop}\label{P:subadditive estimate}
Let $(M,\tau)$ be a semi-finite von Neumann algebra, and $a,b\in \rL^0(M,\tau).$ Then for all $\lambda_{1},\lambda_{2}>0$ we have that
\[\mu_{|a+b|}(\lambda_{1}+\lambda_{2},\infty)\leq \mu_{|a|}(\lambda_{1},\infty)+\mu_{|b|}(\lambda_{2},\infty).\]
\end{prop}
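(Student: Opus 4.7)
The plan is to mirror the proof of Proposition \ref{P:submutliplicative estimate} almost verbatim, substituting the subadditive inequality for generalized $s$-numbers in place of the submultiplicative one. Recall the notation from the preceding proof: for $a\in \rL^0(M,\tau)$, let $\tmu_t(a) = \inf\{\|ap\|_\infty : p\in M \text{ a projection with } \tau(\id-p)\leq t\}$ be the $t$-th generalized $s$-number, and let $\tlambda_s(a)=\tau(1_{(s,\infty)}(|a|))=\mu_{|a|}(s,\infty)$ be the distribution function. By \cite[Proposition 2.2]{MR840845}, these are related by the equivalence
\[
\tmu_t(a)\leq s \quad \Longleftrightarrow \quad \tlambda_s(a)\leq t.
\]

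The key input for this proposition (replacing the submultiplicative estimate of \cite[Lemma 2.5 (vii)]{MR840845}) is the corresponding subadditive estimate, which is also part of \cite[Lemma 2.5]{MR840845}:
\[
\tmu_{t+s}(a+b)\leq \tmu_t(a)+\tmu_s(b)
\]
for all $t,s>0$ and all $a,b\in \rL^0(M,\tau)$.

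Given this, the proof becomes mechanical. The inequality to establish, $\mu_{|a+b|}(\lambda_1+\lambda_2,\infty)\leq \mu_{|a|}(\lambda_1,\infty)+\mu_{|b|}(\lambda_2,\infty)$, rewrites as $\tlambda_{\lambda_1+\lambda_2}(a+b)\leq \tlambda_{\lambda_1}(a)+\tlambda_{\lambda_2}(b)$. Setting $t=\tlambda_{\lambda_1}(a)$ and $s=\tlambda_{\lambda_2}(b)$, the equivalence above gives $\tmu_t(a)\leq \lambda_1$ and $\tmu_s(b)\leq \lambda_2$. Applying the subadditive estimate yields $\tmu_{t+s}(a+b)\leq \lambda_1+\lambda_2$, and invoking the equivalence once more produces $\tlambda_{\lambda_1+\lambda_2}(a+b)\leq t+s$, which is exactly the desired inequality.

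There is no real obstacle here; the argument is a direct transcription of the preceding proof with ``product'' replaced by ``sum'' throughout. The only thing to be slightly careful about is that the sum $a+b$ in $\rL^0(M,\tau)$ is defined as the closure of the operator with domain $\dom(a)\cap\dom(b)$, so one must rely on the fact that the generalized $s$-numbers and distribution function are defined for this closed extension, which is built into the framework of \cite{MR840845}.
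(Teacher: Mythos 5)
Your argument is correct and is essentially identical to the paper's proof: both invoke the subadditivity of generalized $s$-numbers, \cite[Lemma 2.5 (v)]{MR840845}, at the parameters $t=\mu_{|a|}(\lambda_1,\infty)$, $s=\mu_{|b|}(\lambda_2,\infty)$, and then translate back to distribution functions via \cite[Proposition 2.2]{MR840845}. No issues.
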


\begin{proof}
We use \cite[Lemma 2.5, (v)]{MR840845}, which shows that
$$\tmu_{\mu_{|a|}(\l_1,\infty)+\mu_{|b|}(\l_2,\infty)}(a+b) \leq \tmu_{\mu_{|a|}(\l_1,\infty)}(a)+\tmu_{\mu_{|b|}(\l_2,\infty)}(b) \leq \l_1 + \l_2.$$
Apply \cite[Proposition 2.2]{MR840845} to the inequality above to obtain
$$\tlambda_{\l_1 + \l_2}(|a+b|) \le \mu_{|a|}(\l_1,\infty)+\mu_{|b|}(\l_2,\infty).$$
By definition of $\tlambda$, $\tlambda_{\l_1 + \l_2}(|a+b|) = \mu_{|a+b|}(\lambda_{1}+\lambda_{2},\infty)$ so this finishes the proof.

\end{proof}

\section{The geometry of positive definite operators}\label{sec:positive}

Let $\cP=\cP(M,\tau)=\{x \in \GL^2(M,\tau):~x>0\}$ be the positive definite elements of $\GL^2(M,\tau)$. In \S \ref{sec:symmetric}, we review work of Andruchow-Larontonda  \cite{MR2254561} on the geometry of $\cP \cap M$. In \S \ref{sec:measure}, we review the measure topology on $\rL^0(M,\tau)$. By approximating $\cP$ by $\cP \cap M$ (in the measure topology), we show in \S \ref{sec:P1} that $d_\cP$ (as defined in the introduction) is a metric on $\cP$. Moreover, $\GL^2(M,\tau)$ acts transitively and by isometries on $(\cP,d_\cP)$. In \S \ref{sec:P2}, we show that the exponential map $\exp:\rL^2(M,\tau)_{sa} \to \cP$ is a homeomorphism. From this, we conclude that $(\cP,d_\cP)$ is a complete CAT(0) metric space and characterize its geodesics.  In \S \ref{sec:semi-finite case} we generalize results to semi-finite von Neumann algebras.


\subsection{The space $\cP^\infty(M,\tau)$ of bounded positive operators}\label{sec:symmetric}
Throughout this section, we assume $(M,\tau)$ is finite.

Let $M_{sa} \subset M$ be the subspace of self-adjoint elements and
$$\cP^\infty=\cP^\infty(M,\tau):=\{ \exp(x):~ x \in M_{sa} \} \subset M_{sa}$$
be the positive definite elements with bounded inverse. Note $\cP^\infty=\cP \cap M^\times$. This section studies $\cP^\infty$ equipped with a natural metric, as introduced in \cite{MR2254561}. The results in this section are obtained directly from \cite{MR2254561}.

Let $\GL^\infty(M,\tau)=M^\times$ be the group of elements $x \in M$ such that $x$ has a bounded inverse $x^{-1}$ in $M$. This group acts on $M_{sa}$ by
$$g.w := g w g^* ~(\forall g\in \GL^\infty(M,\tau), w \in M_{sa}).$$

For $w \in \cP^\infty$, the {\bf tangent space to $\cP^\infty$ at $w$}, denoted $T_w(\cP^\infty)$, is a copy of $M_{sa}$ with the inner product $\langle \cdot, \cdot \rangle_w$ defined by
$$\langle x,y \rangle_w := \langle w^{-1/2}.x, w^{-1/2}.y\rangle = \tau( w^{-1/2} x^* w^{-1} y w^{-1/2}) = \tau(w^{-1}x w^{-1}y).$$
 Let $\|\cdot \|_{w,2}$ denote the $\rL^2$-norm with respect to this inner product. In the special case that $w=\id$ is the identity, this is just the restriction of the standard inner product to $M_{sa}$.

These inner products induce a Riemannian metric on $\cP^\infty(M,\tau)$. The reader might be concerned that the tangent spaces $T_w(\cP^\infty)$ are not necessarily complete with respect to their inner products. The incompleteness of $M_{sa}$ causes no difficulty in defining the metric on $\cP^\infty$ but it does mean that Theorem \ref{thm:km} cannot be directly applied to $\cP^\infty$.

\begin{example}[The abelian case]
Suppose $M=\rL^\infty(Y,\nu)$ as in \S \ref{intro-abelian}. Then $T_w(\cP^\infty)=M_{sa}$ consists of essentially bounded real-valued functions on $Y$. The completion of $M_{sa}$ is $\rL^2((Y,\nu), \R)$. If $\nu$ is not supported on a finite subset, then $\rL^2(Y,\nu)$ is not contained in $\rL^\infty(Y,\nu)$ and $M_{sa}$ is not complete. More generally, if $M$ contains an embedded copy of $\rL^\infty(Y,\nu)$ for a diffuse measure space $(Y,\nu)$ and then $M_{sa}$ will not be complete. For example, this occurs if $M$ is diffuse. See Proposition \ref{prop:facts about GL2/L2 diffuse case}.

\end{example}

Here is a more detailed explanation of the metric. Let $\g:[a,b] \to \cP^\infty$ be a path. The $\rL^2$-derivative of $\g$ at $t$ is defined by
$$\g'(t) = \lim_{h\to 0} \frac{\g(t+h)-\g(t)}{h}$$
where the limit is taken with respect to the $\rL^2$-metric on $T_{\g(t)}(\cP^\infty)$. Then the length of $\g$ is defined as in the finite-dimensional case:
$$\textrm{length}_\cP(\g) = \int_a^b \|\g'(t)\|_{\g(t),2} ~dt.$$
Define distance on $\cP^\infty(M,\tau)$ by $d_\cP(x,y)= \inf_\g \textrm{length}_\cP(\g)$ where the infimum is taken over all piece-wise smooth curves $\g$ with derivatives in $M$. For this to be well-defined, it needs to be shown that there exists a piecewise smooth curve between any two points of $\cP^\infty$. For any $\exp(x) \in \cP^\infty$, the map $t \mapsto \exp(tx)$ defines a smooth curve from $\id$ to $\exp(x)$. A piecewise smooth curve between any two points can be obtained by concatenating two of these special curves.

\begin{lem}\label{lem:isometries}
The action of $\GL^\infty(M,\tau)$ on $\cP^\infty$ is transitive and by isometries.
\end{lem}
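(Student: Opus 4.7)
The plan is to verify transitivity by exhibiting an explicit element carrying the identity to any given $p\in \cP^\infty$, and then to check that each element $g\in \GL^\infty(M,\tau)$ acts by an isometry by showing its differential preserves the Riemannian inner product pointwise.

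For transitivity: given $p = \exp(x)\in \cP^\infty$ with $x\in M_{sa}$, set $g = p^{1/2} = \exp(x/2)$. Then $g\in \cP^\infty \subset \GL^\infty(M,\tau)$ (with bounded inverse $\exp(-x/2)$), and since $g$ is self-adjoint we have $g.\id = g\,\id\,g^* = p^{1/2}p^{1/2} = p$. Hence the orbit of $\id$ is all of $\cP^\infty$.

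For the isometry property, fix $g\in \GL^\infty(M,\tau)$ and let $\Phi_g\colon \cP^\infty \to \cP^\infty$ be defined by $\Phi_g(w) = gwg^*$. The map $\Phi_g$ is smooth, and its differential at $w$ sends a tangent vector $v\in T_w(\cP^\infty) \cong M_{sa}$ to $gvg^* \in T_{\Phi_g(w)}(\cP^\infty)\cong M_{sa}$. Using $(gwg^*)^{-1} = (g^*)^{-1}w^{-1}g^{-1}$ together with the trace property $\tau(ab)=\tau(ba)$, a short calculation gives
\[
\|gvg^*\|_{gwg^*,2}^2 = \tau\bigl((gwg^*)^{-1}\,gvg^*\,(gwg^*)^{-1}\,gvg^*\bigr) = \tau(w^{-1}vw^{-1}v) = \|v\|_{w,2}^2.
\]
Thus $d\Phi_g$ is an isometry on each tangent space.

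Consequently, if $\gamma\colon [a,b]\to \cP^\infty$ is a piecewise smooth path (with derivatives in $M$), then $\Phi_g\circ \gamma$ is also such a path, with $(\Phi_g\circ\gamma)'(t) = g\gamma'(t)g^*$, so $\mathrm{length}_\cP(\Phi_g\circ \gamma) = \mathrm{length}_\cP(\gamma)$. Taking the infimum over admissible paths from $x$ to $y$ yields $d_\cP(g.x, g.y)\leq d_\cP(x,y)$, and applying the same inequality to $g^{-1}$ (which is also in $\GL^\infty(M,\tau)$) gives the reverse inequality, so equality holds. The only mild subtlety to check is that admissible paths with derivatives in $M$ are mapped to admissible paths with derivatives in $M$; but this is immediate from $g, g^*\in M$ and the fact that $M$ is an algebra, so no real obstacle arises.
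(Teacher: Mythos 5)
Your proposal is correct and follows essentially the same route as the paper: transitivity via $w^{1/2}.\id = w$, and the isometry property by checking that the differential $v\mapsto gvg^*$ preserves the inner products $\langle\cdot,\cdot\rangle_w$ using the trace property. Your extra remarks on passing from the pointwise statement to equality of path lengths (and the $g^{-1}$ trick for the reverse inequality) just make explicit what the paper leaves implicit.
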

\begin{proof}
The action of $\GL^\infty(M,\tau)$ on $\cP^\infty$  is by isometries since the Frechet derivative of $g$ at $w$ is the map
$$x \in T_w(\cP^\infty) \mapsto g.x =  gxg^* \in T_{g.w}(\cP^\infty)$$
and
\begin{eqnarray*}
\langle g.x, g.y \rangle_{g.w} &=& \tau( (g.w)^{-1} (g.x) (g.w)^{-1} (g.y) ) \\
&=& \tau( (g^*)^{-1}w^{-1}g^{-1} (gxg^*) (g^*)^{-1}w^{-1}g^{-1}  (gyg^*) ) \\
&=& \tau( w^{-1}xw^{-1}y ) = \langle x,y \rangle_w.
\end{eqnarray*}
The action $\GL^\infty(M,\tau) \cc \cP^\infty$ is transitive since for any $w\in \cP^\infty$, $w^{1/2} \in \GL^\infty(M,\tau)$ and
$$w^{1/2}.\id = w.$$
\end{proof}

\begin{lem}\label{lem:commute}\cite[Lemma 3.5]{MR2254561}
For any $a,b \in \cP^\infty$,
$$d_\cP(a,b) = \| \log (b^{-1/2}ab^{-1/2}) \|_2 \ge \|\log(a)-\log(b)\|_2.$$
\end{lem}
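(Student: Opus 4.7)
The plan is to establish the equality $d_\cP(a,b) = \|\log(b^{-1/2}ab^{-1/2})\|_2$ by reducing via an isometry and combining an explicit upper bound with a lower bound coming from the \emph{exponential metric increasing} (EMI) property; this last ingredient will simultaneously deliver the second inequality of the lemma. First, by Lemma \ref{lem:isometries}, the element $g = b^{-1/2} \in \GL^\infty(M,\tau)$ sends $b$ to $\id$ and $a$ to $b^{-1/2}ab^{-1/2}$, so $d_\cP(a,b) = d_\cP(\id, b^{-1/2}ab^{-1/2})$. It therefore suffices to show $d_\cP(\id, x) = \|\log x\|_2$ for every $x \in \cP^\infty$.

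For the upper bound, write $h = \log x \in M_{sa}$ and consider the smooth curve $\g(t) = \exp(th)$ on $[0,1]$, which joins $\id$ to $x$. Since $h$ and $\g(t)$ commute, $\g'(t) = h\g(t) \in M_{sa}$, and
$$\|\g'(t)\|_{\g(t),2}^2 = \tau\big(\g(t)^{-1}\g'(t)\g(t)^{-1}\g'(t)\big) = \tau(h^2) = \|\log x\|_2^2,$$
so $\textrm{length}_\cP(\g) = \|\log x\|_2$, whence $d_\cP(\id,x) \le \|\log x\|_2$.

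The main obstacle is the matching lower bound. The plan is to prove the EMI estimate: for any smooth curve $\b:[0,1] \to M_{sa}$, the composed curve $\alpha(t) = \exp(\b(t))$ satisfies
$$\|\alpha'(t)\|_{\alpha(t),2} \ge \|\b'(t)\|_2$$
pointwise in $t$. I would start from the Daleckii--Krein formula $\alpha'(t) = \int_0^1 \exp(s\b(t))\b'(t)\exp((1-s)\b(t))\,\dee s$, substitute it into $\tau(\alpha^{-1}\alpha'\alpha^{-1}\alpha')$, and use cyclicity of $\tau$ together with the spectral decomposition of $\b(t)$ to rewrite the resulting double integral as an integral of $\tau(e^{-u\b(t)}\b'(t)e^{u\b(t)}\b'(t))$ against a nonnegative weight on $[-1,1]$. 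Jensen's inequality applied to the convex function $y\mapsto y^2$ against the spectral measure of $\b(t)$ (equivalently, a Cauchy--Schwarz estimate in $\rL^2(M,\tau)$) collapses this to a lower bound involving $\tau(\b'(t)^2)$. This argument is the content of \cite[Lemma 3.5]{MR2254561}. Equality holds precisely when $\b'(t)$ commutes with $\b(t)$, consistent with the one-parameter subgroup being the unique geodesic.

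To conclude, since $\exp: M_{sa} \to \cP^\infty$ is a diffeomorphism (with smooth inverse $\log$ defined on $\cP^\infty$ via holomorphic functional calculus), every piecewise smooth curve $\alpha:[0,1]\to \cP^\infty$ from $b$ to $a$ arises as $\alpha = \exp\circ\b$ for a unique smooth $\b$ in $M_{sa}$ with $\b(0)=\log b$ and $\b(1) = \log a$. Integrating the EMI inequality over $[0,1]$ yields $\textrm{length}_\cP(\alpha) \ge \textrm{length}(\b) \ge \|\log a - \log b\|_2$, and taking the infimum over $\alpha$ proves $d_\cP(a,b) \ge \|\log a - \log b\|_2$, which is the second claim of the lemma. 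Specializing to $b=\id$ also gives $d_\cP(\id,x) \ge \|\log x\|_2$, matching the upper bound and completing the equality.
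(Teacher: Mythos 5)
Your argument is correct and is essentially the paper's own treatment: the paper proves nothing here beyond citing \cite[Lemma 3.5]{MR2254561}, and what you have written out (reduction to the base point by the isometric action, the explicit one-parameter curve for the upper bound, and the exponential-metric-increasing estimate via Daleckii--Krein plus positivity of the spectral weight for the lower bound) is precisely the content of that cited lemma. The only point worth being careful about if you were to write this in full is that the competitor curves in the definition of $d_\cP$ are only required to be piecewise smooth in the $\rL^2$ sense with derivatives in $M$, so one must justify that $\log\circ\alpha$ is again differentiable in that sense before applying the chain rule; this is handled in \cite{MR2254561}.
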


\begin{thm}\label{thm:cat}
$\cP^\infty(M,\tau)$ is a CAT(0) space.
\end{thm}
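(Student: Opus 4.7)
The plan is to verify the CN-inequality of Bruhat-Tits: for any $a, b, c \in \cP^\infty$ and $m$ the midpoint of the geodesic from $a$ to $b$,
$$d_\cP(c, m)^2 \le \frac{1}{2}d_\cP(c, a)^2 + \frac{1}{2}d_\cP(c, b)^2 - \frac{1}{4} d_\cP(a, b)^2,$$
which is well known to be equivalent to the CAT(0) condition for a geodesic metric space. In $\cP^\infty$, the geodesic from $a$ to $b$ is the curve $\gamma(t) = a^{1/2}\exp(t \log(a^{-1/2}ba^{-1/2}))a^{1/2}$, with midpoint $m = a^{1/2}(a^{-1/2}ba^{-1/2})^{1/2}a^{1/2}$; these facts follow from the Riemannian setup of Andruchow-Larotonda \cite{MR2254561}, where in particular existence and uniqueness of geodesics between any two points of $\cP^\infty$ is established.

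By Lemma \ref{lem:isometries}, the element $m^{-1/2} \in \GL^\infty(M,\tau)$ acts as an isometry of $\cP^\infty$ sending $m$ to $\id$, and because isometries carry geodesics to geodesics and preserve midpoints, we may reduce to the case $m = \id$. Since the unique geodesic through $\id$ in the direction of $v \in M_{sa}$ is $t \mapsto \exp(tv)$, the images of $a$ and $b$ must take the form $\exp(-x)$ and $\exp(x)$ for some $x \in M_{sa}$. Writing the image of $c$ as $\exp(z)$ with $z \in M_{sa}$, and using Lemma \ref{lem:commute} to compute $d_\cP(a, b) = 2\|x\|_2$ and $d_\cP(c, \id) = \|z\|_2$, the CN-inequality reduces to
$$\|z\|_2^2 + \|x\|_2^2 \le \frac{1}{2} d_\cP(\exp(z), \exp(x))^2 + \frac{1}{2} d_\cP(\exp(z), \exp(-x))^2.$$

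For the final step, the second half of Lemma \ref{lem:commute} directly gives the estimates $d_\cP(\exp(z), \exp(\pm x))^2 \ge \|z \mp x\|_2^2$, and the parallelogram identity in the Hilbert space $\rL^2(M,\tau)_{sa}$ states
$$\|z - x\|_2^2 + \|z + x\|_2^2 = 2\|z\|_2^2 + 2\|x\|_2^2.$$
Combining these immediately yields the required inequality. The main point requiring external input is the Riemannian identification of the geodesics in $\cP^\infty$ (existence, uniqueness, and the closed-form description of the midpoint), which is the technical heart of \cite{MR2254561}; once that is in hand, the crucial comparison between $d_\cP$ and the ambient Hilbert-space metric on $M_{sa}$ is already encoded in Lemma \ref{lem:commute}, so no further analytical work is needed. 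The hard part is therefore really the Riemannian identification of geodesics, not the CAT(0) inequality itself.
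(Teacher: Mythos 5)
Your proof is correct and is essentially the argument that the paper delegates to its citations: the paper's one-line proof invokes \cite[Lemma 3.6]{MR2254561} (the CN/semi-parallelogram inequality for $\cP^\infty$) together with \cite[Chapter II.1, Proposition 1.7 (3)]{bridson-haefliger-book}, and your derivation of the CN inequality from the geodesic identification, the isometric reduction to midpoint $\id$, the comparison $d_\cP(e^z,e^{\pm x})\ge \|z\mp x\|_2$ of Lemma \ref{lem:commute}, and the parallelogram law is precisely the content of that cited lemma. You correctly isolate the genuine external input, namely the existence, uniqueness, and closed form of minimizing geodesics in $\cP^\infty$ from \cite{MR2254561}, which the paper also takes as given.
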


\begin{proof}
This follows from \cite[Lemma 3.6]{MR2254561} and \cite[Chapter II.1, Proposition 1.7 (3)]{bridson-haefliger-book}.
\end{proof}

 \begin{cor}\label{C:contraction}
Let $x,y \in M_{sa}$ and $\s \ge 1$ be a scalar. Then
$$d_\cP(e^{\s x},e^{\s y}) \ge \s d_\cP(e^x,e^y).$$
\end{cor}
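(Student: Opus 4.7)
The plan is to recognize the inequality as the standard ``non-positive curvature'' comparison: rays emanating from a common base point in a CAT(0) space spread out at least as fast as linearly. All the necessary ingredients are already available.

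First, I would verify that for every $z\in M_{sa}$ the curve $\gamma_{z}\colon [0,\infty)\to \cP^{\infty}$ defined by $\gamma_{z}(t)=e^{tz}$ is a constant-speed geodesic issuing from $\id$. The speed computation is the one done in the introduction to this section: since $z$ commutes with $e^{tz}$, we have $\gamma_{z}'(t)=e^{tz/2}\,z\,e^{tz/2}$, so
\[
\|\gamma_{z}'(t)\|_{\gamma_{z}(t),2}^{2}=\tau\bigl(e^{-tz}\gamma_{z}'(t)e^{-tz}\gamma_{z}'(t)\bigr)=\tau(z^{2})=\|z\|_{2}^{2}.
\]
Hence the length of $\gamma_{z}$ on $[0,s]$ equals $s\|z\|_{2}$. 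On the other hand, Lemma~\ref{lem:commute} gives $d_{\cP}(\id,e^{sz})=\|\log(e^{sz})\|_{2}=s\|z\|_{2}$, so the curve is distance-minimizing, and thus a unit-speed-after-rescaling geodesic from $\id$ through $e^{z}$ and $e^{\sigma z}$.

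Applying this with $z=x$ and $z=y$ yields two geodesics $\gamma_{x},\gamma_{y}$ starting at the common point $p=\id\in\cP^{\infty}$. By Theorem~\ref{thm:cat} the space $\cP^{\infty}$ is CAT(0), so I would then invoke the standard comparison for geodesic triangles: for any $t>0$ and any $s\in[0,1]$, taking the comparison triangle $\bar{p},\bar{a},\bar{b}$ in the Euclidean plane with side lengths matching those of $p,\gamma_{x}(t),\gamma_{y}(t)$, the points $\gamma_{x}(st)$ and $\gamma_{y}(st)$ correspond to $s\bar{a}$ and $s\bar{b}$, and the CAT(0) inequality gives
\[
d_{\cP}(\gamma_{x}(st),\gamma_{y}(st)) \;\le\; \|s\bar{a}-s\bar{b}\| \;=\; s\, d_{\cP}(\gamma_{x}(t),\gamma_{y}(t)).
\]
Equivalently, the function $t\mapsto t^{-1}d_{\cP}(\gamma_{x}(t),\gamma_{y}(t))$ is non-decreasing on $(0,\infty)$.

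Finally, I would specialize to $t=\sigma\ge 1$ and $s=1/\sigma\in(0,1]$ (equivalently, compare the value of this ratio at $t=1$ and $t=\sigma$). This gives
\[
d_{\cP}(e^{x},e^{y})=d_{\cP}(\gamma_{x}(1),\gamma_{y}(1))\;\le\;\tfrac{1}{\sigma}\,d_{\cP}(\gamma_{x}(\sigma),\gamma_{y}(\sigma))=\tfrac{1}{\sigma}\,d_{\cP}(e^{\sigma x},e^{\sigma y}),
\]
and multiplying by $\sigma$ yields the stated inequality. There is no real obstacle here: the whole argument rests on the CAT(0) comparison already established in Theorem~\ref{thm:cat} together with the one-parameter-subgroup identification of geodesics through $\id$, which follows from Lemma~\ref{lem:commute}.
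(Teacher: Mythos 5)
Your argument is correct, and it arrives at the same underlying mechanism as the paper — convexity of $t\mapsto d_\cP(e^{tx},e^{ty})$ as a manifestation of non-positive curvature — but by a somewhat different route. The paper's proof is a two-line affair: it cites \cite[Corollary 3.4]{MR2254561} for the convexity of $f(t)=d_\cP(e^{tx'},e^{ty'})$, notes $f(0)=0$, and concludes $f(t)\le tf(1)$ for $t\in[0,1]$, which after the substitution $t=1/\sigma$, $x'=\sigma x$, $y'=\sigma y$ is exactly the claim. You instead derive the needed inequality from Theorem \ref{thm:cat} (the CAT(0) property of $\cP^\infty$) together with an explicit verification, via the speed computation and Lemma \ref{lem:commute}, that $t\mapsto e^{tz}$ is a constant-speed distance-minimizing ray from $\id$; the comparison-triangle inequality for two geodesics issuing from a common point then gives $d_\cP(\gamma_x(st),\gamma_y(st))\le s\,d_\cP(\gamma_x(t),\gamma_y(t))$, which is the same convexity statement in the only case needed. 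Both routes ultimately lean on Andruchow--Larotonda: the paper on their convexity corollary directly, you on their sectional-curvature result through Theorem \ref{thm:cat}. What your version buys is self-containedness at the level of this corollary (you never need \cite[Corollary 3.4]{MR2254561} as a separate input, only facts already established in the section); what it costs is the need to justify that the exponential curves really are metric geodesics, which you do correctly — the length of $\gamma_z$ on $[0,s]$ is $s\|z\|_2$ and Lemma \ref{lem:commute} shows this equals $d_\cP(\id,e^{sz})$, and the same formula gives $d_\cP(\gamma_z(s_1),\gamma_z(s_2))=(s_2-s_1)\|z\|_2$ so the curve is geodesic between any two of its points, as the comparison argument requires.
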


\begin{proof}
Let $x', y' \in M_{sa}$ and let $f(t) = d_\cP(e^{t x'},e^{t y'})$. By  \cite[Corollary 3.4]{MR2254561}, $f$ is convex. Therefore,
$$f(t) \le t f(1) + (1-t)f(0) = tf(1)$$
for any $0\le t \le 1$. Set $t=1/\s$, $x' = \s x$, $y' = \s y$ to obtain
$$f(t) = d_\cP(e^{x},e^{y}) \le \s^{-1} d_\cP(e^{\s x},e^{\s y}).$$

\end{proof}

\subsection{The measure topology}\label{sec:measure}

This section reviews the measure topology on $\rL^0(M,\tau)$. The results here are probably well-known but being unable to find them explicitly stated in the literature, we give proofs for completeness. We will need this material in the next two sections.

Let $(M,\tau)$ be a semi-finite von Neumann algebra. By \cite[Theorem IX.2.2]{TakesakiII},  the sets
\[\mathcal{O}_{\varepsilon,\delta}(a)=\left\{b\in \rL^0(M,\tau):\tau(1_{(\varepsilon,\infty)}(|a-b|))<\delta\right\}\]
ranging over $a\in \rL^0(M,\tau)$ and $\varepsilon,\delta>0$ form a basis for a metrizable vector space topology on $\rL^0(M,\tau),$ and this topology turns $\rL^0(M,\tau)$ into a topological $*$-algebra (i.e. the product and sum operations are continuous as a function of two variables, as is the adjoint). We shall call this topology the {\bf measure topology}.
This motivates the following definition.

\begin{defn}
Let $(M,\tau)$ be a semi-finite von Neumann algebra. Given a sequence $(a_{n})_{n}$ in $\rL^0(M,\tau),$ and an $a\in \rL^0(M,\tau)$ we say that {\bf $a_{n}\to a$ in measure} if for every $\varepsilon>0$ we have that  \[\tau(1_{(\varepsilon,\infty)}(|a-a_{n}|))\to_{n\to\infty} 0.\]
\end{defn}

\begin{lem}\label{L:L2tomeasure}
Let $(M,\tau)$ be a semi-finite von Neumann algebra. Suppose $x_1,x_2,\ldots \in \rL^2(M,\tau)$ and $\lim_{n\to\infty} x_n = x_\infty$  in $\rL^2(M,\tau)$. Then $x_n \to_{n\to\infty} x_\infty$ in measure.

\end{lem}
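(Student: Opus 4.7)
The plan is to use a Chebyshev-type inequality that relates the $L^2$-norm to the distribution function. The key observation is that for any $a\in \rL^2(M,\tau)$ and any $\varepsilon>0$, the spectral calculus gives
\[\|a\|_2^2 \;=\; \int_0^\infty t^2\, \dee\mu_{|a|}(t) \;\geq\; \int_{(\varepsilon,\infty)} t^2\, \dee\mu_{|a|}(t) \;\geq\; \varepsilon^2\, \mu_{|a|}(\varepsilon,\infty) \;=\; \varepsilon^2\, \tau(1_{(\varepsilon,\infty)}(|a|)).\]
This is legitimate because, as noted in the discussion preceding equation (\ref{E:formula for norm}), the identification $\rL^2(M,\tau)=\{x\in \rL^0(M,\tau):\|x\|_2<\infty\}$ gives $\|a\|_2^2=\int t^2\,\dee\mu_{|a|}(t)$.

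First I would apply this inequality to $a = x_n - x_\infty$. Since $\rL^2(M,\tau)$ is a vector subspace of $\rL^0(M,\tau)$, the difference $x_n - x_\infty$ lies in $\rL^2(M,\tau)$ and we get
\[\tau\bigl(1_{(\varepsilon,\infty)}(|x_n-x_\infty|)\bigr) \;\leq\; \frac{\|x_n-x_\infty\|_2^2}{\varepsilon^2}.\]
Then, letting $n\to\infty$, the hypothesis $\|x_n-x_\infty\|_2\to 0$ forces the right-hand side to $0$ for every fixed $\varepsilon>0$, which is exactly convergence in measure.

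There is no real obstacle here; the only subtlety worth flagging is that one must be a bit careful about what "$x_n - x_\infty$" means as an affiliated operator: as emphasized in the domain discussion in the preliminaries, sums of unbounded affiliated operators are defined as closures. However, since $\rL^2(M,\tau)\subseteq \rL^0(M,\tau)$ is a linear subspace under this (closure) addition, and the $\rL^2$-norm on this subspace agrees with the one coming from the spectral measure of the absolute value, the inequality above is unambiguous and the argument goes through directly.
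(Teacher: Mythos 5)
Your proof is correct and uses exactly the same Chebyshev-type inequality $\varepsilon^2\,\tau(1_{(\varepsilon,\infty)}(|x_n-x_\infty|))\le\|x_n-x_\infty\|_2^2$ as the paper's own argument. The extra remarks about the spectral-calculus formula for $\|\cdot\|_2$ and the closure-sum convention are fine but not needed beyond what the paper already records.
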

\begin{proof}
For any $\eps>0$,
$$\tau(1_{(\eps,\infty)}(|x_n-x|)) \eps^2 \le \|x_n-x\|_2^2.$$
Since $x_n \to x$ in $\rL^2(M,\tau)$, this shows $x_n \to x$ in measure.
\end{proof}

\begin{prop}\label{P:3topologies}
Let $(M,\tau)$ be a von Neumann algebra with a finite trace.  Let $C>0$ and let $M_C \subset M$ be the set of all elements $x$ with $\|x\|_\infty \le C$. Then the measure, strong operator and $\rL^2$ topologies all coincide on $M_C$.  By Strong Operator Topology we mean with respect to either of the inclusions $M \subset B(\cH)$ or $M \subset B(\rL^2(M,\tau))$.
\end{prop}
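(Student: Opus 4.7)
Since $(M,\tau)$ is assumed to arise from an inclusion $M\subseteq B(\cH)$ with $\cH$ separable, $\rL^{2}(M,\tau)$ is separable as well, so the three strong/$\rL^{2}$ topologies under discussion are metrizable on the norm-bounded set $M_{C}$. The measure topology is also metrizable by \cite[Theorem IX.2.2]{TakesakiII}. Thus it suffices to show that the four classes of convergent sequences in $M_{C}$ coincide. Furthermore, as noted earlier in the paper (via \cite[Lemma II.2.5 and Corollary III.3.10]{MR1873025}), the SOT induced by $M\subseteq B(\cH)$ and the SOT induced by $M\subseteq B(\rL^{2}(M,\tau))$ already agree on $M_{C}$, so I only need to compare the measure topology, the $\rL^{2}$ topology, and the SOT on $B(\rL^{2}(M,\tau))$.

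First I would verify the equivalence of the measure topology and the $\rL^{2}$ topology on $M_{C}$. Lemma \ref{L:L2tomeasure} already gives one direction. For the converse, suppose $x_{n},x\in M_{C}$ with $x_{n}\to x$ in measure. Then $\|x_{n}-x\|_{\infty}\leq 2C$, so $\supp(\mu_{|x_{n}-x|})\subseteq [0,2C]$. For any $\varepsilon>0$,
\[
\|x_{n}-x\|_{2}^{2}=\int_{0}^{2C} t^{2}\,d\mu_{|x_{n}-x|}(t)\leq \varepsilon^{2}\tau(\id)+(2C)^{2}\,\tau\bigl(1_{(\varepsilon,\infty)}(|x_{n}-x|)\bigr).
\]
The last term tends to $0$ by measure convergence, so $\limsup_{n}\|x_{n}-x\|_{2}^{2}\leq \varepsilon^{2}\tau(\id)$ for every $\varepsilon>0$, yielding $\rL^{2}$-convergence.

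Next I would show that on $M_{C}$ the SOT on $B(\rL^{2}(M,\tau))$ coincides with $\rL^{2}$-convergence. Because $\tau$ is finite, $\id\in \rL^{2}(M,\tau)$, so if $x_{n}\to x$ in SOT (acting by left multiplication), then $x_{n}=x_{n}\id\to x\id=x$ in $\rL^{2}(M,\tau)$. Conversely, suppose $x_{n}\to x$ in $\rL^{2}(M,\tau)$ with $\|x_{n}\|_{\infty},\|x\|_{\infty}\leq C$. For $y\in M$,
\[
\|(x_{n}-x)y\|_{2}=\|R_{y}(x_{n}-x)\|_{2}\leq \|y\|_{\infty}\|x_{n}-x\|_{2}\to 0,
\]
so $x_{n}\xi\to x\xi$ in $\rL^{2}(M,\tau)$ for every $\xi\in M$. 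Since $M$ is $\rL^{2}$-dense in $\rL^{2}(M,\tau)$ (as $\tau$ is finite, so $\cN=M$) and the operators $x_{n}-x$ are uniformly bounded in operator norm by $2C$, a standard $\varepsilon/3$ argument extends this convergence to arbitrary $\xi\in \rL^{2}(M,\tau)$, giving SOT convergence.

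Combining these three equivalences with the equivalence of the two SOTs on norm-bounded sets completes the proof. I do not anticipate any real obstacle: the only delicate point is ensuring that all four topologies are metrizable on $M_{C}$ so that sequential equivalence implies topological equivalence, and this follows from the separability of $\cH$ together with the cited metrizability of the measure topology.
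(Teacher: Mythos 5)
Your proposal is correct and follows essentially the same route as the paper: the same Chebyshev-type estimates in both directions between the measure and $\rL^{2}$ topologies, the same use of $\id\in\rL^{2}(M,\tau)$ for SOT $\Rightarrow$ $\rL^{2}$, the same $\|xy\|_{2}\le\|x\|_{2}\|y\|_{\infty}$ plus density-and-uniform-boundedness argument for the converse, and the same citation for the agreement of the two SOTs on norm-bounded sets. The only difference is that you explicitly justify why sequential equivalence suffices (metrizability of all four topologies on $M_C$), a point the paper leaves implicit.
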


\begin{proof}
By \cite[I.4.3. Theorem 2]{MR641217} or \cite[Corollary 2.5.9 and Proposition 2.5.8]{anantharaman-popa}, the topology induced on $M_C$ from the SOT on $B(\cH)$ is the same as the topology it inherits from the SOT on  $B(\rL^2(M,\tau))$.

Let $x \in M_C$ and let $(x_n)_n \subset M_C$ be a sequence. We will show that if $x_n \to x$ in one of the three topologies then $x_n\to x$ in the other topologies. After replacing $x_n$ with $x_n-x$ and $C$ with $2C$ if necessary, we may assume $x=0$. Also without loss of generality we may assume $\tau(\id)=1$.

 Suppose that $x_n \to 0$ in measure. We will show that $x_n \to 0$ in $\rL^2$. For any $\varepsilon>0$,
$$\|x_n\|_2^2 = \tau(x_n^*x_n) \le C^2 \tau(1_{(\varepsilon,\infty)}(|x_{n}|)) + \varepsilon^2.$$
Since $x_n \to 0$ in measure, $\limsup_{n\to\infty} \|x_n\|_2^2 \le \varepsilon^2$. Since $\varepsilon$ is arbitrary, this shows $x_n \to 0$ in $\rL^2$.

Now suppose that $x_n \to 0$ in $\rL^2$. We will show that $x_n \to 0$ in the SOT. So let $\xi \in L^2(M,\tau)$. If $\xi \in M$ then
$$\limsup_{n\to\infty} \|x_n \xi\|_2 \le \limsup_{n\to\infty} \|x_n\|_2 \|\xi\|_\infty = 0.$$
In general, for any $\xi \in L^2(M,\tau)$ and $\eps>0$, there exists $\xi' \in M$ with $\|\xi - \xi'\|_2 < \eps$. Then
$$\limsup_{n\to\infty} \|x_n \xi\|_2 \le \limsup_{n\to\infty} \|x_n \xi'\|_2 +\|x_n (\xi-\xi')\|_2  \le \limsup_{n\to\infty} \|x_n (\xi-\xi')\|_2 \le C \eps.$$
Since $\eps>0$ is arbitrary, this shows $x_n \to 0$ in SOT.

Now suppose that $x_n \to 0$ in SOT.  Since $\id \in L^2(M,\tau)$ and $x_n \id = x_n$, $\|x_n\|_2 \to 0$. This shows $x_n \to 0$ in $\rL^2$.


Now suppose $x_n \to 0$ in $\rL^2$. Let $\varepsilon>0$. Then $\tau(1_{(\varepsilon,\infty)}(|x_{n}|)) \le \varepsilon^{-2} \|x_n\|^2_2$. Since $\|x_n\|^2_2 \to 0$ this implies
$$\limsup_{n\to\infty} \tau(1_{(\varepsilon,\infty)}(|x_{n}|)) = 0.$$
So $x_n \to 0$ in measure.

\end{proof}

\begin{remark}
It is possible that the topology on $M$ inherited from the SOT on $B(\cH)$ is not the same as topology it inherits from the SOT on $B(\rL^2(M,\tau))$ \cite[Exercise 1.3]{anantharaman-popa}.
\end{remark}

\begin{defn}
If $(\mu_n)_n$ is a sequence of Borel probability measures on a topological space $X$ and $\mu$ is another Borel probability measure on $X$ then we write $\mu_n \to \mu$ {\bf weakly} if for every bounded continuous function $f:X \to \C$, $\int f~\dee\mu_n$ converges to $\int f~\dee\mu$ as $n\to\infty$.
\end{defn}

Recall that $C_0(\R)$ denotes continuous functions on $\R$ that vanish at infinity while $C_c(\R) \subset C_0(\R)$ denotes those functions with compact support.

\begin{prop}\label{P:conv in meas implies wk* conv}
Let $(M,\tau)$ be a von Neumann algebra with a finite trace. Suppose that $(a_{n})_n, (b_n)_n\in \rL^0(M,\tau)$, $a_n \to a$ in measure, $b_n \to b$ in measure and $b_n$ is self-adjoint for all $n$.
 Then:
 \begin{enumerate}
 \item  $\mu_{b_{n}}\to \mu_{b}$ weakly. \label{I:wk*conv meas sa}
  \item For all but countably many $\lambda \in \R$ we have that $\mu_{b_{n}}(\lambda,\infty)\to \mu_{b}(\lambda,\infty)$.\label{I:level set convergence}
 \item For every bounded, continuous $f\colon \R\to \R$ we have that $\|f(b_{n})-f(b)\|_{2}\to_{n\to\infty}0.$ \label{I:operate continuously bounded functions}
\item For every continuous $f\colon \R\to \R$ we have that $f(b_{n})\to f(b)$ in measure. \label{I:operate continuously measure}
 \item  $|a_n| \to |a|$ in measure. \label{I:wk* conv meas}

 \end{enumerate}

\end{prop}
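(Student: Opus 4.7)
The plan is to prove the five items in the order (3) for resolvents, (3) on $C_0(\R)$, (1), (2), (3) on $C_b(\R)$, (4), and finally (5). The main obstacle is the first step: the measure topology only makes $\rL^0(M,\tau)$ into a topological $\ast$-algebra (continuity of sum, product, and adjoint), so the desired convergence $(b_n - i)^{-1} \to (b - i)^{-1}$ does not come for free from $b_n \to b$ in measure and must be established by hand.

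For that step, note that $(b_n - i)^{-1}$ and $(b - i)^{-1}$ both lie in $M$ with operator norm at most $1$, and start from the resolvent identity
$$(b_n - i)^{-1} - (b - i)^{-1} = -(b_n - i)^{-1}(b_n - b)(b - i)^{-1}.$$
Applying Proposition \ref{P:submutliplicative estimate} twice, with the splittings $\lambda = 2 \cdot (\lambda/2)$ and then $\lambda/2 = (\lambda/4) \cdot 2$, and using $\|(b_n - i)^{-1}\|_\infty, \|(b - i)^{-1}\|_\infty \le 1$ to kill the corresponding terms, yields
$$\tau\bigl(1_{(\lambda,\infty)}(|(b_n - i)^{-1}(b_n - b)(b - i)^{-1}|)\bigr) \le \tau\bigl(1_{(\lambda/4,\infty)}(|b_n - b|)\bigr),$$
which tends to $0$ as $n \to \infty$ since $b_n - b \to 0$ in measure. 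Hence $(b_n - i)^{-1} \to (b - i)^{-1}$ in measure, and since both lie in the unit ball of $M$, Proposition \ref{P:3topologies} upgrades this to convergence in $\rL^2(M,\tau)$. By Stone--Weierstrass, the $\ast$-subalgebra of $C_0(\R)$ generated by $r_{\pm i}(t) = (t \pm i)^{-1}$ is dense; joint continuity of multiplication on norm-bounded subsets of $M$ (in the $\rL^2$ topology) yields (3) for every polynomial in $r_{\pm i}$, and a standard $\varepsilon/3$ argument using $\|f(b_n)\|_2 \le \|f\|_\infty \tau(\id)^{1/2}$ extends (3) to all $f \in C_0(\R)$. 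Item (1) is then immediate from $\int f \, d\mu_{b_n} = \tau(f(b_n))$ applied to $f \in C_c(\R)$, and (2) is the standard consequence of weak convergence of finite measures.

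To extend (3) to $f \in C_b(\R)$, pick a continuous cutoff $\psi_M \in C_c(\R)$ with $\psi_M \equiv 1$ on $[-M, M]$ and $0 \le \psi_M \le 1$, so that $f\psi_M \in C_c(\R) \subset C_0(\R)$ and
$$\|f(b_n) - (f\psi_M)(b_n)\|_2^2 = \int |f|^2 (1 - \psi_M)^2 \, d\mu_{b_n} \le \|f\|_\infty^2 \, \mu_{b_n}(\{|t| > M\});$$
choosing $M$ to be a continuity point of the distribution function of $\mu_b$ and using (2), the right side and its analogue at $b$ become simultaneously small, while $(f\psi_M)(b_n) \to (f\psi_M)(b)$ in $\rL^2$ by what was already shown. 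The same cutoff strategy handles (4): $(f\psi_M)(b_n) \to (f\psi_M)(b)$ in measure, and $f(b_n) - (f\psi_M)(b_n)$ is supported on the spectral subspace of $b_n$ where $|t| > M$, which has trace $\mu_{b_n}(\{|t| > M\})$. Finally, (5) follows by applying (4) to the continuous function $g(t) = \sqrt{\max(t, 0)}$: since $a_n \to a$ in measure, continuity of product and adjoint gives $a_n^* a_n \to a^* a$ in measure, and $a_n^* a_n$ is self-adjoint, so $|a_n| = g(a_n^* a_n) \to g(a^* a) = |a|$ in measure.
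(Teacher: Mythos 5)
Your proof is correct, and it follows the same overall skeleton as the paper's (establish $\rL^2$-convergence of $f(b_n)$ for $f\in C_0(\R)$, deduce weak convergence of the spectral measures, get (2) from Portmanteau, extend to $C_b(\R)$ and to convergence in measure by a cutoff, and obtain (5) from $|a_n|=g(a_n^*a_n)$). The genuine difference is in how the key functional-calculus continuity is obtained: the paper simply cites \cite[Corollary 5.4]{StinespringMeasOps} for the $C_0(\R)$ statement and \cite[Theorem 5.5]{StinespringMeasOps} for item (\ref{I:operate continuously measure}), whereas you prove these from scratch. Your resolvent argument is sound: the identity $(b_n-i)^{-1}-(b-i)^{-1}=(b_n-i)^{-1}(b-b_n)(b-i)^{-1}$ holds in the $\ast$-algebra $\rL^0(M,\tau)$, the two applications of Proposition \ref{P:submutliplicative estimate} with thresholds $2$ and $\lambda/4$ correctly annihilate the resolvent factors (their operator norms are at most $1$), Proposition \ref{P:3topologies} legitimately upgrades measure convergence to $\rL^2$-convergence inside the unit ball of $M$, and Stone--Weierstrass applied to the $\ast$-algebra generated by $t\mapsto(t-i)^{-1}$ (which separates points and vanishes nowhere) plus the $\varepsilon/3$ estimate $\|(f-g)(b_n)\|_2\le\|f-g\|_\infty\tau(\id)^{1/2}$ completes the $C_0$ case. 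What your route buys is self-containedness: the proposition then rests only on results already proved in the paper rather than on an external reference; what it costs is length, and a couple of small steps you gloss over (passing from convergence against $C_c(\R)$ to genuine weak convergence uses that all the measures have the same total mass $\tau(\id)$, and the cutoff bound in (4) should be combined with Proposition \ref{P:subadditive estimate} to control the sum of the three error terms in measure), but both are routine.
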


\begin{proof}
After scaling if necessary we will assume without loss of generality that $\tau(\id)=1$.

(\ref{I:wk*conv meas sa}):
Let $f\in C_{0}(\R)$ (where $C_0(\R)$ is the space of continuous functions that vanish at infinity). By \cite[Corollary 5.4]{StinespringMeasOps}, we know that
\[\lim_{n\to\infty} \|f(b_{n})-f(b)\|_{2} = 0.\]
Since $|\tau(x)-\tau(y)|\leq \|x-y\|_{2}$ for all $x,y\in M,$ the above convergence shows that
\[\lim_{n\to\infty}\int f\,d\mu_{b_{n}}=\lim_{n\to\infty}\tau(f(b_{n}))=\tau(f(b))=\int f\,d\mu_{b}.\]
Now that we know the integrals $\int f\,d\mu_{b_{n}}$ converge as $n\to\infty$ for $f \in C_0(\R)$, it follows that these integrals converge for all bounded continuous $f:\R \to \R$ because $\mu_{b_{n}},\mu_{b}$ are all probability measures (see, e.g. \cite[Exercise 20 of Chapter 7]{Folland}).

(\ref{I:level set convergence}): This follows from  (\ref{I:wk*conv meas sa}) and the Portmanteau theorem.

(\ref{I:operate continuously bounded functions}): Let $R>0$ be such that $|\phi(t)|\leq R$ for all $t\in \R.$ Let $\varepsilon>0,$ and choose a $T>0$ so that $\mu_{b}(\{t:|t|\geq T\})<\varepsilon.$ Choose a function $\psi\in C_{c}(\R)$ with $\psi(t)=1$ for $|t|\leq T$ and so that $0\leq \psi\leq 1.$ Then,
\begin{align*}
\|\phi(b_{n})-\phi(b)\|_{2}&\leq \|\phi \psi(b_{n})-\phi\psi(b)\|_{2}+\|\phi(1-\psi)(b_{n})\|_{2}+\|\phi(1-\psi)(b)\|_{2}\\
&=\|\phi \psi(b_{n})-\phi\psi(b)\|_{2}+\left(\int |\phi(t)|^{2}(1-\psi(t))^{2}\,d\mu_{b_{n}}(t)\right)^{1/2}\\
&+\left(\int |\phi(t)|^{2}(1-\psi(t))^{2}\,d\mu_{b}(t)\right)^{1/2}.
\end{align*}
By \cite[Corollary 5.4]{StinespringMeasOps} and  (\ref{I:wk*conv meas sa}) we thus have that
\[\limsup_{n\to\infty}\|\phi(b_{n})-\phi(b)\|_{2}\leq 2\left(\int |\phi(t)|^{2}(1-\psi(t))^{2}\,d\mu_{b}(t)\right)^{1/2}<2R\varepsilon.\]
Letting $\varepsilon\to 0$ completes the proof.


(\ref{I:operate continuously measure}):
Let $\phi\in C_{c}(\R).$ By \cite[Theorem 5.5]{StinespringMeasOps}, it suffices to show that
\[\|\lim_{n\to\infty} \phi(f(b_{n}))-\phi(f(b))\|_{2} = 0.\]
Since $\phi\circ f$ is bounded and continuous, this follows from (\ref{I:operate continuously bounded functions}).

(\ref{I:wk* conv meas}):
Since $\rL^0(M,\tau)$ is a topological $\ast$-algebra in the measure topology, $a_{n}^{*}a_{n} \to a^*a$ in the measure topology. Let $g\colon [0,\infty)\to [0,\infty)$ be the function $g(t)=\sqrt{t}.$ Then $|a_{n}|=g(a_{n}^{*}a_{n}).$ So this follows from (\ref{I:operate continuously measure}) with $b_{n}=a_{n}^{*}a_{n}.$

\end{proof}

We can give a more refined improvement of Proposition \ref{P:conv in meas implies wk* conv}.\ref{I:operate continuously measure} which we will need later. We first note the following.

\begin{cor}\label{L:compactness implies uniform integrability}
Let $(M,\tau)$ be a von Neumann algebra with a finite trace, and let $K\subseteq \rL^0(M,\tau)$ have compact closure in the measure topology. Then for every $\varepsilon>0,$ there is an $R>0$ so that
\[\tau(1_{(R,\infty)}(|a|))<\varepsilon\]
for all $a\in K.$
\end{cor}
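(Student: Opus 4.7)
The statement is a uniform tail estimate: compactness in measure gives ``uniform $\tau$-measurability'' of the elements in $K$. My plan is to combine a finite cover of (the closure of) $K$ by basic neighborhoods of the measure topology with the subadditivity of spectral distributions from Proposition \ref{P:subadditive estimate}. The rough analogy is that this is the operator-algebraic version of the classical fact that a compact family of measurable functions is uniformly integrable in the sense of controlling level sets.

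First I would fix $\varepsilon>0$ and observe that since each individual $a \in \rL^0(M,\tau)$ is $\tau$-measurable, we have $\mu_{|a|}(R,\infty) = \tau(1_{(R,\infty)}(|a|)) \downarrow 0$ as $R\to\infty$ (this follows, for instance, from the fact that the projections $1_{[0,R]}(|a|)$ increase to $\id$, together with normality of $\tau$). Next, since $\overline{K}$ is compact in the measure topology, I would cover $\overline{K}$ by finitely many basic open neighborhoods of the form $\mathcal{O}_{1,\varepsilon/2}(a_i)$ for $i = 1,\dots, n$, where $a_1,\dots,a_n \in \rL^0(M,\tau)$ (the choice of $1$ as the ``level'' parameter is arbitrary; any fixed positive number will do). For each such $a_i$, choose $R_i > 0$ with $\tau(1_{(R_i,\infty)}(|a_i|)) < \varepsilon/2$, and set $R = 1 + \max_i R_i$.

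Now for an arbitrary $a \in K$, there is some $i$ with $a \in \mathcal{O}_{1,\varepsilon/2}(a_i)$, that is, $\mu_{|a - a_i|}(1,\infty) < \varepsilon/2$. Writing $a = (a - a_i) + a_i$ and applying Proposition \ref{P:subadditive estimate} with $\lambda_1 = 1$ and $\lambda_2 = R_i$, I get
\[
\tau(1_{(R,\infty)}(|a|)) \le \mu_{|a|}(R_i + 1, \infty) \le \mu_{|a - a_i|}(1,\infty) + \mu_{|a_i|}(R_i,\infty) < \tfrac{\varepsilon}{2} + \tfrac{\varepsilon}{2} = \varepsilon,
\]
where the first inequality uses that $(R,\infty) \subseteq (R_i + 1,\infty)$ since $R \ge R_i + 1$. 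This completes the argument.

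The only potential obstacle is bookkeeping — ensuring the single $R$ works for all $i$ simultaneously — which is handled cleanly by taking the maximum of the $R_i$ and bumping by $1$ to absorb the tolerance from the neighborhood radius. The substantive content is really just Proposition \ref{P:subadditive estimate} plus compactness; the proof is otherwise a routine $\varepsilon/2$-argument.
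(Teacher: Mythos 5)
Your proof is correct, and it takes a genuinely different route from the paper's. The paper proves the corollary by showing (via Proposition \ref{P:conv in meas implies wk* conv}, items (\ref{I:wk* conv meas}) and (\ref{I:wk*conv meas sa})) that $x\mapsto \mu_{|x|}$ is continuous from the measure topology into $\Prob(\R)$ with the weak topology, so $\{\mu_{|a|}:a\in K\}$ is weakly compact and hence tight, which is exactly the desired uniform tail bound. You instead run a direct finite-subcover argument: cover $\overline{K}$ by basic neighborhoods $\mathcal{O}_{1,\varepsilon/2}(a_i)$, use finiteness of the trace to get a tail bound for each center $a_i$, and transfer it to all of $K$ via the subadditivity of distribution functions (Proposition \ref{P:subadditive estimate}) applied to $a=(a-a_i)+a_i$. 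Both arguments are sound; each step of yours checks out, including the preliminary observation that $\tau(1_{(R,\infty)}(|a|))\downarrow 0$ for fixed $a$ (which uses normality and finiteness of $\tau$) and the index bookkeeping with $R=1+\max_i R_i$. What your approach buys is self-containment and elementarity: it bypasses the weak-convergence-of-spectral-measures machinery (and the implicit appeal to the fact that weakly compact families of probability measures on $\R$ are tight), relying only on the subadditive estimate already established in \S\ref{sec:GL2}. What the paper's approach buys is brevity and a reusable structural fact, namely the continuity of $x\mapsto\mu_{|x|}$, which is also invoked elsewhere (e.g.\ in Proposition \ref{P:basic convergence log L2} and Corollary \ref{C:joint coninuity of functional calculus}).
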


\begin{proof}
Replacing $K$ with its closure, we may as well assume $K$ is compact. By Proposition \ref{P:conv in meas implies wk* conv} (\ref{I:wk* conv meas}) and (\ref{I:wk*conv meas sa}), the map $\rL^0(M,\tau)\to \Prob(\R)$ sending $x\mapsto \mu_{|x|}$ is continuous if we give $\rL^0(M,\tau)$ the measure topology, and $\Prob(\R)$ the weak topology. So $\{\mu_{|a|}:a\in K\}$ is compact in the weak topology, and thus tight. Tightness means there exists an $R>0$ so that $\mu_{|a|}(R,\infty)<\varepsilon$ for all $a\in K.$ As $\tau(1_{(R,\infty)}(|a|))=\mu_{|a|}(R,\infty),$ we are done.

\end{proof}

\begin{cor}\label{C:joint coninuity of functional calculus}
Let $(M,\tau)$ be a von Neumann algebra with a finite trace. Then the map
\[\mathcal{E}\colon \rL^0(M,\tau)_{sa}\times C(\R,\R)\to \rL^0(M,\tau)_{sa}\]
given by $\mathcal{E}(a,f)=f(a)$ is continuous if we give $\rL^0(M,\tau)_{sa}$ the measure topology and $C(\R, \R)$ the topology of uniform convergence on compact sets.

\end{cor}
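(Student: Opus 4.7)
The plan is to exploit metrizability of both factor topologies (so sequential continuity suffices) and to decompose
\[
 f_n(a_n) - f(a) \;=\; \bigl(f(a_n)-f(a)\bigr) \;+\; \bigl((f_n-f)(a_n)\bigr).
\]
Assume $a_n\to a$ in measure and $f_n\to f$ uniformly on compact subsets of $\R$. The first summand tends to $0$ in the measure topology by Proposition \ref{P:conv in meas implies wk* conv} (\ref{I:operate continuously measure}), applied to the fixed continuous function $f$. Thus the task reduces to showing that $h_n(a_n)\to 0$ in measure, where $h_n := f_n - f\in C(\R,\R)$ converges to $0$ uniformly on every compact set.

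The core idea for this step is to separate the spectral behaviour of $a_n$ on a large window from its tail. Because $\{a_n\}_n\cup\{a\}$ is a convergent sequence together with its limit, it has compact closure in the measure topology, so for each $\varepsilon>0$ Corollary \ref{L:compactness implies uniform integrability} supplies $R=R(\varepsilon)>0$ with
\[
 \tau\bigl(1_{(R,\infty)}(|a_n|)\bigr) < \varepsilon \qquad \text{for every } n.
\]
Split the Borel function $h_n = u_n + v_n$, where $u_n := h_n\cdot 1_{[-R,R]}$ and $v_n := h_n\cdot 1_{\R\setminus[-R,R]}$, and apply Borel functional calculus to get $h_n(a_n)=u_n(a_n)+v_n(a_n)$ in $\rL^0(M,\tau)_{sa}$ (the decomposition is valid since $u_n(a_n)$ is bounded and $v_n(a_n)$ acts on the complementary spectral projection).

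The term $u_n(a_n)$ is bounded with $\|u_n(a_n)\|_\infty \le \sup_{|t|\le R}|h_n(t)|$, which tends to $0$ by uniform convergence of $f_n$ to $f$ on $[-R,R]$; hence $u_n(a_n)\to 0$ in operator norm and therefore in measure. For the term $v_n(a_n)$, observe that $\{t:|v_n(t)|>\delta\}\subseteq \R\setminus[-R,R]$ for every $\delta>0$, so $1_{(\delta,\infty)}(|v_n(a_n)|) = 1_{\{t:|v_n(t)|>\delta\}}(a_n) \le 1_{(R,\infty)}(|a_n|)$ by Borel functional calculus, giving $\tau(1_{(\delta,\infty)}(|v_n(a_n)|))<\varepsilon$ uniformly in $n$. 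Combining the two estimates through the subadditive spectral inequality of Proposition \ref{P:subadditive estimate} yields
\[
 \limsup_{n\to\infty}\tau\bigl(1_{(2\delta,\infty)}(|h_n(a_n)|)\bigr)\le \varepsilon
\]
for every $\delta>0$; letting $\varepsilon\downarrow 0$ gives $h_n(a_n)\to 0$ in measure, which finishes the proof.

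The only genuinely delicate point is that Proposition \ref{P:conv in meas implies wk* conv} (\ref{I:operate continuously measure}) handles a variable $a_n$ with a fixed function, while uniform-on-compact convergence of $f_n$ handles the reverse; the splitting at spectral radius $R$ is the bridge that makes the joint convergence work, converting uniform-on-compact convergence of $h_n$ into operator-norm control on a spectral window of $a_n$, while the complementary tail is suppressed by the uniform tightness provided by compactness of $\{a_n\}_n\cup\{a\}$ in the measure topology.
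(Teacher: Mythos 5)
Your proof is correct, and it shares the paper's key mechanism (the uniform spectral window $R$ furnished by Corollary \ref{L:compactness implies uniform integrability}, i.e.\ tightness of $\{\mu_{|a_n|}\}$, combined with uniform convergence of the functions on $[-R,R]$ and suppression of the tail by the spectral projection $1_{(R,\infty)}(|a_n|)$), but the route through it is genuinely different. The paper keeps both variations coupled: it writes $f_n(a_n)-f(a)=\bigl(f_n(g(a_n))-f(g(a))\bigr)+h_n(a_n)-h(a)$ with a \emph{continuous} truncation $g$ and $h_n=f_n-f_n\circ g$, and then controls the main term in $\rL^2$ via Proposition \ref{P:conv in meas implies wk* conv}~(\ref{I:operate continuously bounded functions}) and a Chebyshev estimate. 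You instead decouple the two variables at the outset, $f_n(a_n)-f(a)=(f(a_n)-f(a))+(f_n-f)(a_n)$, dispose of the first term by the already-proved single-function statement Proposition \ref{P:conv in meas implies wk* conv}~(\ref{I:operate continuously measure}), and handle the second by a Borel (indicator) splitting $h_n=u_n+v_n$; this avoids the $\rL^2$ detour entirely and replaces the continuous cutoff $g$ by spectral projections. Your version is arguably cleaner and more modular; the only points worth making explicit in a write-up are (i) that sequential arguments suffice because both factor topologies are metrizable, (ii) that $\{a_n\}_n\cup\{a\}$ is compact so the Corollary applies, and (iii) the (routine) additivity of Borel functional calculus in $\rL^0(M,\tau)$ justifying $h_n(a_n)=u_n(a_n)+v_n(a_n)$ and $(f_n-f)(a_n)=f_n(a_n)-f(a_n)$, since sums in $\rL^0$ are defined via closures.
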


\begin{proof}
Suppose we are given sequences $(f_{n})_n \subset C(\R,\R)$, $(a_{n})_n \subset  \rL^0(M,\tau)_{sa}$ and $f\in C(\R)$, $a\in \rL^0(M,\tau)_{sa}$ with $f_{n}\to f$ uniformly on compact sets and $a_{n}\to a$ in measure.

To prove $f_{n}(a_{n})\to f(a)$ in measure, fix $\lambda>0.$ Let $\varepsilon>0$ be given. By Corollary \ref{L:compactness implies uniform integrability}, we may choose an $R>0$ so that
\[\sup_{n\in \N}\tau(1_{(R,\infty)}(|a_{n}|))<\varepsilon, \quad \tau(1_{(R,\infty)}(|a|))<\varepsilon.\]
Let $g\colon \R\to \R$ be a bounded continuous function with $g(t)=t$ for $|t|\leq R,$ and define $h\colon \R\to \R$ by $h(t)=f(t)-f(g(t))$ and  $h_n\colon \R\to \R$ by $h_n(t)=f_n(t)-f_n(g(t))$. Then:
\[f_{n}(a_{n})-f(a)=f_{n}(g(a_{n}))-f(g(a))+h_n(a_{n})-h(a).\]
Then, by Proposition \ref{P:subadditive estimate} we have that:
\begin{align*}
& \tau(1_{(\lambda,\infty)}(|f_{n}(a_{n})-f(a_{n})|)\\
& \leq \tau(1_{(\lambda/4,\infty)}(|h_n(a_{n})|))+\tau(1_{(\lambda/4,\infty)}(|h(a)|))+\tau(1_{(\lambda/2,\infty)}(|f_{n}(g(a_{n}))-f(g(a))|))\\
&\leq  \tau(1_{(\lambda/4,\infty)}(|h_n(a_{n})|))+\tau(1_{(\lambda/4,\infty)}(|h(a)|))+\frac{4}{\lambda^{2}}\|f_{n}(g(a_{n}))-f(g(a))\|_{2}.
\end{align*}
Since $\lambda>0,$ and $h=0$ in $[-R,R]$ it follows that for all $n\in \N:$
\[\tau(1_{(\lambda/4,\infty)}(|h_n(a_{n})|))\leq \tau(1_{(R,\infty)}(|a_{n}|))<\varepsilon.\]
Similarly,
\[\tau(1_{(\lambda/4,\infty)}(|h(a)|))<\varepsilon.\]
For the last term: let $T>0$ be such that $\|g\|_{\infty}\leq T.$ Then:
\begin{align*}
  \|f_{n}(g(a_{n}))-f(g(a))\|_{2}&\leq \|f_{n}(g(a_{n}))-f(g(a_{n}))\|_{2}+\|f(g(a_{n}))-f(g(a))\|_{2}\\
&\leq \|f_{n}(g(a_{n}))-f(g(a_{n}))\|_\infty+\|f(g(a_{n}))-f(g(a))\|_{2}\\
  &\leq \sup_{t\in \R:|t|\leq T}|f_{n}(t)-f(t)|+\|f(g(a_n))-f(g(a))\|_{2}.
\end{align*}
We have that $\sup_{t\in \R:|t|\leq T}|f_{n}(t)-f(t)|\to_{n\to\infty}0$ as $n\to\infty$ since $f_{n}\to f$ uniformly on compact sets. We also have that $\|f(g(a_n))-f(g(a))\|_{2}\to 0$ by Proposition \ref{P:conv in meas implies wk* conv} (\ref{I:operate continuously bounded functions}).
Hence $\|f_{n}(g(a_{n}))-f(g(a))\|_{2}\to_{n\to\infty}0$. 
 Altogether, we have shown that
\[\limsup_{n\to\infty}\tau(1_{(\lambda,\infty)}(|f_{n}(a_{n})-f(a_{n})|))\leq 2\varepsilon.\]
Since $\varepsilon>0$ is arbitrary, we can let $\varepsilon\to 0$ to show that
\[\tau(1_{(\lambda,\infty)}(|f_{n}(a_{n})-f(a)|))\to_{n\to\infty}0.\]
Since this is true for every $\lambda>0,$ we have that $f_{n}(a_{n})\to_{n\to\infty}f(a)$ in measure.

\end{proof}

\subsection{The space $\mathcal{P}(M,\tau)$ of positive log-square integrable operators}\label{sec:P1}

\begin{defn}
Let $(M,\tau)$ be a tracial von Neumann algebra, and let
$$G=\GL^2(M,\tau)=\{a\in \rL^0(M,\tau):\log(|a|)\in \rL^2(M,\tau)\}.$$
Set $\mathcal{P}=\mathcal{P}(M,\tau)=\{a\in G:a> 0\}.$ For $a,b\in \mathcal{P},$ set
\[d_{\mathcal{P}}(a,b)=\|\log(b^{-1/2}ab^{-1/2})\|_{2}.\]
This is well-defined by Theorem \ref{T:log subgroup}. Note that $\cP^\infty = \cP\cap M$ and $d_\cP$ restricted to $\cP^\infty$ agrees with the formula above by Lemma \ref{lem:commute}.
\end{defn}

The main result of this section is:

\begin{thm}\label{T:I heard you like metrics}
Let $(M,\tau)$ be a von Neumann algebra with a finite trace. Then,
\begin{enumerate}
\item $d_{\mathcal{P}}$ is a metric.\label{I:its a metric}
\item The group $G$ acts on $\mathcal{P}$ by isometries  by $g.a=gag^{*}$. \label{I:isometric action}
\item The action $G \cc \cP$ is transitive. \label{I:transitive}
\item $\cP^\infty$ is dense in $\cP$. \label{I:dense}
\end{enumerate}
\end{thm}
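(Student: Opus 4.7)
The plan is to prove parts (2), (3), (4) in turn, then deduce (1). For (3) transitivity is immediate: for any $a \in \cP$, the element $a^{1/2}$ lies in $G$ (since $\log a^{1/2} = \tfrac12\log a \in \rL^2(M,\tau)$) and $a^{1/2}.\id = a$. For (2), given $g \in G$ and $a, b \in \cP$, set $h := gb^{1/2} \in G$. Since $h$ is invertible in $\rL^0(M,\tau)^\times$, Lemma~\ref{L:invertible} gives its polar decomposition $h^* = v|h^*|$ with $v$ unitary, and a direct computation yields
$$(gbg^*)^{-1/2}(gag^*)(gbg^*)^{-1/2} = v^*(b^{-1/2}ab^{-1/2})v.$$
Unitary conjugation preserves the spectral measure, so $d_\cP(g.a, g.b) = d_\cP(a,b)$. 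For (4), given $a \in \cP$, write $x := \log a \in \rL^2(M,\tau)_{sa}$, set $x_n := 1_{[-n,n]}(x)\cdot x \in M_{sa}$, and $a_n := \exp(x_n) \in \cP^\infty$. Because $a_n$ and $a$ both lie in the abelian von Neumann algebra generated by $x$, they commute, so $d_\cP(a_n, a) = \|x_n - x\|_2 \to 0$ by dominated convergence on the spectral measure $\mu_x$.

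Turning to (1): non-negativity and $d_\cP(a, a) = 0$ are immediate, and if $d_\cP(a, b) = 0$ then faithfulness of $\tau$ forces $\log(b^{-1/2}ab^{-1/2}) = 0$, whence $a = b$. For symmetry, set $c := b^{-1/2}a^{1/2}$, so that $c^*c = a^{1/2}b^{-1}a^{1/2}$ and $cc^* = b^{-1/2}ab^{-1/2}$; Theorem~\ref{T:log subgroup} provides $\|\log|c|\|_2 = \|\log|c^*|\|_2$, and since $a^{1/2}b^{-1}a^{1/2}$ and $a^{-1/2}ba^{-1/2}$ are mutual inverses, another invocation of Theorem~\ref{T:log subgroup} gives $d_\cP(a, b) = d_\cP(b, a)$.

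The main obstacle will be the triangle inequality. My plan is to approximate $a, b, c \in \cP$ by sequences $a_n, b_n, c_n \in \cP^\infty$ via the construction in (4), apply the CAT(0) triangle inequality on $\cP^\infty$ from Theorem~\ref{thm:cat} to get
$$d_\cP(a_n, c_n) \leq d_\cP(a_n, b_n) + d_\cP(b_n, c_n),$$
and pass to the limit as $n \to \infty$. By Corollary~\ref{C:joint coninuity of functional calculus} and Proposition~\ref{P:conv in meas implies wk* conv}, the operators $c_n^{-1/2}a_n c_n^{-1/2}$ converge in measure to $c^{-1/2}ac^{-1/2}$ and their spectral measures converge weakly, so Fatou's lemma yields $d_\cP(a, c) \leq \liminf_n d_\cP(a_n, c_n)$. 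The delicate part is the reverse estimate $\limsup_n d_\cP(a_n, b_n) \leq d_\cP(a, b)$ (and similarly for $b_n, c_n$), which requires upgrading measure-topology convergence to $\rL^2$-norm convergence of the logs; I expect this to be handled by exploiting the specific form of the approximations (e.g.\ $a_n$ commutes with $a$ by construction, letting one compute explicitly) together with uniform-integrability bounds on the tails of the relevant spectral distributions.
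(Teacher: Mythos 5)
Most of your proposal is correct, and two of its steps are genuinely slicker than the paper's argument. Transitivity and density follow the paper. Your proof of (2), however, is a different and more elementary route: the paper approximates $g,a,b$ by bounded elements and invokes the Riemannian isometry statement for $\cP^\infty$ from \cite{MR2254561} together with a convergence lemma, whereas you reduce everything to the algebraic identity $(gbg^*)^{-1/2}(gag^*)(gbg^*)^{-1/2}=v^*(b^{-1/2}ab^{-1/2})v$ coming from the polar decomposition of $h^*=(gb^{1/2})^*$. That identity checks out: $|h^*|=(hh^*)^{1/2}=(gbg^*)^{1/2}=v^*b^{1/2}g^*$ with $v$ unitary by Lemma \ref{L:invertible}, and unitary conjugation preserves spectral measures because $\tau$ is a trace. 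Likewise your symmetry argument via $c=b^{-1/2}a^{1/2}$ and Theorem \ref{T:log subgroup} is purely algebraic, while the paper obtains symmetry only as a limit of the bounded case. These are real simplifications that avoid the approximation machinery for those two items.

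The genuine gap is in the triangle inequality, and you have located it yourself: you need $\limsup_n d_\cP(a_n,b_n)\le d_\cP(a,b)$ but only "expect" it to hold. The commutativity of $a_n$ with $a$ does not help here, because the obstruction lives in the non-commuting product $b_n^{-1/2}a_nb_n^{-1/2}$. Concretely, what is missing is a fixed integrable majorant, uniform in $n$, for $\lambda\mapsto\lambda\bigl[\mu_{b_n^{-1/2}a_nb_n^{-1/2}}(e^{\lambda},\infty)+\mu_{b_n^{1/2}a_n^{-1}b_n^{1/2}}(e^{\lambda},\infty)\bigr]$ in the integral formula (\ref{E:formula for norm}) for $d_\cP(a_n,b_n)^2$. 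The paper produces it (Propositions \ref{P:submutliplicative estimate} and \ref{P:basic convergence log L2}) from the submultiplicativity of distribution functions, $\mu_{|xy|}(\lambda_1\lambda_2,\infty)\le\mu_{|x|}(\lambda_1,\infty)+\mu_{|y|}(\lambda_2,\infty)$, a Fan--Ky type inequality for generalized $s$-numbers from \cite{MR840845}, combined with the sandwich bounds $e^{-|\log a|}\le a_n\le e^{|\log a|}$, $e^{-|\log b|}\le b_n\le e^{|\log b|}$ and operator monotonicity of the inverse; dominated convergence then gives the full two-sided limit $d_\cP(a_n,b_n)\to d_\cP(a,b)$, which is what the limiting argument requires. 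Without Proposition \ref{P:submutliplicative estimate} (or an equivalent), the "uniform-integrability bounds on the tails" you invoke have no proof, so this step must be supplied before the argument is complete.
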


We will extend this result to the semi-finite case in \S \ref{sec:semi-finite case}. To prove this theorem, we will approximate elements of $\cP$ by elements of $\cP^\infty$ in the measure topology and then apply results from the previous section on $\cP^\infty$. Because we will use the Dominated Convergence Theorem, we need some basic facts about operator monotonicity. Recall that if $a,b$ are operators then by definition, $a\le b$ if and only if $b-a\in \rL^0(M,\tau)$ is a positive operator (where $b-a$ is defined to be the closure of $b-a$ restricted to $\dom(b)\cap \dom(a)$).


\begin{prop}\label{P:basic facts positive ops affil}
Let $(M,\tau)$ be a semi-finite von Neumann algebra.
\begin{enumerate}
\item Suppose $a,b\in \rL^0(M,\tau)$ and $|a|\leq |b|.$ Then for every $\lambda>0$ we have that \label{I:ineq op ineq meas}
\[\mu_{|a|}(\lambda,\infty)\leq \mu_{|b|}(\lambda,\infty).\]
\item If $a,b\in \rL^0(M,\tau)$ are self-adjoint and $a\leq b,$ then $cac^{*}\leq cbc^{*}$ for every $c\in \rL^0(M,\tau).$ \label{I:this is easy}
\end{enumerate}
\end{prop}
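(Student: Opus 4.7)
The plan is to prove (1) and (2) separately. For (1), I would first reduce to the case $0 \leq a \leq b$ (positive self-adjoint) by replacing $a,b$ with $|a|,|b|$, since $\mu_{|a|}$ depends only on $|a|$. The result then follows from two standard facts about the generalized $s$-numbers $\tmu_{s}$ from \cite{MR840845} (already invoked in Propositions \ref{P:submutliplicative estimate} and \ref{P:subadditive estimate}): the Weyl-type monotonicity $\tmu_{s}(a)\le \tmu_{s}(b)$ whenever $0\le a\le b$ (Fack--Kosaki \cite[Lemma 2.5]{MR840845}), and the duality $\tmu_{s}(x)=\inf\{t\ge 0:\tlambda_{t}(x)\le s\}$ from \cite[Proposition 2.2]{MR840845}, where $\tlambda_{t}(x)=\mu_{|x|}(t,\infty)$. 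Setting $s:=\tlambda_{\lambda}(b)$, the duality applied to $b$ forces $\tmu_{s}(b)\le \lambda$; combined with monotonicity, $\tmu_{s}(a)\le \lambda$, so by the definition of $\tmu_{s}(a)$ there exist $t_{\varepsilon}\le \lambda+\varepsilon$ with $\tlambda_{t_{\varepsilon}}(a)\le s$ for every $\varepsilon>0$. Since $\tlambda_{\cdot}(a)$ is decreasing and right-continuous (as a distribution function of $\mu_{|a|}$), letting $\varepsilon\to 0^{+}$ gives $\tlambda_{\lambda}(a)\le s=\tlambda_{\lambda}(b)$, which is exactly the desired inequality.

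For (2), I would use the classical factorization argument. Since $b-a\ge 0$ in $\rL^{0}(M,\tau)$, it has a positive square root $d:=(b-a)^{1/2}\in \rL^{0}(M,\tau)_{+}$ defined via the spectral calculus. Using that $\rL^{0}(M,\tau)$ is an associative $*$-algebra (\S \ref{sec:affiliated}, by \cite[Theorems IX.2.2, IX.2.5]{TakesakiII}), I then compute
\[
cbc^{*}-cac^{*}\;=\;c(b-a)c^{*}\;=\;c\,d\,d\,c^{*}\;=\;(dc^{*})^{*}(dc^{*}),
\]
which is manifestly of the form $y^{*}y$ with $y:=dc^{*}\in \rL^{0}(M,\tau)$, and therefore positive in $\rL^{0}(M,\tau)$.

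The only real subtlety, worth flagging as the main obstacle, is a bookkeeping point in part (2): products and differences in $\rL^{0}(M,\tau)$ are defined as closures of the naive domain-wise operations, so each equality in the display above is an identity in the completed $*$-algebra rather than a literal composition identity on a common dense domain. Once the $*$-algebra structure from \cite{TakesakiII} is in hand this step becomes routine. The nontrivial content of the proposition lives in part (1), specifically the Weyl-type monotonicity of $\tmu_{s}$ on positive affiliated operators, which is the key technical input from \cite{MR840845}.
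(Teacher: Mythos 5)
Your proof is correct and follows essentially the same route as the paper: part (1) rests on the Weyl-type monotonicity of generalized $s$-numbers from \cite[Lemma 2.5]{MR840845} (the paper simply cites this lemma and leaves the translation to distribution functions implicit, which you carry out explicitly via \cite[Proposition 2.2]{MR840845}), and part (2) is the identical factorization $cbc^{*}-cac^{*}=(dc^{*})^{*}(dc^{*})$ with $d$ a square root of $b-a$.
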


\begin{proof}
(\ref{I:ineq op ineq meas}): This is implied by \cite[Lemma 3.(i)]{BKMeasOps} or \cite[Lemma 2.5(iii)]{MR840845}.

(\ref{I:this is easy}): We may write $b-a=d^{*}d$ for some $d\in \rL^0(M,\tau).$ Then
$cac^{*}-cbc^{*}=(dc^*)^{*}dc^*$.
\end{proof}

The next proposition contains the approximations results we will need.

\begin{prop}\label{P:basic convergence log L2}
Let $(M,\tau)$ be a von Neumann algebra with a finite trace. Suppose that $(a_{n})_n$, $(b_{n})_n$ are sequences in $G,$ that $a\in \rL^0(M,\tau)$ and that $a_{n}^{\pm 1}\to a^{\pm 1},$ $b_{n}^{\pm 1}\to b^{\pm 1}$ in measure. Further assume that there are $A_{1},A_{2},B_{1},B_{2}\in \mathcal{P}$ with $A_{1}\leq |a_{n}|\leq A_{2},$ $B_{1}\leq |b_{n}|\leq B_{2}$ for all $n\in \N.$
\begin{enumerate}
\item Then $a\in G$ and $\|\log(|a_{n}|)\|_{2}\to_{n\to\infty} \|\log(|a|)\|_{2}.$ \label{I:conv in meas log L2}
\item If $a_{n}$ and $b_n \in \mathcal{P}$ for all $n,$ then $d_{\mathcal{P}}(a_{n},b_{n})\to_{n\to\infty}d_{\mathcal{P}}(a,b).$
\label{I:distance convergence}
 \end{enumerate}

\end{prop}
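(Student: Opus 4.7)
The plan is to reduce both parts to a single Dominated Convergence Theorem argument applied to formula (\ref{E:formula for norm}),
\[\|\log|x|\|_2^2 = 2\int_0^\infty \lambda\bigl[\mu_{|x|}(e^\lambda,\infty) + \mu_{|x^{-1}|}(e^\lambda,\infty)\bigr]\,d\lambda.\]
In both parts, the requisite pointwise convergence of the integrands will be cheap: the hypothesis $a_n^{\pm 1}\to a^{\pm 1}$ in measure, together with Proposition \ref{P:conv in meas implies wk* conv}(\ref{I:wk* conv meas}) and (\ref{I:level set convergence}), will give $\mu_{|a_n^{\pm 1}|}(\lambda,\infty)\to \mu_{|a^{\pm 1}|}(\lambda,\infty)$ at all but countably many $\lambda$. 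So the substance will lie in producing uniform integrable majorants from the two-sided operator bounds.

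For part (\ref{I:conv in meas log L2}), I will combine $A_1\le|a_n|\le A_2$ with Proposition \ref{P:basic facts positive ops affil}(\ref{I:ineq op ineq meas}) to get $\mu_{|a_n|}(\lambda,\infty)\le \mu_{A_2}(\lambda,\infty)$. For the inverse side, operator monotonicity of $t\mapsto -t^{-1}$ on positive invertible affiliated operators turns $|a_n|\ge A_1$ into $|a_n|^{-1}\le A_1^{-1}$, and then Proposition \ref{P:transformation of measures}(\ref{I:change of measure under reflection}) together with Proposition \ref{P:basic facts positive ops affil}(\ref{I:ineq op ineq meas}) yields $\mu_{|a_n^{-1}|}(\lambda,\infty)\le \mu_{A_1^{-1}}(\lambda,\infty)$. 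Since $A_1,A_2\in\mathcal{P}\subset G$, these majorants are integrable against $\lambda\,d\lambda$ after $\lambda\mapsto e^\lambda$, by applying (\ref{E:formula for norm}) to $A_2$ and to $A_1^{-1}$. The Dominated Convergence Theorem then forces a finite limit. Joint continuity of multiplication in the measure topology (\S \ref{sec:measure}) gives $a\cdot a^{-1}=\lim a_n a_n^{-1}=\id$, so $a\in \rL^0(M,\tau)^\times$, and (\ref{E:formula for norm}) identifies the limit as $\|\log|a|\|_2^2$. This simultaneously shows $a\in G$ and delivers the norm convergence.

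For part (\ref{I:distance convergence}), I will set $c_n=b_n^{-1/2}a_nb_n^{-1/2}\in\mathcal{P}$ and $c=b^{-1/2}ab^{-1/2}$, so that $d_{\mathcal{P}}(a_n,b_n)=\|\log c_n\|_2$, and then replay the argument of part (\ref{I:conv in meas log L2}) on $(c_n)$. This requires two preparatory steps. First, to get $c_n^{\pm 1}\to c^{\pm 1}$ in measure, I will apply Corollary \ref{C:joint coninuity of functional calculus} with the continuous function $g(t)=|t|^{1/2}$ to the sequences $b_n^{\pm 1}\to b^{\pm 1}$ (using positivity of $b_n^{\pm 1}$ so $g$ realizes the principal square root), obtaining $b_n^{\pm 1/2}\to b^{\pm 1/2}$ in measure, and then invoke joint continuity of multiplication in $\rL^0(M,\tau)$. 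Second, to produce integrable majorants for $\mu_{c_n}$ and $\mu_{c_n^{-1}}$, I will iterate Proposition \ref{P:submutliplicative estimate} on the three-factor product with $\lambda_1=\lambda_2=\lambda_3=\lambda^{1/3}$, use $\mu_{b_n^{\pm 1/2}}(s,\infty)=\mu_{b_n^{\pm 1}}(s^2,\infty)$, and apply the operator-monotonicity bounds once more to conclude
\[\mu_{c_n}(\lambda,\infty)\le 2\mu_{B_1^{-1}}(\lambda^{2/3},\infty) + \mu_{A_2}(\lambda^{1/3},\infty),\]
and symmetrically $\mu_{c_n^{-1}}(\lambda,\infty)\le 2\mu_{B_2}(\lambda^{2/3},\infty) + \mu_{A_1^{-1}}(\lambda^{1/3},\infty)$. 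A change of variables shows these are integrable against $\lambda\,d\lambda$ after $\lambda\mapsto e^\lambda$, because $A_1,A_2,B_1,B_2\in G$. The same Dominated Convergence argument as in part (\ref{I:conv in meas log L2}) then gives $\|\log c_n\|_2\to \|\log c\|_2$.

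The hard part will be justifying operator monotonicity ($t\mapsto t^{1/2}$ and $t\mapsto -t^{-1}$) for unbounded affiliated positive invertible operators, which is what converts the quadratic-form hypotheses into the spectral-distribution bounds that feed the iterated Proposition \ref{P:submutliplicative estimate}; the split $\lambda^{1/3}\cdot\lambda^{1/3}\cdot\lambda^{1/3}$ is chosen precisely so that after the exponential change of variables each of the three resulting terms pulls back to an integral of the form $\int_0^\infty u\,\mu_{\cdot}(e^u,\infty)\,du$ controlled by $\|\log|\cdot|\|_2^2$ for one of $A_1^{\pm 1},A_2,B_1^{-1},B_2$.
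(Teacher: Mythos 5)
Your proposal is correct and follows essentially the same route as the paper's proof: both parts rest on formula (\ref{E:formula for norm}), pointwise convergence of the distribution functions via Proposition \ref{P:conv in meas implies wk* conv}, integrable majorants obtained from the two-sided operator bounds via Proposition \ref{P:basic facts positive ops affil} and the iterated Proposition \ref{P:submutliplicative estimate}, and the Dominated Convergence Theorem. The only differences are cosmetic — the paper splits $e^{\lambda}=e^{\lambda/4}e^{\lambda/2}e^{\lambda/4}$ rather than into three equal factors, and reaches $\mu_{|a_n^{-1}|}\le\mu_{A_1^{-1}}$ by conjugating with the unitary from the polar decomposition, which is exactly how your cited Proposition \ref{P:transformation of measures}(\ref{I:change of measure under reflection}) is proved.
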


\begin{proof}
(\ref{I:conv in meas log L2}): As in (\ref{E:formula for norm}),
\[\|\log(a_{n})\|_{2}^{2}=2\int_{0}^{\infty}\lambda[\mu_{|a_{n}|}(e^{\lambda},\infty)+\mu_{|a_{n}^{-1}|}(e^{\lambda},\infty)]\,d\lambda.\]
Moreover, since $|a_{n}|\leq A_{2},$ we have that $\mu_{|a_{n}|}(\lambda,\infty)\leq \mu_{A_{2}}(\lambda,\infty).$ Let $a_{n}=u_{n}|a_{n}|$ be the polar decomposition. Since $a_n^{-1}=u_n^{-1} (u_n |a_n|^{-1}u_n^{-1})$, it follows that $|a_{n}^{-1}|=u_{n}|a_{n}|^{-1}u_{n}^{*}$. So by operator monotonicity of inverses, $|a_{n}^{-1}|\leq u_{n}A_{1}^{-1}u_{n}^{*}$ and thus by Proposition \ref{P:basic facts positive ops affil} (\ref{I:ineq op ineq meas}), \[\mu_{|a_{n}^{-1}|}(e^{\lambda},\infty)\leq \mu_{u_{n}A_{1}^{-1}u_{n}^{*}}(e^{\lambda},\infty).\] Since $a_{n}\in L^{0}(M,\tau)^{\times}$ we know that each $u_{n}$ is a unitary, so $\mu_{|a_{n}^{-1}|}(e^{\lambda},\infty)\leq \mu_{A_{1}^{-1}}(e^{\lambda},\infty)$.  Thus
\[\lambda[\mu_{|a_{n}|}(e^{\lambda},\infty)+\mu_{|a_{n}^{-1}|}(e^{\lambda},\infty)]\leq \lambda[ \mu_{A_{2}}(e^{\lambda},\infty)+\mu_{A_{1}^{-1}}(e^{\lambda},\infty)].\]
Since $A_{1},A_{2}\in \mathcal{P},$ the right hand side of this expression is in $L^{1}(\R).$

Since $a_{n}^{\pm 1}\to a^{\pm 1}$ in measure, Proposition \ref{P:conv in meas implies wk* conv} implies that $\mu_{|a_{n}^{\pm 1}|}(\lambda,\infty)\to \mu_{|a^{\pm 1}|}(\lambda,\infty)$ for all but countably many $\lambda.$ So by the Dominated Convergence Theorem,
\begin{align*}
\|\log(|a|)\|_{2}^{2}&=2\int_{0}^{\infty}\lambda[\mu_{|a|}((e^{\lambda},\infty))+\mu_{|a^{-1}|}((e^{\lambda},\infty))]\,d\lambda\\
&=\lim_{n\to\infty}2\int_{0}^{\infty}\lambda[\mu_{|a_{n}|}((e^{\lambda},\infty))+\mu_{|a_{n}^{-1}|}((e^{\lambda},\infty))]\,d\lambda\\
&=\lim_{n\to\infty}\|\log(|a_{n}|)\|_{2}^{2}.
\end{align*}

Moreover, we already saw that
\[2\int_{0}^{\infty}\lambda\left[\mu_{|a_{n}|}(e^{\lambda},\infty)+\mu_{|a_{n}^{-1}|}(e^{\lambda},\infty)\right]\,\dee\lambda\leq 2\int \lambda\left[ \mu_{A_{2}}(e^{\lambda},\infty)+\mu_{A_{1}^{-1}}(e^{\lambda},\infty)\right]\,\dee\lambda<\infty.\]
Thus $\log(|a|)\in \rL^2(M,\tau)$ and we have established that $\|\log(|a_{n}|)\|_{2}\to_{n\to\infty} \|\log(|a|)\|_{2}.$

(\ref{I:distance convergence}):
By definition,
\[d_{\mathcal{P}}(a_{n},b_{n})=\|\log(b_{n}^{-1/2}a_{n}b_{n}^{-1/2})\|_{2},\]
and as in (\ref{I:conv in meas log L2}) we have that
\begin{equation}\label{E:distance integral expression}
d_{\mathcal{P}}(a_{n},b_{n})=2\int_{0}^{\infty}\lambda\left[\mu_{b_{n}^{-1/2}a_{n}b_{n}^{-1/2}}(e^{\lambda},\infty)+\mu_{b_{n}^{1/2}a_{n}^{-1}b_{n}^{1/2}}(e^{\lambda},\infty)\right]\,\dee\lambda.
\end{equation}
By Proposition \ref{P:conv in meas implies wk* conv} (\ref{I:operate continuously measure}) we know that $b_{n}^{-1/2}a_{n}b_{n}^{-1/2}\to b^{-1/2}ab^{-1/2}$ in measure, and similarly $b_{n}^{1/2}a_{n}^{-1}b_{n}^{1/2}\to ba^{-1}b$ measure. Hence by Proposition \ref{P:conv in meas implies wk* conv} (\ref{I:wk* conv meas}) we know that
\begin{equation}\label{I:setting up dct for metric}
    \lim_{n\to\infty}\mu_{b_{n}^{-1/2}a_{n}b_{n}^{-1/2}}(e^{\lambda},\infty)+\mu_{b_{n}^{1/2}a_{n}^{-1}b_{n}^{1/2}}(e^{\lambda},\infty)=\mu_{b^{-1/2}ab^{-1/2}}(e^{\lambda},\infty)+\mu_{b^{1/2}a^{-1}b^{1/2}}(e^{\lambda},\infty),
\end{equation}
for all but countably many $\lambda.$ Moreover, by Proposition \ref{P:submutliplicative estimate} we have that
\[\mu_{b_{n}^{-1/2}a_{n}b_{n}^{-1/2}}(e^{\lambda},\infty)\leq 2\mu_{b_{n}^{-1/2}}(e^{\lambda/4},\infty)+\mu_{a_{n}}(e^{\lambda/2},\infty)=2\mu_{b_{n}^{-1}}(e^{\lambda/2},\infty)+\mu_{a_{n}}(e^{\lambda/2},\infty).\]
By operator monotonicity of inverses, we have that $b_{n}^{-1}\leq B_{1}^{-1}$ and so by Proposition \ref{P:basic facts positive ops affil} (\ref{I:ineq op ineq meas}) we have
\begin{equation}\label{I:first domination metric}
    \mu_{b_{n}^{-1/2}a_{n}b_{n}^{-1/2}}(e^{\lambda},\infty)\leq 2\mu_{B_{1}^{-1}}(e^{\lambda/2},\infty)+\mu_{A_{2}}(e^{\lambda/2},\infty).
\end{equation}
Similarly,
\begin{equation}\label{I:second domination metric}
    \mu_{b_{n}^{1/2}a_{n}^{-1}b_{n}^{1/2}}(e^{\lambda},\infty)\leq 2\mu_{B_{2}}(e^{-\lambda/2},\infty)+\mu_{A_{1}^{-1}}(e^{-\lambda/2},\infty).
\end{equation}
As in the proof of (\ref{I:conv in meas log L2}),
\[\lambda\mapsto \lambda\left[\mu_{B_{1}^{-1}}(e^{\lambda/2},\infty)+\mu_{B_{2}}(e^{-\lambda/2},\infty)+\mu_{A_{1}^{-1}}(e^{-\lambda/2},\infty)+\mu_{A_{2}}(e^{\lambda/2},\infty)\right]\]
is in $L^{1}(\R).$ So by   (\ref{I:first domination metric}),(\ref{I:second domination metric}), and (\ref{I:setting up dct for metric}) we may apply the dominated convergence theorem to (\ref{E:distance integral expression}) to see that
\[\lim_{n\to\infty}d_{\mathcal{P}}(a_{n},b_{n})=2\int_{0}^{\infty}\lambda\left[\mu_{b^{-1/2}ab^{-1/2}}(e^{\lambda},\infty)+\mu_{b^{1/2}a^{-1}b^{1/2}}(e^{\lambda},\infty)\right]\,\dee\lambda=d_{\mathcal{P}}(a,b).\]
\end{proof}

\begin{proof}[Proof of Theorem \ref{T:I heard you like metrics}]

(\ref{I:its a metric}):

We first prove non-degeneracy. So suppose that $a,b\in \mathcal{P}$ and $d_{\mathcal{P}}(a,b)=0.$ Then $\log(a^{-1/2}ba^{-1/2})=0,$ and so $a^{-1/2}ba^{-1/2}=1.$ Multiplying this equation on the left and right by $a^{1/2}$ proves that $b=a.$

For the triangle inequality, we already know by Lemma \ref{lem:commute} that $d_{\mathcal{P}}$ is a metric when restricted to $\GL^\infty(M,\tau)=M^{\times}.$ Here $M^{\times}$ is the set of elements of $M$ with a \emph{bounded} inverse.  Define $f_{n}\colon [0,\infty)\to [0,\infty)$ by
\[f_{n}(t)=\begin{cases}
n,& \textnormal{ if $t>n$}\\
t,& \textnormal{if $\frac{1}{n}\leq t\leq n$}\\
\frac{1}{n},& \textnormal{ if $0\leq t<\frac{1}{n}$.}\\
\end{cases}\]
Given $a,b\in \mathcal{P},$ set $a_{n}=f_{n}(a),b_{n}=f_{n}(b),$ $A=|\log(a)|,B=|\log(b)|$ and observe that:
\begin{itemize}
    \item $a_{n}^{\pm 1}\to a^{\pm 1},$ $b_{n}^{\pm 1}\to b^{\pm 1}$ in measure,
    \item $\exp(-A)\leq a_{n}\leq \exp(A),$ $\exp(-B)\leq b_{n}\leq \exp(B)$ for all $n\in \N.$
\end{itemize}
By Proposition \ref{P:basic convergence log L2} (\ref{I:distance convergence}), 
\[\lim_{n\to\infty}d_{\mathcal{P}}(a_{n},b_{n})=d_{\mathcal{P}}(a,b).\]
 Since $d_{\mathcal{P}}$ is a metric when restricted to $\cP\cap M^{\times},$ and $f_{n}(\mathcal{P})\subseteq \cP\cap M^{\times},$ the above equation implies the triangle inequality for $d_{\mathcal{P}}.$ It also implies $d_\cP$ is symmetric. So it is a metric.

(\ref{I:isometric action}):
It is easy to see that (\ref{I:isometric action}) is true if $g\in \mathcal{U}(M)$ (where $\mathcal{U}(M) \le M^\times$ is the group of unitaries in $M$). Every $g\in G$ can be written as $g=u|g|$ where $u\in \mathcal{U}(M).$ Since $|g|\in \mathcal{P}$ for every $g\in G,$ and (\ref{I:isometric action}) is true when $u\in \mathcal{U}(M),$ it suffices to show (\ref{I:isometric action}) for $g\in \mathcal{P}.$ So we will assume throughout the rest of the proof that $g\in \mathcal{P}.$

We first show that $d_{\mathcal{P}}(gag^{*},gbg^{*})=d_{\mathcal{P}}(a,b)$ for $a,b\in \mathcal{P}\cap M^{\times}.$ Since $g\in \mathcal{P},$ as in (\ref{I:its a metric}) we may find a sequence $g_{n}\in \mathcal{P}\cap M^{\times}$ so that
\begin{itemize}
    \item $g_{n}^{\pm 1}\to g^{\pm 1}$ in measure,
    \item $g_{n}=f_{n}(g)$ for some $f_{n}\colon [0,\infty)\to [0,\infty)$
    \item $\exp(-H)\leq g_{n}\leq \exp(H)$ for some self-adjoint $H\in \rL^2(M,\tau).$
\end{itemize}
Since $\rL^0(M,\tau)$ is a topological $*$-algebra in the measure topology, we have that $g_{n}ag_{n}\to_{n\to\infty}gag$ in measure. Moreover by Proposition \ref{P:basic facts positive ops affil} (\ref{I:this is easy})
\[\|a^{-1}\|^{-1}_{\infty}\exp(-2H)\leq \|a^{-1}\|_{\infty}^{-1}g_{n}g_{n}\leq g_{n}ag_{n}\leq \|a\|_{\infty}g_{n}^{2}\leq \|a\|_{\infty}\exp(2H),\]
and similarly
\[\|b^{-1}\|^{-1}_{\infty}\exp(-2H)\leq g_{n}bg_{n}\leq \|b\|_{\infty}\exp(2H).\]
So as in (\ref{I:its a metric}) we may apply Proposition \ref{P:basic convergence log L2} (\ref{I:distance convergence}) to see that
\begin{equation}\label{E:were halfway there} d_{\mathcal{P}}(gag,gbg)=\lim_{n\to\infty}d_{\mathcal{P}}(g_{n}ag_{n},g_{n}bg_{n}).
\end{equation}
By Lemma \ref{lem:isometries},  $d_{\mathcal{P}}(g_{n}ag_{n},g_{n}bg_{n})= d_\cP(a,b)$. We thus have that
\[d_{\mathcal{P}}(gag,gbg)=d_{\mathcal{P}}(a,b).\]

We now handle the case of general $a,b\in\mathcal{P}.$ As in (\ref{I:its a metric}), we find may sequences $a_{n},b_{n}\in \mathcal{P}\cap M^{\times}$ so that:
\begin{itemize}
    \item $a_{n}^{\pm 1}\to a^{\pm 1},$ $b_{n}\to b^{\pm 1}$ in measure
    \item $\exp(-A)\leq a_{n}\leq \exp(A),$ $\exp(-B)\leq b_n \leq \exp(B)$ for some $A,B\in \rL^2(M,\tau)$.
\end{itemize}
As in (\ref{I:its a metric}), we have that
\begin{equation}\label{E:not this again}
d_{\mathcal{P}}(a,b)=\lim_{n\to\infty}d_{\mathcal{P}}(a_{n},b_{n}).
\end{equation}
\[d_{\mathcal{P}}(gag^{*},gbg^{*})=\lim_{n\to\infty}d_{\mathcal{P}}(ga_{n}g^{*},gb_{n}g^{*}).\]
So combining (\ref{E:not this again}) with the first case shows that
\[d_{\mathcal{P}}(gag^{*},gbg^{*})=d_{\mathcal{P}}(a,b).\]

(\ref{I:transitive}) Let $p,q \in \cP$. Then $p^{-1/2},q^{1/2} \in G= \GL^2(M,\tau)$. Moreover,
$$(q^{1/2}p^{-1/2}) \cdot p = q.$$

(\ref{I:dense}) Let $a \in \cP$ and define $a_n=f_n(a)$ as in (\ref{I:its a metric}). Then $a_n \in \cP^\infty$ and $a_n \to a$ in measure. Apply Proposition \ref{P:basic convergence log L2} with $b_n=a = B_1=B_2$ to obtain $d_\cP(a_n,a) \to d_\cP(a,a) = 0$ as $n\to\infty$. Since $a \in \cP$ is arbitrary, this proves $\cP^\infty$ is dense in $\cP$.

\end{proof}

\subsection{Continuity of the exponential map}\label{sec:P2}

This section proves that the exponential map $\exp:\rL^2(M,\tau)_{sa} \to \cP$ is a homeomorphism and obtains as a corollary that $\cP$ is a complete CAT(0) metric space. We also obtain a formula for the geodesics in $\cP$. First we need the following estimate which extends the $\cP^\infty$ case proven earlier.

\begin{prop}\label{P:continuity of logarithm map}
Let $(M,\tau)$ be a von Neumann algebra with a finite trace. Then for all $a,b\in \rL^2(M,\tau)_{sa}$,
$$\|a-b\|_{2}\leq d_{\mathcal{P}}(e^{a},e^{b}).$$
 If $a$ and $b$ commute then $\|a-b\|_{2}=d_{\mathcal{P}}(e^{a},e^{b}).$
\end{prop}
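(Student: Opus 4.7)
The plan is to reduce to the case where $a, b$ lie in $M_{sa}$, where the inequality follows from Lemma \ref{lem:commute}, and then pass to the limit via Proposition \ref{P:basic convergence log L2}(\ref{I:distance convergence}).

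First I would truncate. Let $f_n(t) = \max(-n, \min(n, t))$ and set $a_n = f_n(a)$, $b_n = f_n(b)$ via Borel functional calculus, so that $a_n, b_n \in M_{sa}$ with $\|a_n\|_\infty, \|b_n\|_\infty \leq n$. Since $(t - f_n(t))^2 \to 0$ pointwise on $\R$ and is dominated by $t^2 \in \rL^1(\mu_a)$, dominated convergence gives $a_n \to a$ in $\rL^2(M, \tau)$, and similarly $b_n \to b$; in particular $\|a_n - b_n\|_2 \to \|a - b\|_2$. By Lemma \ref{L:L2tomeasure}, $a_n \to a$ and $b_n \to b$ in measure as well.

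Next, I would verify the hypotheses of Proposition \ref{P:basic convergence log L2}(\ref{I:distance convergence}) applied to the sequences $(e^{a_n})_n$ and $(e^{b_n})_n$. By Corollary \ref{C:joint coninuity of functional calculus}, applied with the constant sequence of continuous functions $t \mapsto e^{\pm t}$, we obtain $e^{\pm a_n} \to e^{\pm a}$ and $e^{\pm b_n} \to e^{\pm b}$ in measure. For the required sandwich bound, the scalar inequality $-|t| \leq f_n(t) \leq |t|$ on $\R$ together with the monotonicity of $\exp$ within the abelian von Neumann subalgebra generated by $a$ yields $e^{-|a|} \leq e^{a_n} \leq e^{|a|}$, and both $e^{\pm |a|}$ lie in $\cP$ since $|a| \in \rL^2(M, \tau)_+$. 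The analogous bounds hold for $b_n$. Proposition \ref{P:basic convergence log L2}(\ref{I:distance convergence}) then gives $d_\cP(e^{a_n}, e^{b_n}) \to d_\cP(e^a, e^b)$.

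The inequality now follows: since $e^{a_n}, e^{b_n} \in \cP^\infty$, Lemma \ref{lem:commute} gives $\|a_n - b_n\|_2 = \|\log(e^{a_n}) - \log(e^{b_n})\|_2 \leq d_\cP(e^{a_n}, e^{b_n})$, and taking $n \to \infty$ yields $\|a - b\|_2 \leq d_\cP(e^a, e^b)$. For the equality when $a, b$ commute, the commutativity ensures $a, b$ are jointly affiliated with some abelian von Neumann subalgebra $N \subseteq M$, so $a_n, b_n$ all lie in $N$ and pairwise commute; within this abelian setting $e^{-b_n/2} e^{a_n} e^{-b_n/2} = e^{a_n - b_n}$, giving $d_\cP(e^{a_n}, e^{b_n}) = \|a_n - b_n\|_2$, and passing to the limit completes the argument. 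The main technical point is establishing the operator sandwich $e^{-|a|} \leq e^{a_n} \leq e^{|a|}$: since $\exp$ is not operator monotone in general, we exploit the fact that $a_n$ and $|a|$ commute (both being Borel functions of the same self-adjoint $a$), so that the scalar inequality lifts to an operator inequality via joint functional calculus on the abelian subalgebra generated by $a$. Once this is in hand, the remaining ingredients assemble routinely.
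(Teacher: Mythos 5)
Your proposal is correct and follows essentially the same route as the paper: truncate via $f_n$, apply Lemma \ref{lem:commute} to the bounded elements $e^{a_n},e^{b_n}\in\cP^\infty$, and pass to the limit using Proposition \ref{P:basic convergence log L2}(\ref{I:distance convergence}) together with $\|a_n-b_n\|_2\to\|a-b\|_2$. The only (immaterial) difference is in the commuting case, where the paper computes $e^{-b/2}e^ae^{-b/2}=e^{a-b}$ directly in $\rL^0(M,\tau)$ rather than truncating first.
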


\begin{proof}
Define a function $f_{n}\colon \R\to \R$ by
\[f_{n}(t)=\begin{cases}
n,& \textnormal{ if $t>n$}\\
t,& \textnormal{if $|t|\leq n$}\\
-n,& \textnormal{ if $t<-n$}.
\end{cases}
\]
Set $a_{n}=f_{n}(a),b_{m}=f_{n}(b).$ Then:
\begin{itemize}
\item $e^{a_{n}}\to e^{a},$ $e^{b_{n}}\to e^{b}$ in measure,
\item $\exp(-A)\leq e^{a_{n}}\leq \exp(A),$ $\exp(-B)\leq e^{b_{n}}\leq \exp(B)$ for all $n\in \N.$
\end{itemize}
So as in Theorem \ref{T:I heard you like metrics} (\ref{I:its a metric}) we have that
\[d_{\mathcal{P}}(e^{a},e^{b})=\lim_{n\to\infty}d_{\mathcal{P}}(e^{a_{n}},e^{b_{n}}).\]
Additionally, it is direct to see from the spectral theorem that
\[ \lim_{n\to\infty} \|a-a_{n}\|_{2}= \lim_{n\to\infty}\|b-b_{n}\|_{2}= 0.\]
So, by Lemma \ref{lem:commute},
\[d_{\mathcal{P}}(e^{a},e^{b})=\lim_{n\to\infty}d_{\mathcal{P}}(e^{a_{n}},e^{b_{n}})\geq \lim_{n\to\infty}\|a_{n}-b_{n}\|_{2}=\|a-b\|_{2}.\]
Suppose $a$ and $b$ commute. By definition
$$d_{\mathcal{P}}(e^{a},e^{b}) = \|\log(e^{-b/2}e^ae^{-b/2})\|_2.$$
Since $a$ and $b$ commute, $ e^{-b/2}e^ae^{-b/2} = e^{a-b}$. So $\|\log(e^{-b/2}e^ae^{-b/2})\|_2 = \|a-b\|_2$.

\end{proof}




\begin{thm}\label{T:continuity of exp}
Let $(M,\tau)$ be a von Neumann algebra with a finite trace. Then the exponential map $\exp\colon \rL^2(M,\tau)_{sa}\to \mathcal{P}$ is a homeomorphism.

\end{thm}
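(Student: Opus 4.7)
The map $\exp$ is a bijection: for any $a \in \rL^2(M,\tau)_{sa}$ one has $e^a \in \cP$ with $\log(e^a) = a \in \rL^2(M,\tau)$, and for any $p \in \cP$ one has $\log p \in \rL^2(M,\tau)_{sa}$ by definition of $\cP$, with $e^{\log p} = p$. Continuity of the inverse $\log \colon \cP \to \rL^2(M,\tau)_{sa}$ is immediate from Proposition \ref{P:continuity of logarithm map}, which gives $\|\log p - \log q\|_2 \leq d_\cP(p, q)$; that is, $\log$ is $1$-Lipschitz. So the only real task is to verify continuity of $\exp$.

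The main obstacle is that Proposition \ref{P:continuity of logarithm map} yields an equality $\|a - b\|_2 = d_\cP(e^a, e^b)$ only when $a$ and $b$ commute, whereas a convergent sequence $a_n \to a$ in $\rL^2(M,\tau)_{sa}$ generally does not commute with $a$, and the operators may be unbounded so that the tools of \S \ref{sec:symmetric} (which are stated on $\cP^\infty$) cannot be invoked directly. My plan is to reduce to the bounded, commuting situation via truncation. Let $f_k(t) = \max(-k, \min(t, k))$, and set $a^{(k)} = f_k(a) \in M_{sa}$, $a_n^{(k)} = f_k(a_n) \in M_{sa}$; both have operator norm at most $k$. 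I will then use the decomposition
\[
d_\cP(e^{a_n}, e^a) \leq d_\cP(e^{a_n}, e^{a_n^{(k)}}) + d_\cP(e^{a_n^{(k)}}, e^{a^{(k)}}) + d_\cP(e^{a^{(k)}}, e^a),
\]
which is justified by Theorem \ref{T:I heard you like metrics}.

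The outer two terms fall into the commuting case of Proposition \ref{P:continuity of logarithm map} ($a_n$ commutes with $a_n^{(k)} = f_k(a_n)$, and likewise for $a$), so they equal $\|a_n - a_n^{(k)}\|_2$ and $\|a - a^{(k)}\|_2$ respectively. The middle term will be controlled for each fixed $k$ by Proposition \ref{P:basic convergence log L2}(\ref{I:distance convergence}): Lemma \ref{L:L2tomeasure} gives $a_n \to a$ in measure, whence Corollary \ref{C:joint coninuity of functional calculus} applied twice yields $e^{\pm a_n^{(k)}} \to e^{\pm a^{(k)}}$ in measure. By spectral calculus, $\|a_n^{(k)}\|_\infty \leq k$ forces $e^{-k} \cdot \id \leq e^{a_n^{(k)}} \leq e^k \cdot \id$, and these constant bounds lie in $\cP$ because $\tau$ is finite. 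Applying Proposition \ref{P:basic convergence log L2}(\ref{I:distance convergence}) with $b_n = e^{a^{(k)}}$ constant then gives $d_\cP(e^{a_n^{(k)}}, e^{a^{(k)}}) \to d_\cP(e^{a^{(k)}}, e^{a^{(k)}}) = 0$ as $n \to \infty$.

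To close the argument I will bound $\|a_n - a_n^{(k)}\|_2$ in the limit: by the $\rL^2$-triangle inequality, $\|a_n - a_n^{(k)}\|_2 \leq \|a_n - a\|_2 + \|a - a^{(k)}\|_2 + \|a^{(k)} - a_n^{(k)}\|_2$, where the first summand vanishes as $n \to \infty$, and the third vanishes by Proposition \ref{P:3topologies} (since $f_k(a_n) \to f_k(a)$ in measure and is uniformly bounded by $k$). Taking $\limsup$ in $n$ gives $\limsup_n d_\cP(e^{a_n}, e^a) \leq 2\|a - a^{(k)}\|_2$, and the right side tends to $0$ as $k \to \infty$ because $\|a - a^{(k)}\|_2^2 = \int (t - f_k(t))^2 \, d\mu_a(t)$ is dominated by $\int t^2 \, d\mu_a(t) = \|a\|_2^2 < \infty$ and the integrand vanishes pointwise. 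The continuous bijection $\exp$ thus has continuous inverse $\log$, completing the proof that $\exp$ is a homeomorphism.
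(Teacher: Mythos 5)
Your proof is correct and follows essentially the same route as the paper's: truncate by $f_k$, split $d_\cP(e^{a_n},e^a)$ via the triangle inequality, evaluate the two outer terms exactly using the commuting case of Proposition \ref{P:continuity of logarithm map}, and let $k\to\infty$ by dominated convergence. The only (harmless) divergence is in the middle term, where you invoke Proposition \ref{P:basic convergence log L2}(\ref{I:distance convergence}) with the constant comparison operators $e^{\pm k}\id\in\cP$ (legitimate since $\tau$ is finite), whereas the paper argues directly that $\log(e^{-f_{\lambda}(a_{n})/2}e^{f_{\lambda}(a)}e^{-f_{\lambda}(a_{n})/2})\to 0$ in measure and upgrades to $\rL^2$-convergence via Proposition \ref{P:3topologies}; both arguments are valid.
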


\begin{proof}
By Proposition \ref{P:continuity of logarithm map}, we know that $\log\colon \mathcal{P}\to \rL^2(M,\tau)_{sa}$ is continuous. So it just remains to show that $\exp\colon \rL^2(M,\tau)_{sa}\to \mathcal{P}$ is continuous.

Suppose that $(a_{n})_n$ is a sequence in $\rL^2(M,\tau)$ and $a\in \rL^2(M,\tau)$ with $\|a-a_{n}\|_{2}\to 0.$ Let $\varepsilon>0,$ and for $\lambda>0$, define $f_{\lambda}\colon \R\to \R$ by
\[f_{\lambda}(t)=\begin{cases}
\lambda,& \textnormal{ if $t>\lambda$}\\
t,& \textnormal{ if $|t|\leq \lambda$ }\\
-\lambda,& \textnormal{if $t<-\lambda.$}
\end{cases}\]
If $\lambda>0$ is large enough, then $\|a-f_{\lambda}(a)\|_{2}<\varepsilon.$ Fix such a choice of $\lambda.$

Since $a$ and $f_\l(a)$ commute,
\begin{align}\label{E:cutoff functions estimate}
d_{\mathcal{P}}(e^{a_{n}},e^{a})&\leq d_{\mathcal{P}}(e^{a},e^{f_{\lambda}(a)})+d_{\mathcal{P}}(e^{f_{\lambda}(a_{n})},e^{f_{\lambda}(a)})+d_{\mathcal{P}}(e^{a_{n}},e^{f_{\lambda}(a_{n})})\\ \nonumber
&=\|a-f_{\lambda}(a)\|_{2}+\|a_{n}-f_{\lambda}(a_{n})\|_{2}+d_{\mathcal{P}}(e^{f_{\lambda}(a_{n})},e^{f_{\lambda}(a)}).
\end{align}

Since $a_n \to a$ in $\rL^2(M,\tau)$, $a_n \to a$ in measure. By Proposition \ref{P:conv in meas implies wk* conv}  (\ref{I:operate continuously bounded functions}), 
$\lim_{n\to\infty} \|f_{\lambda}(a_{n})-f_{\lambda}(a)\|_{2} = 0.$
Furthermore, $\max(\|f_{\lambda}(a_{n})\|_{\infty},\|f_{\lambda}(a)\|_{\infty})\leq \lambda$ for all $n\in \N.$

By Proposition \ref{P:conv in meas implies wk* conv} (\ref{I:operate continuously measure}), $e^{-f_{\lambda}(a_{n})/2} \to e^{-f_{\lambda}(a)/2}$ in measure. Since $\rL^0(M,\tau)$ is a topological $\ast$-algebra in the measure topology, $e^{-f_{\lambda}(a_{n})/2}e^{f_{\lambda}(a)}e^{-f_{\lambda}(a_{n})/2} \to 1$ in measure. We claim that
$$\log(e^{-f_{\lambda}(a_{n})/2}e^{f_{\lambda}(a)}e^{-f_{\lambda}(a_{n})/2}) \to 0$$
in measure. To see this, observe that
$$e^{-2\l} \le e^{-f_{\lambda}(a_{n})/2}e^{f_{\lambda}(a)}e^{-f_{\lambda}(a_{n})/2} \le e^{2\l}.$$
Choose a continuous function $\phi:\R \to \R$ with $\phi(x)=\log(x)$ for all $e^{-2\l} \le x \le e^{2\l}$. Then $\phi(e^{-f_{\lambda}(a_{n})/2}e^{f_{\lambda}(a)}e^{-f_{\lambda}(a_{n})/2}) = \log (e^{-f_{\lambda}(a_{n})/2}e^{f_{\lambda}(a)}e^{-f_{\lambda}(a_{n})/2})$. So the claim follows from  Proposition \ref{P:conv in meas implies wk* conv} (\ref{I:operate continuously measure}).

By Proposition \ref{P:3topologies}, the claim above implies $\log(e^{-f_{\lambda}(a_{n})/2}e^{f_{\lambda}(a)/2}e^{-f_{\lambda}(a_{n})/2}) \to 0$ in $\rL^2(M,\tau)$. Since
$$d_{\mathcal{P}}(e^{f_{\lambda}(a_{n})},e^{f_{\lambda}(a)}) = \|\log(e^{-f_{\lambda}(a_{n})/2}e^{f_{\lambda}(a)/2}e^{-f_{\lambda}(a_{n})/2})\|_2$$
this shows
\begin{equation}\label{E:middle term cutoff estimate}
d_{\mathcal{P}}(e^{f_{\lambda}(a_{n})},e^{f_{\lambda}(a)})\to_{n\to\infty}0.
\end{equation}
Since $a_n \to a$ and $f_\l(a_n) \to f_\l(a)$ in $\rL^2(M,\tau)$,
$\|a_{n}-f_{\lambda}(a_{n})\|_{2}\to_{n\to\infty}\|a-f_{\lambda}(a)\|_{2}.$
Combining with (\ref{E:middle term cutoff estimate}), (\ref{E:cutoff functions estimate}) we have shown that
\[\limsup_{n\to\infty}d_{\mathcal{P}}(e^{a_{n}},e^{a})\leq 2\|a-f_{\lambda}(a)\|_{2}<2\varepsilon.\]
Letting $\varepsilon\to 0$ proves that
$d_{\mathcal{P}}(e^{a_{n}},e^{a})\to 0.$

\end{proof}

\begin{cor}\label{C:complete}
Let $(M,\tau)$ be a von Neumann algebra with a finite trace. Then $(\mathcal{P},d_{\mathcal{P}})$ is a complete metric space.

\end{cor}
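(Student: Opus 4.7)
The plan is to reduce completeness of $(\cP, d_\cP)$ to completeness of the real Hilbert space $\rL^2(M,\tau)_{sa}$ by exploiting the fact that the exponential map $\exp\colon \rL^2(M,\tau)_{sa} \to \cP$ is not merely a homeomorphism (Theorem \ref{T:continuity of exp}) but its inverse is $1$-Lipschitz by Proposition \ref{P:continuity of logarithm map}, namely
\[
\|a-b\|_2 \le d_\cP(e^a, e^b) \qquad \forall a,b \in \rL^2(M,\tau)_{sa}.
\]
This one-sided inequality is exactly what is needed to transport Cauchyness from $\cP$ back to $\rL^2(M,\tau)_{sa}$.

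Given a Cauchy sequence $(p_n)_n \subset \cP$, I would set $a_n = \log(p_n) \in \rL^2(M,\tau)_{sa}$. The above inequality yields $\|a_n - a_m\|_2 \le d_\cP(p_n, p_m)$, so $(a_n)_n$ is Cauchy in $\rL^2(M,\tau)_{sa}$. Since $\rL^2(M,\tau)_{sa} = \{x \in \rL^2(M,\tau): Jx = x\}$ is a (real) closed subspace of the Hilbert space $\rL^2(M,\tau)$, it is complete. Hence there exists $a \in \rL^2(M,\tau)_{sa}$ with $\|a_n - a\|_2 \to 0$.

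Now applying continuity of $\exp$ (Theorem \ref{T:continuity of exp}) gives $d_\cP(p_n, e^a) = d_\cP(e^{a_n}, e^a) \to 0$, and $e^a \in \cP$ by definition of $\exp$ as a map into $\cP$. Thus the Cauchy sequence $(p_n)$ converges in $(\cP, d_\cP)$. Since no step requires any serious estimate beyond those already established, I do not anticipate an obstacle: the content of the corollary is really that the earlier comparison $\|a-b\|_2 \le d_\cP(e^a,e^b)$ was in the correct direction to upgrade the homeomorphism into a completeness statement.
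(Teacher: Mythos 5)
Your argument is correct and is essentially identical to the paper's proof: both transport Cauchyness through the $1$-Lipschitz inequality $\|a-b\|_2 \le d_\cP(e^a,e^b)$ of Proposition \ref{P:continuity of logarithm map}, use completeness of $\rL^2(M,\tau)_{sa}$, and then apply continuity of $\exp$ from Theorem \ref{T:continuity of exp} to conclude.
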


\begin{proof}
Let $(a_{n})$ be a Cauchy sequence in $\mathcal{P}.$ Set $b_{n}=\log(a_{n}).$ By Proposition \ref{P:continuity of logarithm map}, we know that $(b_{n})$ is Cauchy in $\rL^2(M,\tau).$ By completeness of $\rL^2(M,\tau),$ there is a $b\in \rL^2(M,\tau)$ with $\|b_{n}-b\|_{2}\to_{n\to\infty}0.$ Then $a=e^{b}\in \mathcal{P},$ and by Theorem \ref{T:continuity of exp} we know that $a_{n}=e^{b_{n}}\to e^{b}=a.$
\end{proof}

\begin{cor}\label{C:cat0}
If $(M,\tau)$ is finite then $\cP$ is CAT(0).
\end{cor}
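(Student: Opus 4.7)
The plan is to exploit the fact that $\cP$ is the metric completion of $\cP^\infty$, combined with the known fact that $\cP^\infty$ is CAT(0) (Theorem \ref{thm:cat}). Indeed, by Corollary \ref{C:complete}, $(\cP, d_\cP)$ is complete, and by Theorem \ref{T:I heard you like metrics}(\ref{I:dense}), $\cP^\infty$ is $d_\cP$-dense in $\cP$. So the task reduces to the general principle that CAT(0) passes to metric completions. Concretely, I will verify the Bruhat--Tits / Reshetnyak characterization: a complete metric space is CAT(0) if and only if for every pair of points there is a midpoint and the CN inequality holds (see \cite[Chapter II.1, Proposition 1.7 and Proposition 1.11]{bridson-haefliger-book}).

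First I would construct midpoints in $\cP$. Given $x,y \in \cP$, pick sequences $x_n, y_n \in \cP^\infty$ with $x_n \to x$ and $y_n \to y$ in $d_\cP$. Since $\cP^\infty$ is CAT(0), each pair $(x_n,y_n)$ has a midpoint $m_n \in \cP^\infty$. To see $(m_n)$ is Cauchy, apply the CN inequality in $\cP^\infty$ to the triple $(x_n, y_n, m_k)$ with midpoint $m_n$:
\[
d_\cP(m_k, m_n)^2 \le \tfrac{1}{2} d_\cP(m_k, x_n)^2 + \tfrac{1}{2} d_\cP(m_k, y_n)^2 - \tfrac{1}{4} d_\cP(x_n, y_n)^2.
\]
Since $d_\cP(m_k, x_n) \le \tfrac{1}{2} d_\cP(x_k, y_k) + d_\cP(x_k, x_n)$ and similarly for $d_\cP(m_k, y_n)$, the right-hand side tends to $0$ as $n, k \to \infty$ using that $d_\cP(x_n,y_n) \to d_\cP(x,y)$ and $d_\cP(x_k,y_k) \to d_\cP(x,y)$. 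By completeness (Corollary \ref{C:complete}), $m_n \to m$ in $\cP$, and continuity of $d_\cP$ gives $d_\cP(x,m) = d_\cP(y,m) = \tfrac{1}{2} d_\cP(x,y)$, so $m$ is a midpoint.

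Next, for any $z \in \cP$, approximate $z$ by $z_n \in \cP^\infty$ (again by Theorem \ref{T:I heard you like metrics}(\ref{I:dense})). The CN inequality in $\cP^\infty$ applied to $(x_n,y_n,z_n)$ with midpoint $m_n$ reads
\[
d_\cP(z_n, m_n)^2 \le \tfrac{1}{2} d_\cP(z_n, x_n)^2 + \tfrac{1}{2} d_\cP(z_n, y_n)^2 - \tfrac{1}{4} d_\cP(x_n, y_n)^2.
\]
Letting $n \to \infty$ and using continuity of $d_\cP$, one obtains the CN inequality for $(x,y,z)$ with midpoint $m$. Together with the first step, this verifies the midpoint-CN characterization and hence $(\cP, d_\cP)$ is CAT(0).

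The main technical point is verifying that the sequence of midpoints $(m_n)$ is Cauchy, but this is a standard consequence of the CN inequality; all other steps are direct continuity arguments using that $d_\cP$ is a genuine metric (Theorem \ref{T:I heard you like metrics}(\ref{I:its a metric})) and that $\cP$ is complete.
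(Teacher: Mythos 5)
Your proof is correct and follows essentially the same route as the paper: both arguments reduce the claim to the facts that $\cP^\infty$ is CAT(0) (Theorem \ref{thm:cat}), dense in $\cP$, and that $\cP$ is complete, so that $\cP$ is the metric completion of a CAT(0) space. The only difference is that the paper simply cites \cite[II.3, Corollary 3.11]{bridson-haefliger-book} for the fact that completions of CAT(0) spaces are CAT(0), whereas you reprove that fact by hand via the midpoint/CN-inequality characterization (your Cauchy-midpoint argument is sound, though the CN characterization is Exercise 1.9 rather than Proposition 1.11 in Bridson--Haefliger).
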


\begin{proof}
Recall that $\cP^\infty$ is CAT(0) by Theorem \ref{thm:cat}. By Theorem \ref{T:I heard you like metrics} $\cP^\infty$ is dense in $\cP$.  Because metric completions of CAT(0) spaces are CAT(0) by \cite[II.3, Corollary 3.11]{bridson-haefliger-book}, this implies $\cP$ is CAT(0). 
\end{proof}

\begin{cor}\label{C:weaker topology}
Let $(M,\tau)$ be a von Neumann algebra with a finite trace. Then the measure topology on $\mathcal{P}(M,\tau)$ is weaker than the $d_{\mathcal{P}}$-topology.

\end{cor}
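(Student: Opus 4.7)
The plan is to go through the logarithm: by Proposition \ref{P:continuity of logarithm map} we have the Lipschitz bound
\[
\|\log(a_n)-\log(a)\|_2 \le d_{\mathcal{P}}(a_n,a),
\]
so convergence in $d_{\mathcal{P}}$ immediately implies that $\log(a_n)\to\log(a)$ in $\rL^2(M,\tau)_{sa}$. Next, by Lemma \ref{L:L2tomeasure}, $\rL^2$-convergence implies convergence in measure, giving $\log(a_n)\to\log(a)$ in the measure topology on $\rL^0(M,\tau)_{sa}$.

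To finish, I need to apply the exponential to recover $a_n\to a$ in measure. This is handled by Corollary \ref{C:joint coninuity of functional calculus}: taking $f_n = f = \exp$ (viewed as a continuous function $\R\to\R$, uniformly convergent on compact sets to itself), the map $\mathcal{E}(\,\cdot\,,\exp)\colon \rL^0(M,\tau)_{sa}\to \rL^0(M,\tau)_{sa}$ is continuous in the measure topology. Hence
\[
a_n = \exp(\log(a_n)) \longrightarrow \exp(\log(a)) = a
\]
in measure.

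Thus any sequence (equivalently, net, since both topologies are metrizable on $\mathcal{P}$ via $d_{\mathcal{P}}$ and since the measure topology is metrizable by \cite[Theorem IX.2.2]{TakesakiII}) converging in $d_{\mathcal{P}}$ converges in measure, which is exactly the statement that the measure topology is weaker than the $d_{\mathcal{P}}$-topology. There is no real obstacle here: all the work has already been done in Proposition \ref{P:continuity of logarithm map}, Lemma \ref{L:L2tomeasure}, and Corollary \ref{C:joint coninuity of functional calculus}; the corollary is just the composition of these three continuity statements.
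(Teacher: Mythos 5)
Your proof is correct and follows essentially the same route as the paper's: pass to logarithms via the Lipschitz estimate of Proposition \ref{P:continuity of logarithm map}, use Lemma \ref{L:L2tomeasure} to get convergence in measure, and apply the measure-continuity of the exponential (the paper cites Proposition \ref{P:conv in meas implies wk* conv}(\ref{I:operate continuously measure}) where you cite the joint-continuity Corollary \ref{C:joint coninuity of functional calculus}, but these are the same tool).
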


\begin{proof}
Let $(b_{n})_n$ be a sequence in $\mathcal{P}(M,\tau)$ and $b\in \mathcal{P}(M,\tau)$ with $\lim_{n\to\infty} d_\cP(b_{n},b)=0.$ Let $a_{n}=\log b_{n}$, $a=\log(b)$. Then $\|a_{n}-a\|_{2}\to_{n\to\infty}0,$ since the logarithm map is continuous. So $a_{n}\to a$ in measure. But then by applying the exponential map in Proposition \ref{P:conv in meas implies wk* conv} (\ref{I:operate continuously measure}) we have that $b_{n}\to b$ in measure.
\end{proof}

\begin{cor}\label{C:geodesics}
For $\xi \in \rL^2(M,\tau)_{sa}$, the map $\g_\xi:\R \to \cP$ defined by
$$\g_\xi(t) = \exp(t\xi)$$
is a minimal geodesic with speed $\|\xi\|_2$. Moreover every geodesic $\g$ with $\g(0)=\id$ is equal to $\g_\xi$ for some $\xi$. Moreover, for any $a,b \in \cP$, the unique unit-speed geodesic from $a$ to $b$ is the map $\g:[0,d_\cP(a,b)] \to \cP$ defined by
$$\g(t) = a^{1/2}\g_\xi(t)a^{1/2}$$
where
\begin{eqnarray}\label{xixi}
\xi = \frac{\log (a^{-1/2}ba^{-1/2}) }{\|\log (a^{-1/2}ba^{-1/2})\|_2} = \frac{\log (a^{-1/2}ba^{-1/2}) }{d_\cP(a,b)}.
\end{eqnarray}

\end{cor}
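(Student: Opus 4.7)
The plan is to leverage the complete CAT(0) structure of $(\cP, d_\cP)$ from Corollaries \ref{C:complete} and \ref{C:cat0}, the transitive isometric action of $\GL^2(M,\tau)$ established in Theorem \ref{T:I heard you like metrics}, and the equality case of Proposition \ref{P:continuity of logarithm map} (which yields $d_\cP(e^a, e^b) = \|a-b\|_2$ whenever $a$ and $b$ commute).

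First I would verify that $\g_\xi(t) = \exp(t\xi)$ is a minimal geodesic of speed $\|\xi\|_2$. Since $s\xi$ and $t\xi$ commute, Proposition \ref{P:continuity of logarithm map} immediately gives
\[
d_\cP(\g_\xi(s), \g_\xi(t)) = \|s\xi - t\xi\|_2 = |s-t|\,\|\xi\|_2
\]
for all $s, t \in \R$, and $\g_\xi$ is therefore a constant-speed isometric embedding of $\R$ into $\cP$ of speed $\|\xi\|_2$.

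Next, to classify geodesics through $\id$, I would use that complete CAT(0) spaces are uniquely geodesic. Given a constant-speed geodesic $\g \colon I \to \cP$ with $0 \in I$ and $\g(0) = \id$, fix $t_0 \in I \setminus \{0\}$ and set $\xi := \log(\g(t_0))/t_0$. Then $\g_\xi(t_0) = \g(t_0)$, so by Step 1 both $\g|_{[0, t_0]}$ and $\g_\xi|_{[0, t_0]}$ are geodesics of the same speed between $\id$ and $\g(t_0)$; uniqueness forces them to coincide. A subtle point requiring checking is that $\xi$ does not depend on the choice of $t_0$: for any other $t_1 \in I \setminus \{0\}$ the same argument produces $\xi_1 := \log(\g(t_1))/t_1$ with $\g = \g_{\xi_1}$ on the interval joining $0$ and $t_1$, and the two parameterizations agree on some nondegenerate subinterval where $\exp(t\xi) = \exp(t\xi_1)$; the injectivity of $\exp$ from Theorem \ref{T:continuity of exp} then forces $\xi = \xi_1$, giving $\g = \g_\xi$ on all of $I$.

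Finally, for the geodesic joining general $a, b \in \cP$, I would transport the argument by an isometry. The map $\Phi \colon x \mapsto a^{1/2} x a^{1/2}$ is an isometry of $\cP$ by Theorem \ref{T:I heard you like metrics}(\ref{I:isometric action}), taking $\id$ to $a$ and $p := a^{-1/2} b a^{-1/2}$ to $b$. By the definition of $d_\cP$, $d_\cP(a, b) = \|\log p\|_2 = d_\cP(\id, p)$. With $\xi$ as in \eqref{xixi}, we have $\|\xi\|_2 = 1$ and $\g_\xi(d_\cP(a,b)) = \exp(\log p) = p$, so by Step 1 $\g_\xi|_{[0, d_\cP(a,b)]}$ is the unit-speed geodesic from $\id$ to $p$, and applying the isometry $\Phi$ yields $t \mapsto a^{1/2}\g_\xi(t)a^{1/2}$ as the unique unit-speed geodesic from $a$ to $b$ (with uniqueness again inherited from CAT(0)). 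The main technical point I anticipate is the independence of $\xi$ from the auxiliary point $t_0$ in the classification step; this is resolved by the injectivity of the exponential map, and all the remaining claims then fall out formally from the CAT(0) framework and the isometric action.
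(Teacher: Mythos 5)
Your proof is correct and follows essentially the same route as the paper: establish that $\g_\xi$ is a geodesic via the commuting case of the distance formula, invoke uniqueness of geodesics in the complete CAT(0) space $\cP$ to classify geodesics through $\id$, and transport to general endpoints by the isometric action of $a^{1/2}$. If anything, your version is slightly more careful than the paper's (which only computes $d_\cP(\id,\g_\xi(t))$ rather than $d_\cP(\g_\xi(s),\g_\xi(t))$, and does not spell out the well-definedness of $\xi$ in the classification step), but the underlying argument is the same.
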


\begin{proof}

 For any $t>0$,
$$d_\cP(\id, \g_\xi(t)) = \| \log \g_\xi(t) \|_2 = t\|\xi\|_2.$$
This proves $\g_\xi$ is a minimal geodesic with speed $\|\xi\|_2$. Because $\cP$ is CAT(0), there is a unique unit-speed geodesic between any two points.  By uniqueness of geodesics, every geodesic $\g$ with $\g(0)=\id$ has the above form.

In particular, if $a,b \in \cP$ and $\xi$ is defined by (\ref{xixi}) then $\g_\xi:[0,d_\cP(a,b)] \to \cP$ is a unit-speed geodesic from $\id$ to $a^{-1/2}ba^{-1/2}$. Because the action of $\GL^2(M,\tau)$ on $\cP$ is by isometries, $\g(t)=a^{1/2}.\g_\xi(t)$ is a unit-speed geodesic from $a=a^{1/2}.\id$ to $b=a^{1/2}.a^{-1/2}ba^{-1/2}$.

\end{proof}

\subsection{The semi-finite case}\label{sec:semi-finite case}

Let $(M,\tau)$ be a semi-finite tracial von Neumann algebra.   Let $G=\GL^2(M,\tau)$ and $\cP = \cP(M,\tau) = \exp(L^2_{sa}(M,\tau))$ as before.  We want to show that $d_\cP(a,b) := \|\log(b^{-1/2}ab^{-1/2})\|_2$ is a distance function which makes $\cP$ into a complete CAT(0) space.  Since we have shown this fact when $\tau(\id)$ is finite, our approach will often involve reducing to the finite case.  To this end we first need to identify the following objects.




For a finite projection $p \in M$, observe that $(pMp,\tau \circ p)$ is a von Neumann algebra with a finite trace. Let $\cP_p = \exp(\rL^2_{sa}(pMp,\tau \circ p)) \subset \rL^0(pMp, \tau \circ p)$.  For $a,b \in \cP_p$ define $d_{\cP_p}(a,b) = ||\log(b^{-1/2}ab^{-1/2})||_{\rL^2(pMp,\tau \circ p)}$. Since $(pMp,\tau \circ p_n)$ is finite, Theorem \ref{T:I heard you like metrics} implies $d_{\cP_p}$ is a metric and Corollary \ref{C:cat0} implies $\cP_p$ is complete CAT(0).

Also define $\tcP_p= \exp(p\rL^2_{sa}(M,\tau)p)\subset \cP(M, \tau)$  and define the metric $d_{\tcP_p}$ of $\tcP_p$ to be the restriction of  $d_\cP$ to  $\tcP_p$.  The next proposition implies Theorem \ref{thm:positivecone}.

\begin{prop}\label{P:semi-finite CAT(0)}
The following are true:
\begin{enumerate}
    \item For every projection $p\in M,$ the inclusion $pMp\hookrightarrow M$ extends to a $*$-isomorphism of topological $*$-algebras $\iota\colon L^{0}(pMp,\tau\big|_{pMp})\cong pL^{0}(M,\tau)p.$  Further $\mu_{|\iota(x)|}=\mu_{|x|},$ so in particular $\iota$ induces an isometry $\cP_{p}\to \tilde{\cP}_{p}.$
\label{I:isom ident}
    \item $d_\cP$ is a metric. \label{I: semi-finite metric}
    \item $G$ acts on $\cP$ by isometries. \label{I: semi-finite isom action}
    \item $G$ acts on $\cP$ transitively.\label{I: semi-finite trans action}
    \item $\cP$ is complete.\label{I: semi-finite completeness}
    \item  $\cP$ is CAT(0).\label{I:semi-finite cat(0)}
    \item Let $\cP^\infty = \exp(M_{sa} \cap \rL^2(M,\tau)_{sa})$.  Then $\cP^\infty$ is dense in $\cP$ and $\cP^\infty=\cP \cap M^\times$. \label{I: P infinity dense}
\end{enumerate}

\end{prop}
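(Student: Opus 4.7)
The plan is to reduce all claims to the finite-trace case by working in the corners $pMp$ for finite projections $p\in M$, then exploiting that the family $\{\tcP_p:p\in M\textrm{ finite}\}$ is an increasing directed family whose union is dense in $\cP$.

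For part (1), the $*$-isomorphism $\iota\colon \rL^{0}(pMp,\tau|_{pMp})\to p\rL^{0}(M,\tau)p$ is obtained by extending $y\in \rL^{0}(pMp)$ from $p\rL^{2}(M,\tau)$ to all of $\rL^{2}(M,\tau)$ via $\iota(y)\xi:=y(p\xi)$ on $\dom(y)+(1-p)\rL^{2}(M,\tau)$; the inverse is restriction to $p\rL^{2}$. Compatibility with the topological $*$-algebra structure is routine. The spectral measure identity follows from $1_{E}(|\iota(y)|)=\iota(1_{E}(|y|))$ for Borel $E\subseteq (0,\infty)$, both interpreted as projections in $pMp\subseteq M$ with equal $\tau$-trace, so $\mu_{|\iota(y)|}|_{(0,\infty)}=\mu_{|y|}|_{(0,\infty)}$, which suffices for strictly positive elements of $\cP_p$. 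The map $y\mapsto\iota(y)+(1-p)$ then sends $\cP_p$ isometrically onto $\tcP_p$, using that $\log(\iota(y)+(1-p))=\iota(\log y)$ and that $\iota$ restricts to an isometry $\rL^{2}(pMp)\to p\rL^{2}(M,\tau)p$. Hence Corollaries \ref{C:complete} and \ref{C:cat0} applied in $(pMp,\tau|_{pMp})$ give that each $\tcP_p$ is a complete CAT(0) subspace of $\cP$.

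The density lemma: for $a=\exp(A)\in\cP$ with $A\in \rL^{2}(M,\tau)_{sa}$, the spectral projections $p_{n}:=1_{[1/n,n]}(|A|)$ are finite (by Chebyshev, $\tau(p_{n})\leq n^{2}\|A\|_{2}^{2}$), commute with $A$, and the approximants $a_{n}:=\exp(p_{n}Ap_{n})\in\tcP_{p_n}$ satisfy $d_{\cP}(a_{n},a)=\|p_{n}Ap_{n}-A\|_{2}\to 0$. This uses the identity $d_{\cP}(e^{X},e^{Y})=\|X-Y\|_{2}$ for commuting self-adjoint $X,Y\in \rL^{2}(M,\tau)_{sa}$, which follows directly from the definition since $e^{-Y/2}e^{X}e^{-Y/2}=e^{X-Y}$. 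Note $\tcP_p\subseteq\tcP_q$ whenever $p\leq q$ (because $p\rL^{2}p\subseteq q\rL^{2}q$ when $qp=p$), so $\bigcup_p\tcP_p$ is directed; density of this union in $\cP$ is the approximation statement above.

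With density in hand, non-degeneracy of $d_{\cP}$ is immediate and symmetry follows from Proposition \ref{P:transformation of measures} applied to the polar decomposition of $a^{-1/2}b^{1/2}$ (whose squares give $b^{-1/2}ab^{-1/2}$ and $a^{-1/2}ba^{-1/2}$ up to inversion). The triangle inequality, the isometric transitive action of $G$ (parts (3),(4)), and completeness (5) all follow by approximating $a,b,c\in\cP$ by elements of a common $\tcP_{q_n}$, where $q_n$ is the join of the relevant finite spectral projections (joins of finite projections are finite by the Kaplansky formula), applying the finite-case results inside $\tcP_{q_n}\cong\cP_{q_n}$, and passing to the limit using continuity of $d_\cP$ and $\exp$. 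These continuity statements are semi-finite analogues of Propositions \ref{P:basic convergence log L2}, \ref{P:continuity of logarithm map}, and \ref{P:3topologies}, whose proofs carry over by the same dominated-convergence arguments because they only require sigma-finiteness of the spectral measures $\mu_{|A|}$ on $(0,\infty)$, automatic for $A\in \rL^{2}$ from $\mu_{|A|}([\delta,\infty))\leq\|A\|_{2}^{2}/\delta^{2}$. For (6), any three points of $\bigcup_p\tcP_p$ lie in a common $\tcP_p$ by directedness, so geodesic triangles inherit the CAT(0) property from $\tcP_p$; CAT(0) then passes to the metric completion $\cP$ by \cite[II.3, Corollary 3.11]{bridson-haefliger-book}. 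For (7), the identity $\cP^{\infty}=\cP\cap M^{\times}$ is direct since $\exp(A)\in M^{\times}$ precisely when $A\in M_{sa}$; density of $\cP^{\infty}$ comes from the truncation $A\mapsto f_{n}(A)$ with $f_{n}(t)=\max(-n,\min(n,t))$, giving commuting bounded approximants with $d_{\cP}$-distance $\|f_{n}(A)-A\|_{2}\to 0$. The main obstacle will be carefully verifying that the finite-case dominated-convergence arguments in Sections \ref{sec:P1} and \ref{sec:P2} extend to the semi-finite setting; this reduces to the sigma-finiteness estimate above and requires no fundamentally new ingredient.
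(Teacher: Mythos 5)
Your overall strategy---reduce to the finite-trace corners $pMp$ via the finite spectral projections of the logarithms, transfer the finite-case results through the isometries $\cP_p\cong\tcP_p$, and pass to the limit using dominated-convergence continuity of $d_\cP$ (which, as you note, only needs $\mu_{|A|}(\delta,\infty)\le \|A\|_2^2/\delta^2<\infty$)---is exactly the paper's, and most of your steps (the isomorphism $\iota$, the density lemma via $d_\cP(e^X,e^Y)=\|X-Y\|_2$ for commuting $X,Y$, symmetry via $\mu_{|c|}=\mu_{|c^*|}$, completeness via $\|x-y\|_2\le d_\cP(e^x,e^y)$ plus continuity of $\exp$) match the paper's proof.

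Two steps need repair as written. For CAT(0), you cannot conclude that $\bigcup_p\tcP_p$ is CAT(0) merely because any three of its points lie in a common $\tcP_p$: the comparison inequality must hold for \emph{all} geodesic triangles of the union, and a priori a geodesic of the union joining two points of $\tcP_p$ need not stay inside $\tcP_p$ (uniqueness of geodesics in $\cP$ is not yet available at this stage). The paper sidesteps this by verifying, directly for $\cP$, the four-point sub-embedding condition together with approximate midpoints \cite[Proposition II.1.11]{bridson-haefliger-book}; both conditions involve only finitely many points, so they pass to the limit from the corners, and no claim about geodesics of the dense union is needed. Second, for the isometric action of $G$: approximating $a,b$ in a common $\tcP_{q_n}$ only treats conjugation by elements of $\cP$ (and even there you must simultaneously truncate $g$), whereas a general $g\in G$ lies in no corner and is not positive. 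You need the polar decomposition $g=u|g|$ together with a separate direct computation that unitaries of $M$ act isometrically (using $ue^xu^*=e^{uxu^*}$ and traciality of $\tau$), after which the $|g|$-part is handled by your approximation scheme.
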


\begin{proof}[Proof of Proposition \ref{P:semi-finite CAT(0)}(\ref{I:isom ident})]
Let $j\colon pMp\to M$ the inclusion map. Tautologically,
\[\tau(1_{(\lambda,\infty)}(|j(x)|))=\tau(1_{(\lambda,\infty)}(|x|)),\]
and the above equality implies that $j$ extends to a linear map
\[\iota\colon L^{0}(pMp,\tau\big|_{pMp})\to L^{0}(M,\tau)\]
with closed image,
and that this map is a homeomorphism onto its image. Moreover $j(pMp)=pMp,$ and so passing to closures we have $\iota(L^{0}(pMp,\tau\big|_{pMp}))=pL^{0}(M,\tau)p.$
By uniform continuity and the fact that $\iota$ is a $*$-homomorphism on a dense $*$-subalgebra (namely, $pMp$) it follows that $\iota$ is a $*$-homomorphism. The fact that $\mu_{|\iota(x)|}=\mu_{|x|}$ for all $x\in L^{0}(pMp,\tau\big|_{pMp})$ follows from the fact that it is true for $x\in pMp$ and Proposition \ref{P:conv in meas implies wk* conv} (\ref{I:wk* conv meas}). Since $\mu_{|\iota(x)|}=\mu_{|x|}$, we know
\[\|\log(|x|)\|_{2}=\|\log(|\iota(x)|)\|_{2}\]
for all $x\in pMp.$ So $\iota(\cP_{p})=\tilde{\cP}_{p}.$ Moreover, the fact that $\iota$ is $*$-homomorphism and the above equality implies that $\iota\colon \cP_{p}\to\tilde{\cP}_{p}$ is an isometry.

\end{proof}



To prove (\ref{I: semi-finite metric}) of Proposition \ref{P:semi-finite CAT(0)}, we take an approximation approach similar to that of the finite case.  Although we do not have all the tools available in the finite case such as Proposition \ref{P:conv in meas implies wk* conv}, we still have enough to work with.  We first state the tools that we will be using.




\begin{lem}\label{tikhonov}
Suppose $x_k, x \in \rL^0_{sa}(M,\tau)$ and $x_k \to_{k\to\infty} x$ in measure and $f:\R \to \R$ is a Borel function continuous on the spectrum of $x$ and bounded on bounded subsets of $\R$.  Then $f(x_k) \to_{k\to\infty} f(x)$ in measure.
\end{lem}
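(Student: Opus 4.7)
My plan is to prove the lemma in two stages: first for bounded continuous $f\colon\R\to\R$, then bootstrap to the general case. The overall structure parallels Corollary~\ref{C:joint coninuity of functional calculus}, but because $\tau$ is only semi-finite the spectral measures $\mu_{x_k}$ are not probability measures and I cannot invoke weak convergence on all of $\R$. Instead I will truncate to bounded intervals and use polynomial approximation together with Tietze extension.

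\textbf{Bounded continuous case.} Given $\lambda,\delta>0$, I will first choose $R>0$ with $\tau(1_{(R,\infty)}(|x|))<\delta/5$, possible because $x$ is $\tau$-measurable. Since $x_k\to x$ in measure, Proposition~\ref{P:subadditive estimate} applied to $x_k=x+(x_k-x)$ yields $\tau(1_{(2R,\infty)}(|x_k|))<2\delta/5$ for $k$ large. By the Weierstrass theorem I will pick a polynomial $p$ with $\sup_{|t|\le 2R}|f(t)-p(t)|<\lambda/6$. Since $\rL^0(M,\tau)$ is a topological $*$-algebra in the measure topology, $p(x_k)\to p(x)$ in measure. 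By Borel functional calculus the projection $1_{(\lambda/3,\infty)}(|(f-p)(x_k)|)$ equals $1_{S}(x_k)$ for $S=\{t\in\R:|(f-p)(t)|>\lambda/3\}\subseteq\R\setminus[-2R,2R]$, so $\tau(1_{(\lambda/3,\infty)}(|(f-p)(x_k)|))<2\delta/5$, and analogously $\tau(1_{(\lambda/3,\infty)}(|(f-p)(x)|))<\delta/5$. Writing $f(x_k)-f(x)=(f-p)(x_k)+(p(x_k)-p(x))+(p-f)(x)$ and invoking Proposition~\ref{P:subadditive estimate} finishes this step.

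\textbf{Spectral concentration and the general case.} A key corollary is that for each $\eta>0$, $\tau(1_{\R\setminus V_\eta}(x_k))\to 0$, where $V_\eta=\{t\in\R:\operatorname{dist}(t,\operatorname{spec}(x))<\eta\}$: by Urysohn I can pick bounded continuous $h_\eta\colon\R\to[0,1]$ with $h_\eta=0$ on $\operatorname{spec}(x)$ and $h_\eta=1$ on $\R\setminus V_\eta$, so $h_\eta(x)=0$, the bounded continuous case yields $h_\eta(x_k)\to 0$ in measure, and the functional-calculus identity $h_\eta(x_k)1_{\R\setminus V_\eta}(x_k)=1_{\R\setminus V_\eta}(x_k)$ transfers this to the projection. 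For the general case, given $\lambda,\delta>0$ I would choose $R$ as above, use compactness of $\operatorname{spec}(x)\cap[-2R-1,2R+1]$ together with continuity of $f$ at each point of $\operatorname{spec}(x)$ to select $\eta\in(0,1)$ with $|f(t)-f(s)|<\lambda/10$ whenever $s\in\operatorname{spec}(x)\cap[-2R-1,2R+1]$ and $|t-s|<2\eta$, and by Tietze extend $f|_{\operatorname{spec}(x)\cap[-2R-1,2R+1]}$ to a bounded continuous $\tilde f\colon\R\to\R$, shrinking $\eta$ so $\tilde f$ satisfies the same modulus of continuity on $[-2R-1,2R+1]$. Then $|f-\tilde f|<\lambda/5$ on $V_\eta\cap[-2R,2R]$ by the triangle inequality. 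Decomposing $\R=(V_\eta\cap[-2R,2R])\sqcup([-2R,2R]\setminus V_\eta)\sqcup(\R\setminus[-2R,2R])$, the first region contributes nothing to $\tau(1_{(\lambda/3,\infty)}(|(f-\tilde f)(x_k)|))$, while the second and third are controlled by spectral concentration and the mass bound on $x_k$; hence $(f-\tilde f)(x_k)\to 0$ in measure. The bounded continuous case applied to $\tilde f$ gives $\tilde f(x_k)\to\tilde f(x)$, and $\tilde f(x)-f(x)$ is supported on $1_{\R\setminus[-2R-1,2R+1]}(x)$ of small trace. A final application of Proposition~\ref{P:subadditive estimate} closes the argument.

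\textbf{Main obstacle.} The delicate point will be the estimate $(f-\tilde f)(x_k)\to 0$ in measure: since $\operatorname{spec}(x_k)$ generically lies outside $\operatorname{spec}(x)$ and $f,\tilde f$ can disagree wildly off $\operatorname{spec}(x)$, there is no direct pointwise control on $\operatorname{spec}(x_k)$. The only way I see to resolve this is to combine (i)~concentration of $\mu_{x_k}$ on the fattenings $V_\eta$ of $\operatorname{spec}(x)$ with (ii)~genuine pointwise continuity of $f$ \emph{at} every point of $\operatorname{spec}(x)$ viewed as a function on all of $\R$. Condition (ii) is strictly stronger than continuity of the restriction $f\big|_{\operatorname{spec}(x)}$, and indeed necessary: the example $\operatorname{spec}(x)=\{0\}$ with $x_k=(1/k)\cdot\id$ shows that mere restriction-continuity does not suffice.
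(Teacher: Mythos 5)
Your argument is correct, but it is a genuinely different route from the paper's: the paper disposes of this lemma with a one-line citation to Tikhonov's theorem on continuity of operator functions in the measure topology (\cite[Theorem 2.4]{MR892008}), whereas you give a self-contained proof from the ingredients already in the paper (the subadditivity estimate of Proposition \ref{P:subadditive estimate}, polynomial approximation on $[-2R,2R]$, and a Urysohn/Tietze truncation to the spectrum). What your approach buys is transparency and independence from the external reference; in particular it makes visible exactly which hypotheses are used where. Two remarks. First, your closing observation is a genuinely useful clarification of the statement itself: ``continuous on the spectrum of $x$'' must be read as continuity of $f\colon\R\to\R$ at each point of $\operatorname{spec}(x)$, not continuity of the restriction $f|_{\operatorname{spec}(x)}$, and your example $x=0$, $x_k=k^{-1}\id$, $f=1_{\{0\}}$ shows the weaker reading makes the lemma false; this is indeed the hypothesis in the cited theorem. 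Second, a small imprecision: the intermediate assertion ``hence $(f-\tilde f)(x_k)\to 0$ in measure'' is not literally what your estimate gives, since the region $\R\setminus[-2R,2R]$ only contributes $\limsup_k\tau\bigl(1_{(\lambda/3,\infty)}(|(f-\tilde f)(x_k)|)\bigr)\le 2\delta/5$ rather than $0$, and $\tilde f$ depends on $\delta$ through $R$. This is harmless because the final assembly only needs the $\limsup$ bound $3\delta/5<\delta$ with $\delta$ arbitrary, which is exactly how you conclude; just state the $\limsup$ inequality rather than full convergence in measure for that term.
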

\begin{proof}
This is implied by \cite[Theorem 2.4]{MR892008}.
\end{proof}

\begin{lem}\label{cdfcont}
Suppose $x_k,x \in \rL^0(M,\tau)$ and $x_k \to_{k\to\infty}  x$ in measure.  Suppose $\l \mapsto \mu_{|x|}(\l,\infty)$ is continuous at $\l_0$. Then $\mu_{|x_k|}(\l_0,\infty) \to_{k\to\infty}  \mu_{|x|}(\l_0,\infty)$.
\end{lem}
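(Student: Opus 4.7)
The statement is a non-commutative version of the Portmanteau theorem: convergence in measure implies convergence of the tail distribution function $\l \mapsto \mu_{|\cdot|}(\l,\infty)$ at continuity points of the limit. I would (i) reduce to the case $x_k,x\ge 0$ by continuous functional calculus, (ii) sandwich $\mu_{|x_k|}(\l_0,\infty)$ via the subadditive estimate of Proposition \ref{P:subadditive estimate}, and (iii) collapse the sandwich using the hypothesized continuity at $\l_0$.

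For step (i): since $\rL^0(M,\tau)$ is a topological $\ast$-algebra in the measure topology, $x_k^\ast x_k \to x^\ast x$ in measure, and Lemma \ref{tikhonov} applied to the function $f(t)=\sqrt{\max(t,0)}$ (continuous on all of $\R$, bounded on bounded subsets) gives $|x_k|\to |x|$ in measure. Write $y=|x|$ and $y_k=|x_k|$. One may take $\l_0>0$ (the substantive case); $\tau$-measurability of $y$ then ensures $\mu_y(\l,\infty)<\infty$ for every $\l>0$, so in particular the restriction of $\mu_y$ to any interval $(\l_0/2,\infty)$ is a finite measure.

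For step (ii): fix $\epsilon$ with $0<\epsilon<\l_0$. Writing $y_k=y+(y_k-y)$ and $y=y_k+(y-y_k)$ and applying Proposition \ref{P:subadditive estimate} yields
\[
\mu_{y_k}(\l_0,\infty)\le \mu_{y}(\l_0-\epsilon,\infty)+\mu_{|y_k-y|}(\epsilon,\infty), \qquad \mu_{y}(\l_0+\epsilon,\infty)\le \mu_{y_k}(\l_0,\infty)+\mu_{|y_k-y|}(\epsilon,\infty).
\]
Since $\mu_{|y_k-y|}(\epsilon,\infty)\to 0$ as $k\to\infty$ by definition of convergence in measure, passing to $\liminf$ and $\limsup$ in $k$ gives the sandwich
\[
\mu_y(\l_0+\epsilon,\infty)\le \liminf_k \mu_{y_k}(\l_0,\infty)\le \limsup_k \mu_{y_k}(\l_0,\infty)\le \mu_y(\l_0-\epsilon,\infty).
\]
Letting $\epsilon\downarrow 0$ and using continuity from above of the finite measure $\mu_y|_{(\l_0/2,\infty)}$, the right-hand side tends to $\mu_y([\l_0,\infty))$ and the left-hand side to $\mu_y(\l_0,\infty)$; the continuity hypothesis at $\l_0$ is precisely the statement $\mu_y(\{\l_0\})=0$, which forces these two limits to coincide. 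The only subtlety, and the closest thing to an obstacle, is the need to choose $\epsilon<\l_0$ so that Proposition \ref{P:subadditive estimate} is applied with strictly positive arguments; $\tau$-measurability then guarantees every tail quantity appearing in the sandwich is finite, legitimizing the continuity-of-measure step.
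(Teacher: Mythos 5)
Your proof is, at its core, the same sandwich argument via Proposition \ref{P:subadditive estimate} that the paper uses, but two points deserve comment. First, the preliminary reduction to $|x_k|\to|x|$ in measure is correct but unnecessary: one can apply Proposition \ref{P:subadditive estimate} directly to the decompositions $x=(x-x_k)+x_k$ and $x_k=(x_k-x)+x$, so that the error term is $\mu_{|x-x_k|}(\d,\infty)$, which tends to $0$ by the very definition of convergence in measure; no appeal to the topological $\ast$-algebra structure or to Lemma \ref{tikhonov} is then needed, and this is how the paper proceeds.

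Second, and more substantively, the claim that ``$\tau$-measurability of $y$ ensures $\mu_y(\l,\infty)<\infty$ for every $\l>0$'' is false in the semi-finite setting: $\tau$-measurability only guarantees $\mu_{|y|}(\l,\infty)<\infty$ for all sufficiently large $\l$. For instance, in $(B(\cH),\tau_\cH)$ with $\cH$ infinite-dimensional the identity satisfies $\mu_{\id}(\l,\infty)=\tau_\cH(\id)=\infty$ for every $\l<1$. Since the lemma is stated and used for a general semi-finite $(M,\tau)$, your continuity-from-above step --- and the identification of the continuity hypothesis with $\mu_y(\{\l_0\})=0$ --- is not justified when the tails to the left of $\l_0$ are infinite. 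The gap is easy to close: either note that in that degenerate case continuity at $\l_0$ forces $\mu_y(\l_0,\infty)=\infty$, so the lower half of your sandwich already yields $\liminf_k\mu_{y_k}(\l_0,\infty)\ge\mu_y(\l_0+\eps,\infty)\to\infty$; or, better, drop the measure-theoretic detour and use the hypothesis as literally stated, namely that $\mu_y(\l_0-\eps,\infty)$ and $\mu_y(\l_0+\eps,\infty)$ both converge to $\mu_y(\l_0,\infty)$ as $\eps\downarrow 0$, which requires no finiteness at all.
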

\begin{proof}
Let $\l \mapsto \mu_{|x|}(\l,\infty)$ be continuous at $\l_0$. By Proposition \ref{P:subadditive estimate}, for any $0<\d< \l_0$ and $k \in \N$,
\begin{eqnarray*}
\mu_{|x|}(\l_0+\d,\infty) &\leq& \mu_{|x-x_k|}(\delta,\infty) + \mu_{|x_k|}(\l_0,\infty);\\
\mu_{|x_k|}(\l_0,\infty) &\leq& \mu_{|x_k-x|}(\d,\infty) + \mu_{|x|}(\l_0-\d,\infty).
\end{eqnarray*}
Since $x_k \to x$ in measure (as $k\to\infty$),
\begin{eqnarray*}
 \mu_{|x|}(\l_0+\d,\infty) &\leq& \liminf_{k\to\infty} \mu_{|x_k|}(\l_0,\infty)\\
\limsup_{k\to\infty} \mu_{|x_k|}(\l_0,\infty) &\leq& \mu_{|x|}(\l_0-\d,\infty).
   \end{eqnarray*}
Since $\l_0$ is a point of continuity, this implies
$$\limsup_{k\to\infty} \mu_{|x_k|}(\l_0,\infty) \leq\mu_{|x|}(\l_0,\infty)\leq\liminf_{k\to\infty} \mu_{|x_k|}(\l_0,\infty)$$
which implies the lemma.

\end{proof}

We now prove that $d_\cP$ satisfies the triangle inequality and symmetry properties; the identity property is similar to the finite case.  We do this by approximating via elements from a ``reduced" von Neumann algebra with a finite trace.

\begin{notation}\label{notation1}
For any $x \in \rL^2_{sa}(M,\tau)$ and $n\in \N$, let $p_n^x= 1_{(-\infty, -1/n)\cup (1/n,\infty)}(x)$ and $x_n := p_n^x x p_n^x = xp_n^x$.  Then $x_n$ is an increasing sequence converging in measure to $x$ (as $n\to\infty$). Because $x \in \rL^2_{sa}(M,\tau)$, $p_n^x$ is a finite projection.
\end{notation}

\begin{prop}\label{finite approx of cP}
Suppose $z,w \in \cP$ and we can write $z = e^{x_1}\cdots e^{x_k}$, $w = e^{y_1}\cdots e^{y_l}$ for some $x_i,y_j \in \rL^2_{sa}(M,\tau)$. Let
\begin{eqnarray*}
x_{i,n} = p_n^{x_i} x_i p_n^{x_i}, &\quad& z_n = e^{x_{1,n}}\cdots e^{x_{k,n}}\\
y_{j,n} = p_n^{y_j} y_j p_n^{y_j}, &\quad& w_n = e^{y_{1,n}}\cdots e^{y_{l,n}}.
\end{eqnarray*}
Assume $z_n$ and $w_n$ are positive for all $n$. Then $d_\cP(z_n,w_n) \to d_\cP(z,w)$ and  $d_\cP(z_n,z) \to 0$ as $n \to\infty$.
\end{prop}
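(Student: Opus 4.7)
The plan is to reduce everything to a single dominated-convergence argument applied to the integral formula (\ref{E:formula for norm}) for $\|\log|\cdot|\|_2^2$, built up from convergence in measure of elementary pieces together with an $n$-uniform spectral domination.

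First, since $p_n^{x_i}$ commutes with $x_i$, one has $x_i - x_{i,n} = x_i \cdot 1_{[-1/n,1/n]}(x_i)$, and dominated convergence on the spectral measure $\mu_{x_i}$ gives $\|x_i - x_{i,n}\|_2 \to 0$; hence $x_{i,n} \to x_i$ in measure by Lemma \ref{L:L2tomeasure}, and likewise $y_{j,n} \to y_j$. Since $t \mapsto e^{\pm t}$ is continuous and bounded on bounded subsets of $\R$, Lemma \ref{tikhonov} gives $e^{\pm x_{i,n}} \to e^{\pm x_i}$ and $e^{\pm y_{j,n}} \to e^{\pm y_j}$ in measure, and continuity of the $\ast$-algebra operations on $\rL^0(M,\tau)$ in the measure topology then yields $z_n \to z$, $z_n^{-1} \to z^{-1}$, $w_n \to w$, and $w_n^{-1} = e^{-y_{l,n}} \cdots e^{-y_{1,n}} \to w^{-1}$ in measure. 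A second application of Lemma \ref{tikhonov} with $f(t) = \sqrt{t \vee 0}$ (continuous on the spectra of the positive operators $w_n, w_n^{-1}$ and bounded on bounded sets) gives $w_n^{\pm 1/2} \to w^{\pm 1/2}$ in measure, whence
\[
a_n := w_n^{-1/2} z_n w_n^{-1/2} \longrightarrow a := w^{-1/2} z w^{-1/2}, \qquad a_n^{-1} \longrightarrow a^{-1}
\]
in measure, and Lemma \ref{cdfcont} yields $\mu_{a_n}(e^\lambda, \infty) \to \mu_a(e^\lambda, \infty)$ together with the analogous limit for $a_n^{-1}$ for all but countably many $\lambda$.

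The core of the argument is the production of an $n$-uniform dominant for these distribution functions. Applying Proposition \ref{P:submutliplicative estimate} twice to $a_n = w_n^{-1/2}(z_n w_n^{-1/2})$ and using the monotonicity of $\lambda \mapsto \mu(\lambda,\infty)$ yields
\[
\mu_{a_n}(e^\lambda, \infty) \le 2\mu_{w_n^{-1}}(e^{\lambda/2}, \infty) + \mu_{z_n}(e^{\lambda/4}, \infty),
\]
and iterating Proposition \ref{P:submutliplicative estimate} along the products $z_n = \prod_{i=1}^k e^{x_{i,n}}$ and $w_n^{-1} = \prod_{j=1}^l e^{-y_{j,n}}$ gives
\[
\mu_{z_n}(e^{\lambda/4}, \infty) \le \sum_{i=1}^k \mu_{|x_{i,n}|}\!\!\left(\tfrac{\lambda}{4k}, \infty\right), \qquad \mu_{w_n^{-1}}(e^{\lambda/2}, \infty) \le \sum_{j=1}^l \mu_{|y_{j,n}|}\!\!\left(\tfrac{\lambda}{2l}, \infty\right).
\]
The crucial observation is that because $|x_{i,n}| = |x_i| p_n^{x_i}$ and $\lambda > 0$, the spectral inequality $1_{(\lambda,\infty)}(|x_{i,n}|) \le 1_{(\lambda,\infty)}(|x_i|)$ holds, so $\mu_{|x_{i,n}|}(\lambda,\infty) \le \mu_{|x_i|}(\lambda,\infty)$ and similarly for $y_{j,n}$. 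Combining these bounds, $\mu_{a_n}(e^\lambda,\infty) \le F(\lambda)$ for some $n$-independent function $F$ built from $\mu_{|x_i|}$ and $\mu_{|y_j|}$; the analogous argument applied to $a_n^{-1} = w_n^{1/2} z_n^{-1} w_n^{1/2}$ produces a dominant of the same form. Since $x_i, y_j \in \rL^2$, the identity $\|x\|_2^2 = 2\int_0^\infty t\,\mu_{|x|}(t,\infty)\,dt$ combined with a change of variables shows $\int_0^\infty \lambda F(\lambda)\,d\lambda < \infty$.

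The first conclusion now follows from (\ref{E:formula for norm}):
\[
d_\cP(z_n, w_n)^2 = 2\int_0^\infty \lambda\bigl[\mu_{a_n}(e^\lambda,\infty) + \mu_{a_n^{-1}}(e^\lambda,\infty)\bigr]\,d\lambda,
\]
to which the Dominated Convergence Theorem applies, yielding $d_\cP(z_n, w_n) \to d_\cP(z, w)$. For the second claim $d_\cP(z_n, z) \to 0$, we repeat the argument with $w_n = z$ held constant: the dominant for the $z$-factors is now furnished directly by $z \in \cP$, i.e., by the finiteness of $\int_0^\infty \lambda[\mu_z(e^\lambda,\infty) + \mu_{z^{-1}}(e^\lambda,\infty)]\,d\lambda$, while the limit operator is $a = z^{-1/2} z z^{-1/2} = \id$ with $\mu_\id(e^\lambda,\infty) = 0$ for every $\lambda > 0$, so the integrand vanishes in the limit. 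The main obstacle is the coordination of several convergence notions — measure, $\rL^2$, and distribution-function pointwise — together with the production of an $n$-uniform dominant; this is where it is essential that the truncations $x_{i,n}, y_{j,n}$ are spectral compressions rather than arbitrary $\rL^2$-approximations, which is what allows the $n$-independent majorization by $\mu_{|x_i|}, \mu_{|y_j|}$.
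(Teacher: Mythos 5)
Your proposal is correct and follows essentially the same route as the paper's proof: convergence in measure of $z_n^{\pm1}, w_n^{\pm1}$ and hence of $w_n^{-1/2}z_nw_n^{-1/2}$, an $n$-uniform domination of the distribution functions obtained by iterating Proposition \ref{P:submutliplicative estimate} together with the key inequality $|x_{i,n}|\le |x_i|$ (which the paper also exploits), and the Dominated Convergence Theorem applied to the integral formula (\ref{E:formula for norm}). The only differences are cosmetic (a slightly different splitting of exponents in the submultiplicative estimate, and spelling out the $d_\cP(z_n,z)\to 0$ case, which the paper leaves as "similar").
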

\begin{proof}

We will just prove $d_\cP(z_n,w_n) \to_{n\to\infty} d_\cP(z,w)$ since the proof that $d_\cP(z_n,z)_{n\to\infty} \to 0$ is similar.

Let $q_n=z_n^{-1/2}w_nz_n^{-1/2}$ and $q=z^{-1/2}wz^{-1/2}$. Because $x_{i,n} \to x_i$ in measure, the exponential map is continuous and bounded on bounded subsets, and $\rL^0(M,\tau)$ is a topological *-algebra, it follows from Lemma \ref{tikhonov} that $z_n \to z$ in measure as $n\to\infty$.  Similarly $z_n^{-1} \to z^{-1}$, $w_n \to w$ and $w_n^{-1}\to w$ in measure as $n\to\infty$. It follows from Lemma \ref{tikhonov} that $z_n^{-1/2} \to z^{-1/2}$ in measure. Since $\rL^0(M,\tau)$ is a topological *-algebra in the measure topology, $q_n\to q$ in measure.

Next use Proposition \ref{P:submutliplicative estimate} and operator monotonicity (Proposition \ref{P:basic facts positive ops affil}) to get (for any $\l>0$)
\begin{eqnarray}\label{eqn:multiplication-bound}
\mu_{z^{\pm 1}_n}(e^\l,\infty) &\leq&   \sum_{i=1}^k \mu_{e^{\pm x_{i,n}}}(e^{\l/k},\infty) = \sum_{i=1}^k\mu_{ \pm x_{i,n}}(\l/k,\infty) \\
&\leq& \sum_{i=1}^k\mu_{| x_{i,n}|}(\l/k,\infty) = \sum_{i=1}^k\mu_{k| x_{i,n}|}(\l,\infty)
\leq \sum_{i=1}^k \mu_{k|x_i|}(\l,\infty).
\end{eqnarray}
A similar calculation shows that $\mu_{w^{\pm 1}_n}(e^\l,\infty)  \leq \sum_{j=1}^l \mu_{l|y_j|}(\l,\infty)$.

Next use Proposition \ref{P:submutliplicative estimate} to get (for any $\l>0$) 
\begin{eqnarray*}
\mu_{q_n}(e^\l,\infty) &\leq&   2\mu_{z_n^{-1/2}}(e^{\l/3},\infty) + \mu_{w_n}(e^{\l/3},\infty)\\
&=&  2\mu_{z^{-1}_n}(e^{2\l/3},\infty) + \mu_{w_n}(e^{\l/3},\infty)\\
&\leq& \sum_{i=1}^k \mu_{k|x_i|}(2\l/3,\infty) + \sum_{j=1}^l \mu_{l|y_j|}(\l/3,\infty) \\
&=& \sum_{i=1}^k \mu_{(3/2)k|x_i|}(\l,\infty) + \sum_{j=1}^l \mu_{3l|y_j|}(\l,\infty)
\end{eqnarray*}
where the last inequality follows from (\ref{eqn:multiplication-bound}).

Note $q_n^{-1}=z_n^{1/2} w_n^{-1} z_n^{1/2}$. So by a similar computation we obtain
\begin{eqnarray*}
\mu_{q_n^{-1}}(e^\l,\infty) &\leq&    \sum_{i=1}^k \mu_{(3/2)k|x_i|}(\l,\infty) + \sum_{j=1}^l \mu_{3l|y_j|}(\l,\infty).
\end{eqnarray*}

Now since each $x_i \in \rL^2_{sa}(M,\tau)$, by equation (\ref{E:simpler formula for norm}) we conclude that
$$\l \mapsto  2\l\left(\sum_{i=1}^k \mu_{(3/2)k|x_i|}(\l,\infty)  + \sum_{j=1}^l \mu_{3l|y_j|}(\l,\infty)\right)$$
 is integrable.  It follows that $\l \mapsto \l(\mu_{q_n}(e^\l,\infty) + \mu_{q_n^{-1}}(e^\l,\infty))$ is dominated by an integrable function, so that by the Dominated Convergence Theorem, Lemma \ref{cdfcont} and equation (\ref{E:formula for norm}), $\| \log q_n\|_2 \to \| \log q \|_2$. By definition, $\| \log q_n\|_2  = d_\cP(z_n,w_n)$ and $\| \log q\|_2  = d_\cP(z, w)$,  so this implies the proposition.
\end{proof}


We can now prove $d_\cP$ is a metric.

\begin{proof}[Proof of Proposition \ref{P:semi-finite CAT(0)}(\ref{I: semi-finite metric})]
To prove the triangle inequality in $(\cP,d_\cP)$, suppose $e^x,e^y,e^v \in \cP$ with $x,y,v \in \rL^2_{sa}(M,\tau)$. Define $x_n,y_n,v_n \in M$ as above. Let $p_n = p_n^x \vee p_n^y \vee p_n^v$ be the smallest projection dominating $p_n^x, p_n^y, p_n^v$. By \cite{MR641217}[Part III, Ch. 2, Prop. 5], $p_n$ is a finite projection in $M$. So $x_n,y_n,v_n$ are in the finite von Neumann sub-algebra $p_nMp_n$.  By Theorem \ref{T:I heard you like metrics}(\ref{I:its a metric}),
$$d_{\cP}(e^{x_n},e^{v_n}) \leq d_{\cP}(e^{x_n},e^{y_n}) + d_{\cP}(e^{y_n},e^{v_n}).$$
By Proposition \ref{finite approx of cP}, this implies the triangle inequality $d_{\cP}(e^{x},e^{v}) \leq d_{\cP}(e^{x},e^{y}) + d_{\cP}(e^{y},e^{v}).$

Similarly, the symmetry $d_{\cP}(e^{x},e^{v}) = d_{\cP}(e^{v},e^{x})$ follows from by taking the limit as $n\to\infty$ in   $d_{\cP}(e^{x_n},e^{v_n}) = d_{\cP}(e^{v_n},e^{x_n})$. Lastly, if $d_\cP(e^x,e^y)=0$ then, by definition, $\log(e^{-x/2}e^ye^{-x/2})=0$ which implies $e^{-x/2}e^ye^{-x/2}=1$ which implies $e^x=e^y$.

\end{proof}



\begin{cor}\label{finite approx}
Let $\Proj \subset M$ denote the set of finite projections in $M$. Then $\cup_{p \in \Proj}\cP_{p}$ is dense in $\cP$.
\end{cor}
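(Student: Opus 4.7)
The plan is to apply Proposition \ref{finite approx of cP} in the degenerate case $z = w$, together with the isometric identification $\iota\colon \cP_p \to \tcP_p$ of Proposition \ref{P:semi-finite CAT(0)}(\ref{I:isom ident}), so as to produce, for each $e^x \in \cP$, an approximating sequence that lives in $\tcP_{p_n}$ for some sequence of finite projections $p_n$.

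Concretely, I would fix $e^x \in \cP$ with $x \in \rL^2_{sa}(M,\tau)$. Following Notation \ref{notation1}, set $p_n = 1_{(-\infty,-1/n)\cup(1/n,\infty)}(x)$ and $x_n = p_n x p_n$. Chebyshev's inequality gives
\[
\tau(p_n) = \mu_{|x|}((1/n,\infty)) \;\leq\; n^2 \int t^2\,\dee\mu_{|x|}(t) \;=\; n^2 \|x\|_2^2 \;<\; \infty,
\]
so $p_n \in \Proj$. Since $x_n \in p_n\rL^2_{sa}(M,\tau)p_n$, we have $e^{x_n} \in \tcP_{p_n}$, and via $\iota$ this element corresponds to a member of $\cP_{p_n}$, hence of $\bigcup_{p \in \Proj}\cP_p$. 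Now I would apply Proposition \ref{finite approx of cP} with $k = \ell = 1$, $x_1 = y_1 = x$, and $z = w = e^x$, $z_n = w_n = e^{x_n}$; positivity of $z_n, w_n$ is automatic because $x_n$ is self-adjoint. The proposition yields $d_\cP(e^{x_n}, e^x) \to 0$ as $n \to \infty$. Since $e^x \in \cP$ was arbitrary, the density claim follows.

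No serious obstacle arises: all of the analytic work is absorbed into Proposition \ref{finite approx of cP}, whose proof (by dominated convergence on the integral expression (\ref{E:formula for norm})) already works in the semi-finite setting. The only points requiring explicit mention are (i) that the spectral cutoff $p_n$ is a finite projection --- a one-line consequence of $x \in \rL^2(M,\tau)$ and Chebyshev --- and (ii) that $e^{x_n}$ should be viewed, via the $*$-isomorphism $\iota$ of Proposition \ref{P:semi-finite CAT(0)}(\ref{I:isom ident}), as an element of $\cP_{p_n}$ rather than literally as an element of $\rL^0(p_nMp_n, \tau|_{p_nMp_n})$.
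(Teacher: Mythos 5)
Your proposal is correct and is essentially identical to the paper's own proof: the paper also takes $x\in \rL^2(M,\tau)_{sa}$, forms the truncations $x_n=p_n^x x p_n^x$ of Notation \ref{notation1} (where $p_n^x$ is already noted there to be a finite projection), and invokes Proposition \ref{finite approx of cP} to get $d_\cP(e^{x_n},e^x)\to 0$. Your extra remarks on Chebyshev and on the identification $\iota\colon\cP_{p}\to\tcP_{p}$ only make explicit what the paper leaves implicit.
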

\begin{proof}
For any $x \in \rL^2(M,\tau)_{sa}$ and $n\in \N$, $e^{x_n} \in  \cup_{p \in \Proj}\cP_{p}$ and $\lim_{n\to\infty} e^{x_n} = e^x$ in $(\cP,d_\cP)$ by Proposition \ref{finite approx of cP}.
\end{proof}

To prove Proposition \ref{P:semi-finite CAT(0)}(\ref{I: semi-finite isom action}), we first show that unitary elements in $M$ act by isometries on $\cP$.  Then, by polar decomposition, it suffices to consider the action of $\cP$ on $\cP$.

\begin{lem}\label{lem:unitary-action}
Let  $x,y \in \rL^2_{sa}(M,\tau)$ and $u \in M$ is unitary. Then
\begin{enumerate}
\item  $(ue^x u^*)^{-1}  = ue^{-x} u^*$;
\item  $(ue^{x} u^*)^{1/2}  = ue^{x/2} u^*$;
\item $e^{uxu^*} = u e^x u^*$;
\item $d_\cP(ue^x u^*, u e^y u^*) = d_\cP(e^x,e^y)$.
\end{enumerate}
\end{lem}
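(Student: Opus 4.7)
The plan is to establish (3) first by an approximation argument, then obtain (1) and (2) as immediate functional-calculus consequences, and finally derive (4) by direct computation using the unitary invariance of the trace. The key tool throughout is that conjugation by $u$, namely the map $\mathrm{Ad}(u)\colon a\mapsto uau^{*}$, is a $*$-automorphism of $\rL^{0}(M,\tau)$ which is continuous in the measure topology and hence commutes with continuous functional calculus applied to self-adjoint elements.

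For (3), first suppose $x\in M_{sa}$. Then $e^{uxu^{*}}=ue^{x}u^{*}$ is standard: the identity $p(uxu^{*})=up(x)u^{*}$ for polynomials $p$ extends by continuity to continuous functions, so in particular to the exponential. For general $x\in \rL^{2}_{sa}(M,\tau)$, use the truncations $x_{n}=p_{n}^{x}xp_{n}^{x}\in M_{sa}$ from Notation \ref{notation1}; these satisfy $x_{n}\to x$ in measure, and since $\mathrm{Ad}(u)$ is continuous in the measure topology, also $ux_{n}u^{*}\to uxu^{*}$ in measure. The function $t\mapsto e^{t}$ is continuous and bounded on bounded sets, so Lemma \ref{tikhonov} yields $e^{x_{n}}\to e^{x}$ and $e^{ux_{n}u^{*}}\to e^{uxu^{*}}$ in measure. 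Passing to the limit in $e^{ux_{n}u^{*}}=ue^{x_{n}}u^{*}$ (using again continuity of $\mathrm{Ad}(u)$) proves (3).

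For (1) and (2), apply the same approximation with $f(t)=e^{-t}$ and $f(t)=e^{t/2}$ respectively. On $M_{sa}$ the identities $(ue^{x_{n}}u^{*})^{-1}=ue^{-x_{n}}u^{*}$ and $(ue^{x_{n}}u^{*})^{1/2}=ue^{x_{n}/2}u^{*}$ are standard. Both sides converge in measure by Lemma \ref{tikhonov} and the continuity of $\mathrm{Ad}(u)$, so (1) and (2) follow in the limit.

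For (4), apply the definition with $a=ue^{x}u^{*}$ and $b=ue^{y}u^{*}$. By (1) and (2) the operator $b^{-1/2}$ equals $ue^{-y/2}u^{*}$, so
\[
b^{-1/2}ab^{-1/2}=ue^{-y/2}u^{*}\cdot ue^{x}u^{*}\cdot ue^{-y/2}u^{*}=u\bigl(e^{-y/2}e^{x}e^{-y/2}\bigr)u^{*}.
\]
The operator $e^{-y/2}e^{x}e^{-y/2}$ lies in $\cP$ (it is positive, and invertible with log-square-integrable absolute value by Theorem \ref{T:log subgroup}), so another application of the functional-calculus/approximation argument used for (3) gives $\log(u(e^{-y/2}e^{x}e^{-y/2})u^{*})=u\log(e^{-y/2}e^{x}e^{-y/2})u^{*}$. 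Since $\tau$ is a trace, $\|uzu^{*}\|_{2}=\|z\|_{2}$ for every $z\in \rL^{2}(M,\tau)$, yielding $d_{\cP}(ue^{x}u^{*},ue^{y}u^{*})=\|\log(e^{-y/2}e^{x}e^{-y/2})\|_{2}=d_{\cP}(e^{x},e^{y})$. The only delicate point throughout is justifying that the functional calculus identity $f(uxu^{*})=uf(x)u^{*}$ persists for unbounded $x$, but this is exactly the content of Lemma \ref{tikhonov} combined with the continuity of $\mathrm{Ad}(u)$ on $\rL^{0}(M,\tau)$.
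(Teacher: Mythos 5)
Your proposal is correct and follows essentially the same route as the paper: item (3) is established by approximating $x$ by bounded self-adjoint truncations, using the polynomial/Banach-algebra identity $e^{uau^*}=ue^au^*$ for bounded $a$ and passing to the limit in measure via Lemma \ref{tikhonov}, and item (4) is then a direct computation using (1)--(3) together with the unitary invariance of $\|\cdot\|_2$ coming from the trace property. The only cosmetic difference is that the paper approximates by an $\rL^2$-convergent sequence from $M_{sa}$ and treats (1) and (2) by direct algebraic observations rather than by a second limiting argument, but both variants are valid.
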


\begin{proof}
The first claim is obvious. The second follows from observing that $ue^{x/2} u^*$ is positive and its square is $ue^{x} u^*$.

For the third claim, first consider a sequence $x_k \in M$ converging to $x$ in $\rL^2$.  For each $x_k$, because $M$ is a unital Banach algebra,
$$e^{ux_ku^*} = \sum_{n=0}^\infty \frac{ (ux_ku^*)^n }{n!} =  \sum_{n=0}^\infty \frac{ ux_k^n u^* }{n!} = u e^{x_k} u^*.$$

By Lemma \ref{L:L2tomeasure},  $x_k \to x$ in measure as $k \to\infty$. By Lemma \ref{tikhonov}, $ue^{x_k}u^* \to ue^xu^*$ in measure, and also $e^{ux_ku^*} \to e^{uxu^*}$.  But since $ue^{x_k}u^* = e^{ux_ku^*}$, by uniqueness of limits $ue^xu^* = e^{uxu^*}$.

The last claim now follows using the previous three claims:
\begin{eqnarray*}
d_\cP(ue^x u^*, u e^y u^*) &=& \| \log [ (ue^x u^*)^{-1/2}u e^y u^*(ue^x u^*)^{-1/2}] \|_2 \\
&=& \| \log [ (ue^{-x/2} u^*) u e^y u^*(ue^{-x/2} u^*)] \|_2 \\
&=& \| \log [ ue^{-x/2}  e^y e^{-x/2} u^*] \|_2 \\
&=& \| u \log [ e^{-x/2}  e^y e^{-x/2} ] u^* \|_2 \\
&=&  \tau(u \log [ e^{-x/2}  e^y e^{-x/2} ]^* \log [ e^{-x/2}  e^y e^{-x/2} ] u^*)^{1/2} \\
&=&  \tau( \log [ e^{-x/2}  e^y e^{-x/2} ]^* \log [ e^{-x/2}  e^y e^{-x/2} ] )^{1/2}  \\
&=& \| \log [ e^{-x/2}  e^y e^{-x/2} ]  \|_2 \\
&=& d_\cP(e^x,e^y).
\end{eqnarray*}
The first equality is by definition of $d_\cP$. The second follows from the first two claims above. The third equality uses $uu^*=1$. The fourth follows from the third item of this lemma. The fifth is by definition of $\| \cdot \|_2$. The sixth holds because $\tau$ is a trace.
\end{proof}

\begin{proof}[Proof of Proposition \ref{P:semi-finite CAT(0)}(\ref{I: semi-finite isom action})]
By Lemma \ref{lem:unitary-action} and polar decomposition it suffices to consider the action of $\cP$ on $\cP$.  Let $g, a,b \in \cP$, where $g = e^h, a = e^x, b = e^y$, $h, x,y \in \rL^2_{sa}(M,\tau)$. We want to show that $d_{\cP}(gag^*,gbg^*) = d_{\cP}(e^x,e^y)$.  As before consider reduced versions $h_n, x_n, y_n$ of $h,x,y$.
Let $g_n = e^{h_n}$, $a_n = e^{x_n}$, $b_n = e^{y_n}$.
By Proposition \ref{finite approx of cP}, $d_\cP(g_na_ng_n^*,g_nb_ng_n^*) \to d_\cP(gag^*,gbg^*)$.



Now $d_\cP(g_na_ng_n,g_nb_ng_n) = d_\cP(a_n,b_n) \to d_\cP(a,b)$ (equality because we are again in a von Neumann algebra with a finite trace and so Theorem \ref{T:I heard you like metrics}(\ref{I:isometric action}) applies, and convergence by Proposition \ref{finite approx of cP}), so it must be that $d_\cP(gag,gbg) = d_\cP(a,b)$.
\end{proof}

The proof of Proposition \ref{P:semi-finite CAT(0)}(\ref{I: semi-finite trans action}) is the same argument as in the finite case.

To prove $(\cP,d_\cP)$ is complete, we first show that $\exp$ is a homeomorphism from $\rL^2(M,\tau)_{sa}$ to $\cP$.




\begin{lem}\label{semi-finite cont of log}
Let $x,y \in \rL^2(M,\tau)_{sa}$.  Then $\|x-y\|_2 \le d_\cP(e^x,e^y)$.
\end{lem}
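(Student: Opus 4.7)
The plan is to deduce the semi-finite case from the finite-trace version already proved in Proposition \ref{P:continuity of logarithm map}, by compressing $x$ and $y$ to a common finite projection and then passing to the limit. With $p_{n}^{x}, p_{n}^{y}$ as in Notation \ref{notation1}, set
\[
x_{n} = p_{n}^{x}\, x\, p_{n}^{x}, \qquad y_{n} = p_{n}^{y}\, y\, p_{n}^{y}, \qquad p_{n} = p_{n}^{x} \vee p_{n}^{y}.
\]
Then $p_{n}$ is a finite projection in $M$ (the join of two finite projections is finite), and $x_{n}, y_{n}$ belong to $\rL^{2}_{sa}(p_{n}Mp_{n}, \tau|_{p_{n}Mp_{n}})$.

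First, since $(p_{n}Mp_{n}, \tau|_{p_{n}Mp_{n}})$ has finite trace, Proposition \ref{P:continuity of logarithm map} applied inside that algebra yields
\[
\|x_{n} - y_{n}\|_{\rL^{2}(p_{n}Mp_{n})} \le d_{\cP_{p_{n}}}\!\bigl(e^{x_{n}}_{p_{n}Mp_{n}}, e^{y_{n}}_{p_{n}Mp_{n}}\bigr),
\]
where the exponentials are computed inside the reduced algebra, whose identity is $p_{n}$.

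Second, I would identify the right-hand side with $d_{\cP}(e^{x_{n}}, e^{y_{n}})$ in the ambient space $\cP(M,\tau)$. Because $x_{n}$ and $y_{n}$ annihilate $(1-p_{n})\cH$, the spectral calculus gives
\[
e^{x_{n}}_{M} = (1-p_{n}) + e^{x_{n}}_{p_{n}Mp_{n}}, \qquad e^{y_{n}}_{M} = (1-p_{n}) + e^{y_{n}}_{p_{n}Mp_{n}},
\]
and hence $(e^{y_{n}}_{M})^{-1/2} e^{x_{n}}_{M} (e^{y_{n}}_{M})^{-1/2} = (1-p_{n}) + (e^{y_{n}}_{p_{n}Mp_{n}})^{-1/2} e^{x_{n}}_{p_{n}Mp_{n}} (e^{y_{n}}_{p_{n}Mp_{n}})^{-1/2}$. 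Since the two summands are supported on orthogonal subspaces and $\log(1) = 0$, the logarithm vanishes on $(1-p_{n})\cH$, so its $\rL^{2}(M,\tau)$-norm equals the $\rL^{2}(p_{n}Mp_{n})$-norm of the inner factor, giving
\[
d_{\cP_{p_{n}}}\!\bigl(e^{x_{n}}_{p_{n}Mp_{n}}, e^{y_{n}}_{p_{n}Mp_{n}}\bigr) = d_{\cP}\!\bigl(e^{x_{n}}_{M}, e^{y_{n}}_{M}\bigr);
\]
alternatively this is just Proposition \ref{P:semi-finite CAT(0)}(\ref{I:isom ident}) together with the bookkeeping that the identity in $p_{n}Mp_{n}$ is $p_{n}$, not $1$.

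Third, I would pass to the limit $n \to \infty$ on both sides. For the left-hand side, dominated convergence applied to the spectral integral $\int_{|t| \le 1/n} t^{2}\, d\mu_{|x|}(t) \to 0$ (and the analogous fact for $y$) gives $x_{n} \to x$ and $y_{n} \to y$ in $\rL^{2}(M,\tau)$, so $\|x_{n}-y_{n}\|_{2} \to \|x-y\|_{2}$. For the right-hand side, Proposition \ref{finite approx of cP} applied with $k = l = 1$ gives $d_{\cP}(e^{x_{n}}, e^{x}) \to 0$ and $d_{\cP}(e^{y_{n}}, e^{y}) \to 0$, and then the triangle inequality (Proposition \ref{P:semi-finite CAT(0)}(\ref{I: semi-finite metric})) yields $d_{\cP}(e^{x_{n}}, e^{y_{n}}) \to d_{\cP}(e^{x}, e^{y})$. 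Taking limits in the displayed inequality then delivers $\|x - y\|_{2} \le d_{\cP}(e^{x}, e^{y})$. The only delicate step is the identification in the second part, which is purely algebraic once one carefully tracks the role of the compressed identity; everything else is a straightforward application of results already established in the paper.
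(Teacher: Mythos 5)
Your proposal is correct and follows essentially the same route as the paper: compress to the finite projections $p_n = p_n^x \vee p_n^y$, invoke Proposition \ref{P:continuity of logarithm map} in the finite-trace corner $(p_nMp_n,\tau|_{p_nMp_n})$, and pass to the limit using Proposition \ref{finite approx of cP}. Your handling of the left-hand side is in fact a little more direct than the paper's (you get $x_n\to x$ and $y_n\to y$ in $\rL^2$ straight from the spectral integral, whereas the paper argues via convergence in measure of $x_n-y_n$ together with a domination estimate and dominated convergence), and your explicit bookkeeping of the identity $p_n$ versus $\id$ in the reduced algebra fills in a step the paper leaves implicit.
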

\begin{proof}
As above, consider $x_n = p^x_nxp^x_n$ and $y_n=p^y_nyp^y_n$. Let $p_n = p_n^x \vee p_n^y$ be the supremum of $p_n^x$ and $p_n^y$. By \cite{MR641217}[Part III, Ch. 2, Prop. 5], $p_n$ is a finite projection in $M$. Proposition \ref{P:continuity of logarithm map} applies to $(p_nMp_n, \tau \circ p_n)$. So $\|x_n-y_n\|_2 \le d_\cP(e^{x_n},e^{y_n})$.

By Proposition \ref{finite approx of cP}, $d_\cP(e^{x_n},e^{y_n}) \to d_\cP(e^x,e^y)$ as $n\to\infty$.  It remains to show that $\|x_n-y_n\|_2 \to \|x-y\|_2$ as $n\to\infty$.  Now we know that $x_n -y_n \to x-y$ in measure.  Furthermore we can write $\|x_n-y_n\|_2^2$, in a similar fashion as equation (\ref{E:formula for norm}), as $2\int_{0}^{\infty}\lambda\mu_{|x_n-y_n|}(\lambda,\infty)\dee\lambda$, and by Lemma \ref{cdfcont}, $\mu_{|x_n-y_n|}(\l,\infty) \to \mu_{|x-y|}(\l,\infty)$, while by Proposition \ref{P:subadditive estimate} and Proposition \ref{P:basic facts positive ops affil} $\mu_{|x_n-y_n|}(\l,\infty) \leq \mu_{|x|}(\l,\infty) + \mu_{|y|}(\l,\infty)$.  So by the Dominated Convergence Theorem $\|x_n-y_n\|_2 \to \|x-y\|_2$.

\end{proof}

\begin{lem}\label{L: approx converg}
Let $f:\R \to \R$ be a bounded continuous function. Suppose there is an open neighborhood $\cO \subset \R$ of $0$ such that $f(t)=0$ for all $t \in \cO$. Suppose $a_1,a_2,\ldots$ is a sequence in $\rL^2(M,\tau)_{sa}$ that converges to $a \in \rL^2(M,\tau)_{sa}$.  Then $d_\cP(e^{f(a_k)},e^{f(a)}) \to 0$ as $k\to\infty$.
\end{lem}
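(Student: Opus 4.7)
The plan is to prove the stronger $\rL^2$-convergence $\|e^{f(a_k)}-e^{f(a)}\|_2\to 0$ and then convert this to $d_\cP$-convergence via a functional-calculus bound on $\log$ over a compact interval away from $0$. Let $\eps>0$ satisfy $(-\eps,\eps)\subset\cO$ and set $\psi(t):=e^{f(t)}-1$. Because $\psi$ is continuous, bounded, and vanishes on $(-\eps,\eps)$, the function $h(t):=\psi(t)/t$ (with $h(0):=0$) is continuous on $\R$ and bounded by $\|\psi\|_\infty/\eps$, giving the factorization $\psi(t)=h(t)\,t$. Functional calculus then yields
\[
\psi(a_k)-\psi(a)=h(a_k)(a_k-a)+(h(a_k)-h(a))\,a.
\]

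The first summand has $\rL^2$-norm at most $\|h\|_\infty\|a_k-a\|_2\to 0$. For the second, I would approximate $a$ in $\rL^2$-norm by $a':=ap$, where $p:=1_{[\delta,1/\delta]}(|a|)$ is a finite spectral projection of $|a|$; then $a'\in pMp$ is bounded with $\|a'\|_\infty\le 1/\delta$ and $\|a-a'\|_2\to 0$ as $\delta\to 0$. The error contribution is at most $2\|h\|_\infty\|a-a'\|_2$, which can be made arbitrarily small. For the main piece, since $a'=pa'$,
\[
\|(h(a_k)-h(a))a'\|_2\le \|(h(a_k)-h(a))p\|_2\,\|a'\|_\infty
\quad\text{and}\quad
\|(h(a_k)-h(a))p\|_2^2=\tau\bigl(p|h(a_k)-h(a)|^2p\bigr).
\]
Since $a_k\to a$ in measure (Lemma \ref{L:L2tomeasure}) and $h$ is continuous and bounded, Lemma \ref{tikhonov} gives $h(a_k)\to h(a)$ in measure, hence $p|h(a_k)-h(a)|^2p\to 0$ in measure. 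This element lies in the finite tracial von Neumann algebra $(pMp,\tau|_{pMp})$ (Proposition \ref{P:semi-finite CAT(0)}(\ref{I:isom ident})) and is uniformly bounded in operator norm by $4\|h\|_\infty^2$, so Proposition \ref{P:3topologies} applied in $pMp$ yields $\rL^2$-convergence and in particular $\tau(p|h(a_k)-h(a)|^2p)\to 0$. Combining, $\psi(a_k)\to\psi(a)$ in $\rL^2$.

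Finally, set $T_k:=e^{-f(a)/2}e^{f(a_k)}e^{-f(a)/2}\in\cP$. Operator sandwich estimates give $e^{-2\|f\|_\infty}\id\le T_k\le e^{2\|f\|_\infty}\id$, so the function $g(x):=\log(x)/(x-1)$ (extended to be $1$ at $x=1$) is continuous and hence bounded by some constant $C$ on $[e^{-2\|f\|_\infty},e^{2\|f\|_\infty}]$. Functional calculus yields $\log T_k=g(T_k)(T_k-1)$, and $T_k-1=e^{-f(a)/2}(e^{f(a_k)}-e^{f(a)})e^{-f(a)/2}$, so
\[
d_\cP(e^{f(a_k)},e^{f(a)})=\|\log T_k\|_2\le C\|T_k-1\|_2\le Ce^{\|f\|_\infty}\|\psi(a_k)-\psi(a)\|_2\longrightarrow 0.
\]

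The main obstacle is the $\rL^2$-convergence $\psi(a_k)\to\psi(a)$ in the semi-finite setting, where the finite-case Proposition \ref{P:conv in meas implies wk* conv}(\ref{I:operate continuously bounded functions}) is unavailable. The crucial trick, enabled by the hypothesis that $f$ vanishes on a neighborhood of $0$, is the factorization $\psi(t)=h(t)t$: it cancels the ``large'' part of $a$ against the Lipschitz-like bound on $\psi$ near $0$, and reduces the remaining convergence problem to a finite-corner computation in $pMp$ where Proposition \ref{P:3topologies} applies.
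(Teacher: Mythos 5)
Your proposal is correct, and it takes a genuinely different route from the paper's. The paper works directly with $z_k=e^{-f(a_k)/2}e^{f(a)}e^{-f(a_k)/2}$: it shows $\log z_k\to 0$ in measure, and then upgrades this to $\rL^2$-convergence by observing that $\log z_k$ is uniformly bounded in operator norm \emph{and} supported on a projection of uniformly finite trace --- the latter being where the hypothesis that $f$ vanishes near $0$ enters, via $\ker(f(a_k))\cap\ker(f(a))\subseteq\ker(\log z_k)$ and the Chebyshev bound $\tau(1_{(1/2\lambda,\infty)}(|a_k|))\le 4\lambda^2\|a_k\|_2^2$. You instead use the vanishing hypothesis to make $h(t)=(e^{f(t)}-1)/t$ bounded, prove the stronger statement $\|e^{f(a_k)}-e^{f(a)}\|_2\to 0$ by splitting against a finite spectral corner $p=1_{[\delta,1/\delta]}(|a|)$, and only then pass to $d_\cP$ via the uniform bound on $\log(x)/(x-1)$ over the spectrum of $T_k$. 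Your route buys the extra conclusion that the exponentials converge in $\rL^2$-norm, together with the explicit Lipschitz-type estimate $d_\cP(e^{f(a_k)},e^{f(a)})\le Ce^{\|f\|_\infty}\|e^{f(a_k)}-e^{f(a)}\|_2$; the paper's is shorter because it never leaves the logarithmic picture. One cosmetic point: the step from $\rL^2(pMp)$-convergence of $p|h(a_k)-h(a)|^2p$ to $\tau(p|h(a_k)-h(a)|^2p)\to 0$ deserves the one-line Cauchy--Schwarz justification $\tau(x)\le\tau(p)^{1/2}\|x\|_2$ in the finite corner (or, more directly, note that $1_{(0,\infty)}(|(h(a_k)-h(a))p|)\le$ a projection of trace $\le\tau(p)$ and run the usual $\varepsilon$-splitting); this is trivial but should be said.
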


\begin{proof}[Proof of Lemma \ref{L: approx converg}]
By Lemma \ref{L:L2tomeasure},  $a_k$ converges to $a$ in measure as $k\to\infty$. It follows by Lemma \ref{tikhonov} that $f(a_k)$ converges to $f(a)$ in measure, and also $e^{f(a_k)}$ converges to $e^{f(a)}$ in measure.  Since multiplication is jointly continuous with respect to the convergence in measure topology, $z_k:= e^{-f(a_k)/2}e^{f(a)}e^{-f(a_k)/2} \to 1$ in measure.

Let $\l>0$ be such that $\sup_{t\in \R} |f(t)| \le \l$ and $f(t)=0$ for all $|t|\le 1/\l$.  Now since $ e^{-2\l} \leq e^{-f(a_k)/2}e^{f(a)}e^{-f(a_k)/2} \leq e^{2\l}$ and $\log$ is a continuous function on the spectrum of $z_k$, by Lemma \ref{tikhonov} $\log z_k \to 0$ in measure.


We now show that $\log z_k$ converges to $0$ in $\rL^2$.  Now $-2\l \leq \log z_k \leq 2\l$ is uniformly bounded by $2\l$, $\log z_k$ is also in $\rL^2_{sa}(M,\tau)$.
\begin{claim}\label{cofinite kernel}
$\sup_k \tau(1_{(0,\infty)}(|\log z_k|)) < \infty$.
\end{claim}

\begin{proof}
Note that $\ker(f(a_k)) \cap \ker(f(a)) \subset \ker(\log z_k)$, so $\ker(\log z_k)^\perp \leq (\ker(f(a_k)) \cap \ker(f(a)))^\perp = \overline{\textrm{span}(\ker(f(a_k))^\perp \cup \ker(f(a_k))^\perp)}$.  Equivalently, $1_{(0,\infty)}(|\log z_k|) \leq 1_{(0,\infty)}(|f(a_k)|) \vee 1_{(0,\infty)}(|f(a)|) \leq 1_{(0,\infty)}(|f(a_k)|) + 1_{(0,\infty)}(|f(a)|) $.   Now  $\tau(1_{(0,\infty)}|f(a_k)|) = \tau(1_{(1/2\l,\infty)}|a_k|) \leq 4\l^2\|a_k\|_2^2$.  Since $a_k$ is converging to $a$ in $\rL^2$, the right hand side is bounded independently of $k$.  The claim follows.
\end{proof}

Since $|\log z_k| \le 2\l$, the claim implies that for any $\eps>0$,
\begin{eqnarray*}
\|\log z_k\|_2^2 &\le& \eps^2\mu_{|\log z_k|}(0,\eps] + (2\l)^2\mu_{|\log z_k|}(\eps,\infty)\le \eps^2 K + (2\l)^2\mu_{|\log z_k|}(\eps,\infty)
\end{eqnarray*}
where $K=\sup_k \tau(1_{(0,\infty)}(|\log z_k|))$ is constant. Since $\eps>0$ is arbitrary and $|\log z_k| \to |\log z|$ in measure (as $k\to\infty)$, it follows that $\log z_k \to 0$ in $\rL^2$. This implies $d_\cP(e^{f(a_k)},e^{f(a)}) \to_{k\to\infty} 0$.

\end{proof}

\begin{prop}\label{C:exp cont semi-finite}
$\exp: \rL^2(M,\tau)_{sa} \to \cP$ is continuous.
\end{prop}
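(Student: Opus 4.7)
The plan is to mimic the finite-trace proof of Theorem \ref{T:continuity of exp}, but replace the truncation $f_\lambda$ used there by a cutoff function that vanishes both at $\pm\infty$ \emph{and} in a neighborhood of $0$. This is essential because in the semi-finite setting a bounded truncation of $a$ that fails to vanish near $0$ need not lie in $\rL^2(M,\tau)$ (since $\id\notin \rL^2(M,\tau)$), and also because Lemma \ref{L: approx converg} requires $\phi$ to vanish near $0$.

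Given $a_k\to a$ in $\rL^2(M,\tau)_{sa}$ and $\varepsilon>0$, since $a\in \rL^2(M,\tau)_{sa}$ the measure $t^2\,d\mu_{|a|}(t)$ is finite on $\R$, so I would choose $0<\delta<\lambda$ and a bounded continuous function $\phi\colon\R\to\R$ with $|\phi(t)|\le |t|$, $\phi(t)=t$ on $\delta\le |t|\le \lambda$, $\phi(t)=0$ for $|t|\le \delta/2$ or $|t|\ge 2\lambda$, and with $\|a-\phi(a)\|_2<\varepsilon$. The key virtues of $\phi$ are that it is bounded, continuous, and vanishes in a neighborhood of $0$, so Lemma \ref{L: approx converg} applies and gives $d_\cP(e^{\phi(a_k)},e^{\phi(a)})\to 0$; combining this with Lemma \ref{semi-finite cont of log} yields $\|\phi(a_k)-\phi(a)\|_2\to 0$.

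Now I would apply the triangle inequality (Proposition \ref{P:semi-finite CAT(0)}(\ref{I: semi-finite metric})):
$$d_\cP(e^{a_k},e^a)\le d_\cP(e^{a_k},e^{\phi(a_k)}) + d_\cP(e^{\phi(a_k)},e^{\phi(a)}) + d_\cP(e^{\phi(a)},e^a).$$
Since $a$ and $\phi(a)$ commute as affiliated operators (both being functional calculi of $a$), functional calculus yields $e^{-\phi(a)/2}e^a e^{-\phi(a)/2} = e^{a-\phi(a)}$, so $d_\cP(e^a,e^{\phi(a)}) = \|a-\phi(a)\|_2<\varepsilon$, and analogously $d_\cP(e^{a_k},e^{\phi(a_k)}) = \|a_k-\phi(a_k)\|_2$. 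Writing $a_k-\phi(a_k) = (a_k-a) + (a-\phi(a)) - (\phi(a_k)-\phi(a))$ and applying the $\rL^2$ triangle inequality, together with $\|a_k-a\|_2\to 0$ and $\|\phi(a_k)-\phi(a)\|_2\to 0$, gives $\limsup_k \|a_k-\phi(a_k)\|_2 \le \varepsilon$. Assembling the three terms produces $\limsup_k d_\cP(e^{a_k},e^a)\le 2\varepsilon$, and since $\varepsilon>0$ was arbitrary, $d_\cP(e^{a_k},e^a)\to 0$.

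The main obstacle is verifying the commuting identity $d_\cP(e^x,e^y)=\|x-y\|_2$ for commuting self-adjoint $x,y\in \rL^2(M,\tau)_{sa}$ that may be unbounded; this requires checking that $e^{-y/2}e^xe^{-y/2}$ coincides with $e^{x-y}$ as an element of $\rL^0(M,\tau)$, which reduces to functional calculus in the commutative von Neumann subalgebra generated by $x$ and $y$. Once this is handled, the argument above provides continuity of $\exp$.
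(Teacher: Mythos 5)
Your proposal is correct and follows essentially the same route as the paper's proof: truncate by a bounded continuous cutoff vanishing near $0$ and at infinity, control the middle term via Lemma \ref{L: approx converg} (together with Lemma \ref{semi-finite cont of log} to pass from $d_\cP$-convergence to $\rL^2$-convergence of the truncations), handle the outer terms via the commuting identity $d_\cP(e^x,e^{\phi(x)})=\|x-\phi(x)\|_2$, and finish with the triangle inequality and dominated convergence. The only differences are cosmetic (fixing $\varepsilon$ up front rather than letting the cutoff parameter tend to infinity at the end), and the commuting identity you flag as the remaining obstacle is exactly the step the paper also uses, justified by functional calculus in the abelian von Neumann algebra generated by $a$.
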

\begin{proof}
We use a strategy similar to that used in the finite trace setting.
Suppose $(a_k)$ is a sequence in $\rL^2(M,\tau)_{sa}$ converging to $a$ in $\rL^2$.

Let $\l >0$.  Let $f_\l: \R \to \R$ be a continuous nondecreasing function such that for $f_\l(x) = 0$ on $[-1/2\l, 1/2\l]$, $f_\l(x) = \l$ for $x > \l$, $f_\l(x) = -\l$ for $x < -\l$, and $f_\l(x) = x$ on $[-\l,-1/\l] \cup [1/\l, \l]$.

Because $a_k$ and $f_\l(a_k)$ commute, $d_\cP(e^{a_k},e^{f_\l(a_k)}) = \|a_k - f_\l(a_k)\|_2$. Similarly,  $d_\cP(e^{a},e^{f_\l(a)}) = \|a - f_\l(a)\|_2$. So two applications of the triangle inequality yield
\begin{align}\label{E:cutoff functions estimate2}
d_{\mathcal{P}}(e^{a_{k}},e^{a})&\leq d_{\mathcal{P}}(e^{a_k},e^{f_{\lambda}(a_k)})+d_{\mathcal{P}}(e^{f_{\lambda}(a_{k})},e^{f_{\lambda}(a)})+d_{\mathcal{P}}(e^{f_\l(a)},e^a)\\ \nonumber
&=\|a_{k}-f_{\lambda}(a_{k})\|_{2}+d_{\mathcal{P}}(e^{f_{\lambda}(a_{k})},e^{f_{\lambda}(a)})+ \|a-f_{\lambda}(a)\|_{2}\\ \nonumber
&\leq d_{\mathcal{P}}(e^{f_{\lambda}(a_{k})},e^{f_{\lambda}(a)})+ \|a_k-a\|_{2} +2\|a-f_{\lambda}(a)\|_{2} + \|f_\l(a)-f_{\lambda}(a_{k})\|_{2}.
\end{align}
By Lemmas \ref{semi-finite cont of log} and \ref{L: approx converg},
\begin{align*}
\limsup_{k\to\infty} d_{\mathcal{P}}(e^{a_{k}},e^{a})&\leq  2\|a-f_{\lambda}(a)\|_{2}
\end{align*}
Since
$$\|a - f_\l(a)\|^2_2 = \int (t-f_\l(t))^2~\dee \mu_a(t),$$
and $(t-f_\l(t))^2 \le t^2$, the Dominated Convergence Theorem implies $\|a - f_\l(a)\|_2 \to 0$ as $\l \to \infty$.  Combined with the previous inequality, this implies $d_\cP(e^{a_k},e^a) \to 0$ as $k\to\infty$.
\end{proof}

The proof that $(\cP,d_\cP)$ is complete now follows from the same argument as in Corollary \ref{C:complete}. This finishes the proof of Proposition \ref{P:semi-finite CAT(0)}(\ref{I: semi-finite completeness}).



\begin{proof}[Proof of Proposition \ref{P:semi-finite CAT(0)}(\ref{I:semi-finite cat(0)})]
We use arguments similar to those found in \cite[Theorem II.3.9]{bridson-haefliger-book} in order to apply \cite[Proposition II.1.11]{bridson-haefliger-book}. A {\bf sub-embedding} of a 4-tuple $(x_1,y_1,x_2,y_2)$ of points in a metric space $(X,d_X)$ is a 4-tuple of points $(\bx_1,\by_1,\bx_2,\by_2)$ in the Euclidean plane $\E^2$ such that $d_X(x_i,y_j)=\|\bx_i- \by_j\|$ ($\forall i,j \in \{1,2\})$ and $d_X(x_1,x_2) \le \|\bx_1-\bx_2\|$ and $d_X(y_1,y_2) \le \|\by_1-\by_2\|$. A pair of points $x,y \in X$ is said to have {\bf approximate midpoints} if for every $\d>0$ there exists $m \in X$ such that
$$\max\{ d_X(x,m), d_X(m,y)\} \le \frac{1}{2} d_X(x,y) + \d.$$
According to \cite[Proposition II.1.11]{bridson-haefliger-book}, a metric space $(X,d_X)$ is CAT(0) if and only if every 4-tuple of points in $X$ admits a sub-embedding into $\E^2$ and every pair of points $x,y \in X$ admits approximate midpoints. We will verify that $(\cP,d_\cP)$ satisfies this condition.

Now let $a_i \in \cP$, $1 \leq i \leq 4$ and consider the reduced versions $a_{i,n} = \exp(p_n \log(a_i) p_n) \in \cP_{p_n}$, where $p_n = \vee_{i=1}^4 p_n^{\log(a_i)}$ is as defined in Notation \ref{notation1}.   For each $n$, $(\cP_{p_n}, d_{\cP})$ is CAT(0) by Corollary \ref{C:cat0}. So each 4-tuple $(a_{i,n})_i$ has a sub-embedding in Euclidean space $\E^2$: a 4-tuple of points $(\ba_{i,n})_i$ such that $d_{\cP_n}(a_{1,n}, a_{2,n}) \leq \|\ba_{1,n}-\ba_{2,n}\|$, $d_{\cP_n}(a_{3,n}, a_{4,n}) \leq \|\ba_{3,n}-\ba_{4,n}\|$, $d_{\cP_n}(a_{i,n}, a_{j,n}) = \|\ba_{i,n}-\ba_{j,n}\|$ for all other $i,j$.

By translation invariance of the standard metric on $\E^2$, we can assume $\ba_{1,n} = \ba_1$ is the same for all $n$.  We have shown in Proposition \ref{finite approx of cP} that for each $i,j$, $d_{\cP_{p_n}}(a_{i,n},a_{j,n}) \to d_\cP(a_i,a_j)$ as $n\to\infty$.  So the sub-embedding condition and triangle inequality show that $(\ba_{i,n})$ is contained in a compact set as $i$ and $n$ vary.  In particular, by passing to a subsequence if necessary, we can assume $\ba_{i,n}$ converges to some $\ba_i$ in $\E^2$ for each $i$.  It follows that $(\ba_i)_{i=1}^4$ is a sub-embedding of $(a_i)_{i=1}^4$.


For the approximate midpoint condition, let $x,y \in \cP$.  Let $x_n, y_n \in \cP_{p_n}$ be the reduced versions where $p_n$ is now redefined to be $p_n=p_n^{\log(x)} \vee p_n^{\log(y)}$. For example, $x_n = \exp(p_n \log(x)p_n)$. Let $\d>0$.

By Proposition \ref{finite approx of cP}, there exists $n$ such that
$$|d_{\cP}(x_{n},y_{n}) -d_\cP(x,y)|<\d/3, ~d_\cP(x,x_n) \le \d/3, ~d_\cP(y,y_n) \le \d/3.$$
Because $(\cP_{p_n},d_\cP)$ is CAT(0), there exists $m \in \cP_{p_n}$ with
$$\max\{ d_\cP(x_n,m), d_\cP(m,y_n)\} \le \frac{1}{2} d_\cP(x_n,y_n) + \d/3.$$
By the triangle inequality,
\begin{eqnarray*}
\max\{ d_\cP(x,m), d_\cP(m,y)\} &\le& \max\{ d_\cP(x_n,m), d_\cP(m,y_n)\} + \d/3 \\
&\le& \frac{1}{2} d_\cP(x_n,y_n) + 2\d/3 \le \frac{1}{2} d_\cP(x,y) + \d.
\end{eqnarray*}
Thus $x,y$ have approximate midpoints. This completes the verification of the conditions in \cite[Proposition II.1.11]{bridson-haefliger-book}.

\end{proof}

\begin{proof}[Proof of Proposition \ref{P:semi-finite CAT(0)}(\ref{I: P infinity dense})]
That $\cP^\infty$ is dense in $\cP$ follows from $M_{sa} \cap \rL^2(M,\tau)_{sa}$ being dense in $\rL^2(M,\tau)_{sa}$ and Proposition \ref{C:exp cont semi-finite} (that $\exp: \rL^2(M,\tau) \to \cP$ is continuous).  Now if $x \in \cP^{\infty}$, then $x = e^y$ for $y \in M_{sa}$.  Then $\|e^y\|_\infty \leq e^{\|y\|_\infty}$ and similarly for $x^{-1} = e^{-y}$, so $x \in M^\times$.  Thus $\cP^\infty \subset \cP \cap M^\times$.  Conversely if $x \in \cP \cap M^\times$ then $\log x \in \rL^2_{sa}(M,\tau) \cap M = M_{sa}$, so $x \in \cP^\infty$.
\end{proof}

\begin{cor}\label{semi-finite weaker top and geodesics}
Corollaries \ref{C:weaker topology} and \ref{C:geodesics} also hold for $(M,\tau)$ semi-finite.
\end{cor}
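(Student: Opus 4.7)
The plan is to deduce both corollaries from the ingredients already assembled for the semi-finite case in Proposition \ref{P:semi-finite CAT(0)}, Lemma \ref{semi-finite cont of log}, Proposition \ref{C:exp cont semi-finite}, Lemma \ref{L:L2tomeasure}, and Lemma \ref{tikhonov}. The arguments are essentially the same as in the finite case; one only needs to check that the tools used in Corollary \ref{C:weaker topology} and Corollary \ref{C:geodesics} still apply.

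For the analog of Corollary \ref{C:weaker topology}: let $(b_n)_n \subset \cP$ and $b \in \cP$ with $d_\cP(b_n, b) \to 0$. Setting $a_n = \log b_n$ and $a = \log b$, Lemma \ref{semi-finite cont of log} gives $\|a_n - a\|_2 \le d_\cP(b_n,b) \to 0$. By Lemma \ref{L:L2tomeasure}, $a_n \to a$ in measure. Since $\exp\colon \R \to \R$ is continuous and bounded on bounded subsets of $\R$, Lemma \ref{tikhonov} applied to the self-adjoint affiliated operators $a_n, a$ yields $b_n = \exp(a_n) \to \exp(a) = b$ in measure.

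For the analog of Corollary \ref{C:geodesics}: the key computation is that if $\xi \in \rL^2(M,\tau)_{sa}$ and $s,t \in \R$, then $s\xi$ and $t\xi$ are commuting self-adjoint operators affiliated with $(M,\tau)$, so functional calculus gives $e^{-s\xi/2} e^{t\xi} e^{-s\xi/2} = e^{(t-s)\xi}$ in $\rL^0(M,\tau)$; hence $d_\cP(\g_\xi(s), \g_\xi(t)) = \|\log e^{(t-s)\xi}\|_2 = |t-s|\cdot\|\xi\|_2$, so $\g_\xi$ is a unit-speed (after normalization) minimal geodesic. By Proposition \ref{P:semi-finite CAT(0)}(\ref{I: semi-finite completeness})--(\ref{I:semi-finite cat(0)}), $(\cP,d_\cP)$ is a complete CAT(0) space, hence uniquely geodesic. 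The map $\exp\colon \rL^2(M,\tau)_{sa} \to \cP$ is bijective (surjective by definition of $\cP$, injective by Lemma \ref{semi-finite cont of log}), so every geodesic emanating from $\id$ has the form $\g_\xi$. Finally, since $\GL^2(M,\tau)$ acts transitively and by isometries on $\cP$ by Proposition \ref{P:semi-finite CAT(0)}(\ref{I: semi-finite isom action})--(\ref{I: semi-finite trans action}), applying the isometry $g\mapsto a^{1/2} g a^{1/2}$ to the geodesic from $\id$ to $a^{-1/2}ba^{-1/2}$ with the indicated $\xi$ produces the unique unit-speed geodesic from $a$ to $b$.

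No significant obstacle is expected, since every prerequisite has already been proved for semi-finite $(M,\tau)$. The only point that requires a moment's care is verifying the commutativity identity $e^{-s\xi/2} e^{t\xi} e^{-s\xi/2} = e^{(t-s)\xi}$ at the level of $\tau$-measurable operators, which follows from the joint spectral calculus for the single affiliated self-adjoint operator $\xi$.
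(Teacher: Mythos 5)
Your proposal is correct and follows exactly the route the paper intends: the paper's own proof of this corollary is the single sentence ``Using Proposition \ref{P:semi-finite CAT(0)}, the proofs are similar to the finite case,'' and your argument is precisely that finite-case template with the semi-finite substitutes (Lemma \ref{semi-finite cont of log} for Proposition \ref{P:continuity of logarithm map}, Proposition \ref{P:semi-finite CAT(0)} for completeness, CAT(0), and the isometric transitive action). Your explicit functional-calculus verification of $d_\cP(\g_\xi(s),\g_\xi(t))=|s-t|\,\|\xi\|_2$ is a welcome detail, since the semi-finite Lemma \ref{semi-finite cont of log} records only the inequality and not the commuting-case equality.
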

\begin{proof}
Using Proposition \ref{P:semi-finite CAT(0)}, the proofs are similar to the finite case.
\end{proof}

\section{Proofs of the main results}\label{sec:proofs of main results}

\subsection{The limit operator}
This subsection proves a generalization of Theorem \ref{thm:main}. We first need a lemma.
\begin{lem}\label{lem:contraction}
Let $(M,\tau)$ be a semi-finite tracial von Neumann algebra. For any $a,b\in \cP$ and $\s \ge 1$,
$$d_\cP(a^\s, b^\s) \ge \s d_\cP(a,b).$$
\end{lem}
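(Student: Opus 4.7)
The plan is to deduce this inequality from the CAT(0) geometry of $\cP$ established in Proposition \ref{P:semi-finite CAT(0)}, together with the explicit description of geodesics in $\cP$ from Corollary \ref{C:geodesics}, extended to the semi-finite setting via Corollary \ref{semi-finite weaker top and geodesics}. This generalizes the strategy used in Corollary \ref{C:contraction} for $\cP^\infty$ in the finite case, and bypasses any need for a truncation or approximation argument.

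First I would write $a = e^x$ and $b = e^y$ for uniquely determined $x,y \in \rL^2(M,\tau)_{sa}$. Such logarithms exist because $\exp\colon \rL^2(M,\tau)_{sa} \to \cP$ is a bijection: surjectivity holds by definition of $\cP$, while injectivity follows from Lemma \ref{semi-finite cont of log}. Since $\sigma x$ and $\sigma y$ again lie in $\rL^2(M,\tau)_{sa}$, we have $a^\sigma = e^{\sigma x}$ and $b^\sigma = e^{\sigma y}$ in $\cP$, and by Corollary \ref{semi-finite weaker top and geodesics} the maps $\gamma_x(t) := e^{tx}$ and $\gamma_y(t) := e^{ty}$ are affinely parameterized minimal geodesic lines $\R \to \cP$, both passing through $\id$ at $t=0$.

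Next I would invoke the standard fact that in a CAT(0) space the distance between two affinely parameterized geodesics is a convex function of the parameter (see, e.g., \cite[Chapter II, Proposition 2.2]{bridson-haefliger-book}). Applied to $\gamma_x$ and $\gamma_y$, this gives convexity of
$$f(t) := d_\cP(\gamma_x(t), \gamma_y(t)) = d_\cP(e^{tx}, e^{ty})$$
on $[0,\infty)$. Since $f(0) = 0$ and $f$ is convex, the chord-slope $t \mapsto f(t)/t$ is non-decreasing on $(0,\infty)$: for $\sigma \geq 1$, writing $1 = (1/\sigma)\cdot \sigma + (1-1/\sigma)\cdot 0$ and applying convexity gives $f(1) \leq f(\sigma)/\sigma$, which rearranges to the desired $\sigma\, d_\cP(a,b) \leq d_\cP(a^\sigma, b^\sigma)$.

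There is no serious obstacle here: all the heavy lifting (CAT(0) property of $\cP$ and the explicit global parameterization of the geodesics $\gamma_\xi$) is already in place by the time this lemma appears, so the proof reduces to the elementary convexity argument sketched above. The only thing worth double-checking is that the cited convexity statement applies to the full geodesic lines $\gamma_x, \gamma_y$ on $[0,\infty)$ (not merely short segments), which is fine since Corollary \ref{semi-finite weaker top and geodesics} provides minimal geodesics defined on all of $\R$.
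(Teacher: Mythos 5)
Your proof is correct, but it takes a genuinely different route from the paper's. The paper reduces everything to the bounded, finite-trace case: it writes $a=e^{x}$, $b=e^{y}$, approximates $x,y$ by elements of $M_{sa}$ (finite case) or by the truncations $p_n x p_n$ (semi-finite case), invokes Corollary \ref{C:contraction} on $\cP^\infty$ --- which in turn rests on the convexity result \cite[Corollary 3.4]{MR2254561} of Andruchow--Larotonda for the Riemannian manifold $\cP^\infty$ --- and then passes to the limit using Theorem \ref{T:continuity of exp} and Proposition \ref{finite approx of cP}. You instead work directly on the completed space $\cP$: since Proposition \ref{P:semi-finite CAT(0)} and Corollary \ref{semi-finite weaker top and geodesics} are already available at this point in the paper, the curves $t\mapsto e^{tx}$ and $t\mapsto e^{ty}$ are affinely parameterized geodesics through $\id$, the CAT(0) convexity of $t\mapsto d_\cP(e^{tx},e^{ty})$ holds synthetically by \cite[Chapter II, Proposition 2.2]{bridson-haefliger-book} (applied on $[0,T]$ for each $T$, which handles your concern about rays versus segments), and the chord-slope inequality with $f(0)=0$ finishes the argument. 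The underlying idea --- convexity of $f(t)=d_\cP(e^{tx},e^{ty})$ together with $f(0)=0$ --- is the same in both proofs; what your version buys is the complete elimination of the two-stage approximation machinery, at the cost of leaning on the general CAT(0) theory of the completion rather than on the differential-geometric convexity statement for the dense subspace $\cP^\infty$. There is no circularity, as neither Proposition \ref{P:semi-finite CAT(0)} nor Corollary \ref{C:geodesics} uses Lemma \ref{lem:contraction}.
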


\begin{proof} First, assume $\tau$ is a finite trace.  Let $x=\log a$, $y=\log b$. Recall that $M_{sa} \subset M$ is the set of self-adjoint elements in $M$. Because $M_{sa}$ is dense in $\rL^2(M,\tau)_{sa}$, there exist $x_n, y_n \in M_{sa}$ with $x_n \to x$ and $y_n \to y$ in $\rL^2(M,\tau)_{sa}$ as $n\to\infty$. Thus
\begin{eqnarray*}
 d_\cP(a^\s, b^\s) &=& d_\cP(e^{\s x}, e^{\s y}) = \lim_{n\to\infty} d_\cP (e^{\s x_n}, e^{\s y_n}) \\
 &\ge & \lim_{n\to\infty} \s d_\cP (e^{ x_n}, e^{ y_n}) = \s d_\cP(e^x,e^y) = \s d_\cP(a,b)
 \end{eqnarray*}
 where the second and third equalities follow from continuity of the exponential map (Theorem \ref{T:continuity of exp}) and the inequality follows from Corollary \ref{C:contraction}.

 Next we consider the general semi-finite case.  Let $x_n,y_n$ be the reduced versions of $x,y \in \rL^2_{sa}(M,\tau)$ as in Notation \ref{notation1}.  Then
\begin{eqnarray*}
d_\cP(a^\s, b^\s) &=& d_\cP(e^{\s x}, e^{\s y}) = \lim_{n\to\infty} d_\cP (e^{\s x_n}, e^{\s y_n}) \\
 &\ge & \lim_{n\to\infty} \s d_\cP (e^{ x_n}, e^{ y_n}) = \s d_\cP(e^x,e^y) = \s d_\cP(a,b).
 \end{eqnarray*}

Where the second and second-to-last equalities follows from Proposition \ref{finite approx of cP} and the inequality follows from the above finite case.
\end{proof}

We can now prove a slight generalization of Theorem \ref{thm:main} by expanding the range of the cocycle.
\begin{thm}\label{thm:main-general}
Let $(X,\mu)$ be a standard probability space, $f:X \to X$ an ergodic measure-preserving transformation, $(M,\tau)$ a semi-finite von Neumann algebra with faithful normal trace $\tau$. Let $c:\Z \times X \to \GL^2(M,\tau)$ be a cocycle:
$$c(n+m,x) = c(n, f^mx)c(m,x)\quad \forall n,m \in \Z, ~\mu-a.e. ~x\in X.$$
Let $\pi:\GL^2(M,\tau) \to \Isom(\cP)$ be the map $\pi(g)x = gxg^*$ where $\Isom(\cP)$ is the group of isometries of $\cP$. Suppose $\pi \circ c$ is measurable with respect to the compact-open topology on $\Isom(\cP)$ and
$$ \int_X \| \log(|c(1,x)|^2)\|_2 ~d\mu(x) = \int_X d_\cP(1,|c(1,x)|^2)~d\mu(x)< \infty.$$
Then for almost every $x\in X$, the following limit exists:
$$\lim_{n\to\infty} \frac{\| \log (| c(n,x)|^2 )\|_2}{n} = D.$$
Moreover, if $D>0$ then for a.e. $x$, there exists $\La(x) \in \rL^2(M,\tau)$ with $\La(x)\ge 0$ such that
$$\log \La(x):=\lim_{n\to\infty} \log \left([c(n,x)^*c(n,x)]^{1/2n}\right) \in \rL^2(M,\tau)$$
exists for a.e. $x$ and
$$\lim_{n\to\infty} \frac{1}{n}d_\cP(\La(x)^n, |c(n,x)|)=0.$$
\end{thm}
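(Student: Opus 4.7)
Plan: Following §\ref{sec:proof overview}, the strategy is to apply the Karlsson--Margulis theorem (Theorem \ref{thm:km}) to the complete CAT(0) metric space $(\cP, d_\cP)$ of Theorem \ref{thm:positivecone}, with basepoint $y_0 = \id$, and to read off $\Lambda(x)$ from the asymptotic direction of the resulting geodesic ray.

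Since KM requires a \emph{reverse} cocycle, I would first define $\check c(n,x) := c(n,x)^*$. A direct application of the forward cocycle identity together with $(AB)^* = B^*A^*$ gives $\check c(n+m, x) = c(n+m,x)^* = c(m,f^n x)^* c(n,x)^{*} \cdot \text{(rewritten)} = \check c(n, x)\,\check c(m, f^n x)$, so $\check c$ is a reverse cocycle. Under the isometric action $\pi(g)a = gag^*$ (Theorem \ref{T:I heard you like metrics}(\ref{I:isometric action}) and its semi-finite extension Proposition \ref{P:semi-finite CAT(0)}(\ref{I: semi-finite isom action})), one has $\pi(\check c(n,x))\id = c(n,x)^* c(n,x) = |c(n,x)|^2$, so $d_\cP(\id, \pi(\check c(n,x))\id) = \|\log|c(n,x)|^2\|_2$ and the first-moment hypothesis supplies the KM integrability condition. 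KM then yields a.e.\ existence of $D = \lim_{n\to\infty} \|\log|c(n,x)|^2\|_2/n$, which is the first conclusion of the theorem. When $D > 0$, KM further produces a unique unit-speed geodesic ray $\gamma(\cdot, x)$ in $\cP$ based at $\id$ with $\tfrac{1}{n}d_\cP(\gamma(Dn, x), |c(n,x)|^2) \to 0$. By Corollary \ref{C:geodesics} (extended to the semi-finite case in Corollary \ref{semi-finite weaker top and geodesics}), every such geodesic has the form $\gamma(t, x) = \exp(t\xi(x))$ for a unique $\xi(x) \in \rL^2(M,\tau)_{sa}$ with $\|\xi(x)\|_2 = 1$. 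I would then set
\[
\Lambda(x) := \exp(D\xi(x)/2) \in \cP,
\]
so that $\Lambda(x)^{2n} = \gamma(Dn, x)$ and $\log \Lambda(x) = D\xi(x)/2 \in \rL^2(M,\tau)$.

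To convert the squared KM estimate into the desired convergence statements, I would apply Lemma \ref{lem:contraction} twice. First, with $\sigma = 2$, $a = \Lambda(x)^n$, $b = |c(n,x)|$:
\[
d_\cP(\Lambda(x)^n, |c(n,x)|) \le \tfrac{1}{2}\, d_\cP(\Lambda(x)^{2n}, |c(n,x)|^2),
\]
whose right side, divided by $n$, tends to $0$; this yields $\tfrac{1}{n}d_\cP(\Lambda(x)^n, |c(n,x)|) \to 0$, which is one of the two claims. Second, with $\sigma = n$, $a = |c(n,x)|^{1/n}$, $b = \Lambda(x)$: the lemma gives $d_\cP(|c(n,x)|^{1/n}, \Lambda(x)) \le \tfrac{1}{n}d_\cP(|c(n,x)|, \Lambda(x)^n) \to 0$. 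Finally, $\|\log a - \log b\|_2 \le d_\cP(a, b)$ by Proposition \ref{P:continuity of logarithm map} (and its semi-finite extension Lemma \ref{semi-finite cont of log}), so this last convergence yields $\tfrac{1}{n}\log|c(n,x)| \to \log \Lambda(x)$ in $\rL^2(M,\tau)$, which is the remaining assertion.

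The algebraic skeleton above is essentially mechanical once the geometry of $\cP$ is in place; the main subtleties I foresee are two technical points. First, verifying that the measurability hypothesis on $\pi \circ c$ in the compact-open topology on $\Isom(\cP)$ passes to $\pi \circ \check c$ --- this should follow from measurability of the adjoint operation and the coincidence of topologies recorded in Proposition \ref{P:3topologies}. Second, justifying a Borel-measurable selection $x \mapsto \xi(x)$ so that $\Lambda(x)$ is a bona fide measurable $\rL^0(M,\tau)$-valued function: uniqueness of the KM geodesic pins down $\xi(x)$, and measurability then follows formally from measurability of $\check c$ together with continuity of the logarithm/exponential (Theorem \ref{T:continuity of exp} and Proposition \ref{C:exp cont semi-finite}), but this argument should be spelled out carefully.
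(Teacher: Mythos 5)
Your proposal is correct and follows essentially the same route as the paper: apply Karlsson--Margulis to $(\cP,d_\cP)$ with the reverse cocycle $c(n,x)^*$ acting by $\pi$, identify $\La(x)=\exp(D\xi(x)/2)$ from the unique geodesic ray, and then convert the KM estimate on $d_\cP(\La(x)^{2n},|c(n,x)|^2)$ into the stated conclusions via Lemma \ref{lem:contraction} and the log-Lipschitz bound of Proposition \ref{P:continuity of logarithm map} (Lemma \ref{semi-finite cont of log} in the semi-finite case). The two measurability points you flag are legitimate but minor, and your splitting of the contraction step into $\s=2$ and $\s=n$ applications is just a cosmetic variant of the paper's single $\s=2n$ estimate.
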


\begin{proof}
We will use Theorem \ref{thm:km}. So let $(Y,d)=(\cP,d_\cP)$. By Corollaries \ref {C:complete} and \ref{C:cat0} for the finite case and Proposition \ref{P:semi-finite CAT(0)} for the semi-finite case, $(\cP,d_\cP)$ is a complete CAT(0) metric space. Let $y_0 = \id \in Y$. Observe that the map
$$\N \times X \to \GL^2(M,\tau), \quad (n,x) \mapsto c(n,x)^*$$
is a reverse cocycle. Also
$$d_{\cP}(y_0, c(1,x)^*.y_0) = \| \log(c(1,x)^* c(1,x))\|_2.$$
So
$$\int_X d_{\cP}(y_0, c(1,x)^*.y_0) ~d\mu(x) < \infty.$$
Theorem \ref{thm:km} implies:  for almost every $x\in X$, the following limit exists:
$$\lim_{n\to\infty} \frac{d_{\cP}(y_0, c(n,x)^*.y_0)}{n}  =D.$$
Moreover, if $D>0$ then for almost every $x$ there exists a unique unit-speed geodesic ray $\g(\cdot, x)$ in $\cP$ starting at $\id$ such that
$$\lim_{n\to\infty} \frac{1}{n} d_{\cP}(\gamma(Dn,x), c(n,x)^*.y_0) =0.$$
By Corollaries  \ref {C:geodesics} for the finite case and \ref{P:semi-finite CAT(0)} for the semi-finite case,
$$\g(t,x) = \exp(t\xi(x))$$
for some unique unit norm element $\xi(x) \in \rL^2(M,\tau)_{sa}$. Let $\La(x)=\exp(D\xi(x)/2)$. Thus we have
$$\lim_{n\to\infty} \frac{1}{n} d_{\cP}(\La(x)^{2n}, c(n,x)^*c(n,x)) =0.$$
Equivalently,
$$\lim_{n\to\infty} \frac{1}{n} \|\log(\La(x)^{-n} c(n,x)^*c(n,x) \La(x)^{-n})\|_2 = \lim_{n\to\infty} \frac{1}{n} L(\La(x)^{-n} c(n,x)^*c(n,x) \La(x)^{-n}) = 0. $$
Observe that
\begin{eqnarray*}
\lim_{n\to\infty} \left\| \log \La(x) - \log\left([c(n,x)^*c(n,x)]^{1/2n}\right) \right\|_2 &\le & \lim_{n\to\infty} d_\cP(\La(x),  [c(n,x)^*c(n,x)]^{1/2n} ) \\
&\le & \lim_{n\to\infty} \frac{1}{n} d_{\cP}(\La(x)^{2n}, c(n,x)^*c(n,x)) =0
\end{eqnarray*}
where the first inequality follows from Proposition \ref{P:continuity of logarithm map} for the finite case and Lemma \ref{semi-finite cont of log} for the semi-finite case. The second inequality follows from Lemma \ref{lem:contraction}.
This concludes the proof.

\end{proof}

In order to show that Theorem \ref{thm:main-general} implies Theorem \ref{thm:main}, we need to show how SOT-measurability of the cocycle $c$ in Theorem \ref{thm:main} implies that $\pi\circ c$ is measurable with respect to the compact-open topology.

We will need the next few lemmas to clarify the measurability hypothesis on the cocycle. The next lemma is probably well-known.

\begin{lem}\label{lem:pointwise}
Let $(Y,d)$ be a metric space. Then the pointwise convergence topology on the isometry group $\Isom(Y,d)$ is the same as the compact-open topology.
\end{lem}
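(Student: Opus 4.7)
The plan is to show each topology contains the other by verifying the defining convergences. The pointwise topology is generated by sets of the form $W(\{y\},U)=\{g:g(y)\in U\}$ for $y\in Y$ and $U\subseteq Y$ open, while the compact-open topology is generated by sets $W(K,U)=\{g:g(K)\subseteq U\}$ for $K\subseteq Y$ compact and $U\subseteq Y$ open. Since singletons are compact, the pointwise topology is immediately coarser than the compact-open topology, so no work is needed in that direction.

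For the nontrivial direction, I would show that on $\Isom(Y,d)$, pointwise convergence of a net $(f_\alpha)$ to $f$ implies uniform convergence on every compact set $K\subseteq Y$. This is an equicontinuity argument: given $\varepsilon>0$ and $K$ compact, cover $K$ by finitely many balls $B(y_1,\varepsilon/3),\dots,B(y_N,\varepsilon/3)$ with $y_i\in K$. Choose $\alpha_0$ such that $d(f_\alpha(y_i),f(y_i))<\varepsilon/3$ for all $i\le N$ and all $\alpha\ge \alpha_0$ (possible since the cover is finite). Then for $y\in K$, pick $y_i$ with $d(y,y_i)<\varepsilon/3$ and use the triangle inequality together with the fact that $f_\alpha$ and $f$ are isometries:
\[ d(f_\alpha(y),f(y))\le d(f_\alpha(y),f_\alpha(y_i))+d(f_\alpha(y_i),f(y_i))+d(f(y_i),f(y))<\varepsilon. \]

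Finally, to conclude that pointwise convergence implies compact-open convergence, I would check that uniform convergence on compact sets implies convergence in the compact-open topology. Given a subbasic neighborhood $W(K,U)$ of $f$, the image $f(K)$ is compact (as $f$ is continuous) and disjoint from the closed set $Y\setminus U$, so they have positive distance $\delta>0$ (using that in a metric space, a compact set and a disjoint closed set have positive distance; if $U=Y$ there is nothing to prove). By the equicontinuity argument above, $\sup_{y\in K}d(f_\alpha(y),f(y))<\delta$ eventually, which forces $f_\alpha(K)\subseteq U$, i.e.\ $f_\alpha\in W(K,U)$ eventually. The main (very minor) obstacle is simply being careful with the finite cover argument and the isometry condition; there is no deep difficulty here, as the result is essentially an Arzel\`a--Ascoli-type observation specialized to the $1$-Lipschitz family of isometries.
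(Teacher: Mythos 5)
Your proof is correct and uses essentially the same argument as the paper: both directions rest on the trivial inclusion (singletons are compact) plus the equicontinuity of isometries combined with a finite cover of the compact set and the triangle inequality. The only cosmetic difference is that you factor the nontrivial direction through "pointwise implies uniform convergence on compacta" and the positive distance between $f(K)$ and $Y\setminus U$, whereas the paper builds the radii of the covering balls directly into the target open set; the content is identical.
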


\begin{proof}
It is immediate that the pointwise convergence topology is contained in the compact-open topology. To show the opposite inclusion, let $K \subset Y$ be compact, $O \subset Y$ be open and suppose $g \in \Isom(Y,d)$ is such that $gK \subset O$. Let $g_n \in \Isom(Y,d)$ and suppose $g_n \to g$ pointwise. It suffices to show that $g_n K \subset O$ for all sufficiently large $n$.

Because $K$ is compact, there are a finite subset $F \subset K$ and for every $x \in F$, a radius $\eps_x>0$ such that if $B(x,\eps_x) \subset Y$ is the open ball of radius $\eps_x$ centered at $x$ then
$$gK \subset \cup_{x \in F} B(gx,\eps_x) \subset O.$$
By compactness again, there exist $0<\eps'_x<\eps_x$ such that
$$K \subset \cup_{x \in F} B(x,\eps'_x).$$
Since $g_n \to g$ pointwise, there exists $N$ such that $n>N$ implies $d(g_n x, gx) \le \eps_x-\eps'_x$ for all $x \in F$. Therefore,
$$g_n K \subset \cup_{x \in F} B(g_n x,\eps'_x) \subset \cup_{x \in F} B(g x,\eps_x) \subset O$$
as required.
\end{proof}

Let $\tcH \in \{\cH, \rL^2(M,\tau)\}$ be one of the two Hilbert spaces under consideration. We use $\tcH-SOT$ to denote the Strong Operator Topology with respect to the embedding of $M$ in $B(\tcH)$. Similarly, $\tcH-WOT$ refers to the Weak Operator Topology. If we write SOT or WOT without the $\tcH$-prefix then the default assumption is that we have  chosen $\tcH=\cH$. Of course, it is possible that $\cH=\rL^2(M,\tau)$, so SOT by itself refers to both cases.

Some of the results of the next Theorem appear in \cite{MR1123654}.
\begin{thm}\label{thm:SOT}
Suppose $(M,\tau)$ is $\s$-finite, semi-finite and $\cH$ is separable.  Then
\begin{enumerate}
\item the operator norm $M \to \R$, $T \mapsto \|T\|_\infty$ is SOT-Borel; \label{I:op-norm}
\item a subset $E \subset M$ is $\cH$-SOT-Borel if and only if it is $\rL^2(M,\tau)$-SOT-Borel; \label{I:embedding}
\item  the inverse operator norm $M^\times \to \R$, $T \mapsto \|T^{-1}\|_\infty$ is SOT-Borel; \label{I:inverse op-norm}
\item a subset $E \subset M$ is SOT-Borel if and only if it is WOT-Borel; \label{I:WOT}
\item the adjoint $M\to M$, $T \mapsto T^*$ is SOT-Borel; \label{I:adjoint}
\item the multiplication map $M \times M \to M$, $(S,T) \mapsto ST$ is SOT-Borel; \label{I:multiplication}
\item the map $\cP \cap M^\times \to M$ defined $T \mapsto \log T$ is SOT-Borel; \label{I:logarithm}
\item the map $M \cap \rL^2(M,\tau)$, $T \mapsto \|T\|_2$ is SOT-Borel; \label{I:l2 norm}
\item for any $x,y \in \cP \cap M^\times$ the map $M^\times \cap \GL^2(M,\tau) \to \R$ defined by $T \mapsto d_\cP( TxT^*, y)$ is SOT-Borel. \label{I:distance}
\end{enumerate}
\end{thm}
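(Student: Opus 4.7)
I would prove the nine parts roughly in the stated order, letting each feed into the next, and in each case exploit the separability of $\cH$ (hence of $\rL^2(M,\tau)$) to write the object of interest as a countable limit of SOT-continuous functions. Fix countable dense unit balls $D\subset\cH$ and $D'\subset\rL^2(M,\tau)$. For part (1), I would write $\|T\|_\infty=\sup_{\xi\in D}\|T\xi\|$, a countable supremum of SOT-continuous functions. Part (3) follows similarly from $\|T^{-1}\|_\infty^{-1}=\inf_{\xi\in D,\ \|\xi\|=1}\|T\xi\|$ on $M^{\times}$. For part (4), the inclusion WOT-Borel $\subseteq$ SOT-Borel is immediate since WOT is weaker, while the identity $\|T\xi\|=\sup_{\eta\in D}|\langle T\xi,\eta\rangle|$ exhibits each SOT-continuous seminorm as a countable supremum of WOT-continuous functions and yields the reverse inclusion. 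Part (2) then follows by the decomposition $M=\bigcup_n\{T:\|T\|_\infty\le n\}$ (a countable Borel cover in either SOT by part (1)), together with the fact that on each norm ball the two SOT topologies coincide.

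Part (5) is immediate from WOT-continuity of the adjoint combined with (4). For part (6), I would invoke joint SOT-continuity of multiplication on norm-bounded subsets — the standard three-term estimate $\|(ST-S_0T_0)\xi\|\le\|S\|\,\|(T-T_0)\xi\|+\|(S-S_0)T_0\xi\|$ — together with the countable Borel decomposition $M\times M=\bigcup_{m,n}M_m\times M_n$ from (1).

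The main effort goes into (7). Set $\cP_n^\infty:=\cP\cap\{T:\|T\|_\infty\le n,\ \|T^{-1}\|_\infty\le n\}$; this is Borel by (1) and (3). Each $T\in\cP_n^\infty$ has spectrum in $[n^{-1},n]$, so by Weierstrass $\log$ is a uniform limit on that interval of polynomials $p_k$, and hence $\log T$ is the operator-norm (hence SOT) limit of $p_k(T)$. Since each $T\mapsto p_k(T)$ is SOT-Borel by (1) and (6), so is the pointwise limit $T\mapsto\log T$ on $\cP_n^\infty$; taking a union over $n$ yields (7). For (8), I would use $\sigma$-finiteness to produce an increasing sequence of finite-trace projections $p_n\uparrow\id$; normality of $\tau$ and the trace property give
\[
\|T\|_2^2=\tau(TT^*)=\sup_n\tau(Tp_nT^*)=\sup_n\|Tp_n\|_{\rL^2(M,\tau)}^2.
\]
The map $T\mapsto Tp_n=L_T(p_n)$ is $\rL^2$-SOT-continuous from $M$ into $\rL^2(M,\tau)$, so each $T\mapsto\|Tp_n\|_{\rL^2}$ is $\rL^2$-SOT-continuous, the supremum is $\rL^2$-SOT-Borel, and (2) transfers the conclusion to $\cH$-SOT.

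Finally, for (9), I would unpack $d_\cP(TxT^*,y)=\|\log(y^{-1/2}TxT^*y^{-1/2})\|_2$ and first verify that the hypotheses force $A:=y^{-1/2}TxT^*y^{-1/2}$ to lie in $\cP\cap M^{\times}$: it is bounded above and below because $T,T^{-1},x,x^{-1},y^{-1/2}$ are all bounded, while $\log A\in\rL^2(M,\tau)$ follows by applying Theorem \ref{T:log subgroup} to $y^{-1/2}Tx^{1/2}\in\GL^2(M,\tau)$ (using $\log A=2\log|(y^{-1/2}Tx^{1/2})^*|$). The composition
\[
T\ \mapsto\ y^{-1/2}T\ \mapsto\ (y^{-1/2}T)\,x\,(y^{-1/2}T)^{*}\ \mapsto\ \log(\cdot)\ \mapsto\ \|\cdot\|_2
\]
is then SOT-Borel by (5), (6), (7), (8). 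The hardest step by far is (7), since $\log$ is not SOT-continuous on $\cP$ in general and the Weierstrass approximation rescues the argument only because we have confined attention to spectra in a fixed compact subinterval of $(0,\infty)$. A secondary subtlety is (8), where $\sigma$-finiteness of the trace and normality of $\tau$ are genuinely used to pass to a countable Borel supremum.
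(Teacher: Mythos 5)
Your proposal is correct, and its overall architecture — decompose $M$ into operator-norm balls where SOT is metrizable and the two SOTs agree, express norms as countable suprema of continuous functions, and confine the logarithm to operators with spectrum in a fixed compact subinterval of $(0,\infty)$ — is exactly the paper's. The differences are in which key facts you prove versus cite. For (1), (4), (5), (6) the paper simply invokes standard references, whereas you supply the elementary arguments (countable suprema over a dense set, the three-term multiplication estimate); these are fine. For (7), the paper cites Kaplansky's theorem that $a\mapsto f(a)$ is SOT-continuous on self-adjoint operators for bounded continuous $f$, and concludes that $\log$ is SOT-\emph{continuous} on $M_D$; you instead use Weierstrass approximation of $\log$ on $[n^{-1},n]$ by polynomials, which is self-contained, and in fact since the polynomials converge to $\log$ uniformly on $[n^{-1},n]$ you get $\|p_k(T)-\log T\|_\infty\to 0$ uniformly over $T\in\cP_n^\infty$, so you too could conclude SOT-continuity (not merely Borel-ness) on each piece — worth stating, as it sidesteps the minor issue of taking pointwise SOT-limits of Borel maps into the non-metrizable space $M$. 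For (8), the paper uses pairwise-orthogonal finite projections and writes $\|T\|_2^2=\sum_i\|T^*p_i\|_2^2$, which forces it to route through Borel-ness of the adjoint; your version with an increasing sequence $p_n\uparrow\id$ and $\|T\|_2^2=\sup_n\|Tp_n\|_2^2$ (justified by normality) avoids the adjoint entirely and is slightly cleaner. Your verification in (9) that $y^{-1/2}TxT^*y^{-1/2}\in\cP$ via Theorem \ref{T:log subgroup} applied to $y^{-1/2}Tx^{1/2}$ is a detail the paper leaves implicit, and it is correct.
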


\begin{proof}

(\ref{I:op-norm}), (\ref{I:WOT}), (\ref{I:adjoint}), (\ref{I:multiplication})  follow from \cite[Proposition 52.2(c) and Proposition 52.5]{ConwayOT}.


(\ref{I:embedding}) Let $M_C = \{T \in M:~ \|T\|_\infty \le C\}.$ By  \cite[I.4.3. Theorem 2]{MR641217} or \cite[Corollary 2.5.9 and Proposition 2.5.8]{anantharaman-popa}, the $\cH$-SOT-topology on $M_C$ is the same as the $\rL^{2}(M,\tau)$-SOT-topology on $M_C$. Since the operator norm is SOT-Borel by (\ref{I:op-norm}), this implies that the $\cH$-SOT-Borel sigma-algebra is the same as the $\rL^2(M,\tau)$-SOT-Borel sigma-algebra.

(\ref{I:inverse op-norm})
Let $\cH_{0}$ be a countable dense subset of $\{h\in \cH:\|h\|=1\}.$  Then for every invertible $T,$ we have
\[\|T^{-1}\|_{\infty}^{-1}=\inf_{h\in \cH_{0}}\|T(h)\|.\]
Since $T\mapsto \|T(h)\|$ is SOT-continuous for every $h\in \cH,$ this proves that $T\mapsto \|T^{-1}\|_{\infty}$ is Borel.

(\ref{I:logarithm}) Note that $\cP \cap M$ is SOT-Borel since, by (\ref{I:adjoint}), $M_{sa}$ is SOT-Borel and
$$\cP\cap M=\{a \in M_{sa}:~ \langle ah, h \rangle \ge 0 ~\forall h \in \cH_0\}.$$
For $D>0$, let
$$M_D = \{T \in \cP:~ \|T\|_\infty \le D \textrm{ and } \|T^{-1}\|_\infty \le D\}.$$
By items (\ref{I:op-norm}) and (\ref{I:inverse op-norm}), $M_D$ is SOT-Borel. So it suffices to show that the map $M_D \to M$ given by $T \mapsto \log T$ is SOT-continuous. Fortunately, it was proven in \cite[Corollary on page 232]{MR50181} that if $h: \R \to \R$ is any continuous bounded function then the map on self-adjoint operators given by $a \mapsto f(a)$ is strongly continuous. Since $\log$ is bounded on $[D^{-1},D]$, this implies $T \mapsto \log T$ is SOT-continuous on $M_D$.

(\ref{I:l2 norm}) Let $p_1,p_2,\ldots$ be a sequence of pairwise-orthogonal finite projections $p_i \in M$ with $\lim_{n\to\infty} \sum_{i=1}^n p_n = \id$ in $\rL^2(M,\tau)$-SOT. Then for any $T \in \rL^2(M,\tau)$,  $\sum_{i=1}^n p_iT$ converges to $T$ in $\rL^2(M,\tau)$.

Now $\langle p_i T, p_j T\rangle = 0$ for all $i \ne j$ and $\sum_{i=1}^\infty p_i T = T$ (where convergence is in $\rL^2(M,\tau)$). Therefore,
$$\|T\|_2^2 = \sum_{i=1}^\infty \|p_i T\|_2^2 =  \sum_{i=1}^\infty \|T^* p_i \|_2^2$$
where the last equality follows from the tracial property of $\tau$.  So it suffices to prove that for any fixed finite projection $p \in M$, the map $T \mapsto \|T^*p\|_2^2$  is SOT-Borel. This follows from item (\ref{I:adjoint})  which states that the adjoint map is SOT-Borel (since $p \in \rL^2(M,\tau)$).

(\ref{I:distance}) By definition,
$$ d_\cP( TxT^*, y) = \| \log(y^{-1/2}TxT^* y^{-1/2})\|_2.$$
So this item follows from the previous items.
\end{proof}

\begin{cor}\label{C: SOT-PCT Borel}
Suppose $(M,\tau)$ is $\s$-finite and semi-finite. Then $\pi:M^\times \to \Isom(\cP)$ is Borel as a map from $M^\times$ with the SOT to $\Isom(\cP)$ with the pointwise convergence topology.
\end{cor}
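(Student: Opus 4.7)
By Lemma~\ref{lem:pointwise} the compact-open topology on $\Isom(\cP)$ coincides with the topology of pointwise convergence, so I will argue that $\pi$ is Borel with respect to the latter. The overall plan is to exhibit a countable family of open subsets of $\Isom(\cP)$ that (i) generates the Borel $\sigma$-algebra on $\Isom(\cP)$ and (ii) pulls back under $\pi$ to SOT-Borel subsets of $M^\times$; then $\pi$ will automatically be Borel. The $\sigma$-finiteness hypothesis on $(M,\tau)$ gives separability of $\rL^2(M,\tau)_{sa}$, and since $\exp\colon \rL^2(M,\tau)_{sa}\to \cP$ is continuous by Proposition~\ref{C:exp cont semi-finite}, the space $(\cP,d_\cP)$ is separable. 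Because $\cP^\infty=\cP\cap M^\times$ is dense in $\cP$ by Proposition~\ref{P:semi-finite CAT(0)}(\ref{I: P infinity dense}), I can choose countable dense subsets $\{x_n\}_{n\in\N}$ and $\{z_m\}_{m\in\N}$ of $\cP$ lying inside $\cP\cap M^\times$.

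For $n,m\in \N$ and positive rationals $r$, define
\[ F_{n,m,r}:=\{g\in \Isom(\cP): d_\cP(g(x_n),z_m)<r\},\]
each of which is open in the pointwise-convergence topology. To verify that the countable family $\{F_{n,m,r}\}$ generates the Borel $\sigma$-algebra, I would show that every open $U\subseteq \Isom(\cP)$ is a union of finite intersections of $F_{n,m,r}$'s (of which there are only countably many). Given $g\in U$, the definition of the pointwise topology yields a basic neighborhood $\bigcap_{i=1}^k\{g': g'(y_i)\in V_i\}\subseteq U$ containing $g$. Choosing $x_{n_i}$ close to $y_i$ and $z_{m_i}$ close to $g(x_{n_i})$, the 1-Lipschitz property of isometries together with two applications of the triangle inequality then sandwiches $g$ between a finite intersection $\bigcap_i F_{n_i,m_i,r_i}$ and the given basic neighborhood, proving the generation claim.

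With generation in hand, it suffices to check that for every $n,m,r$ the set
\[\pi^{-1}(F_{n,m,r})=\{T\in M^\times: d_\cP(Tx_nT^*,z_m)<r\}\]
is SOT-Borel in $M^\times$, and this is exactly the content of Theorem~\ref{thm:SOT}(\ref{I:distance}), applicable because $x_n,z_m\in \cP\cap M^\times$. The main obstacle is the generation step; although standard, it is the only place that uses the isometric (as opposed to merely measurable) nature of $\pi(T)$, which ensures that pointwise approximation on the dense set $\{x_n\}$ controls the values of $g$ everywhere on $\cP$. Every other ingredient is a direct invocation of previously established results.
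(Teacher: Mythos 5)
Your proposal is correct and follows essentially the same route as the paper: reduce Borel-ness of $\pi$ to Borel-ness of the evaluation maps $T\mapsto d_\cP(Tx T^*,z)$ for $x,z$ ranging over the dense set $\cP\cap M^\times$ (using that $\pi(T)$ is an isometry so that pointwise control on a dense set suffices), and then invoke Theorem~\ref{thm:SOT}(\ref{I:distance}). You spell out the countable-generation step for the Borel $\sigma$-algebra more explicitly than the paper does, but the underlying argument is the same.
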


\begin{proof}
By definition of the pointwise convergence topology, it suffices to show  that for every $x,y \in \cP$, the map $T \mapsto d_\cP( TxT^*, y)$ is SOT-Borel. By (8) of Proposition \ref{P:semi-finite CAT(0)}, $M^\times \cap \cP$ is dense in $\cP$. So it suffices to show that for every $x,y \in M^\times \cap \cP$, the map $T \mapsto d_\cP( TxT^*, y)$ is SOT-Borel. This is item (\ref{I:distance}) of Theorem \ref{thm:SOT}.
\end{proof}

\begin{cor}\label{C:hypotheses}
The hypotheses of Theorem \ref{thm:main} imply the hypotheses of Theorem \ref{thm:main-general}. In particular, Theorem \ref{thm:main} is true.
\end{cor}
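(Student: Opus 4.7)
The plan is to verify, one at a time, each hypothesis of Theorem \ref{thm:main-general} from the hypotheses of Theorem \ref{thm:main}. There are essentially two things to check: the measurability condition on $\pi\circ c$, and the first moment condition phrased in terms of $d_\cP$.

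For the measurability condition, I would combine Corollary \ref{C: SOT-PCT Borel} with Lemma \ref{lem:pointwise}. Under the assumption that $\cH$ is separable and $\tau$ is semi-finite, one first observes that $(M,\tau)$ is $\sigma$-finite (since the separable predual structure forces the existence of a countable family of finite projections with supremum the identity), so Corollary \ref{C: SOT-PCT Borel} applies and yields that $\pi\colon M^\times \to \Isom(\cP)$ is Borel when $M^\times$ carries the SOT and $\Isom(\cP)$ carries the pointwise convergence topology. By Lemma \ref{lem:pointwise}, the pointwise convergence topology on $\Isom(\cP)$ agrees with the compact-open topology, so $\pi$ is Borel as a map into $\Isom(\cP)_{\text{co}}$. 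Composing with the SOT-measurable cocycle $c\colon \Z\times X \to M^\times$ (with SOT taken with respect to either $\cH$ or $\rL^2(M,\tau)$, which agree on Borel structure by Theorem \ref{thm:SOT}\,(\ref{I:embedding})) yields that $\pi\circ c$ is measurable with respect to the compact-open topology, as required.

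For the moment condition, I would use the functional calculus identity $\log(|c(1,x)|^2)=2\log(|c(1,x)|)$, valid because $|c(1,x)|\ge 0$. This gives
\[
\bigl\|\log(|c(1,x)|^2)\bigr\|_2 \;=\; 2\bigl\|\log(|c(1,x)|)\bigr\|_2,
\]
so the first moment assumption of Theorem \ref{thm:main} is exactly (up to the factor $2$) the first moment assumption of Theorem \ref{thm:main-general}. Moreover, from the definition of $d_\cP$ and the fact that $|c(1,x)|^2\in\cP$, we have
\[
d_\cP\bigl(\id,\,|c(1,x)|^2\bigr) \;=\; \bigl\|\log(|c(1,x)|^2)\bigr\|_2,
\]
which matches the form stated in Theorem \ref{thm:main-general}.

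With these two checks in place, Theorem \ref{thm:main-general} applies and produces the drift $D$, the limit operator $\La(x)\in\rL^0(M,\tau)_+$, and the convergence $\frac{1}{n}d_\cP(\La(x)^n,|c(n,x)|)\to 0$. The three bulleted conclusions of Theorem \ref{thm:main} then follow: the first is immediate; the second follows from Lemma \ref{lem:contraction} (rescaling by $1/n$); and the third, convergence of $n^{-1}\log|c(n,x)|$ to $\log\La(x)$ in $\rL^2(M,\tau)$, follows from the inequality $\|\log a - \log b\|_2 \le d_\cP(a,b)$ supplied by Proposition \ref{P:continuity of logarithm map} / Lemma \ref{semi-finite cont of log}. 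Finally, $\La(x)\in\GL^2(M,\tau)$ because $\log\La(x)\in\rL^2(M,\tau)_{sa}$. I do not anticipate a serious obstacle here: the corollary is a routine bookkeeping step translating between the two equivalent formulations of the hypotheses, and essentially all the work has already been done in Section \ref{sec:measure} (for measurability) and Section \ref{sec:positive} (for the geometry of $\cP$).
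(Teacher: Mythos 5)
Your proposal is correct and follows essentially the route the paper intends: the paper leaves this corollary's proof implicit, resting on Corollary \ref{C: SOT-PCT Borel} together with Lemma \ref{lem:pointwise} for the measurability of $\pi\circ c$, and on the identity $\log(|c(1,x)|^2)=2\log(|c(1,x)|)$ (with $d_\cP(\id,|c(1,x)|^2)=\|\log(|c(1,x)|^2)\|_2$) for the moment condition, exactly as you do. Your derivation of the three bulleted conclusions from Theorem \ref{thm:main-general} via Lemma \ref{lem:contraction} and Proposition \ref{P:continuity of logarithm map} also matches the computations already carried out inside the proof of Theorem \ref{thm:main-general} (with convergence in measure supplied by Corollary \ref{C:weaker topology} and its semi-finite analogue).
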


\subsection{Determinants}\label{sec:determinants}

This section proves Theorem \ref{thm:determinants}. So we assume $(M,\tau)$ is finite. Following \cite[Definition 2.1]{MR2339369}, we let $M^\Delta$ be the set of all $x\in \rL^0(M,\tau)$ such that
$$\int_0^\infty \log^+ (t)~d\mu_{|x|}(t)<\infty.$$
For $x \in M^\Delta$, the  integral $\int_0^\infty \log t~d\mu_{|x|}(t) \in [-\infty, \infty)$ is well-defined. The {\bf Fuglede-Kadison determinant of $x$} is
$$\Delta(x):=\exp \left(\int_0^\infty \log t~d\mu_{|x|}(t)\right) \in [0,\infty).$$
For the sake of context, we mention that by \cite[Lemma 2.3 and Proposition 2.5]{MR2339369}, if $x,y \in M^\Delta$ and $\Delta(x)>0$ then $x^{-1} \in M^\Delta$, $\Delta(x^{-1})=\Delta(x)^{-1}$, $xy \in M^\Delta$, and $\Delta(xy)=\Delta(x)\Delta(y)$.



\begin{prop}\label{P:determinant}
Suppose $\tau$ is a finite trace. Then $\GL^2(M,\tau) \subset M^\Delta$. Moreover, $\Delta:\cP \to (0,\infty)$ is continuous.
\end{prop}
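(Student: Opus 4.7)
The plan is to prove the two statements in the proposition in sequence, using only elementary properties of the spectral measure, the assumption that $\tau$ is finite (so that $\mu_{|a|}$ is a finite measure of total mass $\tau(\id)$), and the homeomorphism $\exp:\rL^2(M,\tau)_{sa}\to\cP$ established in Theorem \ref{T:continuity of exp}.

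For the inclusion $\GL^2(M,\tau)\subset M^\Delta$, let $a\in \GL^2(M,\tau)$, so that $\log|a|\in \rL^2(M,\tau)$. By the change of variables formula for the spectral measure, this means $\int_0^\infty (\log t)^2\,d\mu_{|a|}(t)=\|\log|a|\|_2^2<\infty$. Since $\tau$ is finite, $\mu_{|a|}$ is a finite Borel measure, so by the Cauchy--Schwarz inequality applied on the set $[1,\infty)$,
\[\int_0^\infty \log^+(t)\,d\mu_{|a|}(t)=\int_{[1,\infty)}\log t\,d\mu_{|a|}(t)\leq \|\log|a|\|_2\,\mu_{|a|}([1,\infty))^{1/2}<\infty,\]
which gives $a\in M^\Delta$.

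For continuity of $\Delta$ on $\cP$, observe first that for $a\in \cP$ we have $a=|a|$, and since $\log a\in \rL^2(M,\tau)$, a second application of Cauchy--Schwarz shows that $\log t$ is $\mu_a$-integrable (not merely its positive part). In particular, under the substitution $s=\log t$,
\[\log\Delta(a)=\int_0^\infty \log t\,d\mu_a(t)=\int_\R s\,d\mu_{\log a}(s)=\tau(\log a),\]
where the last equality uses that $\mu_{\log a}$ is the spectral measure and that a self-adjoint element of $\rL^2(M,\tau)$ is in $\rL^1(M,\tau)$ (since $\tau$ is finite) with trace given by integration against its spectral distribution. Now by Theorem \ref{T:continuity of exp}, writing every $a\in \cP$ uniquely as $a=e^h$ with $h\in \rL^2(M,\tau)_{sa}$, the map $a\mapsto h=\log a$ is a homeomorphism $\cP\to \rL^2(M,\tau)_{sa}$. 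Hence it suffices to prove that $h\mapsto \tau(h)$ is continuous on $\rL^2(M,\tau)_{sa}$; but this is immediate from linearity together with the Cauchy--Schwarz bound $|\tau(h)|\leq \tau(\id)^{1/2}\|h\|_2$. Composing with the continuous map $t\mapsto e^t$ on $\R$ shows that $\Delta(a)=\exp(\tau(\log a))$ is continuous from $\cP$ to $(0,\infty)$, noting that $\Delta(a)>0$ because $\tau(\log a)\in\R$.

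There is no real obstacle here: both claims reduce, via the identification $\log\Delta(a)=\tau(\log a)$ and the fact that $\mu_{|a|}$ is a finite measure, to Cauchy--Schwarz. The only subtlety worth double-checking is the equality $\log\Delta(a)=\tau(\log a)$ for $a\in \cP$, which requires verifying that $\log a\in \rL^1(M,\tau)$ and that the trace pairing agrees with spectral integration; both follow from finiteness of $\tau$ combined with $\log a\in \rL^2(M,\tau)$.
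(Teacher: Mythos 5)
Your proof is correct and follows essentially the same route as the paper's: the inclusion $\GL^2(M,\tau)\subset M^\Delta$ via the finiteness of $\mu_{|a|}$ (your Cauchy--Schwarz step is just the explicit form of the inclusion $\rL^2(M,\tau)\subset\rL^1(M,\tau)$ for finite $\tau$ that the paper invokes), and continuity via $\Delta(a)=\exp(\tau(\log a))$ together with continuity of $\log:\cP\to\rL^2(M,\tau)_{sa}$ and norm-continuity of the trace. The only difference is that you spell out the identity $\log\Delta(a)=\tau(\log a)$ and the $\rL^2$-to-$\rL^1$ estimates explicitly, which the paper leaves implicit.
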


\begin{proof}
Let $x \in \GL^2(M,\tau)$. By definition, $\log |x| \in \rL^2(M,\tau)$. Since $\tau$ is a finite trace, $\rL^2(M,\tau) \subset \rL^1(M,\tau)$. Thus $\log |x| \in  \rL^1(M,\tau)$ and therefore $\log^+ |x| \in \rL^1(M,\tau)$. So $x \in M^\Delta$.

Now let $(x_n)_n \subset \cP$ and suppose $\lim_{n\to\infty} x_n = x \in \cP$. By Proposition \ref{P:continuity of logarithm map}, $\log |x_n|$ converges to $\log |x|$ in $\rL^2(M,\tau)$. Therefore, $\log |x_n|$ converges to $\log |x|$ in $\rL^1(M,\tau)$. But the trace $\tau:\rL^1(M,\tau)\to \C$ is norm-continuous. So $\tau(\log|x_n|) \to \tau(\log|x|)$. Since $\exp:\R \to \R$ is continuous  and $\Delta(x)=\exp(\tau(\log |x|))$, this finishes the proof.
\end{proof}

Theorem \ref{thm:determinants} follows immediately from Proposition \ref{P:determinant} and Theorem \ref{thm:main}.

\subsection{Growth rates}\label{sec: growth rates}


In this subsection, we prove Theorem \ref{thm:main2}. The proof uses Theorem \ref{thm:main} as a black-box. The extra ingredients needed to prove the theorem are general approximation results for powers of a single operator. These results will also be needed in later subsections to prove Theorem \ref{thm:invariance}.

\begin{defn}
Let $a \in \rL^0(M,\tau)$ be a positive operator and $\xi \in \dom(a)\subset \rL^2(M,\tau)$. By the Spectral Theorem  there exists a unique positive measure $\nu$ on $\C$ such that $\nu([0,\infty))=\|\xi\|_{2}^{2}$ and for every bounded, Borel function $f\colon [0,\infty)\to \C$,
\[\ip{f(a)\xi,\xi}=\int f(s)\,\dee\nu(s).\]
Moreover, for a Borel function $f\colon [0,\infty)\to\C$ we have that $\xi\in \dom(f(a))$ if and only if $\int |f(s)|^{2}\,\dee\nu(s)<\infty,$ and $\ip{f(a)\xi,\xi}=\int f\,\dee\nu$ if $\xi\in \dom(f(a)).$ The measure $\nu$ is called the {\bf spectral measure of $a$ with respect to $\xi$}. Let $\rho(\nu) \in [0,\infty]$ be the smallest number such that $\nu$ is supported on the interval $[0,\rho(\nu)]$. \end{defn}

\begin{lem}\label{L:existence of growth rates limiting object}
Let $a \in \rL^0(M,\tau)_{sa}$, $\xi\in \bigcap_{n=1}^{\infty}\dom(a^n)$ and let $\nu$ be the spectral measure of $a$ with respect to $\xi$. Then
$$\rho(\nu)=\lim_{n\to\infty} \|a^n \xi\|_2^{1/n} \in [0,\infty].$$
Moreover, $\xi \in 1_{[0,t]}(a)(\rL^2(M,\tau))$ if and only if $\lim_{n\to\infty} \|a^n \xi\|_2^{1/n} \le t$ (for any $t \in [0,\infty]$).
\end{lem}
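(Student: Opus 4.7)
The plan is to reduce everything to a well-known statement about $L^p$ norms on a finite measure space. Using the defining property of the spectral measure $\nu$ of $a$ with respect to $\xi$, for each $n$ the hypothesis $\xi \in \dom(a^n)$ gives
\[
\|a^n\xi\|_2^2 = \langle a^{2n}\xi,\xi\rangle = \int_0^\infty s^{2n}\,\dee\nu(s) < \infty,
\]
so that $\|a^n\xi\|_2^{1/n} = \bigl(\int s^{2n}\,\dee\nu\bigr)^{1/(2n)}$ is precisely the $L^{2n}(\nu)$-norm of the identity function $s\mapsto s$.

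Next, since $\nu$ is a finite positive measure (with total mass $\|\xi\|_2^2$), I would invoke the standard fact that for such $\nu$ and any non-negative measurable function $g$,
\[
\lim_{p\to\infty}\Bigl(\int g^p\,\dee\nu\Bigr)^{1/p} = \|g\|_{L^\infty(\nu)}.
\]
Applied to $g(s)=s$, this gives $\|g\|_{L^\infty(\nu)} = \rho(\nu)$, directly producing the claimed identity $\lim_{n\to\infty}\|a^n\xi\|_2^{1/n}=\rho(\nu)$. For completeness (since this standard fact is what makes the sequence actually converge rather than merely have $\limsup$/$\liminf$ bounds), I would sketch both directions: the upper bound $\bigl(\int s^{2n}\,\dee\nu\bigr)^{1/(2n)}\leq \rho(\nu)\cdot \|\xi\|_2^{1/n}$ when $\rho(\nu)<\infty$ (the case $\rho(\nu)=\infty$ is handled by the lower bound alone), and the lower bound $\bigl(\int s^{2n}\,\dee\nu\bigr)^{1/(2n)}\geq (\rho(\nu)-\varepsilon)\,\nu((\rho(\nu)-\varepsilon,\infty))^{1/(2n)}$, noting that $\nu((\rho(\nu)-\varepsilon,\infty))>0$ by definition of $\rho(\nu)$, so that this lower bound tends to $\rho(\nu)-\varepsilon$. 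Letting $\varepsilon\to 0$ finishes the first claim.

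For the second assertion, I would use that $\xi \in 1_{[0,t]}(a)(\rL^2(M,\tau))$ if and only if $1_{[0,t]}(a)\xi=\xi$, equivalently $1_{(t,\infty)}(a)\xi=0$. By the definition of the spectral measure with respect to $\xi$,
\[
\|1_{(t,\infty)}(a)\xi\|_2^2 = \langle 1_{(t,\infty)}(a)\xi,\xi\rangle = \nu((t,\infty)),
\]
so this vanishing is equivalent to $\nu((t,\infty))=0$, i.e.\ $\rho(\nu)\leq t$. Combining with the first part of the lemma gives the desired equivalence with $\lim_{n\to\infty}\|a^n\xi\|_2^{1/n}\leq t$.

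I expect no real obstacle here: the content is essentially the classical fact that $L^p$ norms converge to $L^\infty$ norms on a finite measure space, lifted through the spectral calculus. The only mild care is making the argument work uniformly in whether $\rho(\nu)$ is finite or infinite, which is handled by running the lower-bound half independently of the upper-bound half.
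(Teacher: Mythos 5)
Your proof is correct and follows essentially the same route as the paper: both reduce the first claim to the identity $\|a^n\xi\|_2^{1/n}=\bigl(\int s^{2n}\,\dee\nu\bigr)^{1/2n}$ and the convergence of $L^{2n}(\nu)$-norms to $\rho(\nu)=\|s\|_{L^\infty(\nu)}$ (which the paper dismisses as a "standard measure theory exercise" and you spell out), and both reduce the second claim to the observation that membership in $1_{[0,t]}(a)(\rL^2(M,\tau))$ is equivalent to $\nu((t,\infty))=0$. No gaps.
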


\begin{proof}
Since $\xi \in \dom(a^{n})$, $\int s^{2n}\,\dee\nu(s)<\infty$ for every $n\in \N.$ Thus
\begin{eqnarray}\label{E:thingy}
\| a^{n}\xi\|_{2}^{1/n}=\ip{a^{2n}\xi,\xi}^{1/2n}=\left(\int s^{2n}\,\dee\nu(s)\right)^{1/2n}.
\end{eqnarray}
It is a standard measure theory exercise that the limit of $\left(\int s^{2n}\,\dee\nu(s)\right)^{1/2n}$ as $n\to\infty$ exists and equals $\rho(\nu)$.

Now suppose that $t>0$ and that $\lim_{n\to\infty}\|a^{n}\xi\|_{2}^{1/n}\leq t.$ Then, by the above comment we have that $\nu$ is supported on $[0,t].$ Thus:
\[\|\xi-1_{[0,t]}(a)\xi\|_{2}^{2}=\int |1-1_{[0,t]}(s)|^{2}\,\dee\nu(s)=0.\]
So $\xi\in 1_{[0,t]}(a)(\rL^2(M,\tau)).$

For the converse, suppose $\xi\in 1_{[0,t]}(a)(\rL^2(M,\tau)).$ Then
\begin{eqnarray*}
\lim_{n\to\infty}\|a^{n}\xi\|_{2}^{1/n} &=& \lim_{n\to\infty}\|a^{n}1_{[0,t]}(a)\xi\|_{2}^{1/n} =\lim_{n\to\infty}\left(\int_0^t s^{2n}\,\dee\nu(s)\right)^{1/2n} \le t.
\end{eqnarray*}

\end{proof}

\begin{lem}\label{semi-finite 2top}
Suppose $(M,\tau)$ is semi-finite.  Let $C>0$ and suppose $x_n, x \in M_C$, where $M_C = \{x \in M \cap \rL^2(M,\tau):~ \|x\|_\infty \leq C\}$ and $x_n \to x$ in measure as $n\to\infty$.  Then $x_n \to x$ in SOT.
\end{lem}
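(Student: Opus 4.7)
\emph{Proof sketch.} After replacing $x_{n}$ by $x_{n}-x$ (and $C$ by $2C$) we may assume $x=0$. By the observation following Proposition~\ref{P:3topologies}, the topology induced on any operator-norm-bounded subset of $M$ by the SOT of $B(\cH)$ agrees with the one induced by the SOT of $B(\rL^{2}(M,\tau))$, so it suffices to prove $\|x_{n}\xi\|_{2}\to 0$ for every $\xi\in \rL^{2}(M,\tau)$. Using density of $\cN=\{y\in M:\tau(y^{*}y)<\infty\}$ in $\rL^{2}(M,\tau)$ and the uniform bound $\|x_{n}\|_{\infty}\leq 2C$, this in turn reduces to the case $\xi\in \cN$, i.e.\ $\xi$ bounded and square-integrable.

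Fix such $\xi$. Since $\xi\xi^{*}\in M$ has finite trace, for each $\varepsilon>0$ the spectral projection $p_{\varepsilon}:=1_{(\varepsilon,\infty)}(\xi\xi^{*})$ is a \emph{finite} projection in $M$, and a direct spectral-calculus computation gives $\|(1-p_{\varepsilon})\xi\|_{2}^{2}=\int_{0}^{\varepsilon}t\,d\mu_{\xi\xi^{*}}(t)\to 0$ as $\varepsilon\to 0$ (the measure $\mu_{\xi\xi^{*}}$ being $\sigma$-finite on $(0,\infty)$). From $\|x_{n}\xi\|_{2}\leq \|x_{n}p_{\varepsilon}\xi\|_{2}+2C\|(1-p_{\varepsilon})\xi\|_{2}$ together with the bound $\|x_{n}p_{\varepsilon}\xi\|_{2}\leq \|x_{n}p_{\varepsilon}\|_{2}\|\xi\|_{\infty}$ (right multiplication by $\xi\in M$ is bounded on $\rL^{2}(M,\tau)$ with norm $\|\xi\|_{\infty}$), everything reduces, for each fixed $\varepsilon>0$, to proving $\|x_{n}p_{\varepsilon}\|_{2}\to 0$.

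By the tracial property, $\|x_{n}p_{\varepsilon}\|_{2}^{2}=\tau(p_{\varepsilon}x_{n}^{*}x_{n}p_{\varepsilon})$, and the operator $y_{n}:=p_{\varepsilon}x_{n}^{*}x_{n}p_{\varepsilon}$ lies in the corner $p_{\varepsilon}Mp_{\varepsilon}$, a \emph{finite} tracial von Neumann algebra, with uniform bound $\|y_{n}\|_{\infty}\leq(2C)^{2}$. Joint continuity of multiplication in the measure topology implies $y_{n}\to 0$ in measure (and via Proposition~\ref{P:semi-finite CAT(0)}(\ref{I:isom ident}) this is the same whether measured in $M$ or in $p_{\varepsilon}Mp_{\varepsilon}$), so Proposition~\ref{P:3topologies} applied inside the finite algebra $(p_{\varepsilon}Mp_{\varepsilon},\tau|_{p_{\varepsilon}Mp_{\varepsilon}})$ yields $\|y_{n}\|_{2}\to 0$. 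Finally Cauchy--Schwarz gives $\tau(y_{n})=\tau(y_{n}p_{\varepsilon})\leq \|y_{n}\|_{2}\,\tau(p_{\varepsilon})^{1/2}\to 0$, completing the proof.

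The main obstacle is orchestrating the interplay among several related notions of convergence -- in measure on $M$, in measure on $p_{\varepsilon}Mp_{\varepsilon}$, and the $\rL^{1}$- and $\rL^{2}$-norms on the corner. The key device is the single finite projection $p_{\varepsilon}$, which simultaneously controls the tail $\|(1-p_{\varepsilon})\xi\|_{2}$ and compresses everything to a finite tracial subalgebra where Proposition~\ref{P:3topologies} applies directly.
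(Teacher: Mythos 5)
Your argument is correct, but it takes a genuinely different route from the paper's. The paper truncates the \emph{operator}: it sets $p_{n,k}=1_{(1/k,\infty)}(|x_n|)$ and splits $x_n=x_np_{n,k}+x_n(1-p_{n,k})$, noting that the second summand has operator norm at most $1/k$ while the first has $\|x_np_{n,k}\|_2^2\le C^2\,\mu_{|x_n|}(1/k,\infty)\to 0$ by convergence in measure; combined with $\|ab\|_2\le\|a\|_2\|b\|_\infty$ for $b=\xi\in M\cap\rL^2(M,\tau)$ this gives the result directly, with no appeal to finite corners. You instead truncate the \emph{vector}, using the finite projection $p_\varepsilon=1_{(\varepsilon,\infty)}(\xi\xi^*)$ to control the tail $\|(1-p_\varepsilon)\xi\|_2$ and then compressing to the finite tracial algebra $p_\varepsilon Mp_\varepsilon$, where Proposition \ref{P:3topologies} (measure = $\rL^2$ on norm-bounded sets) finishes the job; the identification of the two measure topologies via Proposition \ref{P:semi-finite CAT(0)}(\ref{I:isom ident}) is needed and you correctly invoke it. Both proofs share the same outer reductions (pass to $x=0$, then to $\xi\in M\cap\rL^2(M,\tau)$ by density and the uniform operator-norm bound). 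The paper's version is shorter and self-contained; yours buys reuse of the finite-trace machinery and exemplifies the reduction-to-finite-corner strategy used throughout \S\ref{sec:semi-finite case}, at the cost of an extra layer (the corner identification and the Cauchy--Schwarz step $\tau(y_n)\le\|y_n\|_2\,\tau(p_\varepsilon)^{1/2}$, which could in fact be shortened since $\rL^2$-convergence in the finite corner already gives $\rL^1$-convergence of $y_n$).
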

\begin{proof}
As in Proposition \ref{P:3topologies} we can assume that $x = 0$.  To begin, let $\xi \in M \cap \rL^2(M,\tau)$.  We want to show that $x_n\xi \to 0$ in $\rL^2(M,\tau)$.


Let $p_{n,k} = 1_{(1/k,\infty)}(|x_n|)$ and $x_{n,k} =  x_n p_{n,k}$. By the triangle inequality,
\begin{eqnarray*}
\|x_n\xi\|_2 &\leq& \|x_{n,k}\xi\|_2 + \|(x_n-x_{n,k})\xi\|_2 \\
&\leq& \|x_{n,k}\|_2 \|\xi\|_\infty + \|(x_n-x_{n,k})\|_\infty \|\xi\|_2 \\
&\leq& C \mu_{x_{n,k}}(0,\infty)^{1/2}  \|\xi\|_\infty + (1/k) \|\xi\|_2.
\end{eqnarray*}
Since $x_n \to_{n\to\infty} 0$ in measure, $\mu_{x_{n,k}}(0,\infty)^{1/2}= \mu_{x_{n,k}}(1/k,\infty)^{1/2} \to_{n\to\infty} 0$ for any fixed $k$. Therefore,
$$\limsup_{n\to\infty} \|x_n\xi\|_2 \leq (1/k) \|\xi\|_2.$$
Since $k$ is arbitrary, this proves $x_n\xi \to_{n\to\infty} 0$ in $\rL^2(M,\tau)$.

Now suppose $\xi \in \rL^2(M,\tau)$ and let $\eps>0$. Because $M \cap \rL^2(M,\tau)$ is dense in $\rL^2(M,\tau)$, there exists $\xi' \in M \cap \rL^2(M,\tau)$ with $\|\xi - \xi'\|_2 \le \eps$. So
\begin{eqnarray*}
\limsup_{n\to\infty} \|x_n\xi\|_2 &\leq& \limsup_{n\to\infty} \|x_n\xi'\|_2 + \|x_n(\xi-\xi')\|_2 \\
&\leq& \limsup_{n\to\infty} \|x_n(\xi-\xi')\|_2 \leq \limsup_{n\to\infty} \|x_n\|_\infty \|\xi-\xi'\|_2 \leq C \eps.
\end{eqnarray*}
Since $\eps>0$ is arbitrary, this proves $x_n\xi \to_{n\to\infty} 0$ in $\rL^2(M,\tau)$. Since $\xi$ is arbitrary, this proves $x_n \to 0$ in SOT.

\end{proof}

\begin{lem}\label{L:weak convergence of spectral measures}
For $n \in \N$, let $a, b_n \in \rL^0(M,\tau)_{sa}$ and $\xi, \xi_n \in \rL^2(M,\tau)$ with $\xi \in \dom(a)$, $\xi_n \in \dom(b_n)$. Assume:
\begin{itemize}
\item $b_n \to a$ in measure,
\item $\|\xi_n - \xi\|_2 \to 0$ as $n\to \infty$.
\end{itemize}
Let $\nu$ be the spectral measure of $a$ with respect to $\xi$ and let $\nu_n$ be the spectral measure of $b_n$ with respect to $\xi_n$. Then $\nu_n \to \nu$ weakly as $n\to\infty$.


\end{lem}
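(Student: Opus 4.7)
By definition of the spectral measures, for every bounded Borel $f\colon\R\to\C$ we have $\int f\,d\nu_n=\langle f(b_n)\xi_n,\xi_n\rangle$ and $\int f\,d\nu=\langle f(a)\xi,\xi\rangle$. Since the $\nu_n$ are finite positive measures with total mass $\|\xi_n\|_2^{2}\to\|\xi\|_2^{2}=\nu(\R)$, weak convergence reduces to showing
\[
\langle f(b_n)\xi_n,\xi_n\rangle\longrightarrow\langle f(a)\xi,\xi\rangle\qquad\textnormal{for all }f\in C_b(\R).
\]
Fix such an $f$ and set $C=\|f\|_\infty$. Write the difference as
\[
\langle f(b_n)(\xi_n-\xi),\xi_n\rangle+\langle f(b_n)\xi,\xi_n-\xi\rangle+\langle(f(b_n)-f(a))\xi,\xi\rangle.
\]
Since $\|f(b_n)\|_\infty\le C$, the first two terms are bounded by $C(\|\xi_n\|_2+\|\xi\|_2)\|\xi_n-\xi\|_2\to 0$, so everything reduces to showing $\langle g_n\xi,\xi\rangle\to 0$, where $g_n:=f(b_n)-f(a)\in M$ satisfies $\|g_n\|_\infty\le 2C$ and, by Lemma \ref{tikhonov} applied to the continuous function $f$, $g_n\to 0$ in measure.

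Given $\delta>0$, put $p_n:=1_{(\delta,\infty)}(|g_n|)\in M$. Convergence in measure gives $\tau(p_n)\to 0$, and by spectral calculus $\|g_n(1-p_n)\|_\infty\le\delta$. Hence
\[
|\langle g_n\xi,\xi\rangle|\;\le\;|\langle g_n p_n\xi,\xi\rangle|+|\langle g_n(1-p_n)\xi,\xi\rangle|\;\le\;2C\,\|p_n\xi\|_2\,\|\xi\|_2+\delta\,\|\xi\|_2^{2}.
\]
It therefore suffices to prove the (purely $\rL^2$) fact that $\|p_n\xi\|_2\to 0$ whenever $p_n\in M$ are projections with $\tau(p_n)\to 0$ and $\xi\in\rL^2(M,\tau)$. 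Granting this, one lets $\delta\downarrow 0$ to conclude $\langle g_n\xi,\xi\rangle\to 0$, which finishes the proof.

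For the remaining claim, use density of $\cN=\{x\in M:\tau(x^*x)<\infty\}$ in $\rL^2(M,\tau)$: given $\eps>0$ pick $\xi'\in\cN$ with $\|\xi-\xi'\|_2<\eps$. Then
\[
\|p_n\xi\|_2\;\le\;\|p_n(\xi-\xi')\|_2+\|p_n\xi'\|_2\;\le\;\eps+\|\xi'\|_\infty\,\tau(p_n)^{1/2},
\]
because $\|p_n\xi'\|_2^{2}=\tau((\xi')^{*}p_n\xi')\le\|\xi'\|_\infty^{2}\tau(p_n)$. Since $\tau(p_n)\to 0$, $\limsup_{n}\|p_n\xi\|_2\le\eps$, and $\eps>0$ is arbitrary. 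The main technical point is this last step---transferring the measure-theoretic vanishing $\tau(p_n)\to 0$ to vanishing against an arbitrary $\rL^2$-vector---but in the semi-finite setting it follows immediately from density of $\cN$ and the elementary bound $\|p_n x\|_2\le\|x\|_\infty\,\tau(p_n)^{1/2}$ for $x\in\cN$.
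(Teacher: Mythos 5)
Your proof is correct and follows essentially the same route as the paper: the same three-term decomposition of $\ip{f(b_n)\xi_n,\xi_n}-\ip{f(a)\xi,\xi}$, with the first two terms killed by $\|\xi_n-\xi\|_2\to 0$ and uniform boundedness of $f(b_n)$, and the third by upgrading the measure convergence $f(b_n)\to f(a)$ (Lemma \ref{tikhonov}) to convergence against $\xi$. The only difference is that where the paper cites Lemma \ref{semi-finite 2top} for this last step, you re-derive the needed instance inline via the spectral truncation $p_n=1_{(\delta,\infty)}(|g_n|)$ and density of $\cN$ in $\rL^2(M,\tau)$ --- which is precisely the argument used to prove that lemma, so nothing new is needed.
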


\begin{proof}

Let $f\in C(\R)$ be bounded.  By Lemma \ref{tikhonov}, $f(b_n) \to f(a)$ in measure.
Since $\|f(b_n)\|_{\infty},\|f(a)\|_{\infty}\leq \|f\|_\infty = \sup_{x\in \R}|f(x)|<\infty,$ Lemma \ref{semi-finite 2top} implies that $f(b_n)\to f(a)$ in the Strong Operator Topology.
Hence,
\begin{eqnarray*}
&&\left| \int f(s)\,\dee\nu_{n}(s) -\int f(s)\,\dee\nu(s)\right| = \left| \ip{f(b_n)\xi_n,\xi_n} - \ip{f (a)\xi,\xi} \right| \\
&\le&  \left| \ip{f(b_n)\xi_n,\xi_n} - \ip{f (b_n)\xi_n,\xi} \right| + \left| \ip{f(b_n)\xi_n,\xi} - \ip{f (b_n)\xi,\xi} \right| +  \left| \ip{f (b_n)\xi,\xi} - \ip{f (a)\xi,\xi} \right| \\
&\le&  \|f\|_\infty \|\xi_n - \xi\|_2 (\|\xi_n\|_2 + \|\xi\|_2)  +  \left| \ip{f (b_n)\xi,\xi} - \ip{f (a)\xi,\xi} \right|.
\end{eqnarray*}
Since $\|\xi_n - \xi\|_2 \to 0$ by assumption and $f(b_n)\to f(a)$ in the SOT, this shows $\int f(s)\,\dee\nu_{n}(s) \to \int f(s)\,\dee\nu(s)$ as $n\to\infty$. Since $f$ is arbitrary, $\nu_{n}\to \nu$ weakly.
\end{proof}

\begin{defn}
If $a \in \rL^0(M,\tau)$ and $\xi \in \rL^2(M,\tau) \setminus \dom(a)$, then let $\|a \xi\|_2 = +\infty$.
\end{defn}
The next definition generalizes Definition \ref{D:growth1}.
\begin{defn}
Given $\xi\in \rL^2(M,\tau)$, let $\Si(\xi)$ be the set of all sequences $(\xi_n)_n \subset \rL^2(M,\tau)$ such $\lim_{n\to\infty} \|\xi - \xi_n\|_2 = 0$. Given a sequence $\bfc=(c_n)_n \subset \rL^0(M,\tau)$ and $\xi\in \rL^2(M,\tau)$, define the {\bf upper and lower smooth growth rates} of $\bfc$ with respect to $\xi$ by
\begin{eqnarray*}
\underline{\Gr}( \bfc | \xi) &=& \inf\big\{ \liminf_{n\to\infty} \| c_n \xi_n\|_2^{1/n} :~ (\xi_n)_n \in \Si(\xi),\ \xi_n \in \dom(c_n)  \big\}  \\
\overline{\Gr}( \bfc | \xi) &=& \inf\big\{ \limsup_{n\to\infty} \| c_n \xi_n\|_2^{1/n} :~ (\xi_n)_n \in \Si(\xi),\ \xi_n \in \dom(c_n) \big\}.
\end{eqnarray*}

\end{defn}

\begin{lem}\label{L:inclusions of candidate Osoledets spaces}
For $n \in \N$, let $c_n \in \rL^0(M,\tau)$, $a\in \rL^0(M,\tau)$ with $a\ge 0$ and $\xi \in \rL^2(M,\tau)$. Let $\bfc = (c_n)_n$. Assume:
\begin{itemize}
\item $|c_n|^{1/n}\to a$ in measure as $n\to\infty$.
\item $\xi\in \bigcap_{n=1}^{\infty}\dom(a^n)$.
\end{itemize}
Then
$$\underline{\Gr}(  \bfc | \xi)  = \lim_{n\to\infty} \|a^n\xi\|_2^{1/n}= \overline{\Gr}(  \bfc | \xi).$$
\end{lem}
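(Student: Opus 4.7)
The idea is to translate both growth rate statements into convergence statements for the spectral measures of $|c_n|^{1/n}$. Let $\nu$ denote the spectral measure of $a$ with respect to $\xi$, and set $\rho = \rho(\nu)$. By Lemma \ref{L:existence of growth rates limiting object}, $\rho = \lim_n \|a^n\xi\|_2^{1/n}$, so it suffices to prove $\underline{\Gr}(\bfc | \xi) \ge \rho$ and $\overline{\Gr}(\bfc | \xi) \le \rho$; the inequality $\underline{\Gr}(\bfc | \xi) \le \overline{\Gr}(\bfc | \xi)$ is immediate, and the second inequality is vacuous when $\rho=\infty$.

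For the \emph{lower bound}, I would take any $(\xi_n)_n \in \Sigma(\xi)$ with $\xi_n \in \dom(c_n) \subseteq \dom(|c_n|^{1/n})$ and let $\nu_n$ be the spectral measure of $|c_n|^{1/n}$ with respect to $\xi_n$. Since $|c_n|^{1/n} \to a$ in measure and $\xi_n \to \xi$ in $\rL^2$, Lemma \ref{L:weak convergence of spectral measures} gives $\nu_n \to \nu$ weakly. For each $t \in (0,\rho)$, the definition of $\rho(\nu)$ forces $\nu((t,\infty)) > 0$, so by the Portmanteau theorem applied to the open set $(t,\infty)$, $\liminf_n \nu_n((t,\infty)) \ge \nu((t,\infty)) > 0$. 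Because
\[\|c_n \xi_n\|_2^2 \;=\; \int s^{2n}\,\dee\nu_n(s) \;\ge\; t^{2n}\,\nu_n((t,\infty)),\]
extracting $n$-th roots yields $\liminf_n \|c_n \xi_n\|_2^{1/n} \ge t$; letting $t \uparrow \rho$ and then taking the infimum over sequences $(\xi_n)_n$ gives $\underline{\Gr}(\bfc | \xi) \ge \rho$.

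For the \emph{upper bound}, assume $\rho<\infty$ and fix $t > \rho$. Set $\xi_n = 1_{[0,t]}(|c_n|^{1/n})\xi$; then $\xi_n$ lies in the range of the spectral projection $1_{[0,t^n]}(|c_n|)$, so $\xi_n \in \dom(c_n)$ and $\|c_n \xi_n\|_2 \le t^n \|\xi\|_2$, giving $\limsup_n \|c_n\xi_n\|_2^{1/n} \le t$. To check $\xi_n \to \xi$ in $\rL^2$, let $\tilde\nu_n$ be the spectral measure of $|c_n|^{1/n}$ with respect to the constant vector $\xi$; the argument of Lemma \ref{L:weak convergence of spectral measures} (which uses only bounded continuous functional calculus, and so does not require $\xi \in \dom(|c_n|^{1/n})$) shows $\tilde\nu_n \to \nu$ weakly. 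Since $\nu$ is supported in $[0,\rho]$ and $t > \rho$, $\nu([t,\infty))=0$, and Portmanteau applied to the \emph{closed} set $[t,\infty)$ gives $\limsup_n \tilde\nu_n([t,\infty)) \le 0$, so $\|\xi - \xi_n\|_2^2 = \tilde\nu_n((t,\infty)) \to 0$. Thus $(\xi_n)_n \in \Sigma(\xi)$, so $\overline{\Gr}(\bfc | \xi) \le t$; letting $t \downarrow \rho$ completes the proof.

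The main obstacle in executing the plan is the careful asymmetric use of the Portmanteau theorem: the lower bound relies on the $\liminf$-on-open-sets half applied to $(t,\infty)$ with $t < \rho$, while the upper bound relies on the $\limsup$-on-closed-sets half applied to $[t,\infty)$ with $t > \rho$. A secondary nuisance is that the domain hypothesis in Lemma \ref{L:weak convergence of spectral measures} is not literally met by the constant vector $\xi$ in the upper bound argument, but the lemma's proof only uses bounded functional calculus, so the conclusion persists.
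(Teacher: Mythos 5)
Your proof is correct and follows essentially the same route as the paper's: both reduce the statement to the identification $\rho(\nu)=\lim_n\|a^n\xi\|_2^{1/n}$ from Lemma \ref{L:existence of growth rates limiting object} and to the weak convergence of spectral measures from Lemma \ref{L:weak convergence of spectral measures}, proving the two inequalities $\underline{\Gr}\ge\rho(\nu)$ and $\overline{\Gr}\le\rho(\nu)$ separately. The only (harmless) differences are cosmetic: for the lower bound you invoke Portmanteau on the open set $(t,\infty)$ where the paper uses Fatou's lemma on the layer-cake representation together with monotonicity of moments, and for the upper bound you use the sharp spectral cutoff $1_{[0,t]}(|c_n|^{1/n})\xi$ where the paper uses a continuous bump $f(b_n)\xi$; your observation that the constant sequence $\xi_n=\xi$ still satisfies the conclusion of Lemma \ref{L:weak convergence of spectral measures} is accurate, since its proof only uses bounded continuous functional calculus.
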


\begin{proof}
Let $\nu$ be the spectral measure of $a$ with respect to $\xi$. Let $\rho(\nu) \ge 0$ be the smallest number such that $\nu$ is supported on $[0,\rho(\nu)]$. By Lemma \ref{L:existence of growth rates limiting object}, $\rho(\nu)=\lim_{n\to\infty} \|a^n\xi\|_2^{1/n}$.

It is immediate that $\underline{\Gr}( \bfc | \xi) \le \overline{\Gr}( \bfc | \xi).$ So it suffices to show $\rho(\nu)\le \underline{\Gr}( \bfc | \xi)$ and $\overline{\Gr}( \bfc | \xi) \le \rho(\nu)$.

We first show $\rho(\nu)\le \underline{\Gr}( \bfc | \xi)$. Let $b_n=|c_n|^{1/n}$. Let $(\xi_n)_n \in \Si(\xi)$ with $\xi_n \in \dom(b_n)$. By hypothesis, $b_n \to a$ in measure. Let $\nu_n$ be the spectral measure of $\xi_n$ with respect to $b_n$. By Lemma \ref{L:weak convergence of spectral measures}, $\nu_n \to \nu$ weakly. Along with (\ref{E:thingy}) and Fatou's Lemma we have for every $m\in \N:$
\begin{align*}
\|a^{m}\xi\|_{2}^{1/m} &= \left(\int s^{2m}\,\dee\nu(s)\right)^{1/2m}=\left(2m\int \lambda^{2m-1}\nu(\lambda,\infty)\,\dee\lambda\right)^{1/2m}\\
&\leq \left(2m\liminf_{n\to\infty}\int \lambda^{2m-1}\nu_{n}(\lambda,\infty)\,\dee\lambda\right)^{1/2m}=\left(\liminf_{n\to\infty}\int s^{2m}\,\dee\nu_{n}(s)\right)^{1/2m}\\
&\leq \liminf_{n\to\infty}\left(\int s^{2n}\,\dee\nu_{n}(s)\right)^{1/2n}=\liminf_{n\to\infty}\|c_n\xi_n\|_{2}^{1/n}.
\end{align*}
So
\[\sup_{m} \|a^{m}\xi\|_{2}^{1/m} \leq\liminf_{n\to\infty}\|c_n\xi_n\|_{2}^{1/n}.\]
By Lemma \ref{L:existence of growth rates limiting object},
 \[\rho(\nu) =\lim_{n\to\infty}\|a^{n}\xi\|_{2}^{1/n}\leq \liminf_{n\to\infty}\|c_n\xi_n\|_{2}^{1/n}.\]
Since $(\xi_n)_n$ is arbitrary, this shows $\rho(\nu) \le \underline{\Gr}( \bfc | \xi)$.

Next we will show $\overline{\Gr}( \bfc | \xi) \le \rho(\nu)$. So let $\varepsilon>0$. Choose a continuous function $f:[0,\infty) \to [0,1]$ such that $f(t)=1$ for all $t \in [0,\rho(\nu)]$ and $f(t)=0$ for all $t \ge \rho(\nu)+\varepsilon$. Let $\xi_n=f(b_n)\xi$.

We claim that $\xi_n \to \xi$ in $\rL^2(M,\tau)$. First observe that
$$\langle f(a)\xi,\xi\rangle = \int f~\dee\nu = \int 1 ~\dee\nu = \|\xi\|_2^2.$$
Since $\|f(a)\|_\infty \le 1$, we must have $f(a)\xi= \xi$.

Next, let $\nu'_n$ be the spectral measure of $b_n$ with respect to $\xi$.  
Note
\begin{eqnarray*}
\| \xi_n - \xi\|^2_2 &=& \|(1-f(b_n))\xi \|_2^2 = \langle (1-f(b_n))\xi, (1-f(b_n)) \xi\rangle \\
&=&  \langle (1-f(b_n))^2 \xi, \xi\rangle = \int (1-f)^2~\dee\nu'_n.
\end{eqnarray*}
By Lemma  \ref{L:weak convergence of spectral measures}, $\nu'_n \to \nu$ weakly. So $\int (1-f)^2~\dee\nu'_n \to \int (1-f)^2~\dee\nu$ as $n\to\infty$. Since $\nu$ is supported on $[0,\rho(\nu)]$ and $1-f=0$ on $[0,\rho(\nu)]$, it follows that $\int (1-f)^2~\dee\nu'_n \to 0$ as $n\to\infty$. This proves that $\| \xi_n - \xi\|_2 \to 0$ as $n\to\infty$. Thus, $(\xi_n)_n \in \Si(\xi)$.

Let $\nu_n$ be the spectral measure of $b_n$ with respect to $\xi_n$. We claim that $\dee \nu_n = f^2 \dee \nu'_n$. To see this, let $g:[0,\infty) \to \R$ be a continuous bounded function. Then
\begin{eqnarray*}
\int g ~\dee\nu_n &=& \langle g(b_n) \xi_n, \xi_n \rangle =  \langle g(b_n) f(b_n)\xi, f(b_n)\xi\rangle \\
&=& \langle f(b_n) g(b_n) f(b_n) \xi, \xi\rangle = \int g f^2~\dee\nu'_n.
\end{eqnarray*}
Since $g$ is arbitrary, this proves $\dee \nu_n = f^2 \dee \nu'_n$.

By Lemma  \ref{L:weak convergence of spectral measures}, $\nu_n \to \nu$ weakly.  So
\begin{align*}
\overline{\Gr}( \bfc | \xi) &\le \limsup_{n\to\infty}\|c_n \xi_{n}\|_{2}^{1/n}=\limsup_{n\to\infty}\ip{|c_n|^{2}\xi_{n},\xi_{n}}^{1/2n}\\
& =\limsup_{n\to\infty}\ip{b_n^{2n}\xi_{n},\xi_{n}}^{1/2n} =\limsup_{n\to\infty}\left(\int t^{2n}\,\dee\nu_{n}(t)\right)^{1/2n}\\
&=\limsup_{n\to\infty}\left(\int t^{2n}f(t)^2 \,\dee\nu'_{n}(t)\right)^{1/2n}\leq \rho(\nu)+\varepsilon.
\end{align*}
The last inequality occurs because $f(t)=0$ for all $t > \rho(\nu)+\varepsilon$.  Since $\varepsilon$ is arbitrary, $\overline{\Gr}( \bfc | \xi) \le \rho(\nu)$.

\end{proof}

\begin{cor}\label{C:smooth growth rates}
Let $X,\mu,f,M,\tau,c,\La$ be as in Theorem \ref{thm:main-general}. Then for a.e. $x\in X$ and every $\xi \in \rL^2(M,\tau)$ with $\xi \in \cap_n \dom(a^n)$,
\begin{eqnarray*}
\lim_{n\to\infty} \| \La(x)^n \xi\|_2^{1/n} &=&\overline{\Gr}( \bfc(x) | \xi) = \underline{\Gr}( \bfc(x) | \xi)
\end{eqnarray*}
where $\bfc(x) = (c(n,x))_n$. In particular, Theorem \ref{thm:main2} is true.
\end{cor}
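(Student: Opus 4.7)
The plan is to derive the corollary directly from Theorem \ref{thm:main-general} combined with Lemma \ref{L:inclusions of candidate Osoledets spaces}; the only non-trivial piece is verifying the measure-convergence hypothesis of the latter.

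First, I would invoke Theorem \ref{thm:main-general}: for $\mu$-a.e.\ $x \in X$, one has $\log(|c(n,x)|^{1/n}) = n^{-1}\log|c(n,x)| \to \log\Lambda(x)$ in $\rL^2(M,\tau)$. By Lemma \ref{L:L2tomeasure} this convergence also takes place in the measure topology. Next I would apply Lemma \ref{tikhonov} to the function $\exp:\R\to\R$, which is continuous and bounded on every bounded subset of $\R$: this promotes the measure convergence of the logarithms to measure convergence
\[
|c(n,x)|^{1/n} = \exp\!\left(n^{-1}\log|c(n,x)|\right) \ \longrightarrow\ \exp(\log\Lambda(x)) = \Lambda(x).
\]
This is exactly the hypothesis of Lemma \ref{L:inclusions of candidate Osoledets spaces}, so the lemma applied with $c_n := c(n,x)$ and $a := \Lambda(x)$ immediately yields
\[
\underline{\Gr}(\bfc(x)\mid \xi) = \lim_{n\to\infty}\|\Lambda(x)^n \xi\|_2^{1/n} = \overline{\Gr}(\bfc(x)\mid \xi)
\]
for every $\xi \in \bigcap_n \dom(\Lambda(x)^n)$, which is the main assertion of the corollary.

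To also deduce Theorem \ref{thm:main2}, which is stated for every $\xi \in \rL^2(M,\tau)$, I would additionally handle $\xi \in \rL^2(M,\tau) \setminus \bigcap_n \dom(\Lambda(x)^n)$. In that case the spectral measure $\nu$ of $\Lambda(x)$ with respect to $\xi$ has unbounded support, so (under the convention $\|a\xi\|_2 = +\infty$ for $\xi \notin \dom(a)$) the middle quantity $\lim_n \|\Lambda(x)^n \xi\|_2^{1/n}$ equals $+\infty$. To see that $\underline{\Gr}(\bfc(x)\mid\xi) = +\infty$ as well, I would take any approximating sequence $(\zeta_n)_n \in \Sigma(\xi)$, note via Lemma \ref{L:weak convergence of spectral measures} that the spectral measure $\nu_n$ of $|c(n,x)|^{1/n}$ with respect to $\zeta_n$ converges weakly to $\nu$, and run the Fatou step from the first half of the proof of Lemma \ref{L:inclusions of candidate Osoledets spaces} together with the monotonicity of $L^p$-norms on (normalized) probability measures to obtain, for each fixed $m$,
\[
\liminf_{n\to\infty}\|c(n,x)\zeta_n\|_2^{1/n} \ \ge\ \left(\frac{1}{\|\xi\|_2^2}\int_0^\infty s^{2m}\,d\nu(s)\right)^{1/2m}.
\]
Since $\int s^{2m}\,d\nu = +\infty$ for some $m$, the right-hand side is $+\infty$, forcing $\underline{\Gr}(\bfc(x)\mid\xi) = \overline{\Gr}(\bfc(x)\mid\xi) = +\infty$. (Alternatively one can work with the cutoffs $\xi_k := 1_{[0,k]}(\Lambda(x))\xi \in \bigcap_n \dom(\Lambda(x)^n)$, apply the already-established case to each $\xi_k$, and let $k \to \infty$ using $\rho(\nu_{\xi_k}) \to \infty$.)

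The main potential pitfall is the step promoting $\rL^2$-convergence of logarithms to measure convergence of $|c(n,x)|^{1/n}$ itself: because $\exp$ is unbounded one cannot naively use bounded functional calculus, and a priori one has no uniform operator-norm bound on $|c(n,x)|^{1/n}$. This is precisely what Lemma \ref{tikhonov} is designed to handle, its only requirement on $f=\exp$ being continuity and boundedness on bounded sets of $\R$; once the measure-convergence is in hand, everything else is a direct citation of Lemma \ref{L:inclusions of candidate Osoledets spaces}.
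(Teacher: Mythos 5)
Your proof is correct and follows the same route as the paper: the paper's entire proof is ``apply Lemma \ref{L:inclusions of candidate Osoledets spaces} with $a=\La(x)$, $c_n=c(n,x)$.'' Your extra work is welcome but not a divergence — the verification that $|c(n,x)|^{1/n}\to\La(x)$ in measure (via $\rL^2$-convergence of the logarithms, Lemma \ref{L:L2tomeasure}, and Lemma \ref{tikhonov}) and the treatment of $\xi\notin\bigcap_n\dom(\La(x)^n)$ are details the paper leaves implicit, and your Fatou-based argument for the latter correctly closes the small gap between the corollary's restricted statement and the ``every $\xi$'' claim of Theorem \ref{thm:main2}.
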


\begin{proof}
Apply Lemma \ref{L:inclusions of candidate Osoledets spaces} with $a = \La(x)$, $c_n =c(n,x)$. Theorem \ref{thm:main2} now follows from  Corollary \ref{C:hypotheses}.
\end{proof}

\subsection{Essentially dense subspaces}\label{sec:essentially}

In this section, we review the notion of an essentially dense subspace. This is used in the last section to prove Theorem \ref{thm:invariance}.

\begin{defn}
Let $(M,\tau)$ be a semi-finite von Neumann algebra. A linear subspace $V\subseteq L^{2}(M,\tau)$ is called {\bf right-invariant} if $Vx\subseteq V$ for all $x\in M.$ We say that a right-invariant subspace $\mathcal{D}$ of $L^{2}(M,\tau)$ is {\bf essentially dense} if for every $\varepsilon>0,$ there is a projection $p\in M$ so that $\tau(I-p)\leq \varepsilon,$ and $\mathcal{D}\supseteq pL^{2}(M,\tau).$  If $\cK$ is a closed subspace of $L^2(M,\tau)$ and $W \subseteq \cK$ is a right-invariant subspace, we say that $W$ is essentially dense in $\cK$ if there exists $\cD$ essentially dense in $L^2(M,\tau)$ such that $\cD \cap \cK = W$.


\end{defn}

By definition of $\rL^0(M,\tau)$, if $a\in L^{0}(M,\tau)$, then $\dom(a)$ is essentially dense. It is an exercise to check that  the intersection of countably many essentially dense subspaces is essentially dense.

If $\mathcal{K}\subseteq L^{2}(M,\tau)$ is closed and right-invariant then the orthogonal projection onto $\mathcal{K},$ denoted $p_{\mathcal{K}},$ is in $M$ as a consequence of the Double Commutant Theorem.

Technically, our definition of \emph{essentially dense in $\mathcal{K}$} is different from the one in \cite[Definition 8.1]{Luck}. The next lemma shows that they are in fact equivalent.

\begin{lem}\label{L:equivalence ess dense}
Let $(M,\tau)$ be a semi-finite tracial von Neumann algebra, let $\mathcal{K}\subseteq L^{2}(M,\tau)$ be a closed, right-invariant subspace, and let $W\subseteq \mathcal{K}$ be a right-invariant subspace. Then the following are equivalent:
\begin{enumerate}
    \item $W$ is essentially dense in $\mathcal{K},$ \label{I:essentially dense 1 defn}
    \item there is an increasing sequence of projections $p_{n}\in M$ so that $p_{n}\to p_{\mathcal{K}}$ in the Strong Operator Topology, $\tau(p_\cK - p_n) \to 0$, and $W\supseteq p_{n}L^{2}(M,\tau).$ \label{I:essentially dense 2 defn}
\end{enumerate}

\end{lem}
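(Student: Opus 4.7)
The natural reformulation uses the orthogonal projection $p_{\mathcal{K}}\in M$ onto $\mathcal{K}$, which lies in $M$ by the Double Commutant Theorem since $\mathcal{K}$ is closed and right-invariant. In each direction one passes explicitly between the essentially dense subspace $\mathcal{D}$ and the sequence of projections $(p_{n})$.

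For (\ref{I:essentially dense 2 defn}) $\Rightarrow$ (\ref{I:essentially dense 1 defn}), I would set
\[ \mathcal{D} := W + (\id - p_{\mathcal{K}})\rL^{2}(M,\tau), \]
which is right-invariant as a sum of right-invariant subspaces. The projections $q_{n} := p_{n} + (\id - p_{\mathcal{K}})$ are orthogonal sums of projections in $M$ (orthogonal since $p_{n}\leq p_{\mathcal{K}}$), hence projections in $M$; they satisfy $q_{n}\rL^{2}(M,\tau)\subseteq \mathcal{D}$ and $\tau(\id - q_{n}) = \tau(p_{\mathcal{K}} - p_{n}) \to 0$, showing $\mathcal{D}$ is essentially dense in $\rL^{2}(M,\tau)$. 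The identity $\mathcal{D}\cap \mathcal{K}=W$ reduces to a one-line check: if $\xi = w + (\id - p_{\mathcal{K}})\eta$ with $w\in W$ and $\xi\in \mathcal{K}$, applying $p_{\mathcal{K}}$ yields $\xi = p_{\mathcal{K}}\xi = w\in W$.

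For (\ref{I:essentially dense 1 defn}) $\Rightarrow$ (\ref{I:essentially dense 2 defn}), I would first use the essential density of $\mathcal{D}$ to extract projections $q_{n}\in M$ with $\tau(\id - q_{n}) < 2^{-n}$ and $q_{n}\rL^{2}(M,\tau)\subseteq \mathcal{D}$, then pass to the increasing meets $r_{n} := \bigwedge_{k\geq n} q_{k}$. The De Morgan identity $\id - r_{n} = \bigvee_{k\geq n}(\id - q_{k})$ combined with countable subadditivity of $\tau$ on projections (a consequence of the two-projection relation $\tau(p\vee q) + \tau(p\wedge q) = \tau(p) + \tau(q)$ together with normality) gives $\tau(\id - r_{n}) \leq 2^{-n+1} \to 0$, while $r_{n}\rL^{2}(M,\tau)\subseteq q_{k}\rL^{2}(M,\tau)\subseteq \mathcal{D}$ for every $k\geq n$. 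Setting $p_{n} := p_{\mathcal{K}}\wedge r_{n}$, the sequence $(p_{n})$ is increasing, satisfies $p_{n}\leq p_{\mathcal{K}}$, and has $p_{n}\rL^{2}(M,\tau)\subseteq \mathcal{K}\cap \mathcal{D} = W$. Kaplansky's parallelogram formula $p_{\mathcal{K}} - p_{\mathcal{K}}\wedge r_{n} \sim p_{\mathcal{K}}\vee r_{n} - r_{n}$ then yields
\[ \tau(p_{\mathcal{K}} - p_{n}) = \tau(p_{\mathcal{K}}\vee r_{n} - r_{n}) \leq \tau(\id - r_{n}) \to 0. \]
Strong operator convergence $p_{n}\to p_{\mathcal{K}}$ follows since any increasing sequence of projections converges in SOT to its supremum $p := \sup_{n} p_{n}$; the decreasing sequence $p_{\mathcal{K}} - p_{n}$ starts at a point of finite trace, so normality gives $\tau(p_{\mathcal{K}} - p) = \inf_{n}\tau(p_{\mathcal{K}} - p_{n}) = 0$ and faithfulness forces $p = p_{\mathcal{K}}$.

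The main obstacle is producing an \emph{increasing} sequence $(p_{n})$ whose ranges remain inside $W$: essential density supplies only unrelated projections with small defect trace, and the naive envelope $\bigvee_{k\leq n} q_{k}$ may escape $W$ because $W$ is not assumed closed. The fix is to meet rather than join, using $\bigwedge_{k\geq n} q_{k}$, whose range lies inside every $q_{k}\rL^{2}(M,\tau)\subseteq \mathcal{D}$; Kaplansky's formula then sidesteps the difficulty that $\tau(p_{\mathcal{K}})$ may be infinite by comparing $\tau(p_{\mathcal{K}} - p_{\mathcal{K}}\wedge r_{n})$ directly to the finite quantity $\tau(\id - r_{n})$, rather than through any differences of potentially infinite traces.
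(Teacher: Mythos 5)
Your proof is correct and follows essentially the same route as the paper's: the same subspace $\mathcal{D}=W+(\id-p_{\mathcal{K}})\rL^{2}(M,\tau)$ in one direction, and in the other the same device of taking countable meets $\bigwedge_{k\geq n}q_{k}$ followed by $p_{n}=p_{\mathcal{K}}\wedge r_{n}$. The only (cosmetic) difference is that you justify $\tau(p_{\mathcal{K}}-p_{\mathcal{K}}\wedge r_{n})\leq\tau(\id-r_{n})$ via Kaplansky's parallelogram law, where the paper argues by a direct comparison of projections; your version is if anything the more carefully stated one.
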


\begin{proof}

(\ref{I:essentially dense 2 defn}) implies (\ref{I:essentially dense 1 defn}): Let $\mathcal{D}=W+(\id-p_{\mathcal{K}})L^{2}(M,\tau),$ then clearly $\mathcal{D}\cap \mathcal{K}=W.$  Let $q_n = \id-p_\cK + p_n$.






Then $\mathcal{D}\supseteq q_nL^{2}(M,\tau).$ Since $p_{n}(\id-p_{\mathcal{K}})=0,$ we also have that $q_n$ is an orthogonal projection. Also $\tau(\id-q_n) \to 0$.  Thus $\mathcal{D}$ is essentially dense.

(\ref{I:essentially dense 1 defn}) implies (\ref{I:essentially dense 2 defn}): Write $W=\mathcal{D}\cap \mathcal{K},$ where $\mathcal{D}$ is essentially dense in $L^{2}(M,\tau).$ By assumption, for every $n\in \N,$ we find a projection $f_{n}$ in $M$ so that $\tau(\id- f_{n})\leq 2^{-n}$ and $\mathcal{D}\supseteq f_{n}L^{2}(M,\tau).$ Set $q_{n}=\bigwedge_{m=n}^{\infty}f_{m},$ and $p_{n}=p_\cK\wedge q_{n}.$

Observe that
\[\tau(\id-q_{n})=\tau\left(\bigvee_{m=n}^{\infty}\id-f_{m}\right)\leq \sum_{m=n}^{\infty}\tau(\id-f_{n})\leq 2^{-n+1},\]
where in the first inequality we use that $\tau$ is normal.

By definition, $p_\cK - p_n=p_\cK - p_\cK\wedge q_n$ is the orthogonal projection onto $\cK \cap \textrm{range}(q_n)^\perp$, while $\id-q_n$ is the orthogonal projection onto $\textrm{range}(q_n)^\perp$. It follows that $p_\cK - p_n \le \id-q_n$. Therefore, $\tau(p_\cK - p_n) \leq 2^{-n+1} \to 0$ as $n\to\infty$.

Because $q_{n}$ is increasing in $n$, it also true that $p_{n}$ is increasing in $n$.
 So if $p_{\infty} = \sup_n p_n$ then $p_\infty$ is the SOT-limit of $p_n$ as $n\to\infty$.  
By definition of least upper bound, $p_{\infty}\leq p_{\mathcal{K}}$ and since $\tau(p_\cK - p_\infty) \leq \tau(p_\cK -p_n) \to 0$, the fact that $\tau$ is faithful implies $p_{\infty}=p_{\mathcal{K}}.$ So $p_n \to p_\cK$ in the Strong Operator Topology.  For each $n\in \N,$ we have that
\[p_{n}L^{2}(M,\tau)=\mathcal{K}\cap q_{n}L^{2}(M,\tau)\subseteq \mathcal{K}\cap f_{n}L^{2}(M,\tau)\subseteq \mathcal{K}\cap \mathcal{D}=W.\]

\end{proof}

\begin{lem}\label{L:basic facts injective ops}
Let $(M,\tau)$ be a tracial von Neumann algebra, and let $\mathcal{K}\subseteq L^{2}(M,\tau)$ be a closed and right-invariant subspace, fix an $a\in L^{0}(M,\tau).$
\begin{enumerate}
\item We have that $\dim_{M}(\overline{a(\mathcal{K}\cap \dom(a))})\leq \dim_{M}(\mathcal{K}),$ with equality if $\ker(a) \cap \cK = \{0\}$. \label{I:inj keeps dim}
\item We have that $(a^{-1}(\mathcal{K}))^{\perp}=\overline{(a^{*})(\mathcal{K}^{\perp}\cap \dom(a^{*}))}.$ \label{I:ortho complement formula}
\end{enumerate}
\end{lem}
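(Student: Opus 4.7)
My plan is to prove both parts by introducing auxiliary operators obtained by composing $a$ (or $a^*$) with the projection $p_\cK \in M$ or its complement $q = \id - p_\cK$, and then invoking polar decomposition in $\rL^0(M,\tau)$ together with the classical duality $\ker(T)^\perp = \overline{\im(T^*)}$ for closed densely defined operators. Both $p_\cK$ and $q$ lie in $M$ since $\cK$ is right-invariant, so their products with $a$ (an affiliated operator) again lie in $\rL^0(M,\tau)$.

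For part (1), set $T := ap_\cK \in \rL^0(M,\tau)$. As a product of an affiliated operator and an element of $M$, $T$ is closed, densely defined, and affiliated with $M$, so its polar decomposition $T = v|T|$ produces a partial isometry $v\in M$ with $\tau(v^*v) = \tau(vv^*)$. A brief closure argument identifies $\overline{\im(T)} = \overline{a(\cK\cap\dom(a))}$: on the natural core $\{\xi : p_\cK\xi\in\dom(a)\}$, $T$ sends $\cK\cap\dom(a)$ onto $a(\cK\cap\dom(a))$ and vanishes on $\cK^\perp$, and taking the closure of the operator cannot enlarge the closure of its range. Meanwhile $T^* = p_\cK a^*$ has image contained in $\cK$, so $\ker(T)^\perp = \overline{\im(T^*)}\subseteq\cK$. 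Combining,
\[
\dim_M\bigl(\overline{a(\cK\cap\dom(a))}\bigr) = \tau(vv^*) = \tau(v^*v)\le \tau(p_\cK) = \dim_M(\cK).
\]
In the equality case, the hypothesis $\ker(a)\cap\cK=\{0\}$ yields $\cK\subseteq \ker(a)^\perp = \overline{\im(a^*)}$; every $\eta\in\cK$ is a limit $\eta = \lim a^*\xi_n$ with $\xi_n\in\dom(a^*)$, so $\eta = p_\cK\eta = \lim p_\cK a^*\xi_n\in\overline{\im(T^*)}$, forcing $\cK\subseteq\ker(T)^\perp$ and hence equality.

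For part (2), set $b := qa\in\rL^0(M,\tau)$; again closed, densely defined, and affiliated with $M$. Since $q$ is bounded, the adjoint formula for products of a closed operator with a bounded operator gives $b^* = a^*q$, and thus $\im(b^*) = a^*(\cK^\perp\cap\dom(a^*))$. The identity $\ker(b)^\perp = \overline{\im(b^*)}$ then produces
\[
\ker(b)^\perp = \overline{a^*(\cK^\perp\cap\dom(a^*))}.
\]
Since $a^{-1}(\cK) = \{\xi\in\dom(a) : b\xi=0\}\subseteq\ker(b)$, one has $\ker(b)^\perp\subseteq (a^{-1}(\cK))^\perp$, which is the easy containment. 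For the reverse, I plan to show $a^{-1}(\cK)$ is dense in $\ker(b)$: since $\dom(a)$ is a core for $b$ by the very definition of the product in $\rL^0$, any $\xi\in\ker(b)$ is approximated by $\xi_n\in\dom(a)$ with $\xi_n\to\xi$ and $\eta_n := qa\xi_n\to 0$. Using the polar decomposition $a = u|a|$ and spectral truncations $|a|^{-1}1_{[\varepsilon_n,\infty)}(|a|)\in M$, one constructs corrections $\delta_n\in\dom(a)$ with $\delta_n\to 0$ and $a\delta_n - \eta_n$ lying in a decaying spectral band of $|a|$; a Neumann-type iteration then upgrades these approximate corrections to genuine elements $\xi_n'\in a^{-1}(\cK)$ with $\xi_n'\to\xi$.

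The main technical obstacle is precisely the density claim in part (2): the $\rL^0$-closure $b$ of $qa|_{\dom(a)}$ generically has strictly larger domain than $\dom(a)$, and correspondingly $\ker(b)$ can strictly contain $a^{-1}(\cK)$, so one cannot simply identify these kernels pointwise. Resolving this requires the spectral-cutoff correction described above, which leverages the polar decomposition of $a$ and the essential density of $\dom(a)$ in $\rL^2(M,\tau)$; the remaining ingredients (polar decomposition in $\rL^0(M,\tau)$, the duality $\ker^\perp = \overline{\im^*}$, and standard adjoint formulas for products with bounded operators) are routine.
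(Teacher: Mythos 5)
The inequality in part (1) is correct and is essentially the paper's own argument (polar decomposition of $T=ap_{\cK}$, with $\tau(v^*v)=\tau(vv^*)$). The equality case, however, contains a genuine error: you assert that $\ker(a)\cap\cK=\{0\}$ yields $\cK\subseteq\ker(a)^{\perp}$. Trivial intersection of two closed subspaces does not imply containment in the orthogonal complement — already in $M_2(\C)$ two distinct, non-orthogonal rank-one projections $e,f$ satisfy $e\wedge f=0$ but $e\not\leq 1-f$ — so this step is a non sequitur and the equality case is not established. The correct route (the one the paper takes) stays with $\ker(T)$ itself: one always has $\ker(T)\supseteq\cK^{\perp}$, hence $\ker(T)=\cK^{\perp}\oplus(\ker(T)\cap\cK)$; and since $a$ is closed, any $\xi\in\ker(T)\cap\cK$ is a limit of $p_{\cK}\xi_n\in\dom(a)$ with $a(p_{\cK}\xi_n)\to 0$, so $\xi\in\ker(a)\cap\cK=\{0\}$. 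Thus $\ker(T)=\cK^{\perp}$, $v^*v=p_{\cK}$, and equality holds.

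For part (2) (which the paper outsources to a citation), your skeleton — set $b=qa$ with $q=\id-p_{\cK}$, use $b^*=a^*q$ and $\ker(b)^{\perp}=\overline{\im(b^*)}$ — is sound, and the easy containment is fine. But the reverse containment, i.e. density of $a^{-1}(\cK)$ in $\ker(b)$, is exactly the content of the lemma, and your sketch does not prove it: the corrections $\delta_n$ with $qa\delta_n=\eta_n$ are never constructed, $\eta_n$ need not lie in $\im(qa)$, and $qa$ has no bounded inverse off its kernel, so small $\eta_n$ does not produce small $\delta_n$; the ``Neumann-type iteration'' is not specified and its convergence is doubtful. The gap is real but easily repaired by essential density rather than by approximation-and-correction: $\dom(a)$ is essentially dense, $\ker(b)$ is closed and right-invariant with $p_{\ker(b)}=1_{\{0\}}(|b|)\in M$, so $\dom(a)\cap\ker(b)$ contains $\left(p_{\varepsilon}\wedge p_{\ker(b)}\right)\rL^2(M,\tau)$ with $\tau\!\left(p_{\ker(b)}-p_{\varepsilon}\wedge p_{\ker(b)}\right)\leq\tau(\id-p_{\varepsilon})<\varepsilon$ (Kaplansky's parallelogram law, or Lemma \ref{L:equivalence ess dense}), hence is dense in $\ker(b)$; and every $\xi\in\dom(a)\cap\ker(b)$ satisfies $b\xi=qa\xi=0$, i.e. $\xi\in a^{-1}(\cK)$. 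Therefore $\overline{a^{-1}(\cK)}=\ker(b)$ and the claimed identity follows.
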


\begin{proof}
Throughout, let $p$ be the orthogonal projection onto $\mathcal{K}.$


(\ref{I:inj keeps dim}): Let $ap=v|ap|$ be the polar decomposition. Then $v^{*}v=p_{\ker(ap)^{\perp}}$, $vv^{*}=p_{\overline{\im(ap)}}.$ Clearly $\ker(ap)\supseteq (1-p)(L^{2}(M,\tau))$ so
\[v^{*}v=p_{\ker(ap)^{\perp}}\leq p.\]
So:
\[\dim_{M}(\mathcal{K})=\tau(p)\geq \tau(v^{*}v)=\tau(vv^{*})=\dim_{M}(\overline{a(\mathcal{K}\cap \dom(a))}).\]

If $\ker(a) \cap \cK= \{0\}$, then in fact $v^*v=p_{\ker(ap)^\perp} = p$ so we have equality throughout.

(\ref{I:ortho complement formula}):
This is \cite[Lemma 3.4]{StinespringMeasOps}.


\end{proof}

\subsection{Invariance}\label{sec:invariance}

In this section, we prove Theorem \ref{thm:invariance}.
\begin{lem}\label{L:osoledets spaces lemma}
For $n \in \N$, let $c_n \in \rL^0(M,\tau)$ and $a\in \rL^0(M,\tau)$ with $a > 0$. Let $T_{n}=a^{-n}|c_n|^{2}a^{-n},$ and $S_{n}=T_{n}^{1/2n}.$ Suppose $S_{n}\to \id$ in measure and $|c_n|^{1/n} \to a$ in measure. For $t \in [-\infty,\infty)$, let
\begin{eqnarray*}
\mathcal{V}_{t} &=& \left\{\xi \in \rL^2(M,\tau):~ \liminf_{n\to\infty}\frac{1}{n}\log \|c_n\xi\|_2 \le t\right\} \\
\mathcal{H}_{t} &=& \left\{\xi \in \rL^2(M,\tau):~ \liminf_{n\to\infty} \frac{1}{n}\log \|a^n\xi\|_2 \le t\right\} = 1_{(-\infty,t]}(\log a)(\rL^2(M,\tau)).
\end{eqnarray*}
Then there exists an essentially dense subspace $\mathcal{D}$ of $\rL^2(M,\tau)$ such that
   \[\mathcal{D}\cap \mathcal{V}_{t}=\mathcal{D}\cap \mathcal{H}_{t}.\]
    In particular, we have that $\mathcal{H}_{t}=\overline{\mathcal{V}_{t}}.$
\end{lem}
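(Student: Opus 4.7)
The key algebraic identity is
\[\|c_n\xi\|_2^2 \;=\; \langle a^n S_n^{2n} a^n\xi,\xi\rangle \;=\; \|S_n^n a^n\xi\|_2^2\qquad(\xi\in\dom(a^n)),\]
coming directly from $|c_n|^2 = a^n S_n^{2n} a^n$. This reduces the lemma to comparing the action of $S_n^n$ on the vector $a^n\xi$ with that of the identity. Since $S_n\to \id$ only in measure, $\|S_n^n\|_\infty$ can be infinite, so the comparison must be carried out through spectral projections of $S_n$ and only along a subsequence of $n$'s---which is enough because $\mathcal{V}_t$ is defined by a $\liminf$.

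For $\mathcal{D}\cap\mathcal{V}_t\subseteq\mathcal{H}_t$ I would take $\mathcal{D}\subseteq \mathcal{D}_0:=\bigcap_{n\ge 1}\dom(a^n)$, which is essentially dense as a countable intersection of essentially dense domains. For $\xi\in\mathcal{D}_0$, Lemma \ref{L:inclusions of candidate Osoledets spaces} applied with $b_n=|c_n|^{1/n}\to a$ in measure and the constant approximating sequence $\xi_n\equiv\xi$ gives $\lim_n\|a^n\xi\|_2^{1/n}=\underline{\Gr}(\bfc\mid\xi)\le \liminf_n\|c_n\xi\|_2^{1/n}$. Thus $\xi\in\mathcal{V}_t$ forces $\lim_n\|a^n\xi\|_2^{1/n}\le e^t$, which by Lemma \ref{L:existence of growth rates limiting object} places $\xi$ in $\mathcal{H}_t$.

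The reverse inclusion $\mathcal{D}\cap\mathcal{H}_t\subseteq\mathcal{V}_t$ is where I expect the main obstacle. My plan is to enlarge $\mathcal{D}$ to contain subspaces of the form $q_s\cdot (M\cap \rL^2(M,\tau))$, where $q_s:=1_{[e^{-s},e^t]}(a)\in M$ for $s>0$; the union is essentially dense in $\mathcal{H}_t$ because $q_s\uparrow P:=1_{[0,e^t]}(a)$ as $s\to\infty$. For such $\xi=q_s y$ with $y\in M$, the vector $a^n\xi=(aq_s)^n y$ lies in $M$ with $\|a^n\xi\|_\infty\le e^{nt}\|y\|_\infty$ and left support dominated by $q_s$. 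Splitting $\|S_n^n a^n\xi\|_2$ via the spectral projections $p_n^{(\epsilon)}=1_{[e^{-\epsilon},e^\epsilon]}(S_n)$ (which satisfy $\tau(\id-p_n^{(\epsilon)})\to 0$), the ``good'' part yields $\|S_n^n p_n^{(\epsilon)} a^n\xi\|_2\le e^{n(t+\epsilon)}\|\xi\|_2$, while the ``bad'' part obeys
\[\|S_n^n(\id-p_n^{(\epsilon)}) a^n\xi\|_2^2 \;\le\; e^{2nt}\|y\|_\infty^2\,\tau\!\bigl(q_s(\id-p_n^{(\epsilon)}) S_n^{2n}\bigr),\]
from trace cyclicity and the inequality $(a^n\xi)(a^n\xi)^*\le e^{2nt}\|y\|_\infty^2\, q_s$. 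The technical heart of the argument is extracting a subsequence $n_k$ together with $\epsilon_k\downarrow 0$ along which the right-hand trace grows subexponentially. I would attempt this via a Borel--Cantelli argument making the failure projections $\tau(\id-p_{n_k}^{(\epsilon_k)})$ summable, combined with the identity $\tau(q_s|c_n|^2)=\tau(q_s a^{2n} S_n^{2n})$ (using $|c_n|^2=a^n S_n^{2n} a^n$ and $[a,q_s]=0$) and the convergence $|c_n|^{1/n}\to a$ in measure, which together control the growth of $\tau(q_s S_n^{2n})^{1/n}$ via the bounded behaviour of $a$ on the support of $q_s$.

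Finally, $\mathcal{H}_t=\overline{\mathcal{V}_t}$ is a consequence. The inclusion $\mathcal{H}_t\subseteq\overline{\mathcal{V}_t}$ is immediate since $\mathcal{D}\cap\mathcal{H}_t=\mathcal{D}\cap\mathcal{V}_t$ is $\rL^2$-dense in $\mathcal{H}_t$ by Lemma \ref{L:equivalence ess dense}. The reverse inclusion, $\overline{\mathcal{V}_t}\subseteq\mathcal{H}_t$, reduces by closedness of $\mathcal{H}_t$ to $\mathcal{V}_t\subseteq \mathcal{H}_t$: for $\xi\in\mathcal{V}_t\setminus\mathcal{H}_t$, the spectral measure $\nu_a^\xi$ must have positive mass on some bounded interval $(e^{t'},e^{t''})$ with $t<t'<t''$, and Lemma \ref{L:inclusions of candidate Osoledets spaces} applied to $\xi':=1_{(e^{t'},e^{t''})}(a)\xi\in\mathcal{D}_0$ yields $\liminf_n\|c_n\xi'\|_2^{1/n}\ge e^{t'}$; combining this with the $a$-spectral orthogonality of $\xi'$ and $\xi-\xi'$ and the identity $\|c_n\xi\|_2=\|S_n^n a^n\xi\|_2$ applied separately to each piece would produce a matching lower bound for $\liminf_n\|c_n\xi\|_2^{1/n}\ge e^{t'}$, contradicting $\xi\in\mathcal{V}_t$.
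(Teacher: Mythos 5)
The first half of your argument is sound and matches the paper: the identity $\|c_n\xi\|_2=\|S_n^n a^n\xi\|_2$, the inclusion $\mathcal{D}\cap\mathcal{V}_t\subseteq\mathcal{H}_t$ via Lemma \ref{L:inclusions of candidate Osoledets spaces} applied to the constant sequence, and the closing density argument. The gap is in the reverse inclusion, and it is not merely technical: the intermediate claim you are steering toward --- that $q_s\,(M\cap\rL^2(M,\tau))\subseteq\mathcal{V}_t$ for $q_s=1_{[e^{-s},e^t]}(a)$ --- is false. Take $M=\rL^\infty([0,1])$ with the Lebesgue trace, $a=\id$, $t=0$, and let $c_n$ be multiplication by $f_n$, where $f_n=1$ off a set $E_n$ of measure $1/n$ and $f_n=e^{n^3}$ on $E_n$. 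Then $S_n=|c_n|^{1/n}\to\id$ in measure and $|c_n|^{1/n}\to a$ in measure, so all hypotheses hold, and $q_s=\id$; but for $\xi=\id$ one has $\|c_n\xi\|_2^2\ge e^{2n^3}/n$, hence $\liminf_n\frac{1}{n}\log\|c_n\xi\|_2=+\infty$, so $\xi\in\mathcal{H}_0\setminus\mathcal{V}_0$. The same example shows why your ``bad part'' estimate cannot be closed: the hypothesis $S_n\to\id$ in measure controls only $\tau(\id-p_n^{(\eps)})$, the trace of the offending spectral projection, and gives no control on the size of $S_n^{2n}$ on the range of $\id-p_n^{(\eps)}$. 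The quantity you must bound (comparable to $\|c_nq_s\|_2^2$ up to factors $e^{\pm 2ns}$) can grow super-exponentially along every subsequence, and need not even be finite, so no Borel--Cantelli extraction can make it subexponential. (Your final-paragraph argument for $\mathcal{V}_t\subseteq\mathcal{H}_t$ also relies on splitting $\|S_n^na^n\xi\|_2^2$ over spectral subspaces of $a$, but $S_n$ does not commute with those projections so the cross terms do not vanish; this inclusion in fact follows directly from Lemma \ref{L:inclusions of candidate Osoledets spaces} with the constant sequence.)

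The fix, which is what the paper does, is to let $\mathcal{D}$ depend on the operators $c_n$ and not only on $a$: choose by Borel--Cantelli a subsequence $n_k$ and $\eps_k\downarrow 0$ with $\sum_k\mu_{T_{n_k}}\bigl((1+\eps_k)^{2n_k},\infty\bigr)<\infty$, and admit into $\mathcal{D}$ only those $\xi$ (in the relevant domains) with $a^{n_k}\xi\in 1_{[0,(1+\eps_k)^{2n_k}]}(T_{n_k})(\rL^2(M,\tau))$ for all large $k$. For such $\xi$ there is no bad part at all:
\[\|c_{n_k}\xi\|_2^2=\bigl\langle T_{n_k}a^{n_k}\xi,\,a^{n_k}\xi\bigr\rangle\le(1+\eps_k)^{2n_k}\|a^{n_k}\xi\|_2^2,\]
and essential density of $\mathcal{D}$ follows from the summability of the excluded traces together with the fact that injective affiliated operators preserve von Neumann dimension (Lemma \ref{L:basic facts injective ops}). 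The subspaces $q_s(M\cap\rL^2(M,\tau))$ play no role and should be dropped.
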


\begin{proof}
Choose a decreasing sequence $(\varepsilon_{k})_k$ of positive real numbers tending to zero. Since $S_{n}\to \id$ in measure, there is an increasing sequence $(n_k)_k$
\[\sum_{k}\mu_{S_{n_k}}(1+\varepsilon_{k},\infty)<\infty.\]
By functional calculus, $\mu_{T_{n_k}}\left((1+\varepsilon_{k})^{2n_k},\infty\right) = \mu_{S_{n_k}}(1+\varepsilon_{k},\infty)$. So
\[\sum_{k}\mu_{T_{n_k}}\left((1+\varepsilon_k)^{2n_k},\infty\right)<\infty.\]
Let $r_k \in M$ denote orthogonal projection onto $\cR_{k}:=\overline{ a^{-n_k}1_{[0,(1+\varepsilon_k)^{2n_k}]}(T_{n_k})(\rL^2(M,\tau))}$. 
Let
\[\mathcal{D}=\bigcup_{l=1}^{\infty}\bigcap_{k=l}^{\infty}\Big(\dom(a^{n_k})\cap \cR_{k} \cap \dom(c_{n_k})\Big).\]
We claim that $\cD$ is essentially dense. 

Because $\dom(a^{n_k})$ and $\dom(c_{n_k})$ are essentially dense,
$$\dom(a^{n_k})\cap \dom(c_{n_k})\cap \cR_{k}$$  is essentially dense in $\cR_{k}$. So there exist projections $p_{k}\in M$ satisfying
\begin{itemize}
    \item $p_k \rL^2(M,\tau)\subseteq\dom(a^{n_k})\cap \cR_{k} \cap\dom(c_{n_k}) $
    \item $\tau(r_k - p_{k})  \leq \mu_{T_{n_k}}\left((1+\varepsilon_k)^{2n_k},\infty\right).$
\end{itemize}
For $l\in \N,$ set $q_{l}=\bigwedge_{k=l}^{\infty}p_{k}.$ Then
for every $l\in \N,$ we know that $\mathcal{D}\supseteq q_l\rL^2(M,\tau)$ and
\[\tau(\id-q_{l})\leq \tau(\id - r_l) + \tau(r_l-q_l) \le 2\sum_{k=l}^{\infty}\mu_{T_{n_k}}\left((1+\varepsilon_k)^{2n_k},\infty\right)\to_{l\to\infty}0.\]
So we have shown that $\mathcal{D}$ is essentially dense.

Now suppose that $\xi\in \mathcal{D}.$ Without loss of generality, $\|\xi\|_{2}=1.$ Then:
\begin{align*}
    \liminf_{n\to\infty}\|c_n \xi\|_{2}^{1/n}&\leq \liminf_{k\to\infty}\|c_{n_k}\xi\|_{2}^{1/n_k}=\liminf_{k\to\infty}\ip{|c_{n_k}|^{2}\xi,\xi}^{1/2n_k}\\
    &=\liminf_{k\to\infty}\left\langle a^{-n_k}|c_{n_k}|^{2}a^{-n_k}a^{n_k}\xi, a^{n_k}\xi\right\rangle^{\frac{1}{2n_k}} \\
    &=\liminf_{k\to\infty}\left\langle T_{n_k}a^{n_k}\xi, a^{n_k}\xi\right\rangle^{\frac{1}{2n_k}}.
\end{align*}
By choice of $\mathcal{D},$ we have that $a^{n_k}\xi\in 1_{[0,(1+\varepsilon_k)^{2n_k}]}(T_{n_k})(\rL^2(M,\tau)).$
So
\begin{align*}
    \liminf_{n\to\infty}\|c_n \xi\|_{2}^{1/n}  &\leq \liminf_{k\to\infty} (1+\varepsilon_k)\ip{a^{n_k}\xi,a^{n_k}\xi}^{\frac{1}{2n_k}}\\
    &=\liminf_{k\to\infty}(1+\varepsilon_k)\|a^{n_k}\xi\|_{2}^{1/n_k}=\lim_{n\to\infty}\|a^{n}\xi\|_2^{1/n},
\end{align*}
where the last equality holds by Lemma \ref{L:existence of growth rates limiting object}. So by Lemma \ref{L:inclusions of candidate Osoledets spaces},
\[\liminf_{n\to\infty}\|c_n \xi\|_{2}^{1/n}=\lim_{n\to\infty}\|a^{n}\xi\|_{2}^{1/n}.\]

Thus $\mathcal{D}\cap \mathcal{V}_{t}=\mathcal{D}\cap \mathcal{H}_{t}.$
Since Lemma \ref{L:inclusions of candidate Osoledets spaces} also shows that $\mathcal{V}_{t}\subseteq \mathcal{H}_{t},$ it follows that $\overline{\mathcal{V}_{t}}=\mathcal{H}_{t}$ since essentially dense subspaces are dense.

Lemma \ref{L:existence of growth rates limiting object} also shows that
$\cH_t = 1_{(0,e^t]}(a)(\rL^2(M,\tau))=1_{(-\infty,t]}(\log a)(\rL^2(M,\tau)).$
\end{proof}

\begin{cor}\label{C:Osoledets space cocycle moving}
Let $X,\mu,f,M,\tau,c,\La$ be as in Theorem \ref{thm:main-general}.
For $x\in X$, let
\begin{eqnarray*}
\mathcal{H}_{t}(x) &=& \left\{\xi \in \rL^2(M,\tau):~ \liminf_{n\to\infty} \frac{1}{n} \log \|\La(x)^n\xi\|_2 \le t\right\} = 1_{(-\infty,t]}(\log \La(x))(\rL^2(M,\tau)).
\end{eqnarray*}
Then the Oseledets subspaces and Lyapunov distributions are invariant in the following sense. For a.e. $x\in X$,
\begin{eqnarray*}
c(1,x)\mathcal{H}_{t}(x) &=& \mathcal{H}_{t}(f(x)), \\
\mu_{\log \La(f(x))} &=& \mu_{\log \La(x)}.
\end{eqnarray*}
In particular, Theorem \ref{thm:invariance} is true.
\end{cor}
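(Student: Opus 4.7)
The plan is to combine Lemma \ref{L:osoledets spaces lemma} at both $x$ and $y := f(x)$ with the cocycle identity $c(n+1,x) = c(n,y)\,c(1,x)$ to produce a bijection on ``growth-rate'' subspaces implemented by $c(1,x)$, and then bootstrap from a dense transfer to a closed-subspace equality via Lemma \ref{L:basic facts injective ops}. First I would verify the hypotheses of Lemma \ref{L:osoledets spaces lemma} for a.e.\ $x$, applied both to $(c(n,x))_n$ with $a=\La(x)$ and to $(c(n,y))_n$ with $a=\La(y)$. The convergence $|c(n,x)|^{1/n}\to \La(x)$ in measure is the second bullet of Theorem \ref{thm:main-general}, and $S_n := (\La(x)^{-n}|c(n,x)|^2\La(x)^{-n})^{1/2n}\to\id$ in measure follows from $\|\log S_n\|_2 = \tfrac{1}{2n}d_{\cP}(|c(n,x)|^2,\La(x)^{2n}) \to 0$, which itself comes from the first bullet of Theorem \ref{thm:main-general} combined with Lemma \ref{lem:contraction} (and Lemma \ref{L:L2tomeasure} to pass from $\rL^2$- to measure-convergence).

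Next I would exploit the cocycle identity to establish
$$\xi\in\mathcal{V}_t(x)\ \Longleftrightarrow\ c(1,x)\xi\in\mathcal{V}_t(y) \qquad (\xi\in\dom(c(1,x))),$$
since $\|c(n,y)c(1,x)\xi\|_2 = \|c(n+1,x)\xi\|_2$ and $\liminf_n\tfrac{1}{n}\log a_{n+1} = \liminf_n\tfrac{1}{n+1}\log a_{n+1}$ as $\tfrac{n+1}{n}\to 1$. Letting $\mathcal{D}_x,\mathcal{D}_y$ be the essentially dense subspaces furnished by Lemma \ref{L:osoledets spaces lemma} on which $\mathcal{V}_t = \mathcal{H}_t$, and setting $\mathcal{D} := \mathcal{D}_x\cap\dom(c(1,x))\cap\{\xi : c(1,x)\xi\in\mathcal{D}_y\}$, which is right-invariant and essentially dense (intersections and $\tau$-measurable preimages of essentially dense subspaces remain essentially dense), the equivalence gives $c(1,x)(\mathcal{D}\cap \mathcal{H}_t(x))\subseteq\mathcal{D}_y\cap\mathcal{V}_t(y)\subseteq\mathcal{H}_t(y)$.

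The hard step is upgrading this to $c(1,x)(\mathcal{H}_t(x)\cap\dom(c(1,x)))\subseteq\mathcal{H}_t(y)$, since $c(1,x)$ is generally unbounded on $\rL^2(M,\tau)$ and naive $\rL^2$-approximation does not preserve its images. I would sidestep this via the adjoint. Taking closures gives $\mathcal{H}_t(x)\subseteq\overline{\{\xi\in\dom(c(1,x)): c(1,x)\xi\in\mathcal{H}_t(y)\}}$, and Lemma \ref{L:basic facts injective ops}(\ref{I:ortho complement formula}) with $a=c(1,x)$ and $\mathcal{K}=\mathcal{H}_t(y)$ identifies the orthogonal complement of this closure with $\overline{c(1,x)^*(\mathcal{H}_t(y)^\perp\cap\dom(c(1,x)^*))}$. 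Hence for $\xi\in\mathcal{H}_t(x)\cap\dom(c(1,x))$ and $\eta\in\mathcal{H}_t(y)^\perp\cap\dom(c(1,x)^*)$, the adjoint relation yields $\langle c(1,x)\xi,\eta\rangle = \langle\xi,c(1,x)^*\eta\rangle = 0$; essential density of $\mathcal{H}_t(y)^\perp\cap\dom(c(1,x)^*)$ in $\mathcal{H}_t(y)^\perp$ then forces $c(1,x)\xi\in \mathcal{H}_t(y)$.

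The reverse inclusion follows from the other direction of the iff: for $\eta\in\mathcal{D}_y\cap\mathcal{V}_t(y)\cap\dom(c(1,x)^{-1})$, the vector $\xi:=c(1,x)^{-1}\eta\in\mathcal{V}_t(x)\subseteq\mathcal{H}_t(x)$ exhibits $\eta=c(1,x)\xi$ as an element of $c(1,x)(\mathcal{H}_t(x)\cap\dom(c(1,x)))$; the collection of such $\eta$ is essentially dense in $\mathcal{H}_t(y)$, so taking closures yields $\mathcal{H}_t(y)\subseteq c(1,x)\mathcal{H}_t(x)$. Putting both inclusions together gives $c(1,x)\mathcal{H}_t(x)=\mathcal{H}_t(f(x))$. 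Since $c(1,x)$ is injective, Lemma \ref{L:basic facts injective ops}(\ref{I:inj keeps dim}) then gives $\dim_M \mathcal{H}_t(x)=\dim_M \mathcal{H}_t(f(x))$ for all $t$; these equal $\mu_{\log\La(x)}((-\infty,t])$ and $\mu_{\log\La(f(x))}((-\infty,t])$ respectively, so equality of distribution functions yields $\mu_{\log\La(x)} = \mu_{\log\La(f(x))}$.
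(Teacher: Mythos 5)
Your handling of the subspace invariance $c(1,x)\cH_t(x)=\cH_t(f(x))$ is correct and follows the paper's own route (transfer $\cV_t$ through the cocycle identity, then pass to $\cH_t=\overline{\cV_t}$ via Lemma \ref{L:osoledets spaces lemma}); in fact your adjoint/orthocomplement argument via Lemma \ref{L:basic facts injective ops}(\ref{I:ortho complement formula}) is more careful than the paper's write-up, which simply asserts that the closure passes through $c(1,x)$ — a step that needs exactly the justification you supply when $c(1,x)$ is unbounded. Your verification of the hypotheses of Lemma \ref{L:osoledets spaces lemma} (that $S_n\to\id$ and $|c(n,x)|^{1/n}\to\La(x)$ in measure) is also correct.

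The gap is in the final step. From $\dim_M\cH_t(x)=\dim_M\cH_t(f(x))$ for all $t$ you conclude that the spectral measures coincide because their ``distribution functions'' $F_x(t)=\mu_{\log\La(x)}((-\infty,t])$ agree. This is valid when $\tau(\id)<\infty$, but the corollary is stated in the semi-finite setting of Theorem \ref{thm:main-general}, where $F_x(t)$ is typically $+\infty$ for all $t\ge 0$: the condition $\log\La(x)\in\rL^2(M,\tau)$ only forces $\mu_{\log\La(x)}$ to be finite away from $0$, so when $\tau(\id)=\infty$ the spectral projection $1_{(-\eps,\eps)}(\log\La(x))$ can have infinite trace. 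Two distinct measures can then share an identical, eventually-infinite distribution function (compare $\infty\cdot\delta_0+\delta_1$ with $\infty\cdot\delta_0+2\delta_1$), so equality of $F_x$ and $F_{f(x)}$ does not pin down the measures on $[0,\infty)$. The paper avoids this by comparing the increments directly: it applies Lemma \ref{L:basic facts injective ops}(\ref{I:inj keeps dim}) to $\phi=(p_{\cH_t(f(x))}-p_{\cH_s(f(x))})c(1,x)$, whose restriction to $\cH_t(x)\cap\cH_s(x)^\perp$ is injective with image dense in $\cH_t(f(x))\cap\cH_s(f(x))^\perp$, giving $\mu_{\log\La(x)}(s,t]=\dim_M(\cH_t(x)\cap\cH_s(x)^\perp)=\dim_M(\cH_t(f(x))\cap\cH_s(f(x))^\perp)=\mu_{\log\La(f(x))}(s,t]$ for every $s<t$. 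You already have all the ingredients for this; only the last bookkeeping step needs to be replaced.
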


\begin{proof}
For $x\in X$, let
\begin{eqnarray*}
\mathcal{V}_{t}(x) &=& \left\{\xi \in \rL^2(M,\tau):~ \liminf_{n\to\infty} \frac{1}{n} \log \|c(n,x)\xi\|_2 \le t\right\}.
\end{eqnarray*}
We claim that  $c(1,x)\mathcal{V}_{t}(x)= \mathcal{V}_{t}(f(x)).$ To see this, let $\xi \in \cV_t(x)$. Then for any $\eps>0$ and $N\in \N$ there exists $n>N$ such that $\frac{1}{n} \log \|c(n,x)\xi\|_2 \le t+\eps$. Since $c(n-1,f(x))c(1,x)=c(n,x)$, we can rewrite this as
$$\frac{1}{n} \log \|c(n-1,f(x))c(1,x)\xi\|_2 \le t+\eps.$$
Since this is true for every $\eps>0$ and $N\in \N$, it follows that $c(1,x)\xi \in \mathcal{V}_{t}(f(x))$. Thus $c(1,x)\mathcal{V}_{t}(x)\subseteq \mathcal{V}_{t}(f(x)).$

Conversely, if $\xi \in \mathcal{V}_{t}(f(x))$ then for every $\eps>0$ and $M\in \N$, there exists $m>M$ such that $\frac{1}{m} \log \|c(m,f(x))\xi\|_2 \le t+\eps$. By the cocycle equation, $c(m,f(x))=c(m+1,x)c(1,x)^{-1}$.  So we can rewrite this as
$$\frac{1}{m} \log \|c(m+1,x)c(1,x)^{-1}\xi\|_2 \le t+\eps.$$
Since this is true for every $\eps>0$ and $M\in \N$, it follows that $c(1,x)^{-1}\xi \in \mathcal{V}_{t}(x)$. Thus $c(1,x)^{-1}\mathcal{V}_{t}(f(x))\supseteq \mathcal{V}_{t}(x).$ This proves $c(1,x)\mathcal{V}_{t}(x)= \mathcal{V}_{t}(f(x)).$ Applying the ``in particular" part of Lemma \ref{L:osoledets spaces lemma} shows that $c(1,x)\mathcal{H}_{t}(x)=\mathcal{H}_{t}(f(x)).$




To prove invariance of the Lyapunov distribution, let $s<t$ be real numbers. Fix $x\in X$ and define $\phi \in M$ by
$$\phi = (p_{\cH_t(f(x))} - p_{\cH_s(f(x))})c(1,x).$$
Because the kernel of $p_{\cH_t(f(x))} - p_{\cH_s(f(x))}$ (viewed as a self-map of $\cH_t(f(x))$) is $\cH_s(f(x))$, the kernel of $\phi$ restricted to $\cH_t(x)$ is $c(1,x)^{-1}\cH_s(f(x)) = \cH_s(x)$. Thus, the restriction of $\phi$ to $\cH_t(x) \cap \cH_s(x)^\perp$ is one-to-one. Moreover,
$$\phi(\cH_t(x) \cap \cH_s(x)^\perp)=\cH_t(f(x)) \cap \cH_s(f(x))^\perp.$$
Because injective elements of $M$ preserve von Neumann dimension (by Lemma \ref{L:basic facts injective ops}),
$$\dim_M(\cH_t(x) \cap \cH_s(x)^\perp)=\dim_M(\cH_t(f(x)) \cap \cH_s(f(x))^\perp).$$
By definition, $\mu_{\log\La(x)}(s,t] = \dim_M(\cH_t(x) \cap \cH_s(x)^\perp)$. Since $s<t$ are arbitrary, this shows $\mu_{\log \La(f(x))}=\mu_{\log \Lambda(x)}$.

Theorem \ref{thm:invariance} now follows from Corollary \ref{C:hypotheses}.
\end{proof}

\appendix

\section{Diffuse finite von Neumann algebras}\label{sec:diffuse}

We recall some definitions from the introduction.

\begin{defn}
Let $M\subseteq B(\mathcal{H}),$ $N\subseteq B(\mathcal{K})$ be von Neumann algebras. A linear map $\theta\colon M\to N$ is \emph{normal} if $\theta\big|_{\{x\in M:\|x\|\leq 1\}}$ is weak operator topology-weak operator topology continuous.
\end{defn}

For linear functionals, this agrees with our previous notion of normality introduced in Section \ref{sec: vNa intro}. See \cite[Theorem 7.1.12]{KadisonRingroseII}.

It can be shown that a $C^{*}$-algebra is a von Neumann algebra if and only if $M$ has a predual (i.e. is isometrically isomorphic to the dual of a Banach space), and that moreover this predual is unique \cite[Theorem III.3.5 and Corollary III.3.9]{MR1873025}. It is also known that a linear map is normal if and only if it is weak$^{*}$-weak$^{*}$ continuous. This explains why normality is the correct continuity condition for maps between von Neumann algebras: it is \emph{intrinsic} to the algebra and does not depend upon how it is represented.

Suppose that $(M,\tau)$ is a finite tracial von Neumann algebra. We leave it as an exercise to verify that a sequence (or more generally a net) $x_{n}\in M$ with $\|x_{n}\|\leq 1$ tends to $x$ in the weak operator topology acting on $\rL^{2}(M,\tau)$ if and only if
\begin{equation}\label{E:WOT convergence by traces}
\tau(x_{n}a)\to_{n\to\infty}\tau(xa)\textnormal{ for all $a\in M$.}
\end{equation}
In particular, normality of the trace implies that the regular representation of $M$ on $\rL^{2}(M,\tau)$ is normal.
It follows from the fact that (\ref{E:WOT convergence by traces}) characterizes convergence in the WOT on the unit ball of $M$ that a trace-preserving $*$-homomorphism between tracial finite von Neumann algebras is normal.

\begin{defn}
Let $M$ be a von Neumann algebra. A projection $p\in M$ is a \textbf{minimal projection} if whenever $q\in M$ is a projection with $q\leq p$ we have either $q=p$ or $q=0$. We say that $M$ is \textbf{diffuse} if it has no nonzero minimal projections.
\end{defn}

\begin{prop}\label{prop:diffuse vNa TFAE}
Let $(M,\tau)$ be a finite tracial von Neumann algebra. The following are equivalent:
\begin{enumerate}
\item $M$ is diffuse, \label{item:diffuse appendix}
\item  \label{item: embedding of the unit interval appendix} there is an atomless standard probability space $(X,\mu)$ and an injective, trace-preserving $*$-homomorphism $\iota\colon \rL^{\infty}(X,\mu)\to M$, (here $\rL^{\infty}(X,\mu)$ is equipped with the trace $\int \cdot\,d\mu$),
\item\label{item:RL lemma appendix} there is a sequence $u_{n}$ of unitaries in $M$ with $u_{n}\to 0$ in the weak operator topology. 
\end{enumerate}

\end{prop}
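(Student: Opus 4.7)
The plan is to establish the cycle of implications $(\ref{item: embedding of the unit interval appendix}) \Rightarrow (\ref{item:RL lemma appendix}) \Rightarrow (\ref{item:diffuse appendix}) \Rightarrow (\ref{item: embedding of the unit interval appendix})$. For the first implication, since every atomless standard probability space is isomorphic to $([0,1], \text{Lebesgue})$, I would reduce to this case. Set $v_n(x) = e^{2\pi i n x}$; these are unitaries in $\rL^\infty([0,1])$ and the Riemann-Lebesgue lemma gives $\int v_n g\,dx \to 0$ for every $g \in \rL^1([0,1])$. The trace-preserving normal conditional expectation $E \colon M \to \iota(\rL^\infty)$ exists (a standard fact for trace-preserving inclusions), and for arbitrary $a \in M$ one writes $E(a) = \iota(g)$ with $g \in \rL^\infty$, so that $\tau(\iota(v_n)a) = \tau(\iota(v_n)E(a)) = \tau(\iota(v_n g)) = \int v_n g\,dx \to 0$. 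This gives $\iota(v_n)\to 0$ in the WOT via characterization (\ref{E:WOT convergence by traces}).

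For $(\ref{item:RL lemma appendix}) \Rightarrow (\ref{item:diffuse appendix})$, I would argue by contrapositive. Suppose $p \in M$ is a nonzero minimal projection with central support $z = c(p)$. The first step is to show $zM$ is a type $\textrm{I}_n$ factor for some $n < \infty$: any projection $e \in Z(zM)$ commutes with $p$, so $pe$ is a projection in $pMp = \C p$, forcing $pe \in \{0, p\}$; a short argument using minimality of $z$ as a central support then forces $e \in \{0, z\}$, so $zM$ is a factor, and having a minimal projection makes it type I while finiteness of the restricted trace yields $zM \cong M_n(\C)$. Next, if $u_k \in M$ are unitaries with $u_k \to 0$ in WOT, then multiplication by $z$ is WOT-continuous on bounded sets, so $u_k z \to 0$ in WOT. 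Each $u_k z$ is a unitary in $zM$ by centrality of $z$: $(u_k z)^*(u_k z) = z u_k^* u_k z = z$, and similarly on the other side. But in $M_n(\C)$ the WOT and operator norm topologies coincide, so $u_k z \to 0$ in operator norm, contradicting $\|u_k z\|_\infty = 1$.

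The implication $(\ref{item:diffuse appendix}) \Rightarrow (\ref{item: embedding of the unit interval appendix})$ is the main content and rests on a \emph{halving lemma}: in a diffuse finite tracial von Neumann algebra, every projection $p$ has subprojections of every trace in $[0, \tau(p)]$. To prove this, first diffuseness yields subprojections of arbitrarily small positive trace: given nonzero $p$, pick a proper subprojection $q$; whichever of $q$ or $p - q$ has smaller trace has trace at most $\tau(p)/2$, so iterating produces nonzero traces below any threshold. Then for $t \in (0, \tau(p))$, I would apply Zorn's lemma to the poset of subprojections of $p$ with trace at most $t$, using normality of $\tau$ and the SOT supremum to handle chains. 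A maximal element $q$ must have $\tau(q) = t$, for otherwise $p - q$ admits a nonzero subprojection of trace $\le t - \tau(q)$, whose sum with $q$ strictly exceeds $q$ in the poset.

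With the halving lemma in hand, I would inductively build a dyadic system $\{p_{k/2^n}\} \subset M$ with $p_0 = 0$, $p_1 = \id$, $p_{k/2^n} \le p_{(k+1)/2^n}$, and $\tau(p_{k/2^n}) = k/2^n$, by halving each gap $p_{(k+1)/2^n} - p_{k/2^n}$ at each stage. The assignment $1_{[0, k/2^n]} \mapsto p_{k/2^n}$ then extends linearly to a trace-preserving $*$-homomorphism from the dyadic step functions in $\rL^\infty([0,1], \text{Leb})$ into $M$, and passes to all of $\rL^\infty([0,1], \text{Leb})$ by approximating each $f \in \rL^\infty$ by uniformly bounded dyadic step functions converging in $\rL^2$-norm, and then using WOT-compactness of the unit ball of $M$ to extract a subsequential WOT-limit that must coincide with the $\rL^2$-limit and hence lies in $M$. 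The main obstacle will be precisely this halving lemma combined with the extension step: producing projections of \emph{exact} prescribed trace is what makes the Zorn argument delicate, and upgrading the dyadic $*$-homomorphism to a normal $*$-homomorphism on all of $\rL^\infty([0,1])$ requires care in transferring $\rL^2$-convergence back to bounded operators in $M$, since a priori the $\rL^2$-limit lives only in $\rL^2(M,\tau)$.
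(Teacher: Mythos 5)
Your proposal is correct, and the implications $(\ref{item: embedding of the unit interval appendix})\Rightarrow(\ref{item:RL lemma appendix})$ and $(\ref{item:RL lemma appendix})\Rightarrow(\ref{item:diffuse appendix})$ match the paper's proof in substance: the paper also uses $v_n(x)=e^{2\pi i nx}$ and the Riemann--Lebesgue lemma (transferring WOT-convergence into $M$ via normality of the trace-preserving inclusion, where you instead use the conditional expectation -- both are standard and valid), and it also cuts down by the central support $z$ of a putative minimal projection and derives a contradiction from $u_kz\to 0$ in a finite-dimensional corner; your hands-on verification that $zM$ is a finite type $\mathrm{I}$ factor replaces the paper's citation of Kadison--Ringrose. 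Where you genuinely diverge is $(\ref{item:diffuse appendix})\Rightarrow(\ref{item: embedding of the unit interval appendix})$. The paper passes to a maximal abelian subalgebra $A\subseteq M$, proves $A$ is diffuse, invokes the classification $(A,\tau)\cong(\rL^\infty(Y,\nu),\int\cdot\,d\nu)$, and then performs the dyadic subdivision purely measure-theoretically inside $(Y,\nu)$, so that the resulting map into $M$ is automatically a normal $*$-homomorphism. You instead prove a halving lemma (exact prescribed traces via Zorn's lemma and normality of $\tau$), build the dyadic family of projections directly in $M$, and extend from dyadic step functions to $\rL^\infty([0,1])$ by $\rL^2$-approximation on operator-norm-bounded sets. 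Your route avoids both the maximal-abelian-subalgebra argument and the structure theorem for abelian von Neumann algebras, at the cost of the Zorn argument for exact traces and the extension step; the latter does go through as you describe (the images of the approximating step functions are uniformly operator-norm bounded, the norm-bounded balls of $M$ are $\rL^2$-closed, multiplication is jointly SOT-continuous on bounded sets, and injectivity follows from trace-preservation plus faithfulness of $\tau$), but these are exactly the points you should spell out carefully in a final write-up. Both approaches are standard; yours is arguably more self-contained, the paper's more structural.
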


\begin{proof}

(\ref{item:diffuse appendix}) implies (\ref{item: embedding of the unit interval appendix}):
Let $A\subseteq M$ be a unital, abelian von Neumann algebra which is maximal under inclusion among all abelian von Neumann subalgebras of $M$. The existence of such an $A$ follows from Zorn's Lemma.
We start with the following claim.

\emph{Claim: $A$ is diffuse.}

To prove the claim suppose, for contradiction, that $p\in A$ is a nonzero minimal projection. Since every von Neumann algebra is the norm closed linear span of its projections by \cite[Proposition IX.4.8]{Conway}, it follows that the von Neumann algebra $Ap$ is $1$-dimensional. Since $M$ is diffuse, there exists a nonzero projection $q\in M$ so that $q\leq p$ and $q\ne p$. We claim that $q$ commutes with $A$. To see this, let $a\in A$. Since $Ap$ is $1$-dimensional, there is a $\lambda\in \C$ so that $ap=\lambda p.$ So
\[aq=(ap+a(1-p))q=(\lambda p+a(1-p))q=\lambda q,\]
the last equation following as $q\leq p$ implies that $pq=q$ and $(1-p)q=0$. Similarly,
\[qa=q(ap+a(1-p))=q(\lambda p+(1-p)a)=\lambda q,\]
where in the second equality we are using that $A$ is abelian and that $p\in A$. So $q$ commutes with $A$. Now set
\[B=\overline{\Span(A\cup \{qa:a\in A\})}^{WOT}.\]
Because $q$ commutes with $A$, we know that $\Span(A\cup \{qa:a\in A\})$ is an abelian $*$-algebra. So $B$ is an abelian von Neumann algebra containing $A$. By maximality, this forces $B=A$ and so $q\in A$. But then $p$ is not a minimal projection in $A$, which gives a contradiction.  This proves the claim.

Having shown the claim, note that by \cite[Theorem III.1.18]{MR1873025} we have that $(A,\tau)\cong (\rL^{\infty}(Y,\nu),\int \cdot\,d\nu)$ where $Y$ is a compact Hausdorff space and $\nu$ is a probability measure on $Y$ (we can choose $Y$ to be metrizable if and only if $\rL^{2}(Y,\nu)$ is separable). Saying that $\rL^{\infty}(Y,\nu)$ is diffuse is equivalent to saying that for every measurable $E\subseteq Y$ with $\nu(E)>0$ there is a measurable $F\subseteq E$ with $\nu(F)<\nu(E)$. By a standard measure theory exercise this forces $\{\nu(F):F\subseteq E \textnormal{ is measurable}\}=[0,\nu(E)]$ for every measurable $E\subseteq Y$. By a recursive construction, this implies that for every $n\in \N$ and for every $\sigma\in \{0,1\}^{n}$ there is a measurable $E_{\sigma}\subseteq Y$ which satisfy the following properties:
\begin{itemize}
\item $\nu(E_{\sigma})=2^{-n}$ for every $n\in \N$ and every $\sigma\in \{0,1\}^{n}$,
\item for every $n\in \N$ and every $\sigma,\omega\in \{0,1\}^{n}$ with $\sigma\ne \omega$ we have $E_{\sigma}\cap E_{\omega}=\varnothing$,
\item $X=E_{0}\cup E_{1},$
\item for every $\sigma\in \{0,1\}^{n}$ we have $E_{\sigma}=E_{\sigma 0}\cup E_{\sigma 1}$ where $\sigma 0=(\sigma_{1},\cdots,\sigma_{n},0)$, $\sigma 1=(\sigma_{1},\cdots,\sigma_{n},1)\in \{0,1\}^{n+1}$.
\end{itemize}
It follows from the above properties that there is a unique map $\pi\colon Y\to \{0,1\}^{\N}$ with the property that for all $y\in Y,n\in \N$ we have that $(\pi(y)_{1},\cdots,\pi(y)_{n})=\sigma$ if and only if $y\in E_{\sigma}$. Moreover, the above properties imply that $\mu=\pi_{*}\nu$ is the infinite power of the uniform measure on $\{0,1\}$. The map $\pi$ induces a trace-preserving $*$-homomorphism
\[\theta \colon \rL^{\infty}(\{0,1\}^{\N},\mu)\to \rL^{\infty}(Y,\nu)\cong A\]
by $\theta(f)=f\circ \pi$. Combining with the inclusion of $A$ into $M$ we have a trace-preserving $*$-homomorphism $\rL^{\infty}(\{0,1\}^{\N},\mu)\to M$. Since $(\{0,1\}^{\N},\mu)$ is an atomless standard probability space, we are done.

(\ref{item: embedding of the unit interval appendix}) implies (\ref{item:RL lemma appendix}): By \cite[Theorem V.1.22]{MR1873025} we may, and will, assume that $(X,\mu)=([0,1],m)$ where $m$ is Lebesgue measure. Define unitaries $v_{n}$ in $\rL^{\infty}([0,1],m)$ by $v_{n}(x)=e^{2\pi i nx}$. Let $\iota:\rL^{\infty}([0,1],m) \to B(\rL^{2}([0,1],m))$ be the inclusion map. So for $\theta \in \rL^{\infty}([0,1],m)$ and $f \in \rL^{2}([0,1],m)$, $\iota(\theta)(f)(x) = \theta(x)f(x).$

It follows from the Riemann-Lebesgue Lemma that $\iota(v_{n})\to 0$ in the weak operator topology (as $n\to\infty$). By normality, we have that $v_{n}\to 0$ in the weak operator topology as well.

(\ref{item:RL lemma appendix}) implies (\ref{item:diffuse appendix}):
Suppose that $p\in M$ is a minimal projection. Let $z$ be the \textbf{central support} of $M$, namely the smallest projection in the center of $M$ which dominates $p$. Since $M$ is a minimal projection, it follows by \cite[Proposition 6.4.3 and Corollary 6.5.3]{KadisonRingroseII} that we have a normal isomorphism of von Neumann algebras $Mz\cong B(\mathcal{K})$ for some Hilbert space $\mathcal{K}$. Since $Mz$ (and thus $B(\mathcal{K})$) has a faithful, finite, normal tracial state it follows that $\mathcal{K}$ is finite-dimensional. So $Mz$ is finite-dimensional. Note that $u_{n}z\to 0$ in the weak operator topology. Since there is only one Hausdorff vector space topology on a finite-dimensional space, we know that $\|u_{n}z\|\to 0$. But by unitarity we know that $\|z\|=\|u_{n}z\|$. So $z=0$, and the fact that $p\leq z$ implies that $p=0$. So $M$ has no nonzero minimal projection, and thus $M$ is diffuse.

\end{proof}

As we remarked in the introduction, for most of the tracial von Neumann algebras we are interested in, the limiting operator in our Multiplicative Ergodic Theorem cannot be compact (unless it is zero). This is because of the following result.

\begin{prop}\label{prop: no nonzero compacts appendix}
Let $M\subseteq B(\mathcal{H})$ be a diffuse von Neumann algebra. Then $M$ does not contain any nonzero compact operators.
\end{prop}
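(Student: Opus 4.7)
The plan is to argue by contradiction: I will show that any nonzero compact operator in $M$ would force $M$ to contain a nonzero finite-rank projection, and then argue that a diffuse von Neumann algebra cannot contain such a projection.

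Suppose $T\in M$ is compact and nonzero. Then $|T|=(T^{*}T)^{1/2}$ is a nonzero, positive, self-adjoint compact operator, and $|T|\in M$ because the continuous functional calculus stays inside $M$. By the spectral theorem for compact self-adjoint operators, $|T|$ has at least one nonzero eigenvalue $\lambda>0$, and the associated spectral projection $p=1_{\{\lambda\}}(|T|)$ is a \emph{finite-rank} projection; moreover $p\in M$ since it is obtained by Borel functional calculus applied to $|T|\in M$. So the first step reduces the problem to showing that a diffuse von Neumann algebra contains no nonzero finite-rank projections.

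Now suppose $p\in M$ is a nonzero finite-rank projection. Then $p\mathcal{H}$ is finite-dimensional, so $pMp\subseteq B(p\mathcal{H})$ is a finite-dimensional $*$-subalgebra containing the identity of $B(p\mathcal{H})$, namely $p$. As a finite-dimensional $C^{*}$-algebra with unit $p$, the algebra $pMp$ is a direct sum of matrix algebras and therefore contains a minimal nonzero projection $q\in pMp\subseteq M$. I claim $q$ is a minimal projection in $M$ itself: if $r\in M$ is a projection with $r\leq q\leq p$, then $r=prp\in pMp$, so minimality of $q$ in $pMp$ forces $r=0$ or $r=q$. This contradicts the assumption that $M$ is diffuse, completing the proof.

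The main conceptual step is the passage from ``finite-rank projection in $M$'' to ``minimal projection in $M$'' via the reduced algebra $pMp$; once this is established, everything else is bookkeeping with the spectral theorem. No obstacle is expected, since no tracial assumption on $M$ is needed: the argument uses only the Borel functional calculus (which is available in any von Neumann algebra), the spectral theorem for compact self-adjoint operators, and the structure of finite-dimensional $C^{*}$-algebras.
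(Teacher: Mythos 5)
Your proof is correct and follows essentially the same route as the paper: pass from the compact operator to a finite-rank spectral projection $p=1_{\{\lambda\}}(|T|)\in M$, extract a minimal projection below $p$, and contradict diffuseness. The only (immaterial) difference is how the minimal projection is produced — the paper picks a nonzero subprojection $q\leq p$ in $M$ of minimal rank, whereas you invoke the structure of the finite-dimensional $C^{*}$-algebra $pMp$; both work.
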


\begin{proof}
Suppose that $x\in M$ is a nonzero compact operator. Then $x^{*}x$ is also a nonzero compact operator, and is self-adjoint. By the spectral theorem for compact normal operators, there is a $\lambda\in (0,\infty)$ which is a eigenvalue for $x^{*}x$ with finite dimensional eigenspace. Let $p$ be the projection onto the kernel of $\lambda I-x^{*}x$. By functional calculus, we know that $p=1_{\{\lambda\}}(x^{*}x)$. So it follows by \cite[Proposition IX.8.1]{Conway} that $p\in M.$ Since $p\mathcal{H}$ is finite-dimensional, we may choose a nonzero projection $q\leq p$ so that $\dim(q\mathcal{H})\leq \dim(e\mathcal{H})$ whenever $e\in M$ is a nonzero projection with $e\leq p.$  It is direct to see that $q$ is a minimal projection in $M$, and this contradicts our assumption that $M$ is diffuse.

\end{proof}

We close with some facts about $\rL^{2}(M,\tau)$ and $\GL^{2}(M,\tau)$ when $M$ is diffuse. We need a few preliminaries. Suppose that $(N,\widetilde{\tau})$, $(M,\tau)$ are tracial-von Neumann algebras and that $\iota\colon N\to M$ is a trace-preserving $*$-homomorphism. Then $\mu_{|\iota(x)|}=\mu_{|x|}$ for all $x\in N$ and thus $\iota$ is uniformly continuous for the measure topology (with respect to the unique translation-invariant uniform structure on a topological vector space). By \cite[Lemma 2.3 and Theorem 2.5]{TakesakiII}, we may regard $\rL^{0}(M,\tau)$ and $\rL^{0}(N,\widetilde{\tau})$ as the measure topology completions of $M$ and $N$, and so there is a unique measure topology continuous extension $\rL^{0}(N,\widetilde{\tau})\to \rL^{0}(M,\tau)$ of $\iota$. We will still use $\iota$ for this extension. Since $\rL^{0}(M,\tau)$, $\rL^{0}(N,\widetilde{\tau})$ are $*$-algebras extending $M, N$, the map $\iota\colon \rL^{0}(N,\widetilde{\tau})\to \rL^{0}(M,\tau)$ is a $*$-homomorphism. By measure topology continuity of $\iota$, density of $N$ in $\rL^{0}(N,\widetilde{\tau})$, and Proposition \ref{P:conv in meas implies wk* conv} we have $\iota(f(|x|))=f(\iota(|x|))$ for all continuous, compactly supported $f\colon [0,\infty)\to \R$. In particular
\[\int f\,d\mu_{|\iota(x)|}=\tau(f(|\iota(x)|))=\widetilde{\tau}(f(|x|))=\int f\,d\mu_{|x|},\]
for all continuous, compactly supported $f\colon [0,\infty)\to \R$.
Thus $\mu_{|\iota(x)|}=\mu_{|x|}$. 

\begin{prop}\label{prop:facts about GL2/L2 diffuse case}
Let $(M,\tau)$ be a semifinite tracial von Neumann algebra and suppose that $M$ is diffuse. Then:
\begin{enumerate}
\item $\rL^{2}(M,\tau)$ is not closed under products,\label{item: not closed under products appendix}
\item there is an unbounded operator in $\rL^{2}(M,\tau)$, \label{item: containing an unbounded operator appendix}
\item $\mathcal{P}^{\infty}(M,\tau)$ is not complete in the metric  $d_{\mathcal{P}}$. \label{item: not complete appendix}
\end{enumerate}

\end{prop}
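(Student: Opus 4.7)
The plan is to derive all three statements from a single embedding of a Lebesgue space into $(M,\tau)$ that transports classical unbounded-function pathologies. Since $\tau$ is semifinite, I first pick a nonzero projection $p \in M$ with $c := \tau(p) \in (0,\infty)$. The cut-down $(pMp, c^{-1}\tau|_{pMp})$ is a finite tracial von Neumann algebra, and it inherits diffuseness from $M$ because any minimal projection of $pMp$ would a fortiori be minimal in $M$. Applying Proposition \ref{prop:diffuse vNa TFAE}(\ref{item: embedding of the unit interval appendix}) to this cut-down produces an injective, normal $*$-homomorphism $\iota \colon \rL^\infty([0,1], m) \hookrightarrow pMp \subseteq M$ satisfying $\tau \circ \iota = c \int \cdot \, dm$, where $m$ is Lebesgue measure. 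By the extension argument in the paragraph immediately preceding the proposition, $\iota$ extends to a measure-topology-continuous $*$-homomorphism $\iota\colon \rL^0([0,1],m) \to \rL^0(M,\tau)$.

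The scaling identity $\mu_{|\iota(\phi)|} = c\,\mu_{|\phi|}$ persists on this extension and yields three consequences I will use: (a) $\|\iota(\phi)\|_2^2 = c \int |\phi|^2\, dm$, so $\iota$ maps $\rL^2([0,1], m)$ into $\rL^2(M,\tau)$; (b) $\iota(\phi) \in M$ if and only if $\phi \in \rL^\infty([0,1],m)$; and (c) $\iota$ commutes with the continuous functional calculus on self-adjoint elements, i.e. $\iota(f(\phi)) = f(\iota(\phi))$ for every continuous $f \colon \R \to \R$ and every real $\phi \in \rL^0([0,1],m)$. Item (c) is the only delicate point of the plan: I would prove it by approximating $\phi$ in measure by bounded truncations, for which the identity reduces to $\iota$ being a normal $*$-homomorphism of von Neumann algebras, and then passing to the limit using Corollary \ref{C:joint coninuity of functional calculus} together with the established measure-continuity of $\iota$.

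With the embedding in hand, the three pathologies arise from the classical examples $\phi(x) = x^{-1/4}$ and $\psi(x) = \log \phi(x) = -\tfrac{1}{4}\log x$ on $(0,1]$: one has $\phi \in \rL^2\setminus \rL^\infty$, $\phi^2(x) = x^{-1/2} \notin \rL^2$, and $\psi \in \rL^2$. Then $\iota(\phi) \in \rL^2(M,\tau)\setminus M$ by (a) and (b), establishing (\ref{item: containing an unbounded operator appendix}); and $\iota(\phi) \in \rL^2(M,\tau)$ while $\iota(\phi)^2 = \iota(\phi^2) \notin \rL^2(M,\tau)$, establishing (\ref{item: not closed under products appendix}). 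For (\ref{item: not complete appendix}), $\iota(\psi)$ is self-adjoint in $\rL^2(M,\tau)$, so $\exp(\iota(\psi)) = \iota(\exp \psi) = \iota(\phi) \in \cP$ by (c); but $\iota(\phi) \notin M^\times$ since $\iota(\phi) \notin M$, hence $\iota(\phi) \in \cP \setminus \cP^\infty$ via the identification $\cP^\infty = \cP \cap M^\times$ from Proposition \ref{P:semi-finite CAT(0)}. Because $\cP^\infty$ is dense in the complete metric space $(\cP, d_\cP)$ by the same Proposition, a single point of $\cP\setminus \cP^\infty$ is enough to rule out completeness.
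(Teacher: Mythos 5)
Your overall strategy matches the paper's: transport the classical functions $x^{-1/4}$ and $-\tfrac14\log x$ through an embedding of a Lebesgue space. The route to the embedding differs: the paper cites a structure theorem (Takesaki, Prop.\ V.1.40) to embed $\rL^\infty([0,1]\times J,\ m\otimes\eta)$ trace-preservingly into the full semifinite $(M,\tau)$, whereas you cut down by a finite-trace projection $p$ and apply Proposition \ref{prop:diffuse vNa TFAE} to the finite corner $pMp$. Your corner reduction is sound (a minimal projection of $pMp$ is minimal in $M$, and semifiniteness does supply a nonzero $p$ with $\tau(p)<\infty$), and it makes the argument more self-contained. Parts (\ref{item: not closed under products appendix}) and (\ref{item: containing an unbounded operator appendix}) go through as you wrote them, since they only use that $\iota$ is a trace-scaling $*$-homomorphism on the corner.

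There is, however, a concrete error in your claim (c) and hence in part (\ref{item: not complete appendix}). Because $\iota$ lands in $pMp$ with $p\neq \id$ in general, it is \emph{not} unital as a map into $M$, and the identity $\iota(f(\phi))=f(\iota(\phi))$ fails whenever $f(0)\neq 0$: the left side is supported under $p$, while the right side acts as the scalar $f(0)$ on $(\id-p)\cH$ (take $f\equiv 1$ to see $\iota(1)=p\neq \id$). You apply this precisely with $f=\exp$, where $f(0)=1$. The correct computation is $\exp(\iota(\psi))=\iota(e^{\psi})+(\id-p)=\iota(\phi)+(\id-p)$; in particular $\iota(\phi)$ itself is not invertible in $\rL^0(M,\tau)$ (it vanishes on $(\id-p)\cH$), so the statement ``$\iota(\phi)\in\cP$'' is false as written. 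The argument is rescued by using the witness $a:=\iota(\phi)+(\id-p)$ instead: $a>0$, $\log a=\iota(\psi)\in\rL^2(M,\tau)_{sa}$ so $a\in\cP$, and $a\notin M$ since $\iota(\phi)$ is unbounded, whence $a\in\cP\setminus\cP^\infty$ and your density-plus-completeness conclusion applies. Either make this correction, or restrict claim (c) to functions with $f(0)=0$ and handle $\exp$ by the displayed formula. (The paper avoids the issue because its embedding is unital into the semifinite algebra.)
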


\begin{proof}

By Proposition \ref{prop:diffuse vNa TFAE} and \cite[Proposition V.1.40]{MR1873025} there is a nonempty set $J$ and a trace-preserving map $\iota$ of $(\rL^{\infty}([0,1]\times J),\int \cdot \,d(m\otimes \eta))$ into $(M,\tau)$ where $m$ is Lebesgue measure and $\eta$ is counting measure on the set $J$.

(\ref{item: not closed under products appendix}): Let $g\colon [0,1]\to \C$ be a measurable function so that $g\in \rL^{2}([0,1])$ but $g\notin \rL^{4}([0,1])$. E.g. we can take $g(x)=1_{(0,1]}(x)x^{-1/4}$. Fix $j_{0}\in J$ and define $f\colon X\times J\to \C$ by $f(x,j)=1_{\{j_{0}\}}(j)g(x)$. Since $\iota$ is trace-preserving, we have that
\[\mu_{|\iota(f)|}=\mu_{|f|}=|g|_{*}(m).\]
So $\|\iota(f)\|_{2}=\|g\|_{2}<\infty$ and similarly $\|\iota(f)^{2}\|_{2}=\|\iota(f^{2})\|_{2}=\|g\|_{4}=\infty$. So $\iota(f)\in \rL^{2}(M,\tau)$ and $\iota(f)^{2}\notin \rL^{2}(M,\tau)$. So $\rL^{2}(M,\tau)$ is not closed under products. 

(\ref{item: containing an unbounded operator appendix}): Let $f\in \rL^{2}([0,1]\times J)$ with $f\notin \rL^{\infty}([0,1]\times J)$. As in (\ref{item: not closed under products appendix}) we have that $\iota(f)\in \rL^{2}(M,\tau)$ and $\|\iota(f)\|_{\infty}=\|f\|_{\infty}=\infty$, so $\iota(f)$ is not bounded.

(\ref{item: not complete appendix}): Let $g\in \rL^{2}([0,1]\times J)$ with $g\notin \rL^{\infty}([0,1]\times J)$. Let  $f=\exp(|g|)$. Since $\mu_{|\iota(f)|}=\mu_{|f|}$, we have that $\log|\iota(f)|\in \rL^{2}(M,\tau)$ but $\log|\iota(f)|\notin M$. This $\mathcal{P}(M,\tau)\neq \mathcal{P}^{\infty}(M,\tau)$.  So $\mathcal{P}^{\infty}(M,\tau)$ is a proper dense subspace of a complete metric space, and is thus not complete. 

\end{proof}

\section{More examples}\label{sec:examples-more}

\subsection{Example: the hyperfinite factor}\label{intro-hyperfinite}

Let $M_n(\C)$ denote the algebra of $n\times n$ square matrices with entries in $\C$. Let $\phi_n:M_n(\C) \to M_{2n}(\C)$ be the homomorphism
\begin{displaymath}
A \mapsto  \left( \begin{array}{cc}
A & 0 \\
0 & A
\end{array}\right).
\end{displaymath}
Let $\tau_n:M_n(\C) \to \C$ be the normalized trace defined by $\tau_n(A) = \frac{1}{n} \sum_{i=1}^n A_{ii}$. Note that $\phi_n$ preserves normalized traces in the sense that $\tau_n(A) = \tau_{2n}(\phi_n(A))$.

Let $\cA:=\cup_n M_{2^n}(\C)$ be the direct limit of the matrix algebras $M_{2^n}(\C)$ under the family of maps $\phi_n$ (in other words, $\cA$ is the disjoint union of $M_{2^n}(\C)$ after quotienting out by the equivalence relation generated by $A \sim \phi_{2^n}(A)$ for all $A \in M_{2^n}(\C)$). Because the maps $\phi_n$ preserve normalized traces, there is a trace $\tau:\cA \to \C$ satisfying $\tau(A) = \tau_{2^n}(A)$ for all $A \in M_{2^n}(\C)$.

Define the inner product $\langle A, B \rangle := \tau(A^*B)$ for all $A,B \in \cA$. The Hilbert space completion of this inner product is a Hilbert space, denoted by $\rL^2(\cA,\tau)$. Moreover, each operator $A \in \cA$ acts on $\rL^2(\cA,\tau)$ by left-composition. Thus we have embedded $\cA$ into the algebra $B(\rL^2(\cA,\tau))$ of bounded operators. Let $R$ denote the weak operator closure of $\cA$ in $B(\rL^2(\cA,\tau))$. The trace $\tau$ admits a unique normal extension to $R$. The pair $(R,\tau)$ is called the {\bf hyperfinite II$_1$-factor}.  For more details, see \cite[Theorem 11.2.2]{anantharaman-popa}.

\subsection{Example: infinite tensor powers}\label{intro-tensor}
Let $(M,\tau)$ be a finite tracial von Neumann algebra with $M \subset B(\cH)$ SOT-closed. Then there is a natural embedding of $B(\ell^2(\N))\otimes M \to B(\ell^2(\N) \otimes \cH)$. Let $M_\infty:=B(\ell^2(\N)) \botimes M$ be the SOT-closure of $B(\ell^2(\N))\otimes M$ in $B(\ell^2(\N) \otimes \cH)$. We can think of elements of $M_\infty$ as $\N\times \N$ matrices with entries in $M$. Define a trace $\tau_\infty$ on $M_\infty$ by
$$\tau_\infty(x)=\sum_{i\in \N} \tau(x_{ii}).$$
Equipped with this trace, $M_\infty$ is a semi-finite tracial von Neumann algebra. Moreover, this is the typical example of a non-finite semi-finite tracial von Neumann algebra. See \cite[Chapter 8]{anantharaman-popa} for details.

\section{Glossary}\label{sec:glossary}
Let $\cH$ be a separable Hilbert space and $B(\cH)$ be the algebra of all bounded linear operators on $\cH$. 
\begin{itemize}
\item A {\bf von Neumann algebra} $M$ is a sub-algebra of $B(\cH)$ satisfying: $1 \in M$, $M$ is WOT-closed and $*$-closed.
\item A {\bf tracial von Neumann algebra} is a pair $(M,\tau)$ where $M$ is a von Neumann algebra and $\tau$ is a faithful, normal, semi-finite trace. If $\tau(\id)<\infty$ then  $(M,\tau)$ is a {\bf finite tracial von Neumann algebra}.
\item An operator $x \in B(\cH)$ is {\bf positive} if $\langle x\xi,\xi\rangle \ge 0$ $\forall \xi \in \cH$.
\item $x \ge y$ iff $x-y$ is positive.
\item $M_+ =\{x \in M:~x \ge 0\}$. 
\item $M^\times = \GL^\infty(M,\tau)$ is the group of operators $x\in M$ with $x^{-1}\in M$.
\item $M_{sa} = \{x \in M:~ x=x^*\}$. 
\item $\cP^\infty = M_+ \cap M^\times=\exp(M_{sa})=\cP \cap M^\times$.
\end{itemize}

\begin{itemize}
\item $\cN =\{x \in M:~ \tau(x^*x)<\infty\}$.
\item $\rL^2(M,\tau)$ is the Hilbert space completion of $\cN$ with respect to the inner product $\langle x,y \rangle = \tau(x^*y)$. Also $\rL^2(M,\tau) = \{x \in \rL^0(M,\tau):~ \tau(x^*x)<\infty\}$. 
\item $\rL^2(M,\tau)_{sa} = \{x \in \rL^2(M,\tau):~x =x^*\}$. 
\item $\rL^0(M,\tau)$ is the algebra of $\tau$-measurable operators affiliated with $M$. 
\item $\rL^0(M,\tau)_+ = \{x \in \rL^0(M,\tau):~x \ge 0\}$. 
\item $\rL^0(M,\tau)^\times$ is the group of operators $x\in \rL^0(M,\tau)$ with $x^{-1}\in \rL^0(M,\tau)$. 
\item $\rL^0(M,\tau)_{sa} = \{x \in \rL^0(M,\tau):~x =x^*\}$. 
\item $\GL^2(M,\tau) = \{x\in \rL^0(M,\tau)^\times:~ \log|x| \in \rL^2(M,\tau)\}$. 
\item $\cP= \rL^0(M,\tau)_+ \cap \GL^2(M,\tau) = \exp(\rL^2(M,\tau)_{sa})$.

\end{itemize}

\bibliography{biblio}
\bibliographystyle{alpha}

\end{document}